\numberwithin{equation}{section}
\newtheorem*{proposition*}{Proposition}
\newtheorem*{theorem*}{Theorem}
\newtheorem*{conjecture*}{Conjecture}
\newtheorem*{claim*}{Claim}
\newtheorem*{lemma*}{Lemma}
\newtheorem*{corollary*}{Corollary}
\newtheorem{theorem}{Theorem}[section]
\newtheorem{proposition}[theorem]{Proposition}
\newtheorem{lemma}[theorem]{Lemma}
\newtheorem*{definition*}{Definition}
\newtheorem*{assumption*}{\mathcal{A}ssumption}
\newtheorem*{remark*}{Remark}
\newtheorem{remark}{Remark}[section]
\newcommand{\mcC}{\mathcal{C}}
\newcommand{\mcR}{\mathcal{R}}
\newcommand{\mcN}{\mathcal{N}}
\numberwithin{equation}{section}
\begin{document}
\title[Nonlinear scalar perturbations of extremal Reissner--Nordstr\"{o}m spacetimes]{Nonlinear scalar perturbations of \protect \\ extremal Reissner--Nordstr\"{o}m spacetimes}
\author{Y. Angelopoulos, S. Aretakis, and D. Gajic}

\maketitle

\begin{abstract}
We present the first rigorous study of nonlinear wave equations on extremal black hole spacetimes without any symmetry assumptions on the solution. Specifically, we prove global existence with asymptotic blow-up for solutions to nonlinear wave equations satisfying the null condition on extremal Reissner--Nordstr\"{o}m backgrounds. This result shows that the extremal horizon instability persists in model nonlinear theories. Our proof crucially relies on a new vector field method that allows us to obtain almost sharp decay estimates. 
\end{abstract}

\section{Introduction}\label{intro}

\subsection{Introduction}
\label{sec:Introduction}

 Extremal black holes are characterized by the vanishing of the surface gravity (or, equivalently, of the Hawking temperature) of the event horizon. Special examples are the maximally charged extremal Reissner--Nordstr\"{o}m family (ERN) and the maximally rotating extremal Kerr family (EK). Extremal black holes are of interest from both theoretical and practical points of view. Indeed, extremal black holes saturate various geometric inequalities \cite{SD08}, have interesting uniqueness properties \cite{pauextremalluci}, and, moreover, are of interest in supersymmetry and string theory \cite{stromingerextremalentropy}.  Furthermore, abundant astronomical evidence suggests that many stellar and supermassive black holes are near-extremal \cite{rees2005, brenneman-spin}. As we shall discuss in detail below, unlike  sub-extremal RN and Kerr black holes, ERN and EK exhibit various horizon instability properties. It has recently been found that these properties could potentially serve as an observational signature for extremal black holes by far away observers \cite{extremal-prl}.

In view of the instabilities present already in the linear theory, understanding the full dynamics of extremal black holes is an important and challenging problem. In this article we investigate the global behavior of nonlinear scalar perturbations of extremal black holes. Specifically, we consider  nonlinear wave equations of the form:
\begin{equation}\label{nw}
\left\{\begin{aligned}
       \Box_{g_M} \psi = A (x, \psi ) \cdot g^{\alpha \beta} \cdot  \partial_{\alpha}  \psi  \cdot \partial_{\beta} \psi + \mathcal{O} ( |\psi |^k , |T\psi |^k ) \mbox{  $k \geq 3$},\\
       \psi |_{\Sigma_{\tau_0} } = \epsilon f , \quad n_{\Sigma^{int}_{\tau_0}} \psi |_{\Sigma^{int}_{\tau_0} } = \epsilon h, \
       \end{aligned} \right.
\end{equation}
where $g_M$ is the metric of an extremal Reissner--Nordstr\"{o}m spacetime with mass $M$,  $x$ denotes a spacetime variable, $\Sigma_{\tau_0}$ is a Cauchy spacelike-null hypersurface (i.e. a hypersurface that is spacelike from the horizon till $\{ r = R \}$ for some $R > M$, and null from $\{ r= R \}$ till null infinity) with $\Sigma^{int}_{\tau_0}$ its spacelike part. We will show that for sufficiently small initial data (i.e.~for small $\epsilon$) solutions of \eqref{nw} are unique and exist globally in the domain of outer communications $\mathcal{M}$ up to and including the event horizon.

Our motivation for considering such a model is the study of the stability or instability of extremal Reissner--Nordstr\"{o}m black hole spacetimes in the context of the Einstein--Maxwell equations, where part of the problem consists of dealing with nonlinearities of the form studied in this article. Hence, the present work provides the first step in understanding the fully nonlinear dynamics of extremal black holes without any symmetry assumptions.  We introduce new techniques that we believe will be relevant for the study of that problem. 

The main difficulties, discussed in detail below, arise from the slow non-integrable decay of solutions $\psi$ and the fact that certain derivatives of $\psi$ grow in time. This necessitates the development of a new physical space method that yields maximum decay (for the quantities that do decay) in order to compensate for the growth of the other quantities. Before we provide an overview of these difficulties and their resolution we present some relevant background for sub-extremal and extremal black holes.

\subsection{Linear and nonlinear waves on sub-extremal black holes}

The sub-extremal black hole stability problem is currently one of the most actively studied problems in general relativity. Important developments have been presented by various research teams during the past two decades. Stability results for the linear wave equation on subextremal Kerr backgrounds were obtained in the seminal works of Dafermos and Rodnianski \cite{redshift}, \cite{dr7} (see also the lecture notes \cite{lecturesMD} and the subsequent work with Shlapentokh-Rothman \cite{tria}), which moreover introduced a mathematical interpretation of the celebrated \textit{redshift effect}, allowing the authors to obtain \textit{non-degenerate} integrated local energy decay estimates up to and including the event horizon.  Furthermore, using weighted estimates at infinity introduced in \cite{newmethod} (and extensively studied in \cite{moschidis1}), the authors of \cite{tria} were able to show polynomial decay in time for the solution and its derivatives of all orders. Precise inverse polynomial time asymptotics were rigorously derived in \cite{paper2}. For further results see also \cite{blu1}, \cite{tataru1}, \cite{tataru2}, \cite{volker1}. Global existence and uniqueness of solutions of \eqref{nw} with small initial data on sub-extremal black hole backgrounds were proved by Luk \cite{luknullcondition}. See also \cite{blu0}, \cite{blu3}, \cite{dafrodeasy}, \cite{lindbladmet}. The major difficulty encountered in \cite{luknullcondition} was the loss of time derivatives due to the trapping effect of the photon sphere. We also refer to the work of Yang \cite{shiwu} which can be used to give an alternative proof of the results of \cite{luknullcondition} (using, however, the techniques of \cite{luknullcondition} to deal with the loss of derivatives at the photon sphere). In the recent breakthrough of Dafermos, Rodnianski and Holzegel decay was derived for the system of linearized gravity around the Schwarzschild spacetime \cite{Dafermos2016}. See also \cite{dhr-teukolsky-kerr, anderssonkerr} for works on linearized gravity around Kerr, and \cite{elena1} for the problem of linearized gravity on sub-extremal Reissner--Nordstr\"{o}m. 
Finally, we refer to the impressive recent work by Klainerman and Szeftel on the fully nonlinear stability of the Schwarzschild spacetime \cite{klainerman17} in polarized axial symmetry. It is worth noting that the latter work, among other things, makes use of the techniques introduced in \cite{paper1} which are useful for deriving improved decay results and which play a crucial role in the present paper.

\subsection{Related works on linear and nonlinear waves on extremal black holes}
\subsubsection{Linear waves on extremal black holes}
The study of linear waves on extremal black holes was initiated by the second author in \cite{aretakis1, aretakis2, aretakis3, aretakis4, aretakis2012} where it was shown that, in contrast to the case of sub-extremal backgrounds, first-order transversal derivatives of generic scalar perturbations on extremal Reissner--Nordstr\"{o}m \textbf{do not decay in time} along the event horizon. Higher-order derivatives in fact \textbf{blow up} along the event horizon. The source of these instabilities is the degeneracy of the redshift effect at the event horizon and a hierarchy of conserved charges along the event horizon. Subsequent work \cite{aag1} showed that generic solutions to the wave equation \textbf{do not satisfy a non-degenerate Morawetz estimate} up to and including the event horizon. The latter work, in particular, makes apparent that new techniques are needed in addressing the global evolution of nonlinear wave equations on such backgrounds. Precise asymptotics were derived in \cite{paper4} where it was in fact shown that solutions to the wave equation \textbf{decay non-integrably} in time. For extremal Kerr spacetimes we refer to the works \cite{hm2012, zimmerman1, harveyeffective}. Extentions of these instabilities have been presented in various settings \cite{ori2013,sela, harvey2013, godazgar17, khanna17, cvetic2018}. For an extensive list of references we refer to  \cite{aretakisbrief}.

\subsubsection{Nonlinear waves on extremal black holes}

The study of nonlinear wave equations satisfying the null condition on extremal black holes was initiated by the first author in \cite{yannis1} in the context of \textbf{spherical symmetry}. It was shown that solutions of \eqref{nw}, with smooth spherically symmetric $f$ and $h$,  are \textit{globally smooth} and \textit{unique} in $\mathcal{M}$.  It was further shown that, in analogy to the linear case, the derivatives of the solution that are transversal to the horizon do not decay along the event horizon, while higher-order transversal derivatives diverge to infinity along the event horizon. Other nonlinearities were studied in \cite{aretakis4, aretakisglue, aretakiselliptic, bizon-extremal-nonlinear}. Numerical simulations of the fully non-linear evolution in the context of spherical symmetry were carried out in \cite{harvey2013}. The results of \cite{harvey2013} are in complete agreement with the results of the present paper.

\subsection{The main theorems}

\subsubsection{Notation}\label{notation}
First, we introduce some notation in order to rigorously state the theorems proven in this article. We start by recording the basics of the geometry of extremal Reissner--Nordstr\"{o}m black hole spacetimes. The domain of outer communications up to and including the event horizon of an extremal Reissner--Nordstr\"{o}m spacetime with mass $M > 0$ can be given by the following $4$-dimensional Lorentzian manifold-with-boundary:
$$ \mathcal{M} \doteq \mathbb{R} \times [M, \infty) \times \mathbb{S}^2 , $$
with metric
$$ g_M = - D dv^2 + 2 dv dr + r^2 \gamma_{\mathbb{S}^2} , $$
in the \textit{ingoing} Eddington--Finkelstein coordinates $(v,r,\omega) \in \mathbb{R} \times [M, \infty) \times \mathbb{S}^2$ where 
$$ D \doteq D(r) = \left( 1- \frac{M}{r} \right)^2 , $$
and $\gamma_{\mathbb{S}^2}$ is the standard metric on the $2$-sphere $\mathbb{S}^2$ (in the rest of the document we will use $g$ or $g_M$ for the metric, with raised indices indicating the inverse of the metric). We also consider the \textit{double null} coordinates $(u,v)$ for $v$ as before and $u \doteq v - 2r_{*}$ for $r_{*} (r) = r-M - \frac{M^2}{r-M} + 2M \log \left( 1 - \frac{M}{r} \right)$ the so-called tortoise coordinate. The metric takes the following form in double null coordinates:
$$ g_M = - D du dv + r^2 \gamma_{\mathbb{S}^2} . $$
We denote the future event horizon at $r=M$ by $\mathcal{H}^{+} \doteq \{ r = M \}$, and future null infinity by $\mathcal{I}^{+}$ which is where the null hypersurfaces $\{ u = \tau \}$ terminate as $v \rightarrow \infty$, for any $\tau$.

In the $(v,r,\omega )$ coordinates we denote $T \doteq \partial_v$, $Y \doteq \partial_r$, and in the $(u,v,\omega )$ coordinates we denote $L \doteq \partial_v$, $\underline{L} \doteq \partial_u$. We also have that
$$ L = T + \frac{1}{2} D Y , \quad \underline{L} = -\frac{1}{2} D Y . $$
For $\omega \in \mathbb{S}^2$ we have that the corresponding volume form is given by $d\omega = \sin \theta d\theta d\varphi$ for $\theta$ and $\varphi$ the standard coordinates on the sphere, and the covariant derivative on $\mathbb{S}^2$ is given by $\nabla_{\mathbb{S}^2}$ and the corresponding Laplacian by $\Delta_{\mathbb{S}^2}$. We also use the notation $\slashed{\nabla} = \frac{1}{r} \nabla_{\mathbb{S}^2}$ and $\slashed{\Delta} = \frac{1}{r^2} \Delta_{\mathbb{S}^2}$, and furthermore we define the three Killing vector fields $\Omega_i$, $i \in \{1,2,3\}$, associated to the spherical symmetry of an extremal Reissner--Nordstr\"{o}m spacetime as follows:
$$ \Omega_1 = \sin \varphi \partial_{\theta} + \cot \theta \cos \varphi \partial_{\varphi} , \quad \Omega_2 = - \cos \varphi \partial_{\theta} + \cot \theta \sin \varphi \partial_{\varphi} , \quad \Omega_3 = \partial_{\varphi} , $$
and finally the vector fields $\Omega$ by:
$$ \Omega^m = \Omega_1^{m_1} \Omega_2^{m_2} \Omega_3^{m_3} , $$
for any $m \in \mathbb{N}$ and $( m_1 , m_2 , m_3 ) \in \mathbb{N}_0^3$ where $m_1 + m_2 + m_3 = m$. 

Now we define the null-spacelike-null hypersurfaces $\Sigma_{\tau}$ by setting first
$$ \Sigma_{\tau_0} \doteq \{ v = v_{\Sigma_{\tau_0}} (r) \} , $$
for $v_{\Sigma_{\tau_0}} : [ M , \infty) \rightarrow \mathbb{R}$ given by:
$$ v_{\Sigma_{\tau_0}} (r) = v_0 + \int_M^r G (r' ) \, dr' ,$$
for some $v_0 \in \mathbb{R}_{> 0}$ and $G$ a non-negative function on $[M , \infty)$ satisfying:
$$ G (r) \geq 1 , \quad G(r) - \frac{2}{D} > 0 ,\quad G(r) - \frac{2}{D} = \mathcal{O} ( r^{-1-\delta} ) , $$
for some $\delta > 0$. We further impose the following symmetry condition: if $\left( t = \frac{u+v}{2} , r_{*} , \omega \right) \in \Sigma_{\tau_0}$, then $\left( t = \frac{u+v}{2} , - r_{*} , \omega \right) \in \Sigma_{\tau_0}$. Now $\Sigma_{\tau}$ can be defined by $\Sigma_{\tau} \doteq f_{\tau} ( \Sigma_{\tau_0} )$ for $f_{\tau}$ the flow of $T$. We will work in the spacetime region $\mcR = J^{+} ( \Sigma_{\tau_0} )$. 

\subsubsection{Statement of the theorems}
\label{stattheorem}

In the current article we show the following result for small-data solutions of equation \eqref{nw}:
\begin{theorem}\label{thm:main}
Let $(\mathcal{M} , g_M )$ be the domain of outer communications of an extremal Reissner--Nordstr\"{o}m spacetime up to and including the future event horizon with mass $M > 0$, and consider the nonlinear wave equation
\begin{equation}\label{eq:thm}
 \Box_{g_M} \psi = A(u,v,\omega,\psi ) \cdot g^{\alpha \beta} \cdot ( \partial_{\alpha} \psi ) \cdot ( \partial_{\beta} \psi ) + \mathcal{O} (| \psi |^k , | T\psi |^k ) , \mbox{  $k \geq 3$}, 
\end{equation}  
in $\mathcal{M}$ up to and including the future event horizon. Here $A$ denotes a function that depends smoothly on the coordinates $(u,v,\omega )$ and the solution $\psi$ (where $(u,v)$ are null coordinates and $\omega$ the angular coordinate in $\mathcal{M}$) that is bounded along with all its derivatives. For equation \eqref{eq:thm} we consider smooth and compactly supported initial data $\epsilon f$ and $\epsilon h$ given as
$$ \psi_{ID} \doteq \left( \psi\Big{|}_{\Sigma_{\tau_0}} , n_{\Sigma_{\tau_0}} \psi \Big{|}_{\Sigma^{int}_{\tau_0}} \right) = ( \epsilon f , \epsilon h ) , $$
on an initial null-spacelike-null hypersurface $\Sigma_{\tau_0}$ where $\Sigma^{int}_{\tau_0}$ denotes its spacelike part, where $f$ and $h$ additionally satisfy:
\begingroup
\allowdisplaybreaks
\begin{align*}
E^{\tau_0} [ f ]  + \sum_{\substack{k \leq 5 \\ l \leq 5} }\int_{\Sigma_{\tau_0}}  J^T [ \Omega^k T^{l-1} f ] \cdot \textbf{n}_{\Sigma_0} \, d\mu_{\Sigma_0} + \| f \|_{H^s_{0}} + \| h \|_{\widetilde{H}^{s-1}_{0}}  < \infty ,
\end{align*}
\endgroup
for some $s > 5/2$ (where the norms $E^{\tau} [ \bullet ]$, $H^s_{\tau}$ and $\widetilde{H}^s_{\tau}$ for $\tau \geq \tau_0$, are defined in Appendix \ref{norm:add}). Then there exists a $\Delta > 0$ such that for all $0 \leq \epsilon \leq \Delta$, equation \eqref{eq:thm} with data $(\epsilon f , \epsilon g ) $ as above admits a \textit{unique}, \textit{global} and smooth solution $\psi$ in $\mathcal{M}$ with finite $E^{\tau} [ \psi ]$ and $\| \psi \|_{H^s_{\tau}}$ norms for any $\tau \in [\tau_0 , \infty )$.
\end{theorem}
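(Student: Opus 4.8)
The plan is to run a continuity argument in the region $\mcR = J^{+}(\Sigma_{\tau_0})$, foliated by the hypersurfaces $\Sigma_{\tau}$. Since the principal part $\Box_{g_M}$ is fixed (equation \eqref{eq:thm} is semilinear), local well-posedness and propagation of smoothness are standard, so the solution extends to a maximal development; I would let $\tau^{\star}$ be the supremum of times $\tau$ such that a fixed collection of \emph{bootstrap estimates} holds on $\mcR \cap \{\tau_0 \le \tau' \le \tau\}$, and aim to show $\tau^{\star} = \infty$. The bootstrap quantities must encode the qualitative picture known from the linear theory on extremal Reissner--Nordstr\"om: the (degenerate and non-degenerate) energy fluxes of $\psi$ through $\Sigma_{\tau}$ and along $\mathcal{H}^{+}$, $\mathcal{I}^{+}$, together with their $r^{p}$-weighted counterparts near infinity, decay at the appropriate polynomial rates; the zeroth-order quantity $\psi$ and the ``good'' derivatives $L\psi$, $\snabla\psi$, $T\psi$, together with their $\Omega^{k}T^{l}$-commuted analogues, decay (but only \emph{non-integrably}, at the almost-sharp rates of \cite{paper4, paper1}); whereas the transversal derivative $Y\psi$ is merely assumed bounded on $\mathcal{H}^{+}$ and the higher transversal derivatives $Y^{j}\psi$, $j \ge 2$, are allowed to \emph{grow} in $\tau$. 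Top-order energy norms commuted with $T$, the $\Omega_i$, and the (degenerate) redshift vector field, plus weighted pointwise norms obtained from these by Sobolev embedding on each $\Sigma_{\tau}$, complete the list; the hypotheses on $f,h$ are exactly what is needed to initialize these norms at $\tau=\tau_0$.

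The core of the argument is to recover each bootstrap estimate with an \emph{improved} constant. For the energy hierarchy I would use the multiplier machinery adapted to the extremal geometry: the Killing field $T$ (giving a non-negative but horizon-degenerate flux), the $r^{p}$-weighted multipliers $r^{p}L$ near $\mathcal{I}^{+}$ in the spirit of \cite{newmethod, moschidis1}, the Aretakis-type hierarchy of transversal estimates near $\mathcal{H}^{+}$ (exploiting the conserved charges, which compensates for the degeneracy of the redshift), and, crucially, the new time-weighted vector field of \cite{paper1} that squeezes out \emph{almost-sharp} decay --- indispensable here because the plain $r^{p}$-hierarchy does not yield decay fast enough to absorb the slowly-decaying nonlinear errors. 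Commutation with $\Omega^{k}$ and $T^{l}$ up to the orders in the hypotheses supplies the higher-order versions. The decisive nonlinear input is the \emph{null condition}: in double null coordinates
\[
 g^{\alpha\beta}(\partial_{\alpha}\psi)(\partial_{\beta}\psi) = 2\,(Y\psi)(L\psi) + |\snabla\psi|^{2},
\]
so the dangerous transversal derivative $Y\psi$ always appears multiplied by the good, decaying factor $L\psi$, and the remaining term $|\snabla\psi|^{2}$ is built purely from good derivatives; after multiplication by the bounded coefficient $A$ this generates spacetime error terms decaying fast enough to be reabsorbed, and the higher-order terms $\mathcal{O}(|\psi|^{k}, |T\psi|^{k})$, $k \ge 3$, are even better since $\psi$ and $T\psi$ decay. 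Each error integral is then handled by Cauchy--Schwarz against the fluxes, using the bootstrap decay rates, and closes with a net gain of a power of $\epsilon$ provided $\epsilon$ is taken small.

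The step I expect to be the main obstacle is controlling the commuted equations at the degenerate horizon. When \eqref{eq:thm} is commuted with $Y$ (and with $Y^{j}$ for the higher-order norms) the commutator $[\Box_{g_M},Y]$ and the differentiated nonlinearity produce terms involving $Y$-derivatives of $\psi$ whose coefficients lack a favorable sign near $\mathcal{H}^{+}$, precisely because the surface gravity vanishes and the redshift degenerates there; one must show that the null structure survives commutation (the worst term $Y\big((Y\psi)(L\psi)\big) = (Y^{2}\psi)(L\psi) + (Y\psi)(YL\psi)$ still pairs a bad factor with a decaying one) and that the non-integrable bulk decay is nonetheless summable against the chosen weights --- balancing the $r^{p}$-weights at infinity against the degenerate weights at the horizon while keeping the loss of derivatives at the photon sphere $\{r=2M\}$ under control in the manner of \cite{luknullcondition}. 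It is here that the new vector field method does the essential work of converting ``slow but almost-sharp'' linear decay into ``good enough'' control of the errors.

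Once every bootstrap estimate is recovered with a strictly better constant, a standard continuity argument forces $\tau^{\star} = \infty$, yielding the global solution with finite $E^{\tau}[\psi]$ and $\|\psi\|_{H^{s}_{\tau}}$ for all $\tau \ge \tau_0$; commuting to arbitrary order propagates smoothness, and uniqueness follows from a direct (degenerate) energy estimate together with Gr\"onwall's inequality applied to the difference of two solutions.
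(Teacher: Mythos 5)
Your overall architecture (local existence plus continuation criteria for $|Y\psi|,|T\psi|,|\Omega\psi|$, verified by a bootstrap built on $T$-energies, $r^{p}$-weights at infinity and horizon hierarchies, with commutation only in $\Omega^{k}T^{l}$) matches the paper's skeleton, but the proposal has a genuine gap precisely at the step you flag as the ``main obstacle'': you plan to control the transversal derivative at the horizon through the energy/commutation machinery (Aretakis-type hierarchies, possibly commuting with $Y$ or $Y^{j}$), asserting that the null structure survives and the errors can be reabsorbed. On extremal Reissner--Nordstr\"om this is exactly what fails. Controlling $Y\psi$ for the spherically symmetric part by the energy method would require the $(r-M)^{-p}$-weighted, $(r-M)^{-2}\underline{L}$-commuted estimate at $p=1$, and it is known from \cite{aag1} that this estimate is false already for the linear equation; this is why the paper splits $\psi=\psi_{0}+\psi_{\geq 1}$ and controls $\frac{2r}{D}\underline{L}\phi_{0}$ by the method of characteristics (Section \ref{char}, following \cite{yannis1}), reserving the commuted weighted hierarchies (with the extended range $p\in(0,1+\delta_2]$) for $\psi_{\geq 1}$ and $T\psi$ only. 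Your proposal contains neither the angular decomposition nor the characteristics argument, so as written the bootstrap for $Y\psi$ on $\mathcal{H}^{+}$ cannot be closed. Relatedly, the appeal to the null condition is too optimistic here: via the Couch--Torrence isometry the quadratic terms near the extremal horizon correspond to nonlinearities near $\mathcal{I}^{+}$ that decay one power of $r$ \emph{worse} than the classical null form, so the subextremal treatments of \cite{luknullcondition,shiwu} that you invoke do not transfer; mere boundedness of $Y\psi$ at the horizon is equivalent to a pointwise bound on $r^{2}L(r\psi)$ at infinity, which is strictly stronger than what the standard null-condition machinery yields.

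Two further ingredients of the actual proof are absent and are not optional. First, the paper shows that the weighted energy hierarchies alone cannot close the bootstrap (the required $p$-range would be too long for terms such as $L\phi_{\geq 1}\cdot\underline{L}\bigl(\frac{2r}{D}\underline{L}\phi_{0}\bigr)$); this is repaired by new $v$-weighted $L^{2}_{v,\omega}L^{\infty}_{u}$ (Strichartz-type) estimates for $L T^{m}\Omega^{k}\phi$ proved in Section \ref{aux}, which your scheme has no substitute for. Second, closing several bootstrap assumptions requires quantitative \emph{upper} bounds on the second transversal derivative near (not just on) the horizon, of the form $\bigl|(r-M)^{1-\delta}\frac{2r}{D}\underline{L}\bigl(\frac{2r}{D}\underline{L}\phi\bigr)\bigr|\lesssim\epsilon v^{\delta}$ (Section \ref{sec:GrowthEstimates}); simply ``allowing $Y^{j}\psi$, $j\geq 2$, to grow'' in the bootstrap, as you propose, is not enough, because these quantities re-enter the error integrals and must be controlled with the stated weights. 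Without the decomposition $\psi_{0}+\psi_{\geq 1}$, the characteristics argument for $Y\psi_{0}$, the $v$-weighted $L^{2}L^{\infty}$ estimates, and the growth estimate for $Y^{2}\psi$, the continuity argument does not close, so the proposal as it stands does not prove the theorem.
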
 
Our main result establishes the global existence and uniqueness for solutions of \eqref{nw} for small enough, smooth and compactly supported data given on a null-spacelike-null hypersurface (see section \ref{notation} for the precise definition) that crosses the event horizon. Note that data of this type, which are compactly supported at infinity, but non-zero close to and on the horizon, are the most interesting from a physical point of view, they can be used to model \textit{local} perturbations of the extremal Reissner--Nordstr\"{o}m black hole spacetimes, and in the physics literature they represent \textit{outgoing radiation}. Moreover, and in sharp contrast to the sub-extremal case, our solution exhibits \textit{non-decay} along the event horizon for the derivative that is transversal to the horizon, and \textit{growth} for the second such derivative. In particular the qualitative behaviour of solutions established in Theorem \ref{thm:main} is described by the following result:

\begin{theorem}\label{thm:qual}
Under the conditions of Theorem \ref{thm:main}, with $\psi$ a solution of \eqref{nw} given by Theorem \ref{thm:main}, we have that:
\begin{equation}\label{thm:psidec}
| \psi | ( v , r , \omega ) \lesssim \frac{\epsilon}{v^{1-\delta_1 / 2}} \mbox{  close to $\mathcal{H}^{+}$,} 
\end{equation}
\begin{equation}\label{thm:psideco}
| \Omega \psi | ( v , r , \omega ) \lesssim \frac{\epsilon}{v^{1+\delta_2 / 2}} \mbox{  close to $\mathcal{H}^{+}$,} 
\end{equation}
\begin{equation}\label{thm:psidect}
| T \psi | ( v , r , \omega ) \lesssim \frac{\epsilon}{v^{1+\delta_2 / 2}} \mbox{  close to $\mathcal{H}^{+}$,} 
\end{equation}
\begin{equation}\label{thm:psidecy}
| Y \psi | ( v , r , \omega ) \lesssim \epsilon \mbox{  close to $\mathcal{H}^{+}$, and  } | Y \psi  ( v , M , \omega ) - Y \psi  ( v_0 , M , \omega ) | \simeq \epsilon^2 \mbox{  on $\mathcal{H}^{+}$,}
\end{equation}
and
\begin{equation}\label{thm:psidecyy}
| Y^2 \psi | ( v , M , \omega ) \simeq \epsilon v \mbox{  on $\mathcal{H}^{+}$,} 
\end{equation}
for $0 < \delta_1 \ll 1$, $0 < \delta_2 \ll 1$ small enough, for $\epsilon$ as in Theorem \ref{thm:main}, where we work with the $(v,r,\omega)$ ingoing Eddington--Finkelstein coordinates close to the horizon $\mathcal{H}^{+}$ at $r=M$.
\end{theorem}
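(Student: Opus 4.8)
The plan is to combine the quantitative estimates obtained along the way in the proof of Theorem~\ref{thm:main} with a direct analysis of equation~\eqref{eq:thm} restricted to the event horizon, where the extremality condition $D(M)=D'(M)=0$ produces a transport structure. In the ingoing coordinates one has
$$\Box_{g_M}\psi = 2\, T Y\psi + \frac{2}{r}\,T\psi + \frac{1}{r^2}\partial_r\!\left(r^2 D\, Y\psi\right) + \slashed{\Delta}\psi ,$$
so that on $\mathcal{H}^{+}$, where the third term vanishes because $D$ has a double zero, equation~\eqref{eq:thm} becomes the transport equation
$$2\,\partial_v (Y\psi) + \frac{2}{M}\,T\psi + \slashed{\Delta}\psi = \mathcal{F}\big|_{\mathcal{H}^{+}} ,$$
where $\mathcal{F}$ denotes the right-hand side of~\eqref{eq:thm}; on $\mathcal{H}^{+}$ the quadratic null-form term reduces to $2A\,T\psi\,Y\psi + A\,|\slashed{\nabla}\psi|^2$, since $g^{vv}=0$ kills the $(T\psi)^2$ contribution (this is where the null condition helps: no $(Y\psi)^2$ term survives on $\mathcal{H}^{+}$). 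Information on $\mathcal{H}^{+}$ will be transported into a collar $\{M\le r\le M+\eta\}$ using the degenerate red-shift vector field estimate already established for Theorem~\ref{thm:main}.

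The estimates~\eqref{thm:psidec}--\eqref{thm:psidect} follow from the energy hierarchy underlying Theorem~\ref{thm:main}: commuting~\eqref{eq:thm} with $T$ and with the angular Killing fields $\Omega_i$, combining the $T$-energy, the $r^p$-weighted estimates, and the improved-decay vector field method, and finally applying Sobolev embedding on the spheres, yields pointwise decay for $\psi$, $\Omega\psi$ and $T\psi$ near $\mathcal{H}^{+}$ at the stated rates — the loss of $\delta_1/2$ in~\eqref{thm:psidec} versus the gain of $\delta_2/2$ in~\eqref{thm:psideco}--\eqref{thm:psidect} reflecting the non-integrable, respectively integrable, decay of the corresponding linear quantities. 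The bound $|Y\psi|\lesssim\epsilon$ in~\eqref{thm:psidecy} is then obtained by integrating the horizon transport equation in $v$: the data contribution $Y\psi(v_0,M,\omega)$ is $O(\epsilon)$; the integrals of $T\psi$ and $\slashed{\Delta}\psi$ are $O(\epsilon)$ by~\eqref{thm:psidect} and~\eqref{thm:psideco} (both integrable in $v$); and $\int_{v_0}^{v}|\mathcal{F}|\big|_{\mathcal{H}^{+}}\,dv'\lesssim\epsilon^2$ since $T\psi$ is integrable and $Y\psi$ is bounded by the bootstrap, so the resulting bound closes and then propagates off the horizon.

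For the nonlinear lower bounds in~\eqref{thm:psidecy} and~\eqref{thm:psidecyy} I would exploit the near-conservation of the Aretakis charge. Setting $\widetilde{H}(v,\omega):=Y\psi+\tfrac{1}{M}\psi$ on $\mathcal{H}^{+}$, the transport equation gives $\partial_v\widetilde{H}=-\tfrac12\slashed{\Delta}\psi+\tfrac12\mathcal{F}$; hence, once the linear contributions fixed by the data and the $\slashed{\Delta}\psi$-terms are accounted for, the change of $Y\psi$ along $\mathcal{H}^{+}$ is governed by $\tfrac12\int_{v_0}^{\infty}\mathcal{F}\big|_{\mathcal{H}^{+}}\,dv'$, whose leading part is $\int A\,T\psi\,Y\psi\,dv'$; since $T\psi\sim\epsilon v^{-1-\delta_2/2}$ is integrable while $Y\psi$ converges to a limit that is generically nonzero and of size $\epsilon$, this integral is $\simeq\epsilon^2$ for generic $f,h$. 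For~\eqref{thm:psidecyy} I would commute~\eqref{eq:thm} with $Y$, restrict to $\mathcal{H}^{+}$, and substitute the first transport equation to eliminate $\partial_v Y\psi$; after the cancellations forced by extremality this yields
$$2\,\partial_v(Y^2\psi)\big|_{\mathcal{H}^{+}} = -\frac{2}{M^2}\,Y\psi \;+\; \Big(\text{terms in } \slashed{\Delta}(Y\psi),\ T\psi,\ \slashed{\Delta}\psi,\ \mathcal{F},\ Y\mathcal{F}\Big),$$
where every term other than the first decays in $v$ — for $Y\mathcal{F}$ one uses the decay of $T(Y\psi)$ and $\slashed{\nabla}\psi$ together with the bootstrap bound $|Y^2\psi|\lesssim\epsilon v$, which makes contributions such as $T\psi\cdot Y^2\psi\sim\epsilon^2 v^{-\delta_2}$ subdominant after integration. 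Integrating in $v$ and using $Y\psi\to c(\omega)$ with $|c|\simeq\epsilon$ then gives $|Y^2\psi|(v,M,\omega)\simeq\epsilon v$.

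I expect the main obstacle to be the lower-bound/genericity part: showing that $\tfrac12\int_{v_0}^{\infty}\mathcal{F}\big|_{\mathcal{H}^{+}}\,dv'$ is genuinely of size $\epsilon^2$ (and not smaller), and that the limit $c(\omega)$ of $Y\psi$ is genuinely of size $\epsilon$. This requires the sharp decay rates produced by the new vector field method in order to certify that every error term — the higher-order nonlinear contributions and the higher angular modes — is strictly subdominant to the quantity one wants to bound from below, together with a short openness argument showing that the set of data for which the relevant charge and nonlinear drift are nonzero is generic.
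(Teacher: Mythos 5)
Your treatment of \eqref{thm:psidec}--\eqref{thm:psidect} (energy hierarchy plus Sobolev on the spheres) and your use of a transport identity along $\mathcal{H}^{+}$ for the lower bounds are in essence the paper's route: Section \ref{insta} works with $\tfrac{2r}{D}\underline{L}\phi_0$, which at $r=M$ is exactly $-M^{2}$ times your charge $Y\psi+\tfrac{1}{M}\psi$ restricted to the spherical mean, and the paper sidesteps your ``once the $\slashed{\Delta}\psi$ and $T\psi$ contributions are accounted for'' caveat by projecting to $\ell=0$, where these linear terms vanish identically (for the full $\psi$ you would also need two commuted angular derivatives, not just \eqref{thm:psideco}, to integrate $\slashed{\Delta}\psi$ in $v$). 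The genuine gap is in how you obtain $|Y\psi|\lesssim\epsilon$ \emph{close to} $\mathcal{H}^{+}$, the first half of \eqref{thm:psidecy}, on which the rest of your scheme (the bootstrap bound $|Y^2\psi|\lesssim\epsilon v$ and the $\epsilon^2$ size of the nonlinear drift) depends. You integrate the transport equation only on the horizon itself and then propose to push the bound into a collar ``using the degenerate red-shift vector field estimate already established for Theorem \ref{thm:main}''. No such propagation mechanism is available at extremality: the redshift degenerates, the near-horizon bulk terms one actually has carry weights $D^2(Y\psi)^2$, respectively $(r-M)^{1+\delta}(\underline{L}\Omega^kT^l\phi)^2$, which cannot control $Y\psi$ pointwise off $\mathcal{H}^{+}$, and the energy-method alternative --- the $(r-M)^{-2}\underline{L}$-commuted hierarchy at the endpoint $p=1$ --- fails already for the linear equation, as shown in \cite{aag1} and emphasized in Section \ref{sec:method}. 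This is precisely why the paper proves Theorem \ref{thm:boundy0} by the method of characteristics \emph{in the whole region} $\mathcal{A}$: the identity you wrote on the horizon persists off it as $Lh+(r-M)h\simeq F_h$ for $h=\tfrac{2r}{D}\underline{L}\phi$ (equations \eqref{eq:yder}--\eqref{eq:y}), and one integrates in $v$ at each fixed $u$, treating the borderline term $L\phi\cdot h$ by integrating by parts in $v$ and re-inserting the equation. As written, your off-horizon claim would not close.

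Two further points. First, the upper bound $|Y^2\psi|\lesssim\epsilon v$ that you invoke ``by the bootstrap'' is itself a nontrivial characteristics-type estimate with $D^{1/2-\delta/2}$ weights valid in all of $\mathcal{A}$ (Theorem \ref{thm:boundyy}); it is an input you must prove, not assume, and its proof again uses the region-wide boundedness of $\tfrac{2r}{D}\underline{L}\phi$ from Theorem \ref{thm:boundy0}. Second, your derivation of \eqref{thm:psidecyy} and of the $\epsilon^2$ drift in \eqref{thm:psidecy} --- growth of $Y^2\psi$ driven by the non-decaying $Y\psi$, and the drift given by $\int T\phi\cdot\bigl(\tfrac{2r}{D}\underline{L}\phi\bigr)\,dv$ with the genericity of the data entering only through the nonvanishing of the initial charge --- matches Section \ref{insta} in both structure and level of rigor, so that part is fine once the near-horizon boundedness of $Y\psi$ is established correctly.
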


\subsection{The main difficulties}
\label{themaindiff}

The growth and non-decay of derivatives of the linear flow is one of the main obstacles in proving global existence for \eqref{nw}. Moreover there is an additional difficulty originating from the quadratic terms near the event horizon in the extremal case. This difficulty can be illustrated by considering the transformed problem in a neighborhood of null infinity via the Couch--Torrence conformal isometry\footnote{This transformation maps the event horizon to null infinity and vice versa. See also Appendix \ref{app:iso}.}. Indeed, applying this conformal transformation to a solution $\psi_H$ of the following equation restricted close to the event horizon
\begin{equation}\label{eq:hor}
\Box_{g_M} \psi_H = g^{\alpha \beta} \cdot \partial_{\alpha} \psi_H \cdot \partial_{\beta} \psi_H ,
\end{equation}
yields a solution $\psi_I$ of the following equation restricted to a neighborhood of null infinity:
\begin{equation}\label{eq:inf}
\Box_{g_M} \psi_I = \frac{1}{r} ( L \phi_I ) \cdot ( \underline{L} \phi_I ) + \frac{1}{r^2} \phi_I \cdot ( L \phi_I ) - \frac{1}{r^3} \phi_I \cdot (\underline{L} \phi_I ) + \frac{1}{r^3} \phi_I^2 + \frac{1}{r} | \slashed{\nabla} \phi_I |^2 , 
\end{equation}
where $\phi_{I,H} = r\psi_{I,H} $ and $L = \partial_v$, $\underline{L} = \partial_u$ the standard double null coordinates. On the other hand, the classical null form (as defined in \eqref{eq:inf}) would take the following form 
\begin{equation}\label{null:classical}
 ( L \psi_I ) \cdot ( \underline{L} \psi_I ) + | \slashed{\nabla} \psi_I |^2 . 
 \end{equation}
Note that, in view of the bounds $|L \psi|\leq C r^{-2},  |\underline{L} \psi|\leq C r^{-1}$ and $|\Omega \psi_I|\leq C r^{-1}$, the expression \eqref{null:classical} decays in $r$ towards null infinity like $r^{-3}$. On the other hand, in order to obtain the same decay rate in $r$ for the right hand side of \eqref{eq:inf} we need to derive the following improved bounds:
$|\phi_I|\leq C$, $|\Omega \phi_I|\leq C$ and $|\underline{L} \phi_I|\leq C$ and moreover  $|L\phi_I |\leq Cr^{-2}$.

Hence, we see that merely obtaining the needed $r$-decay would require one to show stronger estimates than those needed in the sub-extremal case. Such estimates have not been shown in previous nonlinear works on asymptotically flat settings. On the other hand, deriving mere boundedness of the transversal derivative $Y\psi$ at the event horizon in the extremal case (which is required by the continuation criterion for \eqref{nw}) corresponds (again via the Couch--Torrence transformation) to bounding pointwise the $r$-weighted derivative $r^2 L(r\psi)$ in a neighborhood of null infinity. It is important to emphasize that in the sub-extremal case the horizon and null infinity are not conformally related and hence one can show pointwise boundedness and decay for the transversal derivatives at the event horizon relatively easily using the redshift effect. See for instance  \cite{luknullcondition} and \cite{shiwu} for a demonstration of this in nonlinear settings.  The method of \cite{luknullcondition} and \cite{shiwu} breaks down  for linear fields on extremal black holes. In fact, they break down even if one considers a strongly degenerate nonlinearity on extremal backgrounds such as 
\begin{equation}\label{eq:sqrtd}
 \Box_{g_M} \psi = \left(1-\frac{M}{r}\right) \cdot g^{\alpha \beta} \cdot \partial_{\alpha} \psi \cdot \partial_{\beta} \psi.
 \end{equation}
Global existence and uniqueness of solutions to \eqref{eq:sqrtd} was proved in \cite{areangel6} using the inhomogeneous energy estimates of \cite{aretakis1} and a novel $(r-M)$-weighted commuted (with $Y$) estimate in order to bound $\left(1-\frac{M}{r}\right) Y\psi$ (which is required for the continuation criterion). On the other hand, in the context of spherical symmetry,  \cite{yannis1} overcame the extremal difficulties by a delicate use of the method of characteristics in combination with the weak decay of the solution, something that is not enough outside spherical symmetry.

\subsection{Overview and method of proof}\label{sec:method}

The classical local existence and uniqueness  of solutions of \eqref{eq:thm} for data as in Theorem \ref{thm:main} can be upgraded to global existence and uniqueness in $\mathcal{M}$  provided one verifies the following continuation criteria (stated schematically here):
$$ | Y \psi | \lesssim \epsilon, \quad | T\psi | \lesssim \epsilon, \quad | \Omega \psi | \lesssim \epsilon\ \  \mbox{ everywhere in $\mathcal{M}$. } $$
Here we used the vector fields corresponding to the ingoing Eddington--Finkelstein coordinates (see Section \ref{notation}). The rest of the article is hence devoted to verifying the aforementioned continuation criteria. This is done through a bootstrap argument. First, we state the energy estimates we are going to use in Section \ref{energy}. In Section \ref{as:boot}, we state our bootstrap assumptions. In Section \ref{dec}, we use the bootstrap assumptions of Section \ref{as:boot} to derive energy and pointwise boundedness and decay estimates (and hence conditionally verifying the continuation criteria). In Section \ref{sec:GrowthEstimates} we show a growth estimate for the second transversal derivative close to the horizon, and a new $v$-weighted estimate for $L \phi$ close to the horizon. Then in Section \ref{end:boot}, we improve the bootstrap assumptions of Section \ref{as:boot}, thereby closing the bootstrap argument. The results of the aforementioned sections establish also estimates \eqref{thm:psidec}, \eqref{thm:psideco}, \eqref{thm:psidect} and the first estimate from \eqref{thm:psidecy}. Finally in Section \ref{insta} we demonstrate non-decay for $Y \psi$ and growth for $Y^2 \psi$ on the horizon establishing the second estimate of \eqref{thm:psidecy} and estimate \eqref{thm:psidecyy}, while in Section \ref{non:other} we discuss how our methods can be adapted to  weighted nonlinearities on sub-extremal black holes.

Our proof relies on several novel techniques which we summarize below:

\bigskip

1. \textbf{(Improved Morawetz estimate)} We prove an improved Morawetz estimate that optimizes the $(r-M)$-weights at the horizon. Schematically, it has the following form:
$$ \int_{\mathcal{A}} (r-M)^{1+\delta} | \partial \psi |^2 \lesssim \int_{\Sigma} J^T [ \psi ] + \mbox{  inhomogeneous terms} , $$
for any $\delta > 0$, and where $\mathcal{A}$ is a spacetime region close to the event horizon not intersecting the photon sphere (see section \ref{energy} for precise definitions). Our proof has certain similarities with Alinhac's method of ghost weights (see \cite{ali}). Note  that our improvement is decoupled from the trapping effect at the photon sphere $\{ r=2M \}$ since the $(r-M)$-weights that we introduce are optimized only at the horizon, while the region close to the photon sphere is treated separately using the inhomogeneous versions of estimates first introduced in \cite{aretakis1}.

\bigskip

2. \textbf{(Angular decomposition)} We split solutions into a spherically symmetric part and a remainder supported on angular frequencies greater or equal to 1 as follows
\[\psi=\psi_0+\psi_{\geq 1}.\] Even though $\psi_0$ and $\psi_{\geq 1}$ are coupled via the  nonlinearity, we are still able to derive sharp decay results for each of them. 

\bigskip

3. \textbf{(Horizon-localized and infinity-localized weighted hierarchies)}  We establish various $(r-M)^{-p}$-weighted  and  $r^p$-weighted hierarchies of estimates which schematically take the following form:
$$ \int_{\mathcal{N}^H} (r-M)^{-p} ( \underline{L} \phi )^2 + \int_{\mathcal{A}} (r-M)^{-p+1} ( \underline{L} \phi )^2 \lesssim \int_{\mathcal{N}^H_0} (r-M)^{-p} ( \underline{L} \phi )^2 + \mbox{  error terms} , $$
and
$$ \int_{\mathcal{N}^I} r^p ( L \phi )^2 + \int_{\mathcal{B}} r^{p-1} ( L\phi )^2 \lesssim \int_{\mathcal{N}^I_0} r^p ( L \phi )^2 + \mbox{  error terms} , $$
where $\phi = r \psi$, and where  $\mathcal{N}^H$, $\mathcal{N}^I$ are null hypersurfaces intersecting  the event horizon and null infinity, respectively, and $\mathcal{A}$ and $\mathcal{B}$ are appropriate spacetime neighborhoods of the event horizon and null infinity, respectively. Such estimates were presented for the linear wave equation on extremal Reissner--Nordstr\"{o}m in \cite{paper4}.

We can show almost-sharp decay  for $\psi_0$ by using the full range of $p$, namely for  $p \in (0,3)$ for the uncommuted estimates and $p \in (0,1)$ for the $(r-M)^{-2} \underline{L}$ --commuted estimates. For the non-spherically symmetric part $\psi_{\geq 1}$ we can prove integrable decay by using an extended range for $p$ for both the uncommuted and the commuted hierarchies. The resulting estimates allow us to show integrability  for  $\psi_{\geq 1}|_{\mathcal{H}^{+}}$  along the event horizon and the radiation field $\phi_{\geq 1}|_{\mathcal{I}^{+}} \doteq r \psi_{\geq 1}|_{\mathcal{I}^{+}}$ along null infinity.

We only apply these weighted hierarchies when considering the higher order derivatives $T^k\psi$ where $k=1,2,3,4,5$.   We use the same range of weighted estimates for $T\psi$ as for $\psi_{\geq 1}$, and then we appropriately restrict $p$ to smaller ranges for $T^k \psi$, $k \in \{2,3,4,5\}$. Note that we need to commute with $T$ multiple times due to the presence of the trapping effect at the photon sphere $\{ r = 2M \}$. The progressively restricted range of $p$ in both the $(r-M)^{-p}$-weighted estimates and the $r^p$-weighted estimates for $T^k \psi$ implies slower decay for these time derivatives. This is a version of the \textit{top order energy technique}. 

The ranges of $p$ for the $(r-M)^{-p}$-weighted estimates close to the horizon and for the $r^p$-weighted estimates close to infinity for $\psi$ and $T\psi$ are summarized in the following table for $\delta_1 , \delta_2 > 0$:
\begin{table}[H]


\begin{center}
\begin{tabular}{  c | c | c | c | c  } 
\mbox{  Multiplier \ / \ Commutator  } & $(r-M)^{-p} \underline{L}$  \ / \  $\mbox{none}$ &  $r^p L$  \ / \  $ \mbox{none} $ & $(r-M)^{-p} \underline{L}$  \ / \  $(r-M)^{-2} \underline{L} $ & $r^p L$  \ / \  $ r^2 L  $ \\ \hline \hline
$\psi_0$ & $p \in (0,3 - \delta_1 ]$ & $p \in (0, 3 - \delta_1 ]$ & $p \in (0, 1-\delta_1 ]$ & $p \in (0, 1-\delta_1 ]$ \\ 
\hline
$\psi_{\geq 1}$ & $p \in (0,2)$ & $p \in (0,2)$ & $p \in (0, 1+\delta_2 ]$ & $p \in (0, 1+\delta_2 ]$ \\ 
\hline
$T \psi$ & $p \in (0,2)$ & $p \in (0,2)$ & $p \in (0, 1+\delta_2 ]$ & $p \in (0, 1+\delta_2 ] $ \\ 
\end{tabular}
\end{center}
\end{table}
\vspace{-0.2cm}
 It is worth noticing that this is the first nonlinear small-data problem where such an extended range for the $r^p$-weighted estimates is needed in a neighborhood of null infinity.

\bigskip

4. \textbf{(The method of characteristics for $\psi_0$)} The above energy hierarchies allow us to verify the continuation criteria for $\partial_r\psi_{\geq 1}$, $\Omega \psi$ and $T\psi$. For the spherically symmetric derivative $Y \psi_0$, however, we need to use the method of characteristics (this is done in Section \ref{char}) as in \cite{yannis1}. Indeed, if we were to use the energy method then we would need to apply the $(r-M)^{-p}$-weighted commuted estimate for $p=1$. However, it was shown in \cite{aag1} that such an estimate does not hold even in the linear case.  

\bigskip

5. \textbf{($v$-weighted $L^2_{v,\omega} L^{\infty}_u$ estimates)} The bootstrap assumptions cannot be closed using purely the weighted energy hierarchies since this would require to use a range for $p$ that is longer than allowed. For example, consider the following nonlinear term
$$ L \phi_{\geq 1} \cdot \underline{L} \left( \frac{2r}{D} \underline{L} \phi_0 \right). $$
One would ideally want to estimate the $L$ derivative in $L^{\infty}$ and use the commuted $(r-M)^{-p}$-weighted estimates for the second factor with  $p=1+\delta_2$. This is however, not possible since in this case we can only take $p<1$. For this purpose we prove new $v$-weighted $L^2_{v,\omega} L^{\infty}_u$ estimates bounding, for example, quantities such as the following one
$$ \int_{v_0}^{\infty} \int_{\mathbb{S}^2} \sup_{u \in [U , u_{R}(v)]} ( L T^k \Omega^m \phi )^2 \cdot v^{1+\delta} \, d\omega dv , $$
where $k\in\{0.1,2,3\}$, $m\in\{0, 1,2,3,4,5\}$, $\delta>0$, $M < R \leq r_0$ and $r_0 < 2M$ and where $u_{R}(v)$ is such that $r\left(u_R(v),v\right)=R$. The proof of such estimates involves a very delicate use of the bootstrap assumptions as well as the structure of the equation. Note that the  loss of two angular derivatives, introduced by using the wave equation, is overcome by  appropriately integrating by parts on the sphere. The aforementioned estimate can be seen as a weighted Strichartz-type estimate, which, in contrast with other settings, it is proven through physical space energy methods. See also Section \ref{aux} for the details.

\bigskip

6. \textbf{(Growth estimates)} Finally we derive growth estimates for $Y^2\psi$ along the event horizon. More generally, we establish upper and lower bounds for $Y^2 \psi$ \textit{in a region close to the horizon}. The latter bounds are necessary, because in order to recover certain bootstraps assumptions we need to estimate in $L^{\infty}$ the second derivative that is transversal to the horizon of the spherically symmetric part of the solution $\partial_r\partial_r  \psi_0$. Specifically, working in double null coordinates (with respect to which $Y^2 \sim \frac{2r}{D} \underline{L} \left( \frac{2r}{D} \underline{L} \right)$) we show that close to the horizon we have that:
$$ \left| (r-M)^{1-\delta} \frac{2r}{D} \underline{L} \left( \frac{2r}{D} \underline{L} \phi \right) \right| \lesssim \epsilon v^{\delta} , $$
where $\delta \in (0,1]$. The proof of such estimates uses an appropriate version the method of characteristics where we allow for a loss of angular derivatives. These techniques provide new results for the linear flow as well.

\begin{remark}
If we consider data that are \textit{supported away from the event horizon}, then the proof of Theorem \ref{thm:main} can be simplified. There is no need to separate the solution in its spherically symmetric and non-spherically symmetric parts, and there is also no need for the extra estimates described in points 5 and 6 above. This is because we can apply commuted $(r-M)^{-p}$-weighted hierarchy for $\psi$ with $p \in (0,1+\delta]$ for some $\delta > 0$ which yields integrable decay for $\psi$ close to the horizon and boundedness for $\partial_r\psi$. However, the physically relevant case is that of outgoing perturbations with initial support crossing the event horizon. 
\end{remark}

\subsection{Relation with impulsive gravitational wave spacetimes}

It is worth comparing the current work with the construction of  impulsive gravitational wave local spacetimes  by Luk and Rodnianski  \cite{iwaves1, iwaves2}. We will argue that our methods can potentially be used to provide a global study of such spacetimes. 

The impulsive gravitational wave spacetimes are solutions of the Einstein vacuum equations with a delta singularity for the Riemann curvature tensor. Specifically, the authors of \cite{iwaves1}, \cite{iwaves2} considered characteristic initial data  on two null intersecting hypersurfaces $H_{u_0}$ and $\underline{H}_{\underline{u}_0}$ 
such that on $H_{u_0}$ the Riemann curvature has a delta singularity. Optical functions  $u$ and $\underline{u}$ are dynamically constructed with $u$ being ingoing and $\underline{u}$ outgoing-- similar to $u$ and $v$ respectively in the present paper-- with corresponding renormalized null vector fields $e_3$ and $e_4$ that are complemented by the spacelike vector fields $e_A$ and $e_B$ for the angular directions. The level sets $H_u$ and $\underline{H}_{\underline{u}}$ are then null hypersurfaces of constant $u$ and constant $\underline{u}$ coordinates respectively.  In \cite{iwaves1} solutions of $R_{\mu \nu} = 0$ are constructed in the region $u_0 \leq u \leq u_0 + I$, $\underline{u}_0 \leq \underline{u} \leq \underline{u}_0 + \epsilon$ with $\epsilon > 0$ small enough and $I$ finite such that on $H_{u_0}$ the Riemann curvature component $\alpha_{AB} \doteq R ( e_A , e_4 , e_B , e_4 )$ has a delta singularity on $H_{u_0} \cap \{ \underline{u} = \underline{u}_0 + \frac{\epsilon}{2}\}$. Note that the second fundamental form $\chi_{AB} = g (D_A e_4 , e_B )$ has a jump discontinuity on $H_{u_0} \cap \{ \underline{u} = \underline{u}_0 + \frac{\epsilon}{2}\}$ which is propagated along the hypersurfaces $\underline{H}_{\underline{u}_0 + \frac{\epsilon}{2}}$. The metric is smooth away from the singular hypersurface. On the other hand, in \cite{iwaves2}, delta singularities are placed on both $H_{u_0} \cap \{ \underline{u} = \underline{u}_0 + \frac{\epsilon}{2} \}$ (for $\alpha_{AB}$ again) and on $\underline{H}_{\underline{u}_0} \cap \{ u = u_0 + \frac{\epsilon}{2}\}$ (where now $\underline{\alpha}_{AB} \doteq R ( e_A , e_3 , e_B , e_3 )$ has a delta singularity and $\underline{\chi}_{AB} = g (D_A e_3 , e_B )$ has a jump discontinuity) and a local solution of $R_{\mu \nu } = 0$ is constructed in $u_0 \leq u \leq u_0 + \epsilon$, $\underline{u}_0 \leq \underline{u} \leq \underline{u}_0 + \epsilon$ for $\epsilon > 0$ small enough, with the singularity for $\alpha$ propagating along $\underline{H}_{\underline{u}_0 +\frac{\epsilon}{2}}$ and the singularity for $\underline{\alpha}$ propagating along $H_{u_0 + \frac{\epsilon}{2}}$, while the solution is smooth elsewhere.  

To draw some analogies with the problem of the current paper, a  nonlinear model scalar problem is to consider an equation of the form \eqref{nw} with $\epsilon$ not necessarily small (i.e. no small data) on the Minkowski spacetime with data given on two intersecting null hypersurfaces $H_{u_0}$ and $\underline{H}_{\underline{u}_0}$ (with $u$ and $\underline{u}$ the standard double null coordinates) where we assume that  $\partial_u ( r\psi)$ has a jump discontinuity on $\underline{H}_{\underline{u}_0 + \frac{\delta}{2}}$ and that $\partial_{\underline{u}} (r \psi)$ has a jump discontinuity on  $H_{u_0 + \frac{\delta}{2}}$ for some $\delta$ that is small enough. The discontinuities for $\partial_u (r \psi)$ and $\partial_{\underline{u}} ( r\psi)$ will propagate along $H_{u_0 + \frac{\delta}{2}}$ and $\underline{H}_{\underline{u}_0 + \frac{\delta}{2}}$ respectively, while the second derivatives $\partial_{uu}^2 ( r\psi)$ and $\partial_{\underline{u} \underline{u}}^2 ( r\psi ) $ will have delta singularities on these hypersurfaces. Note that the analogies with the fully nonlinear problem for the Einstein equations are at the following level:
$$ \psi \rightsquigarrow g , \quad \partial_u ( r\psi )  \rightsquigarrow \underline{\chi} , \quad \partial_{uu}^2 ( r\psi ) \rightsquigarrow \underline{\alpha}, \quad \partial_{\underline{u}} ( r\psi) \rightsquigarrow \chi , \quad \partial_{\underline{u} \underline{u}}^2 (r\psi ) \rightsquigarrow
\alpha . $$

In our case, the event horizon plays the role of the singular hypersurface (analogous to $H_{u_0 + \frac{\delta}{2}}$ in the aforementioned problem -- note that it is a constant $u$ hypersurface for $u = - \infty$) where the second transversal derivative $\partial_{rr}^2 \psi$ (corresponding to $\partial_{uu}^2 ( r\psi)$ in the problem above, and to the Riemann curvature component $\underline{\alpha}$ in the fully nonlinear problem of \cite{iwaves2}) does \textit{not} have a delta singularity, but exhibits asymptotic blow up. Yet, at the level of techniques, the two problems seem to have a further connection, as one key ingredient of our proof is the weighted estimate described at point 5 of the previous section. This is an $L^2_v L^{\infty}_u L^2 ( \mathbb{S}^2 )$ estimate with $v$-weights for $\partial_v ( r\psi )$ which is a quantity that corresponds to $\partial_{\underline{u}} ( r\psi)$ in the aforementioned problem, and to $\partial_{\underline{u}} g \approx \chi$ in the fully nonlinear problem. From the statement of Theorem 3 in pages 29-30 of \cite{iwaves2} and from the use of the $\mathcal{O}_{i,2}, i \leq 2$ norms from section 2.7 of \cite{iwaves2}, we see that $\partial_{\underline{u}} g$ and $\chi$ are bounded in $L^2_{\underline{u}} L^{\infty}_u L^2 (S)$ and this is a key ingredient in the proof of the main result of \cite{iwaves2} as well. It should be noted that the norms in \cite{iwaves2} are not weighted in $\underline{u}$, but this is only because the problem is local and not global (yet weighted versions of these norms analogous to the ones used in the current paper can be used if instead of a local construction of impulsive gravitational wave spacetimes one attempts to do a semi-global construction of impulsive gravitational wave spacetimes - this construction will be established in an upcoming work of the first author).   
\section{Energy inequalities}\label{energy}
In this section, as well as in the one that follows, we prove certain $L^2$ estimates for general solutions of the equation:
\begin{equation}\label{nwf}
\Box_g \psi = F .
\end{equation}
We define the following regions for any given $\tau_1$, $\tau_2$ with $\tau_1 < \tau_2$:
\begin{equation}\label{mcala}
\mathcal{A}_{\tau_1}^{\tau_2} \doteq \mcR (\tau_1 , \tau_2 ) \cap \{ M \leq r \leq r_0 < 2M \} , 
\end{equation}
\begin{equation}\label{mcalb}
\mathcal{B}_{\tau_1}^{\tau_2} \doteq \mcR (\tau_1 , \tau_2 ) \cap \{ 2M < r_1 \leq r \leq \infty \} , 
\end{equation}
 and
\begin{equation}\label{mcalc}
\mcC_{\tau_1}^{\tau_2} \doteq \mcR (\tau_1 , \tau_2 ) \cap \{ r_0 \leq r \leq r_1 \} ,
\end{equation}
for some fixed $r_0$ and $r_1$, and where $\mcR (\tau_1 , \tau_2 ) = \bigcup_{\tau \in [ \tau_1 , \tau_2 ]} \Sigma_{\tau}$ for $\Sigma_{\tau}$ a null-spacelike-null hypersurface that crosses the event horizon (for the precise definition see section \ref{notation}). We also have the following hypersurfaces
$$ \mcN^H_{\tau} \doteq \Sigma_{\tau} \cap \{ M \leq r \leq r_0 \} , \quad \mcN^I_{\tau} \doteq \Sigma_{\tau} \cap \{ r_1 \leq r \leq \infty \} , $$
and we note that
$$ \mathcal{A}_{\tau_1}^{\tau_2} \doteq \bigcup_{\tau \in [ \tau_1 , \tau_2 ]} \mcN_{\tau}^H , \quad \mathcal{B}_{\tau_1}^{\tau_2} \doteq \bigcup_{\tau \in [ \tau_1 , \tau_2 ]} \mcN_{\tau}^I . $$ 
We will derive $(r-M)^{-p}$-weighted estimates over the hypersurfaces $\mcN^H$ and the spacetime region $\mathcal{A}$, and $r^p$-weigthed estimates over the hypersurfaces $\mcN^I$ and the spacetime region $\mathcal{B}$.
\begin{figure}[H]
\centering
\includegraphics[width=4.5cm]{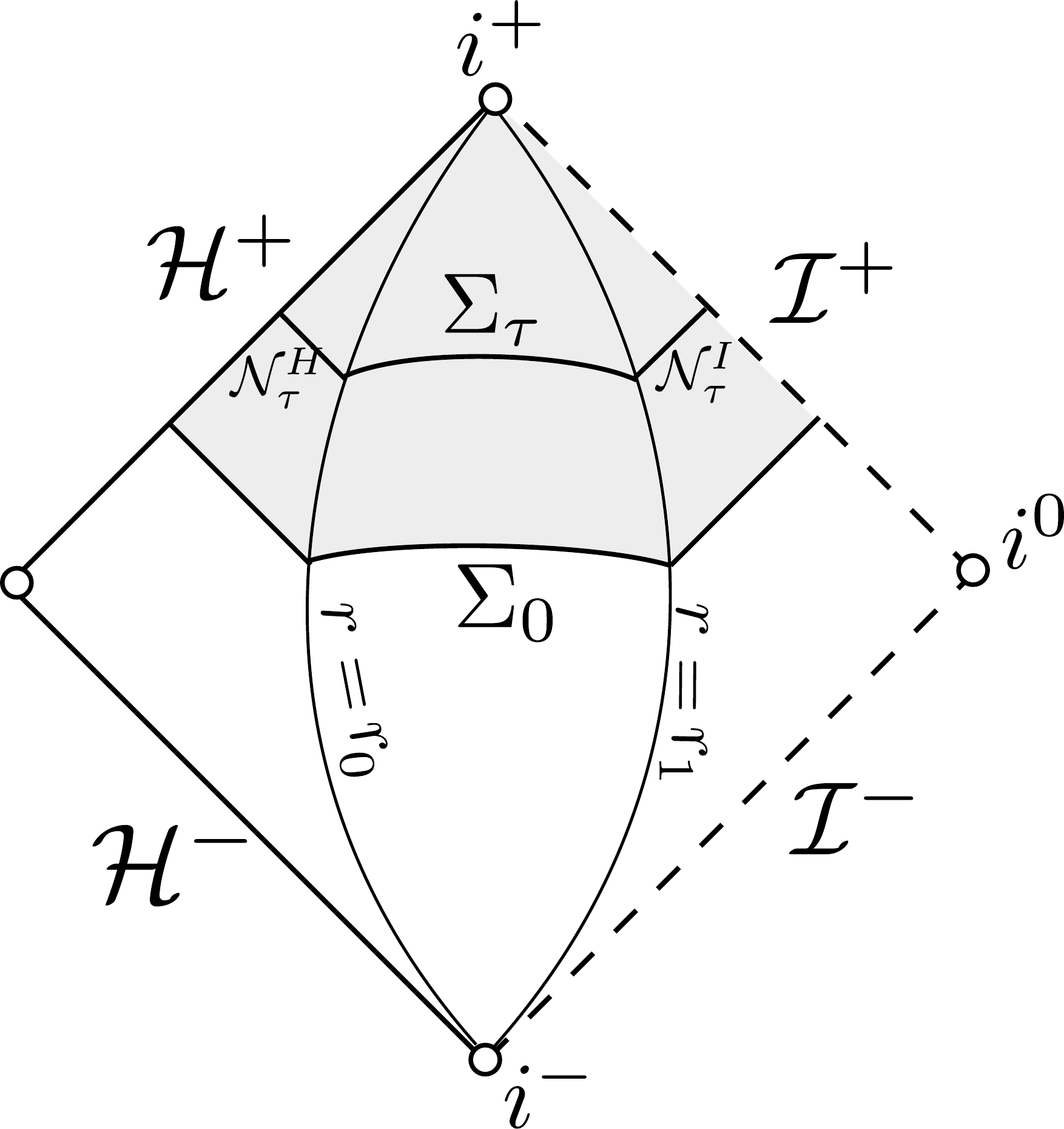}
\end{figure}
Recall that the energy-momentum tensor for the linear wave equation has the form:
$$ \textbf{T}_{\alpha \beta}[\psi ]=\partial_{\alpha}\psi\cdot \partial_{\beta}\psi-\frac{1}{2}g_{\alpha \beta}\partial^{\gamma}\psi\cdot \partial_{\gamma}\psi  , $$
and an energy current is defined as:
$$ ( J^{V_1}[V_2 \psi] )_{\alpha}=\textbf{T}_{\alpha \beta}[V_2\psi]\cdot V_1^{\beta} , $$
for vector fields $V_1$, $V_2$.

\subsection{Morawetz estimates within and outside of spherical symmetry}
First we record a Morawetz estimate for the spherically symmetric part of a solution $\psi$ of \eqref{nwf}, which we denote by 
$$\psi_0 = \frac{1}{4\pi} \int_{\mathbb{S}^2} \psi \, d\omega , $$ 
and which satisfies the equation
\begin{equation}\label{nwf0}
\Box_{g_M} \psi_0 = F_0 ,
\end{equation}
where 
$$ F_0 = \frac{1}{4\pi} \int_{\mathbb{S}^2} F \, d\omega . $$
We have that:
\begin{proposition}\label{prop:mor0}
Let $\psi_0$ be the spherically symmetric part of a solution $\psi$ of \eqref{nwf} which satisfies equation \eqref{nwf0}. For any $\tau_1$, $\tau_2$ with $\tau_1 < \tau_2$ and any $l \in \mathbb{N}$ we have that
\begin{equation}\label{est:mor0}
\begin{split}
\int_{\Sigma_{\tau_2}} J^T [T^l \psi_0 ] \cdot \textbf{n}_{\Sigma}\, d\mu_{\Sigma} + \int_{\mcR_{\tau_1}^{\tau_2}} \Big( \frac{(T T^l \psi_0 )^2}{r^{1+\eta}} + & D^{2} \frac{(YT^l \psi_0)^2}{r^{1+\eta}} + D \frac{( T^l \psi_0 )^2}{r^4}  \Big) \, d\mu_{\mcR} \\ \lesssim & \int_{\Sigma_{\tau_1}} J^T [T^l \psi_0 ] \cdot \textbf{n}_{\Sigma}\, d\mu_{\Sigma}  + \int_{\mcR_{\tau_1}^{\tau_2}} r^{1+\eta} | T^l F_0 |^2 \, d\mu_{\mcR} ,
\end{split}
\end{equation}
for any $\eta > 0$.
\end{proposition}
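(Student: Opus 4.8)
Since $T=\partial_v$ is Killing and commutes with the spherical average $\psi\mapsto\psi_0$, the function $T^l\psi_0$ is spherically symmetric and solves $\Box_{g_M}(T^l\psi_0)=T^lF_0$; it therefore suffices to prove \eqref{est:mor0} for $l=0$ and then apply it with $\psi_0$, $F_0$ replaced by $T^l\psi_0$, $T^lF_0$. I would then reduce to a $1+1$ problem: writing $\phi_0=r\psi_0$ and using $\partial_vr=\tfrac{D}{2}$, $\partial_ur=-\tfrac{D}{2}$ in double null coordinates, equation \eqref{nwf0} is equivalent to
\[\partial_u\partial_v\phi_0+V(r)\,\phi_0=-\tfrac{Dr}{4}F_0,\qquad V=\frac{DD'}{4r}=\frac{M}{2r^3}\Big(1-\frac{M}{r}\Big)^3\ge 0,\]
where $V$ is bounded on $[M,\infty)$, vanishes to third order at $r=M$, decays like $r^{-3}$ at infinity, and satisfies $V'<0$ for $r>2M$.

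The proof for $l=0$ then rests on two ingredients for this $1+1$ equation, together with Hardy inequalities relating $\phi_0$ and $\psi_0$. First, multiplying by $\partial_t\phi_0=(\partial_u+\partial_v)\phi_0$ and integrating over $\mcR_{\tau_1}^{\tau_2}$ gives, using $V\ge 0$ and that $T$ is Killing, boundedness of $\int_{\Sigma_{\tau_2}}\big((\partial_v\phi_0)^2+(\partial_u\phi_0)^2+V\phi_0^2\big)$ and of the fluxes through $\mathcal{H}^+$ and $\mathcal{I}^+$ (which equal $\int_{\mathcal{H}^+}(T\psi_0)^2$ and $\int_{\mathcal{I}^+}(\partial_v\phi_0)^2$ up to constants, hence have the favourable sign) by the same quantities on $\Sigma_{\tau_1}$ plus $\big|\int_{\mcR}(\tfrac{Dr}{4}F_0)\,\partial_t\phi_0\big|$; a Hardy inequality identifies $\int_\Sigma\big((\partial_v\phi_0)^2+(\partial_u\phi_0)^2+V\phi_0^2\big)$ with $\int_\Sigma J^T[\psi_0]\cdot\mathbf{n}_\Sigma$ up to constants. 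Second, I would run a Morawetz estimate using a current built from the multiplier $f(r)\,\partial_{r_*}\phi_0=f(r)(\partial_v-\partial_u)\phi_0$ plus a zeroth order modification $h(r)\phi_0$. The feature specific to the spherically symmetric mode is that, no angular term being present, nothing forces the multiplier to degenerate at the photon sphere $\{r=2M\}$: the divergence of the current yields the bulk $\tfrac14 Df'(r)\big((\partial_v\phi_0)^2+(\partial_u\phi_0)^2\big)$ plus zeroth order terms, so it suffices to take $f$ \emph{bounded and strictly increasing}, with $f'(r)\gtrsim r^{-1-\eta}$ near infinity (compatible with boundedness since $\eta>0$) and $f'(r)\gtrsim 1$ near $r=M$ (harmless, $r$ being confined to a compact set there). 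Rewriting $(\partial_v\phi_0)^2+(\partial_u\phi_0)^2$ through $r^2(T\psi_0)^2$, $r^2D^2(Y\psi_0)^2$ and $\phi_0^2$ reproduces the first two bulk terms of \eqref{est:mor0}; the contribution of $V$, integrated by parts and combined with the $h$-term, produces the third bulk term $Dr^{-4}\psi_0^2$ after a Hardy inequality, using $V'<0$ and $f>0$ away from the horizon.

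To close, I would add a large multiple of the energy identity to the Morawetz identity. The boundary terms on $\Sigma_{\tau_1}$, $\Sigma_{\tau_2}$ are then linear combinations of $\int_\Sigma J^T[\psi_0]\cdot\mathbf{n}_\Sigma$ and lower order $\phi_0^2$-boundary integrals absorbed by Hardy inequalities (using the boundedness of $f,h$); the flux through $\mathcal{H}^+$ has the favourable sign since $f>0$; the flux through $\mathcal{I}^+$, over the compact $u$-range between the null pieces of $\Sigma_{\tau_1}$ and $\Sigma_{\tau_2}$, is absorbed into the energy flux through the null piece of $\Sigma_{\tau_1}$; and the inhomogeneous terms $\big|\int_{\mcR}(\tfrac{Dr}{4}F_0)(f\partial_{r_*}\phi_0+h\phi_0)\big|+\big|\int_{\mcR}(\tfrac{Dr}{4}F_0)\,\partial_t\phi_0\big|$ are treated by Cauchy--Schwarz, splitting off a small multiple of the bulk and leaving $\int_{\mcR}\tfrac{1}{f'(r)}|F_0|^2\,d\mu_{\mcR}\lesssim\int_{\mcR}r^{1+\eta}|F_0|^2\,d\mu_{\mcR}$, which holds because $f'(r)\gtrsim r^{-1-\eta}$ on all of $[M,\infty)$.

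The genuinely delicate point — which I expect to be the main obstacle — is the treatment of the zeroth order ($\psi_0^2$) terms. A naive Young's inequality on the cross-terms arising when $(\partial_{r_*}\phi_0)^2$ is re-expressed through $D^2(Y\psi_0)^2$ and $\psi_0^2$ overshoots the weak $r$-weights recorded in \eqref{est:mor0}; one must instead integrate these cross-terms by parts, in the spirit of Alinhac's ghost-weight method, and exploit the sign of $V'$ and the freedom in $f,h$, so that all zeroth order contributions are ultimately controlled by the \emph{degenerate} $T$-energy (which at $\mathcal{H}^+$ controls only $D(Y\psi_0)^2$, not $(Y\psi_0)^2$). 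In the extremal case this bookkeeping near the horizon is further constrained by $r_*\sim -M^2/(r-M)$ and by $V$ vanishing like $(r-M)^3$ there, which is exactly why only the weak $(r-M)$-weights of \eqref{est:mor0} — those a single bounded, monotone multiplier can afford — are claimed for $\psi_0$.
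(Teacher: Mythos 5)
The paper does not actually prove Proposition \ref{prop:mor0}: it records it as a known inhomogeneous Morawetz estimate, quoting the earlier works \cite{aretakis1}, \cite{areangel6} (the only proof-related content in the paper is Remark \ref{rem:inhommor}, explaining that the inhomogeneity arises from $\int_{\mathcal{R}} F\cdot(\mathcal{X}\psi)$ with $\mathcal{X}\sim T+D\,Y$ near the horizon, treated by Cauchy--Schwarz). Your sketch reconstructs precisely the standard argument behind those citations: reduction to $l=0$ by commuting with $T$, the $1{+}1$ reduction $\partial_u\partial_v\phi_0+V\phi_0=-\tfrac{Dr}{4}F_0$ with $V=\tfrac{DD'}{4r}$, the $T$-energy identity, and a Morawetz current with a \emph{bounded, strictly increasing} $f(r)$ (possible for the $\ell=0$ mode because no angular term forces degeneration at $r=2M$), with the source handled by Cauchy--Schwarz against $1/f'\lesssim r^{1+\eta}$ — which is exactly the mechanism described in Remark \ref{rem:inhommor}. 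The weight bookkeeping is also right: with the measure $d\mu_{\mathcal{R}}\simeq \tfrac{D}{2}r^2\,du\,dv\,d\omega$, the bulk $\tfrac{D f'}{4}\big((L\phi_0)^2+(\underline{L}\phi_0)^2\big)\,du\,dv$ yields the $r^{-1-\eta}(T\psi_0)^2$ and $D^2 r^{-1-\eta}(Y\psi_0)^2$ terms of \eqref{est:mor0}, so the claimed $D^2$ (rather than $D^3$) degeneration is indeed what a single bounded increasing multiplier affords.

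Two points deserve sharpening, though neither is a wrong turn. First, you invoke ``the sign of $V'$'': note $V'=\tfrac{3M}{2}r^{-4}(1-\tfrac{M}{r})^2(\tfrac{2M}{r}-1)$ is negative only for $r>2M$; in $M<r<2M$ the zeroth-order contribution $-\tfrac{D}{2}(fV)'\phi_0^2$ from the $f\partial_{r_*}$-current has the unfavourable sign and is of size $(r-M)^2\phi_0^2$, so near the horizon the positive term $D\,r^{-4}\psi_0^2$ cannot come from $(fV)'$ at all: it must be generated by the one-dimensional Hardy inequality $\int_M^{r_0}(r-M)^2\psi_0^2\,dr\lesssim\int_M^{r_0}(r-M)^4(Y\psi_0)^2\,dr+(r_0-M)^3\psi_0^2(r_0)$, i.e.\ by borrowing a small multiple of the $D^2(Y\psi_0)^2$ bulk, with the boundary term on $\{r=r_0\}$ absorbed by the interior bulk (or by averaging in $r_0$ and using the $h\phi_0$-modification). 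This is exactly why only a $D$-degenerate zeroth-order term is claimed, and it is the one place where the constants genuinely need checking; your text gestures at it but does not execute it. Second, the identification of the $\phi_0$-energy fluxes with $\int_\Sigma J^T[\psi_0]\cdot\mathbf{n}_\Sigma$ on the spacelike-null $\Sigma_\tau$ requires the same Hardy inequalities (with the vanishing boundary contributions at $r=M$ and $r=\infty$), not just a pointwise comparison. With these caveats made explicit, your route is sound and coincides in substance with the proof in the cited references rather than offering a genuinely different one.
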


We now consider the non-spherically symmetric part of a solution of \eqref{nwf}:
$$ \psi_{\geq 1} \doteq \psi - \psi_0 , $$
which in turn satisfies the equation
\begin{equation}\label{nwf1}
\Box_g \psi_{\geq 1} = F_{\geq 1} .
\end{equation}
The difference with the analogous estimates for the spherically symmetric part $\psi_0$ of $\psi$ comes from the trapping effect of the photon sphere (at $r = 2M$) which results in the loss of one or two $T$ derivatives. We state the Morawetz estimate for $\psi_{\geq 1}$ that is supported away from the photon sphere (see \cite{areangel6} for a reference), which has the following form:
\begin{proposition}\label{prop:mor1}
Let $\psi_{\geq 1}$ be the non-spherically symmetric part of a solution $\psi$ of \eqref{nwf} which satisfies equation \eqref{nwf0}. For any $\tau_1$, $\tau_2$ with $\tau_1 < \tau_2$ and any $l,k \in \mathbb{N}$ we have that 
\begin{equation}\label{est:mor1}
\begin{split}
\int_{\Sigma_{\tau_2}} J^T & [\Omega^k T^l \psi_{\geq 1} ]  \cdot \textbf{n}_{\Sigma} d\mu_{\Sigma}  \\ + &  \int_{\mcR (\tau_1 , \tau_2 )} \chi_{(\mcC_{\tau_1}^{\tau_2} )^c} \left( \frac{(T \Omega^k T^l \psi_{\geq 1} )^2 }{r^{1+\eta}} + D^{5/2} \frac{ (Y \Omega^k T^l \psi_{\geq 1} )^2}{r^{1+\eta}} + \sqrt{D} \frac{|\Omega^k T^l \slashed{\nabla} \psi_{\geq 1} |^2}{r}  + D \frac{( \Omega^k T^l \psi )^2}{r^4} \right) \,   d\mu_{\mcR} 
 \\ \lesssim & \int_{\Sigma_{\tau_1}} J^T [\Omega^k T^l \psi_{\geq 1} ] \cdot \textbf{n}_{\Sigma} d\mu_{\Sigma} +   \int_{\mcR_{\tau_1}^{\tau_2}} r^{1+\eta} |\Omega^k T^l F_{\geq 1} |^2 d\mu_{\mcR} \\ & + \int_{\mcC_{\tau_1}^{\tau_2}} |\Omega^k T^{l+1} F_{\geq 1}|^2 d\mu_{\mcC} + \sup_{\tau' \in [\tau_1, \tau_2]} \int_{\Sigma_{\tau'} \cap \mcC_{\tau_1}^{\tau_2}} |\Omega^k T^l F_{\geq 1} |^2 d\mu_{\mcC}, 
 \end{split}
 \end{equation}
where $\mcC$ was defined in \eqref{mcalc}, and where $\chi_{(\mcC_{\tau_1}^{\tau_2} )^{c}}$ is a smooth function that is equal to 1 on the complement of $\mcC$ and 0 around the photon sphere.
\end{proposition}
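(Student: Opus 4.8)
This estimate is a commuted, cutoff variant of the integrated local energy decay (Morawetz) estimates for $\psi_{\geq 1}$ on extremal Reissner--Nordstr\"{o}m established in \cite{areangel6}, and the plan is to adapt that argument while carefully tracking the degenerate $(r-M)$-weights at the horizon and the localization away from the photon sphere $\{r=2M\}$. Throughout write $\Phi \doteq \Omega^k T^l\psi_{\geq 1}$; since the $\Omega_i$ and $T=\partial_v$ are Killing they commute with $\Box_{g_M}$ and preserve the projection off the $\ell=0$ spherical harmonic, so $\Box_{g_M}\Phi = \Omega^k T^l F_{\geq 1}$ and $\Phi$ is again supported on angular frequencies $\geq 1$; it therefore suffices to prove the uncommuted estimate for such $\Phi$. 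The first ingredient is the $T$-energy identity: applying the divergence theorem to $J^T[\Phi]$ over $\mcR(\tau_1,\tau_2)$ produces the fluxes on $\Sigma_{\tau_1}$ and $\Sigma_{\tau_2}$, a non-negative flux on $\mathcal{H}^{+}$ and on $\mathcal{I}^{+}$ (both discarded), and a spacetime term $\int_{\mcR}(\Omega^k T^l F_{\geq 1})\,(T\Phi)$ which, by Cauchy--Schwarz, is bounded by $C\int_{\mcR} r^{1+\eta}|\Omega^k T^l F_{\geq 1}|^2$ plus a small multiple of the $r^{-1-\eta}(T\Phi)^2$ bulk term produced below, to be absorbed.

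The core of the proof is a Morawetz current built from a radial multiplier $X = f(r_{*})\partial_{r_{*}}$ together with a zeroth-order modification, i.e. the modified current $J^{X,w}_\alpha = (J^X[\Phi])_\alpha + \tfrac12 w(r)\,\Phi\,\partial_\alpha\Phi - \tfrac14(\partial_\alpha w)\,\Phi^2$. With $f$ chosen monotone, vanishing at $r=2M$, and degenerating like a suitable power of $D=(1-M/r)^2$ as $r\to M$, one computes $\Div J^{X,w}$ and obtains a bulk that is a sum of squares of $T\Phi$, $Y\Phi$, $\slashed{\nabla}\Phi$ and $\Phi$ with exactly the weights $r^{-1-\eta}$, $D^{5/2}r^{-1-\eta}$, $D^{1/2}r^{-1}$, $D\,r^{-4}$ appearing in \eqref{est:mor1} --- except that the coefficient of $(T\Phi)^2$ degenerates at the photon sphere, which is the trapping obstruction. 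Here the $\Phi^2$ term is recovered from $|\slashed{\nabla}\Phi|^2$ through the Poincar\'e inequality on $\mathbb{S}^2$ (valid precisely because $\Phi$ has no $\ell=0$ part), while the sign of $-\tfrac12\Box_{g_M} w$ together with a Hardy inequality near $r=M$ absorbs the remaining zeroth-order contributions. The boundary fluxes of $J^{X,w}$ on $\Sigma_{\tau_1},\Sigma_{\tau_2}$ are controlled by $\int_{\Sigma_{\tau_i}}J^T[\Phi]\cdot\textbf{n}_{\Sigma}$ once the growth/decay of $f$ at the two ends is fixed, and the flux at infinity has a good sign.

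To remove the trapping degeneracy I would not attempt a non-degenerate estimate --- which is false on extremal Reissner--Nordstr\"{o}m, see \cite{aag1} --- but instead insert the cutoff $\chi_{(\mcC_{\tau_1}^{\tau_2})^c}$ into the Morawetz current. The price is a commutator term supported in $\mcC$, i.e. near $r=2M$; there, commuting the wave equation once more with $T$ --- exploiting that $\partial_{r_{*}}(T\Phi)$ is non-degenerate at the photon sphere --- controls it by $\int_{\mcC}|\Omega^k T^{l+1}F_{\geq 1}|^2$ and the flux term $\sup_{\tau'}\int_{\Sigma_{\tau'}\cap\mcC}|\Omega^k T^l F_{\geq 1}|^2$, which is the inhomogeneous version of the photon-sphere estimates of \cite{aretakis1}. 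Combining the $T$-energy identity with the cutoff Morawetz identity with appropriate positive weights, the bulk terms absorb all error contributions and \eqref{est:mor1} follows; the angular commutators are harmless since $\Omega_i$ is Killing and its flux contributions are again controlled by $\int_\Sigma J^T\cdot\textbf{n}_\Sigma$.

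The main obstacle is the simultaneous handling of the two degeneracies: trapping at $r=2M$ and the degenerate redshift at the extremal horizon $r=M$. The multiplier $f$ (and $w$) must be tuned so the bulk remains coercive with the stated $D$-weights all the way to $r=M$, where, unlike the sub-extremal case, no redshift vector field is available to help, while still being compatible with the cutoff near $r=2M$; the impossibility of improving the $(r-M)$-powers in front of $(Y\Phi)^2$, $|\slashed{\nabla}\Phi|^2$ and $\Phi^2$ is exactly the horizon degeneracy reflected in \cite{aag1}, so these weights should be regarded as sharp for an estimate of this type.
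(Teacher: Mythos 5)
The paper itself offers no proof of Proposition \ref{prop:mor1}: it is quoted from \cite{areangel6} (whose proof in turn rests on the inhomogeneous versions of the currents in \cite{aretakis1, aretakis2}), so your sketch can only be compared with that standard argument, and in outline it does follow it: commute with the Killing fields $\Omega,T$, run a modified radial multiplier current $J^{X,w}$ with $X=f\partial_{r_*}$, use the Poincar\'e inequality on the sphere (available exactly because $\Phi=\Omega^kT^l\psi_{\geq 1}$ has no $\ell=0$ part) for the zeroth-order term, keep the $T$-energy identity for the fluxes, and pay for the photon-sphere localization with one extra $T$-derivative of the inhomogeneity there.

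Two steps are asserted too optimistically. First, the divergence of a single modified current $J^{X,w}$ with radial $X$ does \emph{not} produce a $(T\Phi)^2$ bulk term with a favourable sign: with the standard zeroth-order correction (the choice of $w$ that makes the angular coefficient proportional to $f\,(1-\tfrac{M}{r})(1-\tfrac{2M}{r})/r$, which is where the $\sqrt{D}$ weight and the photon-sphere degeneracy actually come from) the $(T\Phi)^2$ coefficient is cancelled identically, and keeping part of it destroys the sign of the angular term. The $(T\Phi)^2/r^{1+\eta}$ term on the left of \eqref{est:mor1} has to be recovered by an additional ingredient — e.g.\ a Lagrangian-type current $h\,\Phi\,\partial\Phi$ (multiplying the equation by a weighted $\Phi$), which trades $(T\Phi)^2$ against the already-controlled $\partial_{r_*}$, angular and zeroth-order terms away from $r=2M$ and generates further $F\cdot\Phi$ inhomogeneities — and this step should be exhibited, since it is also a source of the $r^{1+\eta}|F_{\geq1}|^2$ error with the stated weights. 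Second, your account of the right-hand side structure in $\mcC$ is too vague: the terms $\int_{\mcC}|\Omega^kT^{l+1}F_{\geq1}|^2$ and $\sup_{\tau'}\int_{\Sigma_{\tau'}\cap\mcC}|\Omega^kT^lF_{\geq1}|^2$ arise concretely from integrating by parts in $T$ the error terms supported near the photon sphere (the cutoff commutators and the $F\cdot\mathcal{X}\Phi$ contribution there, cf.\ Remark \ref{rem:inhommor}), moving one $T$ onto $F$ and picking up boundary-slice terms; saying that ``commuting the wave equation once more with $T$'' handles it is the right idea, but without displaying this integration by parts you have not explained why the loss is exactly one $T$-derivative plus a sup-in-$\tau$ slice norm, which is the precise shape of the statement. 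With these two points made explicit, your sketch coincides with the proof in the cited reference.
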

Next we state two versions of the Morawetz estimate with support on the photon sphere (which can be found in \cite{areangel6}):
\begin{proposition}\label{prop:mor2}
Let $\psi_{\geq 1}$ be the non-spherically symmetric part of a solution $\psi$ of \eqref{nwf} which satisfies equation \eqref{nwf0}. For any $\tau_1$, $\tau_2$ with $\tau_1 < \tau_2$ and any $l,k \in \mathbb{N}$ we have that
\begin{equation}\label{est:mor2}
\begin{split}
\int_{\mcR_{\tau_1}^{\tau_2}} \Big( \frac{(T \Omega^k  T^{l}\psi_{\geq 1} )^2 }{r^{1+\eta}} + &  D^{5/2} \frac{ (Y \Omega^k  T^l \psi_{\geq 1} )^2}{r^{1+\eta}} + \sqrt{D}\frac{|\Omega^k T^l \slashed{\nabla} \psi_{\geq 1} |^2}{r}  + D \frac{(\Omega^k T^l \psi )^2}{r^4} \Big) \,  d\mu_{\mcR} \\ \lesssim &  \sum_{m = l}^{l+1} \left( \int_{\Sigma_{\tau_1}} J^T_{\mu} [\Omega^k T^m \psi_{\geq 1} ] \cdot \textbf{n}_{\Sigma} d\mu_{\Sigma} +  \int_{\mcR_{\tau_1}^{\tau_2}} r^{1+\eta} |\Omega^k T^m F_{\geq 1} |^2 d\mu_{\mcR} \right) \\ & +  \int_{\mathcal{C}_{\tau_1}^{\tau_2}} |\Omega^k T^{l+2} F_{\geq 1} |^2 d\mu_{\mcC} + \sum_{m=l}^{l+1} \sup_{\tau' \in [\tau_1, \tau_2]} \int_{\Sigma_{\tau'} \cap \mcC_{\tau_1}^{\tau_2}} |\Omega^k T^{m}   F_{\geq 1} |^2 d\mu_{\Sigma_{\mcC}}  , 
\end{split}
\end{equation}
and
\begin{equation}\label{est:mor3}
\begin{split}
\int_{\mcR_{\tau_1}^{\tau_2}} \Big( \frac{(T\Omega^k  T^{l}\psi_{\geq 1} )^2 }{r^{1+\eta}} + & D^{5/2} \frac{ (Y\Omega^k  T^l \psi_{\geq 1} )^2}{r^{1+\eta}} + \sqrt{D} \frac{|\Omega^k T^l \slashed{\nabla}  \psi_{\geq 1} |^2}{r} + D \frac{(\Omega^k T^l \psi )^2}{r^4} \Big) \, d\mu_{\mcR} \\ \lesssim &  
\sum_{m = l}^{l+1} \int_{\Sigma_{\tau_1}} J^T [\Omega^k T^m \psi_{\geq 1} ] \cdot \textbf{n}_{\Sigma} d\mu_{\Sigma} + \int_{\mcR_{\tau_1}^{\tau_2}}  r^{1+\eta} |\Omega^k T^l F_{\geq 1}|^2 d\mu_{\mcR} \\ & +   \left( \int_{\tau_1}^{\tau_2} \left( \int_{\Sigma_{\tau'} \cap \mathcal{A}_{\tau_1}^{\tau_2}} |\Omega^k T^{l+1}  F_{\geq 1}|^2 d\mu \right)^{1/2} d\tau' \right)^2  , 
\end{split}
\end{equation}
for any $\eta > 0$, where $\mcC$ was defined in \eqref{mcalc}.
\end{proposition}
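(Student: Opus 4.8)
The plan is to obtain \eqref{est:mor2}--\eqref{est:mor3} by \emph{adding} to the estimate of Proposition \ref{prop:mor1} --- which already controls the Morawetz density on $(\mcC_{\tau_1}^{\tau_2})^{c}$, i.e.\ away from the photon sphere $\{r=2M\}$ --- a classical \emph{trapped} Morawetz estimate localised to $\mcC_{\tau_1}^{\tau_2}$ and to a slightly larger $r$-compact neighbourhood $\widetilde{\mcC}\supset\mcC$ still contained in $\{M<r_0'\le r\le r_1'<\infty\}$. Since $\widetilde{\mcC}$ is bounded away from both $\mathcal H^{+}$ and $\mathcal I^{+}$, no horizon or null-infinity boundary terms appear and the redshift is not needed, while the $D^{5/2}$, $D$ and $r^{1+\eta}$ factors in the statement are harmless bounded weights on $\widetilde{\mcC}$; the part of the left-hand side supported on $(\widetilde{\mcC})^{c}$ is non-degenerate Morawetz bulk already covered by Proposition \ref{prop:mor1}. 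Thus everything reduces to the trapped region, and I would follow the construction of \cite{areangel6}.

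For the trapped estimate I would use the standard multiplier current built from $V=f(r)\,\partial_{r_*}$ with the zeroth-order correction $\tfrac12(\partial_{r_*}f)\,\psi$, applied to $\Omega^{k}T^{l}\psi_{\geq1}$, where $f$ is supported in $\widetilde{\mcC}$, $\partial_{r_*}f\ge 0$, and $f$ vanishes and changes sign exactly at $r=2M$; this sign structure is forced by the fact that the photon sphere is the critical point of the effective potential, equivalently that $\big(\tfrac{D}{r^{2}}\big)'=\tfrac{2(r-M)(2M-r)}{r^{5}}$ changes sign there. Applying the divergence theorem to $J^{V}[\Omega^{k}T^{l}\psi_{\geq1}]$ then produces a spacetime integral controlling $(Y\Omega^{k}T^{l}\psi_{\geq1})^{2}$, $(T\Omega^{k}T^{l}\psi_{\geq1})^{2}$ and $(\Omega^{k}T^{l}\psi_{\geq1})^{2}$ \emph{non-degenerately}, but controlling $|\snabla\Omega^{k}T^{l}\psi_{\geq1}|^{2}$ only with a coefficient $\propto -f\,\big(\tfrac{D}{r^{2}}\big)'\ge 0$ that vanishes to second order at $r=2M$. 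The boundary terms of the divergence identity live only on $\Sigma_{\tau_{1}},\Sigma_{\tau_{2}}$ and are $\lesssim\int_{\Sigma_{\tau_{i}}}J^{T}[\Omega^{k}T^{l}\psi_{\geq1}]\cdot\textbf{n}_{\Sigma}\,d\mu_{\Sigma}$ by Cauchy--Schwarz; the terms generated by $\partial_{r_*}f$ away from $\mcC$ and by the cutoff at $\partial\widetilde{\mcC}$ are supported away from $\{r=2M\}$ and are non-degenerate Morawetz bulk, controlled by Proposition \ref{prop:mor1}; and the inhomogeneous term $\int_{\widetilde{\mcC}}\big(f\partial_{r_*}(\Omega^{k}T^{l}\psi_{\geq1})+\tfrac12(\partial_{r_*}f)\,\Omega^{k}T^{l}\psi_{\geq1}\big)\,\Omega^{k}T^{l}F_{\geq1}$ is treated by Cauchy--Schwarz, absorbing the $(Y,\psi)$-parts of the bulk with a small constant and leaving $\int_{\mcR_{\tau_1}^{\tau_2}}r^{1+\eta}|\Omega^{k}T^{l}F_{\geq1}|^{2}$.

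The heart of the matter --- and the source of the loss of $T$-derivatives --- is upgrading to \emph{non-degenerate} control of $\int_{\mcC}|\snabla\Omega^{k}T^{l}\psi_{\geq1}|^{2}$, which is precisely the trapping obstruction. The remedy is to commute \eqref{nwf1} with the Killing field $T$ (so $\Box_g(\Omega^{k}T^{l+1}\psi_{\geq1})=\Omega^{k}T^{l+1}F_{\geq1}$, and likewise at level $l+2$ for \eqref{est:mor2}) and run the degenerate estimate at the higher levels, which supplies non-degenerate control of $(Y\Omega^{k}T^{l+1}\psi_{\geq1})^{2}$ and $(\Omega^{k}T^{l+2}\psi_{\geq1})^{2}$ on $\mcC$; then one uses the wave equation, which in $(v,r)$ coordinates reads $\sD(\Omega^{k}T^{l}\psi_{\geq1})=\Omega^{k}T^{l}F_{\geq1}-D\,Y^{2}(\Omega^{k}T^{l}\psi_{\geq1})-2\,Y(\Omega^{k}T^{l+1}\psi_{\geq1})-(\text{l.o.t.})$, multiplies by $-\Omega^{k}T^{l}\psi_{\geq1}$, integrates over $\mathbb{S}^{2}$ and over $\mcC$, and trades the second radial derivative, via $\psi\,Y^{2}\psi=Y(\psi\,Y\psi)-(Y\psi)^{2}$ and integration by parts in $r$, against $\int_{\mcC}\big[(Y\Omega^{k}T^{l}\psi_{\geq1})^{2}+(\Omega^{k}T^{l}\psi_{\geq1})^{2}\big]$ and fluxes on $\{r=r_{0}\},\{r=r_{1}\}$ (away from $2M$, again controlled by Proposition \ref{prop:mor1}). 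The resulting bound on $\int_{\mcC}|\snabla\Omega^{k}T^{l}\psi_{\geq1}|^{2}$ has right-hand side covered by the degenerate estimates at levels $l$ and $l+1$. The extra $T$-derivatives on $F_{\geq1}$ enter only when controlling, \emph{without} a smallness gain, the inhomogeneous term of the $(l+1)$-commuted degenerate estimate: integrating $\int_{\mcC}f\partial_{r_*}(\Omega^{k}T^{l+1}\psi_{\geq1})\,\Omega^{k}T^{l+1}F_{\geq1}$ by parts once in $T$ yields $\int_{\mcC}f\partial_{r_*}(\Omega^{k}T^{l}\psi_{\geq1})\,\Omega^{k}T^{l+2}F_{\geq1}$ (absorbed into a small multiple of $(Y\Omega^{k}T^{l}\psi_{\geq1})^{2}$, leaving $\int_{\mcC}|\Omega^{k}T^{l+2}F_{\geq1}|^{2}$) plus a $\tau$-boundary term on $\Sigma_{\tau'}\cap\mcC$ which, by Cauchy--Schwarz and the energy identity, is bounded by the remaining right-hand side terms of \eqref{est:mor2} (carrying $T^{m}$, $m\le l+1$, on $\psi_{\geq1}$ in the flux data, $T^{m}$, $m\le l+1$, on $F_{\geq1}$ in the $r^{1+\eta}$-weighted bulk, and the $\sup_{\tau'}$-fluxes of $\Omega^{k}T^{m}F_{\geq1}$, $m\le l+1$, over $\Sigma_{\tau'}\cap\mcC$). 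For \eqref{est:mor3} one instead estimates that inhomogeneous term directly slice by slice, bounding it by $\sup_{\tau'}\big(\int_{\Sigma_{\tau'}}J^{T}[\Omega^{k}T^{l+1}\psi_{\geq1}]\cdot\textbf{n}_{\Sigma}\,d\mu_{\Sigma}\big)^{1/2}\cdot\int_{\tau_1}^{\tau_2}\|\Omega^{k}T^{l+1}F_{\geq1}\|_{L^{2}(\Sigma_{\tau'}\cap\mathcal{A}_{\tau_1}^{\tau_2})}\,d\tau'$ and applying Young, which loses only one $T$-derivative on $F$ but produces the $L^{1}_{\tau}L^{2}$-type term of \eqref{est:mor3}.

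The main obstacle is, unavoidably, the trapping at the photon sphere $\{r=2M\}$: no first-order multiplier gives non-degenerate angular control there, so the commutation-plus-equation mechanism is essential, and it is exactly what forces the loss of one (for \eqref{est:mor3}) or two (for \eqref{est:mor2}) $T$-derivatives and the appearance of the $\mcC$-localised higher-order $F_{\geq1}$-terms. Beyond this, the only genuinely delicate point is matching the precise $D^{5/2}$, $D$ and $r^{1+\eta}$ weights and the exact derivative counts in \eqref{est:mor2}--\eqref{est:mor3}; everything away from $\{r=2M\}$ is subsumed by Proposition \ref{prop:mor1}.
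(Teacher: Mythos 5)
The paper itself does not prove Proposition \ref{prop:mor2}: the two estimates are quoted from \cite{areangel6}. Your outline reconstructs what is essentially the standard (and, structurally, the cited) argument: a photon-sphere multiplier $f\partial_{r^*}$ with $f$ vanishing and changing sign at $r=2M$ and $\partial_{r^*}f\ge 0$, commutation with $T$, recovery of the angular bulk at $r=2M$ by multiplying the wave equation by $\Omega^kT^l\psi_{\geq1}$ and integrating by parts, and — crucially — the integration by parts in $T$ of the level-$(l+1)$ inhomogeneous term, which is exactly the mechanism that produces the $\int_{\mcC}|\Omega^kT^{l+2}F_{\geq1}|^2$ and $\sup_{\tau'}$-slice terms of \eqref{est:mor2}, while the slice-by-slice Cauchy--Schwarz/Young treatment yields the $L^1_{\tau}L^2$-type term of \eqref{est:mor3}. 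The reduction of everything away from $\{r=2M\}$ to Proposition \ref{prop:mor1} is also consistent with the stated right-hand sides (its $\mcC$-localized $T^{l+1}F_{\geq1}$ term is dominated by the $r^{1+\eta}$-weighted bulk at level $l+1$).

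One step is wrong as written, though your own scheme repairs it. You claim that the divergence identity for $J^{V}[\Omega^kT^l\psi_{\geq1}]$ with $V=f\partial_{r^*}$ and the zeroth-order correction already controls $(T\Omega^kT^l\psi_{\geq1})^2$ non-degenerately near $r=2M$. It cannot: with the standard modifier $w=f'+\tfrac{2fD}{r}$ the bulk is $\tfrac{f'}{D}(\partial_{r^*}\psi)^2-\tfrac{f}{2}\big(\tfrac{D}{r^2}\big)'|\nabla_{\mathbb S^2}\psi|^2-\tfrac14(\Box_g w)\psi^2$, with no $(T\psi)^2$ term, and any choice producing one forces its coefficient to degenerate at the photon sphere — this is precisely the trapping obstruction ($\xi_t\neq 0$ on the trapped set), so no same-level multiplier estimate gives non-degenerate $(T\psi)^2$ there. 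In your argument this is harmless: the $(T\Omega^kT^l\psi_{\geq1})^2$ bulk in \eqref{est:mor2}--\eqref{est:mor3} is supplied by the zeroth-order term of the $T$-commuted, level-$(l+1)$ degenerate estimate, which you run anyway (as is the $Y\Omega^kT^{l+1}\psi_{\geq1}$ control needed in the elliptic step), so the derivative counts are unchanged. Two smaller cautions: the constant-$r$ boundary terms in your radial integrations by parts are fluxes on the timelike hypersurfaces $\{r=r_0\},\{r=r_1\}$ and are not literally covered by the bulk estimate of Proposition \ref{prop:mor1}; use a smooth cutoff in $r$ (or average over $r_0,r_1$) so that only bulk errors supported away from $r=2M$ appear. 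And the $\tau_2$- and $\sup_{\tau'}$-energy fluxes generated by the $T$-boundary terms must be closed through the $T$-energy identity \eqref{basic} with its inhomogeneous term absorbed; this is routine bookkeeping, but it is exactly where the $\sup_{\tau'}\int_{\Sigma_{\tau'}\cap\mcC}|\Omega^kT^{m}F_{\geq1}|^2$, $m\le l+1$, terms of \eqref{est:mor2} are actually consumed.
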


\begin{remark}\label{rem:inhommor}
We note that the inhomogeneous terms of the above estimates come from a term of the form
$$ \left| \int_{\mathcal{R}} ( \Omega^k T^l F ) \cdot ( \mathcal{X} \Omega^k T^l \psi )  \, d\mu_{\mathcal{R}} \right|  $$
where $\mathcal{X}$ is the Morawetz multiplier vector field (which close to the horizon roughly has the form $\mathcal{X} \sim T + D \cdot Y$), after applying Cauchy-Schwarz to it and absorbing certain terms in the left hand side. In the following Section we will improve the weights (in terms of $D$) on these terms.
\end{remark}

Finally we state a basic estimate that allows to bound the $T$-flux without any loss of derivatives:
\begin{equation}\label{basic}
\begin{split}
\int_{\Sigma_{\tau_2}} J^T [\Omega^k T^l \psi_{\geq 1} ] \cdot \textbf{n}_{\Sigma} d\mu_{\Sigma} & \lesssim \int_{\Sigma_{\tau_1}} J^T [\Omega^k T^l \psi_{\geq 1} ] \cdot \textbf{n}_{\Sigma} d\mu_{\Sigma} \\ & + \int_{\mcR_{\tau_1}^{\tau_2}} r^{1+\delta} |\Omega^k T^l F_{\geq 1} |^2 d\mu_{\mcR} + \left( \int_{\tau_1}^{\tau_2} \left( \int_{\Sigma_{\tau} \cap \mathcal{C}_{\tau_1}^{\tau_2}} | \Omega^k T^l F |^2 \, d\mu_{\Sigma_{\tau}} \right)^{1/2} \, d\tau \right)^2 ,
\end{split}
\end{equation}
for any $\tau_1 < \tau_2$ and any $\delta > 0$.

\subsection{An improved Morawetz estimate}
We will need to improve the weights close to the horizon on the aforementioned Morawetz estimates.
\begin{proposition}\label{prop:morimproved}
Let $\psi$ be a solution of the equation \eqref{nwf}. Then for any $\tau_1$, $\tau_2$ with $\tau_1 < \tau_2$, $\phi = r\psi$, any $l, k \in \mathbb{N}$, and any $\delta  > 0$ small enough we have that:
\begin{equation}\label{mor:improved}
\begin{split}
\int_{\Sigma_{\tau_2}} J^T [\Omega^k T^l \psi ] & \cdot \textbf{n}_{\Sigma} \,d\mu_{\Sigma} + \int_{\mathcal{B}_{\tau_1}^{\tau_2} } \left[ r^{-1-\delta} ( L \Omega^k T^l \phi )^2 +r^{-1-\delta} ( \underline{L} \Omega^k T^l \phi )^2 + r^{-1}|\Omega^k T^l \slashed{\nabla} \phi |^2\right] \,  d\omega dv du \\ & +  \int_{\mathcal{A}_{\tau_1}^{\tau_2} } \left[ (r-M)^{1+\delta} ( L \Omega^k T^l \phi )^2 + (r-M)^{1+\delta} ( \underline{L} \Omega^k T^l \phi )^2 + (r-M)^3 |\Omega^k T^l \slashed{\nabla} \phi |^2\right] \,  d\omega du dv
 \\ \lesssim & \int_{\Sigma_{\tau_1}} J^T [\Omega^k T^l \psi ] \cdot \textbf{n}_{\Sigma} d\mu_{\Sigma} +   \int_{\mathcal{A}_{\tau_1}^{\tau_2}} \frac{1}{(r-M)^{1+\delta}} D^2 |\Omega^k T^l F|^2 \, d\omega du dv\\ & + \int_{\mcC_{\tau_1}^{\tau_2}} |\Omega^k T^{l+1} F |^2 \, d\mu_{\mcC} + \sup_{\tau' \in [\tau_1, \tau_2]} \int_{\Sigma_{\tau'} \cap \mcC_{\tau_1}^{\tau_2}} |\Omega^k T^l F |^2 \, d\mu_{\mcC}, 
\end{split}
\end{equation}
\end{proposition}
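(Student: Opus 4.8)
The plan is to localise the estimate in the radial variable. Fix a partition of unity $1=\chi_{\mathcal A}+\chi_{\mathcal C}+\chi_{\mathcal B}$ adapted to the three regions, with $\chi_{\mathcal A}\equiv 1$ on $\{M\le r\le r_0\}$, $\chi_{\mathcal B}\equiv 1$ on $\{r\ge r_1\}$, and $\supp\chi_{\mathcal A}',\ \supp\chi_{\mathcal B}'\subset\mcC_{\tau_1}^{\tau_2}$, the region containing the photon sphere $\{r=2M\}$. In $\mcC$ we prove nothing new: we invoke Proposition~\ref{prop:mor0} for $\psi_0$ and Propositions~\ref{prop:mor1}--\ref{prop:mor2} for $\psi_{\ge 1}$ --- this is precisely where the loss of one $T$-derivative, hence the $\Omega^k T^{l+1}F$ term in \eqref{mor:improved}, enters --- together with the loss-free $T$-energy inequality \eqref{basic} to control the flux of $J^T[\Omega^k T^l\psi]$ through $\Sigma_{\tau_2}$. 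Since $T$ and the $\Omega_i$ commute with $\Box_{g_M}$, it suffices to argue for $\psi$ and then replace $\psi\rightsquigarrow\Omega^k T^l\psi$, $F\rightsquigarrow\Omega^k T^l F$; it is also convenient to treat $\psi=\psi_0+\psi_{\ge 1}$ piece by piece, since only $\psi_{\ge 1}$ carries angular terms.

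The heart of the matter is the near-horizon region $\mathcal A_{\tau_1}^{\tau_2}$. I would work in double null coordinates with $\phi=r\psi$, where \eqref{nwf} reads
\[
 \partial_u\partial_v\phi \;=\; -\tfrac14 V_0\,\phi \;+\; \tfrac14 D\,\slashed{\Delta}\phi \;-\;\tfrac14 D r\,F,\qquad V_0=\frac{D D'}{r}=\frac{2M(r-M)^3}{r^6}=\mathcal O\big((r-M)^3\big),
\]
and apply a ghost-weighted Morawetz multiplier $\mathcal X\phi+\tfrac12(\partial_{r_*}f)\,\phi$ with $\mathcal X=f(r)\,\partial_{r_*}=f(r)(L-\underline{L})$, the current cut off smoothly at $r=r_0$. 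The profile $f$ is increasing in $r_*$, vanishes at $\mathcal H^+$ (recall $r-M\sim M^2/|r_*|$ as $r_*\to-\infty$), and is tuned so that $\partial_{r_*}f$, weighted by the volume factor, is comparable to $(r-M)^{1+\delta}$ near the horizon --- equivalently $f$ is the primitive of the derivative of a bounded ghost weight $e^{q(r)}$, in the spirit of \cite{ali}. The divergence identity for the associated current, integrated over $\mathcal A$, then produces on the left exactly
\[
 \int_{\mathcal A_{\tau_1}^{\tau_2}}\Big[(r-M)^{1+\delta}\big((L\Omega^k T^l\phi)^2+(\underline{L}\Omega^k T^l\phi)^2\big)+(r-M)^3|\Omega^k T^l\slashed{\nabla}\phi|^2\Big]\,d\omega\,du\,dv,
\]
the $J^T$-flux terms on $\Sigma_{\tau_1},\Sigma_{\tau_2}$ and the flux on $\mathcal H^+$ (which is sign-favourable --- in fact it vanishes since $f(M)=0$ and $\underline{L}|_{\mathcal H^+}=0$) --- these boundary terms are dominated by adding a small multiple of the coercive $J^T$-energy, which is why $\int_{\Sigma_{\tau_2}}J^T$ sits on the left and $\int_{\Sigma_{\tau_1}}J^T$ on the right --- and the inhomogeneity $\sim\int_{\mathcal A}f\,Dr\,F\,\partial_u\phi$, which after Cauchy--Schwarz against the $(r-M)^{1+\delta}$-bulk leaves precisely $\int_{\mathcal A}\frac{D^2}{(r-M)^{1+\delta}}|F|^2$.

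The remaining error terms must be absorbed. After an integration by parts in $u$, the potential contribution is of the form $\int_{\mathcal A}\mathcal O\big((r-M)^4\big)\phi^2$; for $\psi_{\ge1}$ the Poincar\'e inequality $\int_{\mathbb S^2}|\slashed{\nabla}\phi|^2\ge\frac{2}{r^2}\int_{\mathbb S^2}\phi^2$ converts a fraction of the degenerate angular bulk into $\int_{\mathcal A}\frac{(r-M)^3}{r^2}\phi^2$, which dominates it near $r=M$; for $\psi_0$ (where angular terms are absent) one instead uses a weighted radial Hardy inequality $\int_M^{r_0}(r-M)^4\phi_0^2\,dr\lesssim\int_M^{r_0}(r-M)^2(\partial_u\phi_0)^2\,dr+\text{boundary}$ (using $\underline{L}=-\tfrac12 D\,\partial_r$), whose right-hand side is controlled by the $(r-M)^{1+\delta}$-bulk since $1+\delta<2$; the indefinite-sign angular coefficient that the $r$-Morawetz produces in $\{r<2M\}$ is what forces the angular bulk to be as degenerate as $(r-M)^3$. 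Near infinity the argument is the classical $r^p$-weighted (equivalently ghost-weighted) Morawetz with $p=-1-\delta$: a multiplier $\sim -r^{-\delta}\partial_{r_*}$, cut off at $r=r_1$, gives the $\mathcal B$-bulk $r^{-1-\delta}\big((L\phi)^2+(\underline{L}\phi)^2\big)+r^{-1}|\slashed{\nabla}\phi|^2$ with boundary and inhomogeneous terms of the usual type, again closed off against $J^T$. Finally one glues: summing the $\mathcal A$-, $\mcC$- and $\mathcal B$-contributions against the partition of unity, the cutoff errors $\sim\chi'\,\phi\,\partial\phi$ are supported inside $\mcC$, away from both $\mathcal H^+$ and $\mathcal I^+$, hence are bounded by the non-degenerate part of the $\mcC$-bulk coming from Propositions~\ref{prop:mor0}--\ref{prop:mor2} and absorbed, while the flux on $\Sigma_{\tau_2}$ over the portion of $\Sigma$ inside $\mcC$ is supplied by \eqref{basic}.

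I expect the main obstacle to be exactly the near-horizon construction: the ghost weight (the profile $f$) must be simultaneously bounded --- so that the $\mathcal H^+$- and $\Sigma$-boundary terms and the potential term stay under control --- and have $\partial_{r_*}f$ large enough to extract the weight $(r-M)^{1+\delta}$. These requirements leave essentially no slack, which is unsurprising: the endpoint $\delta=0$, a non-degenerate Morawetz estimate up to and including the extremal horizon, is \emph{false} by \cite{aag1}. Tracking the sharp powers in the interplay between the $(r-M)^3$-degenerate angular term, the Poincar\'e and Hardy inequalities, and the cubically vanishing potential $V_0$, all while keeping the construction strictly supported in $\{r\le r_0<2M\}$ so that it never sees the trapped set (this decoupling from the photon sphere is the crucial structural point), is where the care is needed.
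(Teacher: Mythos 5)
There is a genuine gap in the near-horizon step, and it is exactly at the point you flag as delicate. Your single symmetric multiplier $f\,\partial_{r_*}=f(L-\underline{L})$ with $f\geq 0$ vanishing at the horizon and increasing does produce the good bulk $\sim (r-M)^{1+\delta}\big[(L\phi)^2+(\underline{L}\phi)^2\big]$ (from $Lf$ and $\underline{L}f$), but its angular by-product has the \emph{wrong} sign in $\{M\le r\le r_0<2M\}$: pairing the multiplier with $D\slashed{\Delta}\phi$ and integrating by parts on the sphere and in $u,v$ leaves a bulk coefficient for $|\nabla_{\mathbb{S}^2}\phi|^2$ proportional to $-\partial_{r_*}\!\big(fD/r^2\big)$, which is negative there since $fD/r^2$ is increasing below the photon sphere (and the Lagrangian correction $\tfrac12(\partial_{r_*}f)\phi$ only adds a further negative angular term). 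So your divergence identity does not "produce on the left" the term $(r-M)^3|\Omega^kT^l\slashed{\nabla}\phi|^2$; instead it produces a wrong-signed angular error of size $(r-M)^{3+\delta}|\slashed{\nabla}\phi|^2$ that must be absorbed by a positive angular bulk you have not generated, and your Poincar\'e step for $\psi_{\geq1}$ presupposes precisely that missing bulk.

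The paper resolves this by splitting the multiplier into two one-sided weighted identities: first $2e^{f}\underline{L}\phi$ with a \emph{bounded ghost weight that does not vanish at the horizon} ($f=-\eta^{-1}[\log(r-M)^{-1}]^{-\eta}$), whose angular by-product is $+D D' e^{f}|\slashed{\nabla}\phi|^2\sim(r-M)^3|\slashed{\nabla}\phi|^2$ with the good sign (because $\underline{L}(e^fD/r^2)<0$), together with the $(\underline{L}\phi)^2$ bulk with weight $D/\big[(r-M)[\log(r-M)^{-1}]^{1+\eta}\big]\gtrsim(r-M)^{1+\delta}$; and second $-\int_{\mathcal{A}}\underline{L}\big[(r-M)^{\delta}(L\phi)^2\big]$ for the $L$-bulk, whose wrong-signed angular terms are only $\mathcal{O}((r-M)^{3+\delta})$ and are absorbed by the first identity. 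If, alternatively, you want to import the positive angular bulk from Propositions \ref{prop:mor0}--\ref{prop:mor2}, you must redo the Cauchy--Schwarz on their inhomogeneous term (Remark \ref{rem:inhommor}) using the newly available $(r-M)^{1+\delta}$ bulk, as the paper explicitly does; otherwise the near-horizon $|F|^2$ term reappears with the unimproved $r^{1+\eta}$ weight and the whole point of \eqref{mor:improved}, namely the $D^2(r-M)^{-1-\delta}|F|^2$ weight, is lost --- your write-up does not address this. The rest of your architecture (region $\mcC$ via the existing estimates with the $\Omega^kT^{l+1}F$ loss, the $T$-flux via \eqref{basic}, an $r^{-\delta}$-type multiplier in $\mathcal{B}$, Hardy for $\psi_0$) is consistent with the paper's proof.
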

\begin{proof}
For simplicity we look at the case $k = l =0$ as both the $\Omega$ and the $T$ operators commute with the wave operator. We will also ignore the bulk term of the first line in \eqref{mor:improved} as we have the optimal $r$ weights at infinity by the previous Morawetz estimates.  We show how to improve only the weights close to the horizon. First we will improve the weight in front of the $\underline{L}$ derivative. We define the function
$$ f (r ) = -\dfrac{1}{ \eta \cdot \left[ \log (r-M)^{-1} \right]^{\eta}} , $$
for some $\eta > 0$ that is small enough. Now we integrate by parts and use the equation for the following integral:
$$ 2 \int_{\mathcal{A}_{\tau_1}^{\tau_2}} e^f ( L \underline{L} \phi ) \cdot ( \underline{L} \phi ) \, d\omega du dv $$
and this gives us the following equality:
\begin{equation*}
\begin{split}
\int_{\mcN_{\tau_2}^H} e^f ( \underline{L} \phi )^2 \, d\omega du & + \int_{\mathcal{A}_{\tau_1}^{\tau_2}} \frac{D \cdot e^f}{2(r-M) \cdot \left[ \log (r-M)^{-1} \right]^{1+\eta}} ( \underline{L} \phi )^2 \, d\omega du dv \\ = & \int_{\mcN_{\tau_1}^H} e^f ( \underline{L} \phi )^2 \, d\omega du  \\ & + \int_{\mathcal{A}_{\tau_1}^{\tau_2}} \left( -D \cdot D' \cdot e^f - D^2 \cdot e^f \cdot f' + \frac{D^2 \cdot e^f}{2r} \right) | \slashed{\nabla} \phi |^2 \, d\omega du dv \\ & + \left. \int \frac{D}{4} \cdot e^f | \slashed{\nabla} \phi |^2 \, d\omega dv \right|_{r =R} \\ & + \int_{\mathcal{A}_{\tau_1}^{\tau_2}} \mathcal{O} ( (r-M)^3 ) \cdot e^f  \phi \cdot ( \underline{L} \phi ) \, d\omega du dv \\ & - \frac{1}{2} \int_{\mathcal{A}_{\tau_1}^{\tau_2}} e^f ( \underline{L} \phi ) \cdot D r F \, d\omega du dv .
\end{split}
\end{equation*}
In the second term of the right-hand side above involving the angular derivatives, we note that the first term is the dominant one. The term with the angular derivatives on $r=R$ can be bounded by the left hand side of the Morawetz estimates provided by Propositions \ref{prop:mor0} and \ref{prop:mor1}. The fourth term can be handled by Cauchy-Schwarz and by using the zeroth order term of the standard Morawetz estimates \eqref{est:mor2} and \eqref{est:mor3}, and both of the terms can be absorbed by the bulk term of the left hand side. It should be noted that when we use the standard Morawetz estimates of Propositions  \ref{prop:mor0} and \ref{prop:mor1} we work with the inhomogeneous term that was mentioned in Remark \eqref{rem:inhommor} and we apply Cauchy-Schwarz to it with the better weights (in terms of $D$) that are available now from the left hand side of the last equality (otherwise we would get no improvement in terms of $D$-weights in our inhomogeneous terms).

Finally noticing that due to the definition of $\mathcal{A}$ we have that in the integrated region:
$$ c \leq e^f \leq C \mbox{  for some constants $c , C$, } $$
we have that
\begin{equation*}
\begin{split}
\int_{\mcN_{\tau_2}^H} ( \underline{L} \phi )^2 \, d\omega du & + \int_{\mathcal{A}_{\tau_1}^{\tau_2}} \left( \frac{r-M}{\left[ \log (r-M)^{-1} \right]^{1+\eta}} ( \underline{L} \phi )^2 + (r-M)^3 |\slashed{\nabla} \phi |^2 \right) \, d\omega du dv \\ \lesssim & \int_{\mcN_{\tau_1}^H} ( \underline{L} \phi )^2 \, d\omega du + \left| \left. \int D \cdot e^f | \slashed{\nabla} \phi |^2 \, d\omega dv  \right|_{r =R} \right| \\ & + \left| \int_{\mathcal{A}_{\tau_1}^{\tau_2}} e^f ( \underline{L} \phi ) \cdot D r F \, d\omega du dv \right| .
\end{split}
\end{equation*}
Note now that for any $\delta > 0$ we have that:
$$ (r-M)^{\delta} \lesssim \frac{1}{\left[ \log (r-M)^{-1} \right]^{1+\eta}} $$
which implies that for any $\delta > 0$ we have that:
\begin{equation}\label{morimp:1}
\begin{split}
\int_{\mcN_{\tau_2}^H} ( \underline{L} \phi )^2 \, d\omega du & + \int_{\mathcal{A}_{\tau_1}^{\tau_2}} \left( (r-M)^{1+\delta} ( \underline{L} \phi )^2 + (r-M)^3 |\slashed{\nabla} \phi |^2 \right) \, d\omega du dv \\ \lesssim & \int_{\mcN_{\tau_1}^H} ( \underline{L} \phi )^2 \, d\omega du + \left| \left. \int D \cdot e^f | \slashed{\nabla} \phi |^2 \, d\omega dv  \right|_{r =R} \right| \\ & + \left| \int_{\mathcal{A}_{\tau_1}^{\tau_2}} e^f ( \underline{L} \phi ) \cdot D r F \, d\omega du dv \right| .
\end{split}
\end{equation}
On the other hand we integrate by parts and we use the equation for the quantity:
$$ - \int_{\mathcal{A}_{\tau_1}^{\tau_2}}  \underline{L} \left[ (r-M)^{\delta} ( L \phi )^2 \right] \, d\omega du dv $$
and we get that:
\begin{equation}\label{morimp:2}
\begin{split}
\frac{\delta}{2} \int_{\mathcal{A}_{\tau_1}^{\tau_2}} & \frac{(r-M)^{1+\delta}}{r^2} ( L \phi )^2 \, d\omega du dv  + \int_{\mathcal{A}_{\tau_1}^{\tau_2}} \mathcal{O} (( r-M)^{3+\delta}) | \slashed{\nabla} \phi |^2 \, d\omega du dv \\ & + \int_{\mcN_{\tau_2}^H} \frac{(r-M)^{2+\delta }}{4r^2} | \slashed{\nabla} \phi |^2 \, d\omega du  =  - \left. \int (r-M)^{\delta} ( L \phi )^2 \, d\omega dv \right|_{r=R} \\ & + \int_{\mcN_{\tau_1}^H} \frac{(r-M)^{2+\delta}}{4r^2} | \slashed{\nabla} \phi |^2 \, d\omega du   + \int_{\mathcal{A}_{\tau_1}^{\tau_2}} \mathcal{O} (( r-M)^{3+\delta}) \phi \cdot (L \phi ) \, d\omega du dv \\ & + \frac{1}{2} \int_{\mathcal{A}_{\tau_1}^{\tau_2}} (r-M)^{\delta} ( L \phi ) \cdot D r F \, d\omega du dv   .
\end{split}
\end{equation}
Combining \eqref{morimp:1} and \eqref{morimp:2}, and noticing that the bulk term with the angular derivatives in the left hand side of \eqref{morimp:2} can be absorbed from the similar term of \eqref{morimp:1} (as it has a bigger $(r-M)$ weight), that the flux terms with the angular derivatives of \eqref{morimp:2} can be absorbed by the $T$-fluxes (i.e. the $T$-energies) of \eqref{morimp:1}, that the term on $r=R$ of \eqref{morimp:2} can be bounded by the left hand sides of the Morawetz estimate \eqref{est:mor3}, that the term before the last term of the right hand side of \eqref{morimp:2} can be absorbed by the left hand side of \eqref{morimp:2} and \eqref{morimp:1} after applying Cauchy-Schwarz while the one before last (after applying again Cauchy-Schwarz) can be absorbed by the left hand side and the zeroth order term of the standard Morawetz, and finally that for the terms of the left hand side of the Morawetz estimate \eqref{est:mor3} we have that due to the $(r-M)$-weights on the horizon:
\begin{equation*}
\begin{split}
\int_{\mathcal{A}_{\tau_1}^{\tau_2}} \Big( (r-M)^{1+\delta} (L\phi)^2 + &  (r-M)^{1+\delta} ( \underline{L} \phi )^2 + (r-M)^3 |\slashed{\nabla} \phi |^2 \Big) \, d\omega du dv \\ & - \int_{\mathcal{A}_{\tau_1}^{\tau_2}} \left( (r-M)^2 (T \psi)^2 + (r-M)^7 ( Y \psi )^2 + (r-M)^3 | \slashed{\nabla} \psi |^2 \right) \, r^2 d\omega du dv \\ \gtrsim &   \int_{\mathcal{A}_{\tau_1}^{\tau_2}} \Big( (r-M)^{1+\delta} (L\phi)^2 +   (r-M)^{1+\delta} ( \underline{L} \phi )^2 + (r-M)^3 |\slashed{\nabla} \phi |^2 \Big) \, d\omega du dv ,
\end{split}
\end{equation*}
we get that
\begin{equation*}
\begin{split}
\int_{\Sigma_{\tau_2}} J^T [\Omega^k T^l \psi ] & \cdot \textbf{n}_{\Sigma} \,d\mu_{\Sigma} \\ & +  \int_{\mathcal{A}_{\tau_1}^{\tau_2} } \left[ (r-M)^{1+\delta} ( L \Omega^k T^l \phi )^2 + (r-M)^{1+\delta} ( \underline{L} \Omega^k T^l \phi )^2 + (r-M)^3 |\Omega^k T^l \slashed{\nabla} \phi |^2\right] \,  d\omega du dv
 \\ \lesssim & \int_{\Sigma_{\tau_1}} J^T [\Omega^k T^l \psi ] \cdot \textbf{n}_{\Sigma} d\mu_{\Sigma}  \\ & + \int_{\mcC_{\tau_1}^{\tau_2}} |\Omega^k T^{l+1} F |^2 \, d\mu_{\mcC} + \sup_{\tau' \in [\tau_1, \tau_2]} \int_{\Sigma_{\tau'} \cap \mcC_{\tau_1}^{\tau_2}} |\Omega^k T^l F |^2 \, d\mu_{\mcC} \\ & +  \int_{\mathcal{A}_{\tau_1}^{\tau_2}}  ( |\underline{L} \phi | + | L \phi | ) \cdot D r |F | \, d\omega du dv .
\end{split}
\end{equation*}
We get the desired estimate after applying Cauchy-Schwarz to the last term. 
 
\end{proof}

With the improved Morawetz estimate that we just showed we can also improve the estimate \eqref{basic} for the $T$-flux that does not lose any derivative and conclude that:
\begin{equation}\label{basicimp}
\begin{split}
\int_{\Sigma_{\tau_2}} J^T [\Omega^k T^l \psi ] & \cdot \textbf{n}_{\Sigma} d\mu_{\Sigma}+  \int_{\mathcal{B}_{\tau_1}^{\tau_2} } \left[ r^{-1-\eta} ( L \Omega^k T^l \phi )^2 +r^{-1-\eta} ( \underline{L} \Omega^k T^l \phi )^2 + r^{-1}|\Omega^k T^l \slashed{\nabla} \phi |^2\right] \,  d\omega dv du  \\ & +  \int_{\mathcal{A}_{\tau_1}^{\tau_2} } \left[ (r-M)^{1+\delta} ( L \Omega^k T^l \phi )^2 + (r-M)^{1+\delta} ( \underline{L} \Omega^k T^l \phi )^2 + (r-M)^3 |\Omega^k T^l \slashed{\nabla} \phi |^2\right] \,  d\omega du dv  \\ \lesssim & \int_{\Sigma_{\tau_1}} J^T [\Omega^k T^l \psi ] \cdot \textbf{n}_{\Sigma} d\mu_{\Sigma} + \left( \int_{\tau_1}^{\tau_2} \left( \int_{\Sigma_{\tau} \cap \mathcal{C}_{\tau_1}^{\tau_2}} | \Omega^k T^l F  |^2 \, d\mu_{\Sigma} \right)^{1/2} \, d\tau \right)^2\\ & +  \int_{\tau_1}^{\tau_2} \int_{\mcN_{v}^H} \frac{1}{(r-M)^{-1-\eta}} D^2 |\Omega^k T^l F |^2 d\omega dv du+\int_{\tau_1}^{\tau_2} \int_{\mcN_{u}^I} r^{1+\eta} |\Omega^k T^l F |^2 d\omega dv du  ,
\end{split}
\end{equation}
for any $\tau_1 < \tau_2$ and any $\eta > 0$.

\subsection{$(r-M)^{-p}$-weighted estimates and $r^p$-weighted estimates}
From \cite{paper4} we have the following $(r-M)^{-p}$-weigthed estimates at the horizon and the $r^p$-weighted estimates at infinity that can be summarized in the following propositions that will be used to show decay for these weighted energies later in the article. We define the following quantities:
$$ \Phi_0^H \doteq \frac{2r}{D} \underline{L} \phi_0 , \quad \Phi_{\geq 1}^H \doteq \frac{2r}{D} \underline{L} \phi_{\geq 1} , \quad \Phi_0^I \doteq \frac{2r^2}{D} L \phi_0, \quad \Phi_{\geq 1}^I \doteq \frac{2r^2}{D} L \phi_{\geq 1} . $$
We have the following proposition for $\psi_0$ (with $\phi_0 = r \psi_0$):
\begin{proposition}\label{prop:rp05}
Let $\psi_0$ be the spherically symmetric part of a solution $\psi$ of \eqref{nwf} which satisfies equation \eqref{nwf0}, and let $\phi \doteq r \psi_0$. For any $\tau$, $\tau_1$, $\tau_2$ with $\tau_1 < \tau_2$, $p\in (0,3)$, and any $l \in \mathbb{N}$ for the quantities
\begin{equation}\label{def:enp0I}
I_{0,l}^p (\tau ) = \int_{\mcN_{\tau}^H} (r-M)^{-p}  (  \underline{L} T^l \phi_0 )^2 \, d\omega du + \int_{\Sigma_{\tau} \setminus ( \mcN_{\tau}^H \cup \mcN_{\tau}^I )} J^T [ T^l \psi_0 ] \cdot \textbf{n}_{\Sigma_{\tau}} \, d\mu_{\Sigma_{\tau}}  +  \int_{\mcN_{\tau}^I} r^p (  L T^l \phi_0 )^2 \, d\omega dv
\end{equation}
we have that
\begin{equation}\label{est:rp05}
\begin{split}
I_{0,l}^p (\tau_2 ) + \int_{\tau_1}^{\tau_2} & I_{0,l}^{p-1} (\tau' ) \, d\tau' \lesssim I_{0,l}^p (\tau_1 ) +\int_{\tau_1}^{\tau_2} \int_{\mcN_v^H} \frac{1}{(r-M)^{1+\eta}} D^2 | T^l F_0 |^2 \, d\omega du dv+ \int_{\tau_1}^{\tau_2} \int_{\mcN_u^I} r^{1+\eta} | T^l F_0 |^2 \, d\omega dv du  \\ + &  \left| \int_{\tau_1}^{\tau_2} \int_{\mcN_u^I} (r-M)^{-p} ( \underline{L} T^l \phi_0 ) \cdot \frac{Dr}{4} (T^l F_0 ) \, d\omega dv du \right| + \left| \int_{\tau_1}^{\tau_2} \int_{\mcN_v^H} r^p ( L T^l \phi_0 ) \cdot (T^l F_0 ) \, d\omega du dv \right| \\ + &  \int_{\mcC_{\tau_1}^{\tau_2}} | T^{l} F_0 |^2 \, d\mu_{\mcC} ,
\end{split}
\end{equation}
for any $\eta > 0$.
\end{proposition}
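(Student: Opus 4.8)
The plan is to derive the hierarchy by applying the standard $r^p$-weighted method of Dafermos--Rodnianski near infinity and its $(r-M)^{-p}$-weighted analogue near the horizon (both taken from \cite{paper4}), and then glueing the two regimes together using the energy estimates already at our disposal. First I would reduce to the case $l=0$, since $T$ commutes with $\Box_{g_M}$ and with the vector fields $L$, $\underline{L}$ appearing in the definition of $I_{0,l}^p$; the inhomogeneity $F_0$ is simply replaced by $T^l F_0$ throughout. Working with $\phi_0 = r\psi_0$, the wave equation \eqref{nwf0} becomes, in double null coordinates, an equation of the schematic form $L\underline{L}\phi_0 = -\tfrac{D'}{2}\cdot(\text{lower order in }\phi_0) + \tfrac{D r}{4} F_0$ with no angular term (spherical symmetry), which is exactly the structure that makes the weighted multiplier computation close.

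For the region near infinity $\mcN_u^I$, $\mathcal{B}_{\tau_1}^{\tau_2}$, I would multiply the equation for $\phi_0$ by $r^p L\phi_0$ (equivalently, contract $J^{r^p L}$ with the appropriate current), integrate over $\mathcal{B}_{\tau_1}^{\tau_2}$, and integrate by parts in $v$ along the null direction. The boundary terms on $\mcN_{\tau_2}^I$ and $\mcN_{\tau_1}^I$ give the $r^p (L T^l\phi_0)^2$ fluxes; the bulk term produces $\int r^{p-1}(L\phi_0)^2$ with a good sign for $p<3$ (the threshold $p=3$ is exactly where the $r^{p-1}$ weight fails to be integrable against the zeroth order term, which is why the range is $(0,3)$ and not larger); and the inhomogeneous contribution is the term $\int_{\mcN_u^I} r^p (L T^l\phi_0)\cdot (T^l F_0)$. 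Symmetrically, near the horizon I multiply by $(r-M)^{-p}\underline{L}\phi_0$ and integrate by parts in $u$: the flux terms $(r-M)^{-p}(\underline{L}T^l\phi_0)^2$ on $\mcN^H_{\tau_i}$ appear, the bulk term gives $\int_{\mathcal{A}}(r-M)^{-p+1}(\underline{L}\phi_0)^2$ with favorable sign since near $r=M$ one has $D \sim (r-M)^2$ so the weight behaves like $(r-M)^{1-p}$ which pairs correctly, and the error term is $\int_{\mcN_v^H}(r-M)^{-p}(\underline{L}T^l\phi_0)\cdot \tfrac{Dr}{4}(T^l F_0)$. Both computations leave over boundary terms on the fixed radii $r=r_0$ and $r=r_1$ where the weights are $\simeq 1$; these, together with the intermediate region $\mcC_{\tau_1}^{\tau_2}$, are absorbed using the Morawetz estimate \eqref{est:mor0} of Proposition~\ref{prop:mor0}, which also supplies the $J^T$-flux term on $\Sigma_\tau\setminus(\mcN^H_\tau\cup\mcN^I_\tau)$ that sits inside $I_{0,l}^p(\tau)$ and produces the $\int_{\mcC}|T^l F_0|^2$ term on the right. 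The $r^{\pm(1+\eta)}$-weighted $|F_0|^2$ spacetime integrals near $\mcN^I$ and $\mcN^H$ arise when one Cauchy--Schwarzes the part of the inhomogeneity that multiplies the zeroth-order Morawetz quantity rather than the null derivatives, exactly as in Remark~\ref{rem:inhommor} and the improved Morawetz estimate.

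The one genuine subtlety — the main obstacle — is the matching of the two weighted regions across $\mcC_{\tau_1}^{\tau_2}$, together with the precise bookkeeping of the zeroth-order term $D(T^l\phi_0)^2/r^4$: one must check that the bulk terms produced by the $r^p$ and $(r-M)^{-p}$ multipliers, when restricted to $\mcC$ where the weights are bounded above and below, are controlled by the left-hand side of the Morawetz estimate \eqref{est:mor0} \emph{without} losing a derivative (this is where sphericall symmetry is essential, since there is no trapping for $\psi_0$, unlike the $\psi_{\geq1}$ case which needs \eqref{est:mor1}--\eqref{est:mor3}). I would also be careful that the error terms involving $\phi_0$ times $\underline{L}\phi_0$ or $L\phi_0$ with weights $\mathcal{O}((r-M)^{3})$ near the horizon — coming from the first-order term $D'\phi_0$ in the equation for $\phi_0$ — are absorbed: after Cauchy--Schwarz one gets $(r-M)^{-p}(\underline{L}\phi_0)^2$ (absorbable into the flux, for $\epsilon$ small or by a Grönwall-type argument in $\tau$) and $(r-M)^{4+p}\phi_0^2$ which is dominated by the zeroth-order Morawetz term $D(r-M)^{-4}\phi_0^2\,r^2\,d\mu$ near $r=M$ for $p<3$. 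Once these absorptions are verified, adding the infinity-region and horizon-region inequalities and invoking \eqref{est:mor0} for the interior yields precisely \eqref{est:rp05}.
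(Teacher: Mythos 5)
The paper itself gives no proof of this proposition -- it is imported wholesale from \cite{paper4} -- and your proposal reconstructs exactly the argument used there: the $r^{p}L$ multiplier near infinity and its Couch--Torrence mirror $(r-M)^{-p}\underline{L}$ near the horizon, with the fluxes on $\mcN^{I}_{\tau}$, $\mcN^{H}_{\tau}$ and the bulk terms $r^{p-1}(L\phi_0)^2$, $(r-M)^{-p+1}(\underline{L}\phi_0)^2$ arising from the null integrations by parts, the $r=r_0,r_1$ boundary terms and the middle region handled by the Morawetz estimate \eqref{est:mor0} (which also supplies the weighted $|F_0|^2$ and $\int_{\mcC}|T^lF_0|^2$ errors), so the approach is essentially the same. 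Two details to tighten in the write-up: the threshold $p<3$ is not an integrability issue for the weight $r^{p-1}$ but comes from the potential term $-\tfrac{DD'}{r}\phi_0\sim -2Mr^{p-3}\phi_0\,L\phi_0$, whose integration by parts (or Hardy's inequality, with constant $(3-p)^{-2}$) produces a bulk contribution of size $(3-p)r^{p-4}\phi_0^{2}$ that is absorbed into the good term $p\,r^{p-1}D(L\phi_0)^2$ only for $p<3$ after choosing $r_1$ large; and near the horizon the Cauchy--Schwarz of the $O((r-M)^{3-p})\phi_0\,\underline{L}\phi_0$ term yields $(r-M)^{5-p}\phi_0^{2}$ (not $(r-M)^{4+p}$), which is dominated by the Morawetz zeroth-order term $D\psi_0^{2}/r^{4}\sim (r-M)^{2}\phi_0^{2}$ precisely for $p<3$, with no smallness in $\epsilon$ or Gr\"onwall argument needed since the equation treated here is linear in $\psi_0$ with a general inhomogeneity $F_0$.
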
 
We also have the following proposition for $\Phi_0^H$ and $\Phi_0^I$:
\begin{proposition}\label{prop:rp06}
Let $\psi_0$ be the spherically symmetric part of a solution $\psi$ of \eqref{nwf} which satisfies equation \eqref{nwf0}, and let $\phi \doteq r \psi_0$. For any $\tau$, $\tau_1$, $\tau_2$ with $\tau_1 < \tau_2$, $p \in (0,1)$, and any $l \in \mathbb{N}$ for the quantities
\begin{equation}\label{def:enp0II}
II_{0,l}^p (\tau ) = \int_{\mcN_{\tau}^H} (r-M)^{-p}  (  \underline{L} T^l \Phi_0^H )^2 \, d\omega du + \int_{\Sigma_{\tau} \setminus ( \mcN_{\tau}^H \cup \mcN_{\tau}^I )} J^T [ T^{l+1} \psi_0 ] \cdot \textbf{n}_{\Sigma_{\tau}} \, d\mu_{\Sigma_{\tau}} +   \int_{\mcN_{\tau}^I} r^p  (  L T^l \Phi_0^I )^2 \, d\omega dv 
\end{equation}
we have that
\begin{equation}\label{est:rp06}
\begin{split}
II_{0,l}^p (\tau_2 ) + \int_{\tau_1}^{\tau_2} II_{0,l}^{p-1} (\tau' ) \, d\tau' & \lesssim II_{0,l}^p (\tau_1 ) + \int_{\tau_1}^{\tau_2} \int_{\mcN_v^H} \frac{1}{(r-M)^{1+\eta}} D^2 | T^{l+1} F_0 |^2 \, d\omega du dv \\ & + \int_{\tau_1}^{\tau_2} \int_{\mcN_u^I} r^{1+\eta} | T^{l+1} F_0 |^2 \, d\omega dv du\\ & + \left| \int_{\tau_1}^{\tau_2} \int_{\mcN_v^H}  (r-M)^{-p} (\underline{L} T^l \Phi_0^H ) \cdot \underline{L} ( r^2 T^l F_0 ) \, d\omega du dv \right| \\ & + \left| \int_{\tau_1}^{\tau_2} \int_{\mcN_u^I}  r^{p}  ( L T^l \Phi_0^I ) \cdot L ( r^3 T^l F_0 ) \, d\omega dv du \right| \\ & + \int_{\mcC_{\tau_1}^{\tau_2}} | T^{l+1} F_0 |^2 \, d\mu_{\mcC} .
\end{split}
\end{equation}
\end{proposition}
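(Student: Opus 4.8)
The plan is to derive the equations satisfied by the commuted quantities $\Phi_0^H$ and $\Phi_0^I$ and then to run, on each of them, the $(r-M)^{-p}$-- and $r^p$--weighted multiplier schemes already used for $\phi_0$ in Propositions \ref{prop:mor0}--\ref{prop:rp05} (equivalently, the linear scheme of \cite{paper4}), carefully bookkeeping the new inhomogeneous contributions. In double null coordinates, \eqref{nwf0} reads $L\underline{L}\phi_0 = -\tfrac{DD'}{4r}\phi_0 - \tfrac{Dr}{4}F_0$ with $\phi_0 = r\psi_0$. Applying $\tfrac{2r}{D}\underline{L}$ to this identity and rewriting $\underline{L}\phi_0 = \tfrac{D}{2r}\Phi_0^H$ yields, since $[\partial_u,\partial_v]=0$, an equation of the schematic form $L\underline{L}\Phi_0^H = V^H\,\Phi_0^H + a^H\,\underline{L}\Phi_0^H + W^H\,\phi_0 - \underline{L}\!\left(\tfrac{r^2}{2}F_0\right)$, with $a^H = \tfrac{D}{2r}-\tfrac{D'}{2}=\tfrac{(r-M)(r-3M)}{2r^3}<0$ on $\mathcal{A}$ and $V^H, W^H = \mathcal{O}((r-M)^2)$ near $\mathcal{H}^+$; applying $\tfrac{2r^2}{D}L$ and rewriting $L\phi_0 = \tfrac{D}{2r^2}\Phi_0^I$ yields the mirror equation $L\underline{L}\Phi_0^I = V^I\,\Phi_0^I + a^I\,L\Phi_0^I + W^I\,\phi_0 - L\!\left(\tfrac{r^3}{2}F_0\right)$, with $a^I = -\tfrac{D}{r}+\tfrac{D'}{2}=\tfrac{(r-M)(2M-r)}{r^3}<0$ on $\mathcal{B}$ and $V^I, W^I = \mathcal{O}(r^{-2})$ near $\mathcal{I}^+$. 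These are exactly the commuted equations of \cite{paper4}, now carrying the sources $\tfrac12\underline{L}(r^2 F_0)$ and $\tfrac12 L(r^3 F_0)$ together with the $\phi_0$--couplings $W^{H,I}\phi_0$.

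I would then apply the multiplier $(r-M)^{-p}\underline{L}\Phi_0^H$ to the $\Phi_0^H$--equation and integrate by parts over $\mathcal{A}_{\tau_1}^{\tau_2}$, and the multiplier $r^p L\Phi_0^I$ to the $\Phi_0^I$--equation and integrate over $\mathcal{B}_{\tau_1}^{\tau_2}$. Using $\partial_v(r-M)^{-p} = -\tfrac p2 D(r-M)^{-p-1}$ and $D\sim(r-M)^2/M^2$ near $\mathcal{H}^+$, the first integration produces the flux $\int_{\mcN_{\tau_2}^H}(r-M)^{-p}(\underline{L}\Phi_0^H)^2$ and the favourably signed bulk $\sim\int_{\mathcal{A}}(r-M)^{-p+1}(\underline{L}\Phi_0^H)^2$ (the $\mcN^H$--parts of $II_{0,l}^p(\tau_2)$ and of $\int_{\tau_1}^{\tau_2}II_{0,l}^{p-1}$), reinforced by the first-order term since $a^H<0$, together with a signed zeroth-order bulk coming from $\partial_u(V^H(r-M)^{-p})$ and a favourably signed flux on $\mathcal{H}^+$; the $r^p$ integration is mirror-symmetric and gives the $\mcN^I$--flux and the bulk $\int_{\mathcal{B}}r^{p-1}(L\Phi_0^I)^2$. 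The $T$--flux piece of $II_{0,l}^p$ is recovered by adding \eqref{basicimp} applied to $T^{l+1}\psi_0$. The key point is that the sources are paired with the multipliers \emph{without} integrating by parts off $F_0$, so that they produce exactly the two mixed integrals $\int\!\int_{\mcN_v^H}(r-M)^{-p}\,\underline{L}T^l\Phi_0^H\cdot\underline{L}(r^2 T^l F_0)$ and $\int\!\int_{\mcN_u^I}r^p\,LT^l\Phi_0^I\cdot L(r^3 T^l F_0)$ on the right-hand side, while the residual $F_0$--contributions (from re-expressing the couplings and from the Morawetz input in the transition region) sit inside $\int\!\int_{\mcN_v^H}(r-M)^{-1-\eta}D^2|T^{l+1}F_0|^2$, $\int\!\int_{\mcN_u^I}r^{1+\eta}|T^{l+1}F_0|^2$ and $\int_{\mcC}|T^{l+1}F_0|^2$. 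The $\phi_0$--couplings $W^{H,I}\phi_0$ are dominated, by Cauchy--Schwarz, by the good bulk terms above and by Proposition \ref{prop:rp05} applied with the same $p$ (legitimate since $p<1<3$); the boundary terms at $r=r_0$ and $r=r_1$ are controlled by the $T$--flux through $\Sigma_\tau\cap\mcC$ and by the Morawetz estimates of Propositions \ref{prop:mor0}--\ref{prop:mor3}, producing the remaining $\mcC$--integral; and the passage from $l=0$ to general $l$ comes from running the argument on $T^l\phi_0$, whose source is $T^l F_0$, the commutators $[T^l,\tfrac{2r}{D}\underline{L}]$ and $[T^l,\tfrac{2r^2}{D}L]$ contributing only terms already bounded by $I^p_{0,l'}$ with $l'\le l$.

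The genuinely new difficulty --- beyond the linear analysis of \cite{paper4}, where these estimates hold with zero source --- is precisely this last bookkeeping: the commutation generates sources carrying an extra $r$--weight \emph{and} an $L$-- or $\underline{L}$--derivative on $F_0$, and one must avoid integrating by parts off $F_0$ (which would cost either an extra derivative or a worse weight) so as to land exactly on the displayed right-hand side; moreover the Morawetz input over $\mcC$ consumes the single $T$--derivative one is allowed (hence $T^{l+1}F_0$ rather than $T^{l+2}F_0$), which forces one to feed in the \emph{improved} Morawetz estimate of Proposition \ref{prop:morimproved} in place of the plain ones. The restriction to $p\in(0,1)$, by contrast, is inherited directly from \cite{paper4}: it reflects the non-decay of $\partial_r\psi_0$ along $\mathcal{H}^+$ (equivalently, the nonvanishing of the conserved Aretakis charge), and it cannot be relaxed, since the commuted estimate at $p=1$ is genuinely false already for the linear wave equation, as established in \cite{aag1}.
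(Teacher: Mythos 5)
Your proposal is correct and follows essentially the route the paper itself intends: Proposition \ref{prop:rp06} is the inhomogeneous version of the commuted hierarchy of \cite{paper4}, i.e.\ exactly the $(r-M)^{-p}\underline{L}$-- and $r^pL$--multiplier argument you reconstruct for $\Phi_0^H$ and $\Phi_0^I$, with your coefficients $a^H=\tfrac{(r-M)(r-3M)}{2r^3}$, $a^I=\tfrac{(r-M)(2M-r)}{r^3}$ (note the paper's displayed coefficient in \eqref{eq:yder} has a sign typo; yours is the correct one, consistent with \eqref{eq:y}), the sources kept unpaired so as to produce precisely the two mixed integrals, and the $\phi_0$--couplings and transition-region boundary terms absorbed via Hardy, the uncommuted hierarchy and the improved Morawetz input. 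The only blemishes are cosmetic: the sign of the $\tfrac{Dr}{4}F_0$ source is immaterial since the mixed terms enter in absolute value, and $[T^l,\tfrac{2r}{D}\underline{L}]=[T^l,\tfrac{2r^2}{D}L]=0$ (the coefficients depend on $r$ alone), so the commutator terms you budget for do not actually arise.
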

Analogously we have the following for $\phi_{\geq 1}$:
\begin{proposition}\label{prop:rp07}
Let $\psi_{\geq 1}$ be the non-spherically symmetric part of a solution $\psi$ of \eqref{nwf} which satisfies equation \eqref{nwf1}, and let $\phi \doteq r \psi_0$. For any $\tau$, $\tau_1$, $\tau_2$ with $\tau_1 < \tau_2$, $p\in (0,2)$, any $k \leq 5$, any $\eta > 0$, and any $l \in \mathbb{N}$ for the quantities
\begin{equation}\label{def:enp1}
I_{\geq 1 ,k,l}^p (\tau ) = \int_{\mcN_{\tau}^H} (r-M)^{-p}  (  \underline{L} \Omega^k T^l \phi_{\geq 1} )^2 \, d\omega du + \int_{\Sigma_{\tau} \setminus ( \mcN_{\tau}^H \cup \mcN_{\tau}^I ) } J^T [ \Omega^k T^l \psi_{\geq 1} ] \cdot \textbf{n}_{\Sigma} \, d\mu_{\Sigma} + \int_{\mcN_{\tau}^I} r^p  (  L \Omega^k T^l \phi_{\geq 1} )^2 \, d\omega dv 
\end{equation}
we have that
\begin{equation}\label{est:rp07}
\begin{split}
I_{\geq 1 ,k,l}^p (\tau_2 ) + & \int_{\tau_1}^{\tau_2} I_{\geq 1,k,l}^{p-1} (\tau' ) \, d\tau' + (2-p) \int_{\tau_1}^{\tau_2} \int_{\mcN_{v}^H} (r-M)^{-p+3} | \slashed{\nabla} \Omega^k T^l \phi_{\geq 1} |^2 \, d\omega du dv  \\ \lesssim &  I_{\geq 1 ,k,l}^p (\tau_1 ) + \int_{\tau_1}^{\tau_2} \int_{\mcN_v^H} \frac{1}{(r-M)^{1+\eta}} D^2 | \Omega^k T^l F_{\geq 1} |^2 \, d\omega du dv\\ + &  \left| \int_{\tau_1}^{\tau_2} \int_{\mcN_v^H} (r-M)^{-p} ( \underline{L} \Omega^k T^l \phi_{\geq 1} ) \cdot \frac{Dr}{4} (\Omega^k T^l F_{\geq 1} ) \, d\omega du dv \right| \\ + &  \int_{\tau_1}^{\tau_2} \int_{\mcN_u^I} r^{1+\eta} | \Omega^k T^l F_{\geq 1} |^2 \, d\omega dv du + \int_{\mathcal{C}_{\tau_1}^{\tau_2}} | \Omega^k T^{l+2} F_{\geq 1} |^2 \, d\mu_{\mathcal{C}} + \sup_{\tau' \in [\tau_1, \tau_2]} \int_{\Sigma_{\tau'} \cap \mcC_{\tau_1}^{\tau_2}} |\Omega^k T^{l+1} F_{\geq 1} |^2 \, d\mu_{\mcC} .
\end{split}
\end{equation}
\end{proposition}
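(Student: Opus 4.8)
This is the extremal Reissner--Nordstr\"om $r^p$/$(r-M)^{-p}$ hierarchy established for the linear equation in \cite{paper4}, so I only indicate the structure of the vector field argument. Since $\Omega$ and $T$ are Killing and commute with $\Box_{g_M}$, it suffices to treat $k=l=0$. Writing $\phi = r\psi_{\geq 1}$ and passing to the double null coordinates $(u,v,\omega)$, equation \eqref{nwf1} becomes
\begin{equation}\label{sketch:phieq}
L\underline{L}\phi \;=\; \frac{D}{4}\,\slashed{\Delta}\phi \;-\; \frac{DD'}{4r}\,\phi \;-\; \frac{Dr}{4}\,F_{\geq 1},
\qquad D' = \frac{2M(r-M)}{r^3},
\end{equation}
where the zeroth order potential $\tfrac{DD'}{4r} = \tfrac{M(r-M)^3}{2r^6}$ is \emph{nonnegative} on $[M,\infty)$ and vanishes cubically at $\mathcal{H}^+$ --- this sign is what makes the scheme work. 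The plan is to run three separate multiplier estimates: an $r^p$-estimate on $\{r\geq r_1\}$, its Couch--Torrence mirror image (the $(r-M)^{-p}$-estimate) on $\{r\leq r_0\}$, and, in the fixed compact region $\mcC$ around the photon sphere $\{r=2M\}$, the Morawetz estimates of Propositions \ref{prop:mor1}--\ref{prop:mor2}; these are glued by cutoff functions, the cutoff-derivative terms being supported in $\mcC$.

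For the infinity estimate I would contract \eqref{sketch:phieq} with $\chi_{\{r\geq r_1\}}\,r^p\,L\phi$ and integrate over $\mcR_{\tau_1}^{\tau_2}\cap\mathcal{B}$. The principal term $\tfrac12\,r^p\,\underline{L}\big((L\phi)^2\big)$, after integration by parts in $u$ (using $\underline{L}r=-D/2$), produces the fluxes $r^p(L\phi)^2$ over $\mcN^I_{\tau_1},\mcN^I_{\tau_2}$ together with the nonnegative bulk $\tfrac{pD}{4}\,r^{p-1}(L\phi)^2$; the angular term, after integrating by parts on $\mathbb{S}^2$ and then in $u$, contributes a nonnegative $(2-p)$-multiple of $r^{p-1}|\slashed{\nabla}\phi|^2$ (which I discard) plus a flux controlled by the $T$-flux; the potential term has a favorable sign; and the source term, by Cauchy--Schwarz against the bulk $r^{p-1}(L\phi)^2$, contributes a quantity quadratic in $F_{\geq 1}$. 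The near-horizon estimate is the mirror image: contract \eqref{sketch:phieq} with $\chi_{\{r\leq r_0\}}\,(r-M)^{-p}\,\underline{L}\phi$ and integrate over $\mcR_{\tau_1}^{\tau_2}\cap\mathcal{A}$, using $\underline{L}(r-M)=-D/2$ and $L\big((r-M)^{-p}\big) = -\tfrac{p}{2r^2}(r-M)^{-p+1}$. The principal term then yields the fluxes $(r-M)^{-p}(\underline{L}\phi)^2$ over $\mcN^H_{\tau_1},\mcN^H_{\tau_2}$ and the nonnegative bulk $\tfrac{p}{4r^2}(r-M)^{-p+1}(\underline{L}\phi)^2$ (the $I^{p-1}$-bulk at the horizon); the angular term, via $\tfrac{D}{4}(r-M)^{-p} = \tfrac{(r-M)^{2-p}}{4r^2}$ followed by an integration by parts in $u$ on the sphere, produces a $(2-p)$-multiple of $(r-M)^{-p+3}|\slashed{\nabla}\phi|^2$ (with $r$-dependent but bounded coefficient) --- this accounts for the factor $(2-p)$ in \eqref{est:rp07} and forces the restriction $p<2$; the cubically vanishing potential produces $\sim (r-M)^{3-p}\phi\cdot\underline{L}\phi$, which Cauchy--Schwarz absorbs into the $(r-M)^{-p+1}(\underline{L}\phi)^2$ bulk together with the zeroth order Morawetz term $D\,r^{-4}(\psi_{\geq 1})^2$; and the source term $-\tfrac{Dr}{4}F_{\geq 1}$ gives the genuinely borderline cross-term $\int(r-M)^{-p}(\underline{L}\phi)\cdot\tfrac{Dr}{4}F_{\geq 1}$, which has no matching bulk for $p$ up to $2$ and is therefore kept explicitly on the right-hand side.

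It remains to add the improved flux estimate \eqref{basicimp} together with the spacetime Morawetz estimates \eqref{est:mor2}--\eqref{est:mor3}. Their bulk terms --- $\tfrac{(T\psi_{\geq1})^2}{r^{1+\eta}}$, $D^{5/2}\tfrac{(Y\psi_{\geq1})^2}{r^{1+\eta}}$, $\sqrt{D}\,r^{-1}|\slashed{\nabla}\psi_{\geq1}|^2$ and $D\,r^{-4}(\psi_{\geq1})^2$ over all of $\mcR$, and the $(r-M)$-weighted bulk of Proposition \ref{prop:morimproved} near the horizon --- absorb the cutoff-derivative errors in $\mcC$, the middle-region boundary terms at $r=r_0,r_1$, and the remaining leftover pieces of the two weighted identities, while their right-hand sides supply precisely the $\int\tfrac{D^2}{(r-M)^{1+\eta}}|F_{\geq1}|^2$, $\int r^{1+\eta}|F_{\geq1}|^2$ and the $\mcC$-supported source terms of \eqref{est:rp07}; the two extra $T$-derivatives on the $\mcC$-terms, $\Omega^k T^{l+2}F_{\geq1}$ and $\Omega^k T^{l+1}F_{\geq1}$, come from the degeneracy of the Morawetz estimate at the photon sphere. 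Combining everything, and using that the cutoffs and the weights are bounded above and below on the relevant integration regions, yields \eqref{est:rp07}. The main obstacle is precisely this last gluing step: the weighted multiplier vector fields $r^p L$ and $(r-M)^{-p}\underline{L}$ both degenerate as one approaches the trapped set $\{r=2M\}$, so in $\mcC$ one is forced onto the Morawetz estimates and one must check carefully that the resulting loss of (at most two) $T$-derivatives is consistent across the whole hierarchy; by contrast, the individual multiplier computations near the horizon and near infinity are routine once the sign of the potential in \eqref{sketch:phieq} and the range $p<2$ are taken into account.
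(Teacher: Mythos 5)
The paper itself does not prove this proposition: it is quoted (with inhomogeneous terms) from the linear theory of \cite{paper4}, so the only possible comparison is with the standard derivation there, and your sketch is exactly that derivation — the $r^p$-multiplier near infinity, its $(r-M)^{-p}$ mirror near the horizon, glued through the photon-sphere region by the Morawetz estimates of Propositions \ref{prop:mor1}--\ref{prop:mor2}, \ref{prop:morimproved} and \eqref{basicimp}, which is where the extra $T$-derivatives on the $\mcC$-terms come from. Your computations of the key structural features are right: the bulk $\tfrac{p}{4r^2}(r-M)^{1-p}(\underline{L}\phi)^2$ from differentiating the weight, the $(2-p)$-multiple of $(r-M)^{3-p}|\slashed{\nabla}\phi_{\geq 1}|^2$ forcing $p<2$, the sign and cubic vanishing of the potential, and the horizon source term kept explicitly as the cross term in \eqref{est:rp07}.

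One bookkeeping caveat: near infinity your treatment of the source ("Cauchy--Schwarz against the bulk $r^{p-1}(L\phi)^2$") does not literally land on the right-hand side as stated, since it produces an $r^{p+1}$-weighted (with the $\tfrac{Dr}{4}$ factor, $r^{p+3}$-weighted) $|F_{\geq 1}|^2$ bulk, which for $p$ close to $2$ is not controlled by the $\int r^{1+\eta}|F_{\geq 1}|^2$ term appearing in \eqref{est:rp07}. To be consistent you should either retain the infinity cross term $\bigl|\int r^p (L\Omega^kT^l\phi_{\geq 1})\cdot(\Omega^kT^lF_{\geq 1})\bigr|$ explicitly, as is done in the spherically symmetric analogue Proposition \ref{prop:rp05}, or record the stronger $r^{p+1}$-weighted source term — which is precisely the quantity that the paper later controls through the bootstrap assumption \eqref{B4}. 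This is a presentational mismatch with the stated inequality rather than a flaw in the mechanism of your argument, and apart from it the sketch is sound.
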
 
Finally we have the following for $\Phi_{\geq 1}^H$ and $\Phi_{\geq 1}^I$:
\begin{proposition}\label{prop:rp08}
Let $\psi_{\geq 1}$ be the non-spherically symmetric part of a solution $\psi$ of \eqref{nwf} which satisfies equation \eqref{nwf1}, and let $\phi \doteq r \psi_0$. For any $\tau$, $\tau_1$, $\tau_2$ with $\tau_1 < \tau_2$, $p \in (0,2]$ and any $l \in \mathbb{N}$ for the quantities
\begin{equation}\label{def:enp11}
II_{\geq 1 , k, l}^p (\tau ) = \int_{\mcN_{\tau}^H} (r-M)^{-p}  (  \underline{L} \Omega^k T^l \Phi_{\geq 1}^H )^2 \, d\omega du + \int_{\Sigma_{\tau} \setminus ( \mcN_{\tau}^H \cup \mcN_{\tau}^I ) } J^T [ \Omega^k T^{l+1} \psi_{\geq 1} ] \cdot \textbf{n}_{\Sigma} \, d\mu_{\Sigma} +   \int_{\mcN_{\tau}^I} r^p  (  L \Omega^k T^l \Phi_{\geq 1}^I )^2 \, d\omega dv
\end{equation}
we have that
\begin{equation}\label{est:rp08}
\begin{split}
II_{\geq 1 ,k,l}^p (\tau_2 ) + & \int_{\tau_1}^{\tau_2} II_{\geq 1 ,k,l}^{p-1} (\tau' ) \, d\tau' + \int_{\tau_1}^{\tau_2} \int_{\mcN_{v}^H} (r-M)^{-p+3} | \slashed{\nabla} \Omega^k T^l \Phi_{\geq 1}^H |^2 \, d\omega du dv \\ \lesssim & II_{\geq 1 ,k,l}^p (\tau_1 ) + \int_{\tau_1}^{\tau_2} \int_{\mcN_v^H} \frac{1}{(r-M)^{1+\eta}} D^2 | \Omega^k T^{l+1} F_{\geq 1} |^2 \, d\omega du dv+ \int_{\tau_1}^{\tau_2} \int_{\mcN_u^I} r^{1+\eta} | \Omega^k T^{l+1} F_{\geq 1} |^2 \, d\omega dv du \\ & + \left| \int_{\tau_1}^{\tau_2} \int_{\mcN_v^H}  (r-M)^{-p} (\underline{L} \Omega^k T^{l} \Phi_{\geq 1}^H ) \cdot \underline{L} ( r^2 \Omega^k T^{l+1} F_{\geq 1} ) \, d\omega du dv \right| \\ & + \int_{\tau_1}^{\tau_2} \int_{\Sigma_{\tau} \setminus \mcN_{\tau}^H} r^{1+\eta} | \Omega^k T^{l+1} F_{\geq 1} |^2 \, d\omega dv du + \int_{\mathcal{C}_{\tau_1}^{\tau_2}} | \Omega^k T^{l+3} F_{\geq 1} |^2 \, d\mu_{\mcC} \\ & + \sup_{\tau' \in [\tau_1, \tau_2]} \int_{\Sigma_{\tau'} \cap \mcC_{\tau_1}^{\tau_2}} |\Omega^k T^{l+2} F_{\geq 1} |^2 \, d\mu_{\Sigma_{\tau'}} .
\end{split}
\end{equation}
\end{proposition}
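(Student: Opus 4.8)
The plan is to derive \eqref{est:rp08} by running the weighted-multiplier ($r^p$-)hierarchy of \cite{paper4}, now applied to the inhomogeneous equation \eqref{nwf1}, while carefully propagating the contribution of the source $F_{\geq 1}$. The starting point is the equation for $\phi_{\geq 1}=r\psi_{\geq 1}$, which in double null coordinates has the schematic form $L\underline{L}\phi_{\geq 1}=\tfrac{D}{4}\slashed{\Delta}\phi_{\geq 1}-\tfrac{DD'}{4r}\phi_{\geq 1}-\tfrac{Dr}{4}F_{\geq 1}$, with $\tfrac{DD'}{4r}=\mathcal{O}((r-M)^{3})$ near the horizon and $\mathcal{O}(r^{-3})$ near infinity. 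I would commute this with $\Omega^k$ ($k\le 5$) and $T^l$ — harmless, since $\Omega$ and $T$ are Killing and respect the angular decomposition — and then with the weighted transversal operators $\tfrac{2r}{D}\underline{L}$ near the horizon and $\tfrac{2r^2}{D}L$ near infinity. The commuted quantities $\Phi_{\geq 1}^H=\tfrac{2r}{D}\underline{L}\phi_{\geq 1}$ and $\Phi_{\geq 1}^I=\tfrac{2r^2}{D}L\phi_{\geq 1}$ satisfy equations of the same type: the principal part is again $L\underline{L}(\cdot)=\tfrac{D}{4}\slashed{\Delta}(\cdot)+\dots$, the terms generated by the commutator are first order in $\Phi_{\geq 1}^{H/I}$ and $\phi_{\geq 1}$ with coefficients carrying strictly more $(r-M)$-decay near the horizon (resp.\ $r^{-1}$-decay near infinity) than the principal terms, and the commuted source enters in the form $\underline{L}(r^2\Omega^k T^{l+1}F_{\geq 1})$ near the horizon and $L(r^3\Omega^k T^{l+1}F_{\geq 1})$ near infinity — with one extra $T$-derivative, reflecting that in the hierarchy $\Phi_{\geq 1}^{H/I}$ sits at the $T^{l+1}\psi_{\geq 1}$ level, compare the $T$-flux in \eqref{def:enp11}.

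Next I would run the multiplier argument region by region. Near the horizon: multiply the $\Phi_{\geq 1}^H$-equation by $2(r-M)^{-p}\underline{L}\Omega^kT^l\Phi_{\geq 1}^H$ and integrate over $\mathcal{A}_{\tau_1}^{\tau_2}$. The $L\underline{L}$-term yields the flux $\int_{\mcN_{\tau_2}^H}(r-M)^{-p}(\underline{L}\Omega^kT^l\Phi_{\geq 1}^H)^2$ and the spacetime term $\int_{\mathcal{A}_{\tau_1}^{\tau_2}}(r-M)^{-p+1}(\underline{L}\Omega^kT^l\Phi_{\geq 1}^H)^2$, whose coefficient is favorable throughout $p\in(0,2]$; the angular term, after integration by parts on $\mathbb{S}^2$, contributes $\int_{\mathcal{A}_{\tau_1}^{\tau_2}}(r-M)^{-p+3}|\slashed{\nabla}\Omega^kT^l\Phi_{\geq 1}^H|^2$ on the left (it is here that one uses that we are on the $\ell\ge 1$ part, so that the spectral gap keeps the sign without a zeroth-order correction); the potential term and the commutator errors yield spacetime integrals of $\underline{L}\Phi_{\geq 1}^H$, $\phi_{\geq 1}$ and their angular derivatives against higher powers of $(r-M)$, which are absorbed into the bulk terms just produced or, after Cauchy--Schwarz, into the left-hand side of the Morawetz estimate \eqref{est:mor1}; and the source splits into the bilinear integral $\int_{\mathcal{A}_{\tau_1}^{\tau_2}}(r-M)^{-p}(\underline{L}\Omega^kT^l\Phi_{\geq 1}^H)\cdot\underline{L}(r^2\Omega^kT^{l+1}F_{\geq 1})$, which is kept, plus a remainder bounded by Cauchy--Schwarz against the horizon flux, producing $\int(r-M)^{-1-\eta}D^2|\Omega^kT^{l+1}F_{\geq 1}|^2$. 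The mirror computation near infinity, with multiplier $r^pL\Omega^kT^l\Phi_{\geq 1}^I$ over $\mathcal{B}_{\tau_1}^{\tau_2}$, gives the flux on $\mcN_{\tau_2}^I$, the favorable bulk $\int_{\mathcal{B}_{\tau_1}^{\tau_2}}r^{p-1}(L\Omega^kT^l\Phi_{\geq 1}^I)^2$, errors with better $r$-decay, and a source that, after one integration by parts in $L$ and Cauchy--Schwarz, is absorbed by the bulk together with $\int r^{1+\eta}|\Omega^kT^{l+1}F_{\geq 1}|^2$.

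It then remains to patch the three regions $\mathcal{A}_{\tau_1}^{\tau_2}$, $\mathcal{B}_{\tau_1}^{\tau_2}$ and the intermediate region $\mcC_{\tau_1}^{\tau_2}$ around the photon sphere, and to handle the fluxes across the interfaces $\{r=r_0\}$ and $\{r=r_1\}$. On the bounded-$r$ region the quantities $\Phi_{\geq 1}^{H/I}$ are comparable to ordinary first derivatives of $\phi_{\geq 1}$, so the interface fluxes are controlled by the $T$-energy of $\Omega^kT^{l+1}\psi_{\geq 1}$ and the improved Morawetz estimate of Proposition \ref{prop:morimproved}; for the middle region I would add a small multiple of the degenerate Morawetz estimates of Propositions \ref{prop:mor1} and \ref{prop:mor2} applied to $\Omega^kT^{l+1}\psi_{\geq 1}$, and it is precisely the loss of one and two $T$-derivatives at the photon sphere that forces the appearance of the inhomogeneous terms $\int_{\mcC_{\tau_1}^{\tau_2}}|\Omega^kT^{l+3}F_{\geq 1}|^2$ and $\sup_{\tau'}\int_{\Sigma_{\tau'}\cap\mcC_{\tau_1}^{\tau_2}}|\Omega^kT^{l+2}F_{\geq 1}|^2$ in \eqref{est:rp08}. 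Combining the three regional estimates, and using the favorable bulk terms to dominate the $II_{\geq 1,k,l}^{p-1}$ quantity, yields the claim.

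The step I expect to be the main obstacle is the bookkeeping of the commutator and potential errors produced in passing from the $\phi_{\geq 1}$-equation to the $\Phi_{\geq 1}^{H/I}$-equations: one must check that every such term — the first-order pieces $\sim D\,\underline{L}\phi_{\geq 1}\sim D^2r^{-1}\Phi_{\geq 1}^H$, the zeroth-order pieces $\sim D\phi_{\geq 1}$, their angular analogues, and the $r^{-1}$-decaying terms near infinity — carries enough extra $(r-M)$-weight, respectively $r^{-1}$-decay, to be absorbed by the favorable bulk terms of the current $p$-estimate, or else is already controlled by the lower-level quantity $II_{\geq 1,k,l}^{p-1}$ together with the Morawetz estimates, so that no weight ever leaves the admissible range $p\in(0,2]$. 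Closely related and equally delicate is the angular integration by parts near the horizon, which must simultaneously produce the correct sign for $\int(r-M)^{-p+3}|\slashed{\nabla}\Phi_{\geq 1}^H|^2$ and leave the other signs intact; this is where the precise definition $\Phi_{\geq 1}^H=\tfrac{2r}{D}\underline{L}\phi_{\geq 1}$ and the $\ell\ge 1$ spectral gap enter in an essential way.
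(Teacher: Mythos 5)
Your proposal takes essentially the route the paper intends: the paper does not actually prove Proposition \ref{prop:rp08} but imports it from the linear $(r-M)^{-p}$/$r^p$ hierarchies of \cite{paper4}, with inhomogeneous terms handled as in Remark \ref{rem:inhommor} and the photon-sphere derivative losses supplied by Propositions \ref{prop:mor1}--\ref{prop:mor2}, and your scheme of commuting with $\tfrac{2r}{D}\underline{L}$ and $\tfrac{2r^2}{D}L$, multiplying by $(r-M)^{-p}\underline{L}$ and $r^{p}L$, using the $\ell\geq 1$ Poincar\'e gap for the signed angular bulk term and extended $p$-range, and patching the regions through the $T$-commuted Morawetz estimates is exactly that derivation. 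The points you gloss over are the same ones the paper leaves implicit (for instance the precise $T$-level of the bilinear horizon source, which the commutation naturally produces at the level $\underline{L}(r^2\Omega^kT^{l}F_{\geq 1})$ as in Proposition \ref{prop:rp06}, and the absorption of the near-infinity source into the quadratic $r^{1+\eta}|\Omega^kT^{l+1}F_{\geq 1}|^2$ terms), so there is no genuine gap relative to the argument the paper relies on.
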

\begin{remark}
The estimates of the last Proposition \ref{prop:rp08} hold also without the need to restrict to higher angular frequencies if $l \geq 1$.
\end{remark}

We note that we also have separate $(r-M)^{-p}$ and $r^p$ weighted estimates for any $\tau_1$, $\tau_2$ with $\tau_1 < \tau_2$, any $l \in \mathbb{N}$ and any $\eta > 0$. For $\phi_0$ close to the horizon we have that:
\begin{equation}\label{est:rp01}
\begin{split}
\int_{\mcN_{\tau_2}^H} & (r-M)^{-p}  (  \underline{L} T^l \phi_0 )^2 \, d\omega du +  \int_{\tau_1}^{\tau_2} \int_{\mcN_v^H}  (r-M)^{-p+1} (  \underline{L} T^l \phi_0 )^2 \, d\omega du dv \\ \lesssim &  \int_{\mcN_{\tau_1}^H} (r-M)^{-p} ( \underline{L} T^l \phi_0 )^2 \, d\omega du + \int_{\Sigma_{\tau_1}} J^T [T^{l} \psi_0 ] \cdot \textbf{n}_{\Sigma_{\tau_1}} \, d\mu_{\Sigma_{\tau_1}} \\ &  + \left| \int_{\tau_1}^{\tau_2} \int_{\mcN_v^H}  (r-M)^{-p} (\underline{L} T^l \phi_0 ) \cdot \frac{Dr}{4}  ( T^l F_0 ) \, d\omega du dv \right| \\ & + \int_{\tau_1}^{\tau_2} \int_{\mcN_v^H} \frac{1}{(r-M)^{1+\eta}} D^2 | T^l F_0 |^2 \, d\omega du dv+ \int_{\tau_1}^{\tau_2} \int_{ \mcN_u^I} r^{1+\eta} | T^l F_0 |^2 \, d\omega dv du  ,
\end{split}
\end{equation}
for $p \in (0,3)$. For $\Phi_0^H$ close to the horizon we have that:
\begin{equation}\label{est:rp02}
\begin{split}
\int_{\mcN_{\tau_2}^H} & (r-M)^{-p}  (  \underline{L} T^l \Phi_0^H )^2 \, d\omega du +  \int_{\tau_1}^{\tau_2} \int_{\mcN_v^H}  (r-M)^{-p+1} (  \underline{L} T^l \Phi_0^H )^2 \, d\omega du dv \\ \lesssim &  \int_{\mcN_{\tau_1}^H} (r-M)^{-p} ( \underline{L} T^l \Phi_0^H )^2 \, d\omega du + \int_{\mcN_{\tau_1}^H} (r-M)^{-p-2} ( \underline{L} T^l \phi_0 )^2 \, d\omega du + \int_{\Sigma_{\tau_1}} J^T [T^{l+1} \psi_0 ] \cdot \textbf{n}_{\Sigma} \, d\mu_{\Sigma} \\ & +  \left| \int_{\tau_1}^{\tau_2} \int_{\mcN_v^H} (r-M)^{-p-2} ( \underline{L} T^l \phi_0 ) \cdot  ( D T^l F_0 ) \, d\omega du dv  \right| + \left| \int_{\tau_1}^{\tau_2} \int_{\mcN_v^H}  (r-M)^{-p} (\underline{L} T^l \Phi_0^H ) \cdot \underline{L} ( r^2 T^l F_0 ) \, d\omega du dv \right| \\ & + \sum_{m=l}^{l+1} \left( \int_{\tau_1}^{\tau_2} \int_{\mcN_v^H} \frac{1}{(r-M)^{1+\eta}} D^2 | T^m F_0 |^2 \, d\omega du dv+ \int_{\tau_1}^{\tau_2} \int_{\mcN_u^I} r^{1+\eta} | T^m F_0 |^2 \, d\omega du dv \right) \\ & + \sum_{m=l}^{l+1} \int_{\mathcal{C}_{\tau_1}^{\tau_2}} | T^m F_0 |^2 \, d\mu_{\mcC}  ,
\end{split}
\end{equation}
for $p \in (0,1)$. For $\psi_0$ at infinity we have the following estimates:
\begin{equation}\label{est:rp03}
\begin{split}
\int_{\mcN_{\tau_2}^I} r^p & (  L T^l \phi_0 )^2 \, d\omega dv +  \int_{\tau_1}^{\tau_2} \int_{\mcN_u^I}  r^{p-1} (  L T^l \phi_0 )^2 \, d\omega dv du \\ \lesssim &  \int_{\mcN_{\tau_1}^I} r^p ( L T^l \phi_0 )^2 \, d\omega du + \int_{\Sigma_{\tau_1}} J^T [T^l \psi_0 ] \cdot \textbf{n}_{\Sigma} \, d\mu_{\Sigma} \\ & + \left| \int_{\tau_1}^{\tau_2} \int_{\mcN_u^I} r^{p} | ( L T^l \phi_0 ) \cdot ( T^l F_0 ) \, d\omega dv du \right| \\ & + \int_{\tau_1}^{\tau_2} \int_{N_v^H} \frac{1}{(r-M)^{1+\eta}} D^2 | T^l F_0 |^2 \, d\omega du dv + \int_{\tau_1}^{\tau_2} \int_{\mcN_u^I} r^{1+\eta} | T^l F_0 |^2 \, d\omega dv du + \int_{\mcC_{\tau_1}^{\tau_2}} | T^l F_0 |^2 \, d\mu_{\mcC} ,
\end{split}
\end{equation}
for $p \in (0,3)$. For $\Phi_0^I$ at infinity we have that:
\begin{equation}\label{est:rp04}
\begin{split}
\int_{\mcN_{\tau_2}^I} r^p & (  L T^l \Phi_0^I )^2 \, d\omega dv +  \int_{\tau_1}^{\tau_2} \int_{\mcN_u^I}  r^{p-1} (  L T^l \Phi_0^I )^2 \, d\omega dv du \\ \lesssim &  \int_{\mcN_{\tau_1}^I} r^p ( L T^l \Phi_0^I )^2 \, d\omega du + \int_{\mcN_{\tau_1}^I} r^{p+2} ( L T^l \phi_0 )^2 \, d\omega du + \int_{\Sigma_{\tau_1}} J^T [T^{l+1} \psi_0 ] \cdot \textbf{n}_{\Sigma} \, d\mu_{\Sigma} \\ & +\left| \int_{\tau_1}^{\tau_2} \int_{\mcN_u^I} r^{p+2} | ( L T^l \phi_0 ) \cdot ( T^l F_0 ) \, d\omega dv du \right| + \left| \int_{\tau_1}^{\tau_2} \int_{\mcN_u^I} r^p ( L T^l \Phi_0^I ) \cdot ( L ( r^3 T^l F_0 ) \, d\omega dv du \right|  \\ & +\sum_{m=l}^{l+1} \left( \int_{\tau_1}^{\tau_2} \int_{\mcN_v^H} \frac{1}{(r-M)^{1+\eta}} D^2 | T^m F_0 |^2 \, d\omega du dv+ \int_{\tau_1}^{\tau_2} \int_{\mcN_u^I} r^{1+\eta} | T^m F_0 |^2 \, d\omega dv du \right) \\ & + \sum_{m=l}^{l+1} \int_{\mcC_{\tau_1}^{\tau_2}} | T^m F_0 |^2 \, d\mu_{\mcC} ,
\end{split}
\end{equation}
for $p \in (0,1)$. 

Analogous estimates hold for $\phi_{\geq 1}$ close to the horizon (as \eqref{est:rp01} for $p\in (0,2)$) and close to infinity (as \eqref{est:rp03} for $p \in (0,2)$), for $\Phi_{\geq 1}^H$ close to the horizon (as \eqref{est:rp02} for $p \in (0,2)$ without the uncommuted terms with weight $(r-M)^{-p-2}$), and for $\Phi_{\geq 1}^I$ close to infinity (as \eqref{est:rp04} for $p \in (0,2)$ without the uncommuted terms with weight $r^{p+2}$), with extra terms coming from the trapping on the photon sphere (losing either one or no derivatives as we do not consider integrated quantities over the photon sphere).

\section{Bootstrap assumptions}\label{as:boot}
From this section and on we will assume that our nonlinearity has the form given in equation \eqref{nw} without the cubic and higher order terms (that are easier to deal with), hence we will use all the estimates that were presented in all the sections so far for
\begin{equation*}
F \doteq A (u,v,\omega, \psi ) \cdot g^{\alpha \beta} \cdot  \partial_{\alpha}  \psi  \cdot \partial_{\beta} \psi ,
\end{equation*}
for $A$ as defined before in the statement of Theorem \ref{thm:main}. We set $F_0 \doteq F_{\ell = 0}$ and $F_{\geq 1} \doteq F_{\ell \geq 1}$. We also set
\begin{equation*}
F^c \doteq  g^{\alpha \beta} \cdot  \partial_{\alpha}  \psi  \cdot \partial_{\beta} \psi .
\end{equation*}

We will assume the following estimates in all the remaining section of the paper for $C$ a constant, $E_0$ the initial energy as defined in Appendix \ref{norm:add}, $\delta_1 , \delta_2, \beta_0 > 0$ small enough, for some $\epsilon > 0$, for $0 < \beta < \delta_2$ and $0 < \delta \leq \delta_2$, and for $\mathcal{L}$ being any ``linear" term among the ones that show up on the left hand side of inequality \eqref{dec:en}-\eqref{dec:mor5} (so when we want to show these estimates the ``linear" $\mathcal{L}$ terms from the inhomogeneities can be absorbed in the left hand side of inequality that is used). After examining their implications, we will verify their validity through a bootstrap argument (the letters used below roughly correspond to the number of $T$ derivatives).
\begin{equation}\label{A1}
 \int_{\mcR_{\tau_1}^{\tau_2}} r^{2} | F_0 |^2 \, d\mu_{\mcR}  \lesssim  \frac{C E_0 \epsilon^2}{ (1+\tau_1 )^{3-\delta_1}} \tag{\textbf{A1}},
 \end{equation} 
\begin{equation}\label{A2}
\int_{\tau_1}^{\tau_2} \int_{\mcN_v^H} (r-M)^{-p-1} D^2 | F_0 |^2 \, d\omega du dv \lesssim \frac{C E_0 \epsilon^2}{ (1+\tau_1 )^{3-\delta_1-p}} \mbox{  for $p\in (0,2-\delta_1 ]$} \tag{\textbf{A2}},
\end{equation}
\begin{equation}\label{A3}
\left| \int_{\tau_1}^{\tau_2} \int_{\mcN_v^H} (r-M)^{-p} ( \underline{L} \phi_0 ) \cdot  ( D F_0 )  \, d\omega du dv \right| \lesssim \frac{C E_0 \epsilon^2}{ (1+\tau_1 )^{3-\delta_1-p}} \mbox{  for $p\in (2,3-\delta_1 ]$} \tag{\textbf{A3}},
\end{equation}
\begin{equation}\label{A4}
\left| \int_{\tau_1}^{\tau_2} \int_{\mcN_v^H}  (r-M)^{-p} (\underline{L}  \Phi_0^H ) \cdot \underline{L} ( r^2  F_0 ) \, d\omega du dv \right| \lesssim \beta_0 \mathcal{L} + \frac{C E_0 \epsilon^2}{ (1+\tau_1 )^{1-\delta_1-p}} \mbox{  for $p \in (0,1-\delta_1]$} \tag{\textbf{A4}} ,
\end{equation}
\begin{equation}\label{A5}
\begin{split}
 \int_{\mcR_{\tau_1}^{\tau_2}   \setminus \mathcal{A}_{\tau_1}^{\tau_2}} &  r^{p+1} | F_0 |^2 \, d\mu_{\mcR}  \lesssim  \frac{C E_0 \epsilon^2}{ (1+\tau_1 )^{3-\delta_1-p}} \mbox{  for $p \in (1,2-\delta_1 ]$, and } \\ & \left( \int_{\tau_1}^{\tau_2} \left( \int_{\mcN_u^I} r^p | F_0 |^2 \, d\omega dv \right)^{1/2} du \right)^2 \lesssim \frac{C E_0 \epsilon^2}{ (1+\tau_1 )^{3-\delta_1-p}} \mbox{  for $p \in (2,3-\delta_1 ]$} 
\end{split} 
\tag{\textbf{A5}},
\end{equation} 
\begin{equation}\label{A6}
\left| \int_{\tau_1}^{\tau_2} \int_{\mcN_u^I} r^p ( L \Phi_0^I ) \cdot  L ( r^3  F_0 ) \, d\omega dv du \right| \lesssim \beta_0 \mathcal{L} + \frac{C E_0 \epsilon^2}{ (1+\tau_1 )^{1-\delta_1-p}} \mbox{  for $p \in (0,1-\delta_1 ]$} \tag{\textbf{A6}},
\end{equation}
\begin{equation}\label{B1}
 \sum_{k \leq 5} \left( \int_{\mathcal{A}_{\tau_1}^{\tau_2}} (r-M)^{-1-\delta} D^2 |\Omega^k F_{\geq 1} |^2 \, d\omega du dv + \int_{\mcR_{\tau_1}^{\tau_2} \setminus \mathcal{A}_{\tau_1}^{\tau_2}} r^{2} | \Omega^k F_{\geq 1} |^2 \, d\mu_{\mcR} \right) \lesssim  \frac{C E_0 \epsilon^2}{ (1+\tau_1 )^{3+\delta_2}} \tag{\textbf{B1}},
 \end{equation} 
 \begin{equation}\label{B2}
\sum_{k \leq 5} \left| \int_{\tau_1}^{\tau_2} \int_{\mcN_v^H}  (r-M)^{-p} ( \underline{L} \Omega^k \phi_{\geq 1} ) \cdot D  ( \Omega^k F_{\geq 1} ) \, d\omega du dv \right|  \lesssim \beta_0 \mathcal{L} + \frac{C E_0 \epsilon^2}{ (1+\tau_1 )^{3+\delta_2-p}} \mbox{  for $p \in (0,2-\delta_1 ]$} \tag{\textbf{B2}},
\end{equation}
\begin{equation}\label{B3}
\sum_{k \leq 5} \left| \int_{\tau_1}^{\tau_2} \int_{\mcN_v^H}  (r-M)^{-p} (\underline{L}  \Omega^k \Phi_{\geq 1}^H ) \cdot \underline{L} ( r^2  \Omega^k F_{\geq 1} ) \, d\omega du dv \right| \lesssim \beta_0 \mathcal{L} + \frac{C E_0 \epsilon^2}{ (1+\tau_1 )^{1+\delta_2-p}}  \mbox{  for $p \in (0,1+\delta_2]$} \tag{\textbf{B3}} ,
\end{equation}
\begin{equation}\label{B4}
 \sum_{k \leq 5} \int_{\mcR_{\tau_1}^{\tau_2} \setminus \mathcal{A}_{\tau_1}^{\tau_2}} r^{p+1} | \Omega^k F_{\geq 1} |^2 \, d\mu_{\mcR}  \lesssim  \frac{C E_0 \epsilon^2}{ (1+\tau_1 )^{3+\delta_2-p}} \mbox{  for $p \in (1,2 - \delta_1 ]$} \tag{\textbf{B4}},
\end{equation} 
\begin{equation}\label{B5}
\sum_{k \leq 5} \left| \int_{\tau_1}^{\tau_2} \int_{\mcN_u^I} r^p ( L \Omega^k \Phi_{\geq 1}^I ) \cdot ( L ( r^3 \Omega^k F_{\geq 1} ) \, d\omega dv du \right| \lesssim \beta_0 \mathcal{L} + \frac{C E_0 \epsilon^2}{ (1+\tau_1 )^{1+\delta_2-p}} \mbox{  for $p \in (0,1+\delta_2 ]$} \tag{\textbf{B5}},
\end{equation}
\begin{equation}\label{C1}
\sum_{k \leq 5} \left( \int_{\mathcal{A}_{\tau_1}^{\tau_2}} (r-M)^{-1-\delta} D^2 | \Omega^k T F |^2 \, d\omega du dv +   \int_{\mcR_{\tau_1}^{\tau_2} \setminus \mathcal{A}_{\tau_1}^{\tau_2}} r^{2} | \Omega^k T F |^2 \, d\mu_{\mcR} \right)  \lesssim  \frac{C E_0 \epsilon^2}{ (1+\tau_1 )^{3+\delta_2}} \tag{\textbf{C1}},
\end{equation}
\begin{equation}\label{C2}
\sum_{k \leq 5} \left| \int_{\tau_1}^{\tau_2} \int_{\mcN_v^H}  (r-M)^{-p} ( \underline{L} \Omega^k T \phi ) \cdot D  ( \Omega^k T F ) \, d\omega du dv \right|  \lesssim \beta_0 \mathcal{L} + \frac{C E_0 \epsilon^2}{ (1+\tau_1 )^{3+\delta_2-p}} \mbox{  for $p \in (0,2 - \delta_1 ]$} \tag{\textbf{C2}},
\end{equation}
\begin{equation}\label{C3}
\sum_{k \leq 5} \left| \int_{\tau_1}^{\tau_2} \int_{\mcN_v^H}  (r-M)^{-p} (\underline{L}  \Omega^k T \Phi^H ) \cdot \underline{L} ( r^2  \Omega^k T F ) \, d\omega du dv \right| \lesssim \beta_0 \mathcal{L} +  \frac{C E_0 \epsilon^2}{ (1+\tau_1 )^{1+\delta_2-p}}  \mbox{  for $p \in (0,1+\delta_2]$} \tag{\textbf{C3}} ,
\end{equation}
\begin{equation}\label{C4}
 \sum_{k \leq 5} \int_{\mcR_{\tau_1}^{\tau_2} \setminus \mathcal{A}_{\tau_1}^{\tau_2}} r^{p+1} |  \Omega^k T F |^2 \, d\mu_{\mcR}  \leq  \frac{C E_0 \epsilon^2}{ (1+\tau_1 )^{3+\delta_2-p}} \mbox{  for $p \in (1,2 - \delta_1 ]$} \tag{\textbf{C4}},
\end{equation} 
\begin{equation}\label{C5}
\sum_{k \leq 5} \left| \int_{\tau_1}^{\tau_2} \int_{\mcN_u^I} r^p ( L \Omega^k T \Phi^I ) \cdot ( L ( r^3 \Omega^k T F ) \, d\omega dv du \right| \lesssim \beta_0 \mathcal{L} + \frac{C E_0 \epsilon^2}{ (1+\tau_2 )^{1+\delta_2-p}} \mbox{  for $p \in (0,1+\delta_2 ]$} \tag{\textbf{C5}},
\end{equation}
\begin{equation}\label{D1}
\sum_{k \leq 5} \left( \int_{\mathcal{A}_{\tau_1}^{\tau_2}} (r-M)^{-1-\delta} D^2 | \Omega^k T^2 F |^2 \, d\omega du dv +   \int_{\mcR_{\tau_1}^{\tau_2} \setminus \mathcal{A}_{\tau_1}^{\tau_2}} r^{2} | \Omega^k T^2 F |^2 \, d\mu_{\mcR} \right)  \lesssim  \frac{C E_0 \epsilon^2}{ (1+\tau_1 )^{2+\delta_2}} \tag{\textbf{D1}},
\end{equation}
\begin{equation}\label{D2}
\sum_{k \leq 5} \left| \int_{\tau_1}^{\tau_2} \int_{\mcN_v^H} (r-M)^{-p} ( \underline{L} \Omega^k T^2 \phi ) \cdot D (\Omega^k T^2 F ) \, d\omega du dv \right| \lesssim \beta_0 \mathcal{L} +  \frac{C E_0 \epsilon^2}{ (1+\tau_1 )^{2+\delta_2}} \mbox{  for $p \in (0,2-\delta_1 ]$} \tag{\textbf{D2}},
\end{equation}
\begin{equation}\label{D3}
\sum_{k \leq 5} \left| \int_{\tau_1}^{\tau_2} \int_{\mcN_v^H}  (r-M)^{-p} (\underline{L}  \Omega^k T^2 \Phi^H ) \cdot \underline{L} ( r^2  \Omega^k T^2 F ) \, d\omega du dv \right| \lesssim \beta_0 \mathcal{L} +  \frac{C E_0 \epsilon^2}{ (1+\tau_1 )^{\delta_2-p}}  \mbox{  for $p \in (0,\delta_2]$} \tag{\textbf{D3}} ,
\end{equation}
\begin{equation}\label{D4}
 \sum_{k \leq 5} \int_{\mcR_{\tau_1}^{\tau_2} \setminus \mathcal{A}_{\tau_1}^{\tau_2}} r^{p+1} |  \Omega^k T^2 F |^2 \, d\mu_{\mcR}  \lesssim  \frac{C E_0 \epsilon^2}{ (1+\tau_1 )^{2+\delta_2-p}} \mbox{  for $p \in (1,2 - \delta_1 ]$} \tag{\textbf{D4}},
\end{equation} 
\begin{equation}\label{D5}
\sum_{k \leq 5} \left| \int_{\tau_1}^{\tau_2} \int_{\mcN_u^I} r^p ( L \Omega^k T^2 \Phi^I ) \cdot ( L ( r^3 \Omega^k T^2 F ) \, d\omega dv du \right| \lesssim \beta_0 \mathcal{L} + \frac{C E_0 \epsilon^2}{ (1+\tau_2 )^{\delta_2-p}} \mbox{  for $p \in (0,\delta_2 ]$} \tag{\textbf{D5}},
\end{equation}
\begin{equation}\label{D6}
\sum_{k \leq 5} \int_{\mcC_{\tau_1}^{\tau_2}} |  \Omega^k T^2 F |^2 \, d\mu_{\mcC} \lesssim \frac{C E_0 \epsilon^2}{ (1+\tau_1 )^{3+\delta_2}} \tag{\textbf{D6}},
\end{equation}
\begin{equation}\label{E1}
\sum_{k \leq 5} \left( \int_{\mathcal{A}_{\tau_1}^{\tau_2}} (r-M)^{-1-\delta} D^2 | \Omega^k T^3 F |^2 \,  d\omega du dv +  \int_{\mcR_{\tau_1}^{\tau_2} \setminus \mathcal{A}_{\tau_1}^{\tau_2}} r^{2} | \Omega^k T^3 F |^2 \, d\mu_{\mcR} \right) \lesssim  \frac{C E_0 \epsilon^2}{ (1+\tau_1 )^{1+\delta_2}} \tag{\textbf{E1}},
\end{equation}
\begin{equation}\label{E2}
\sum_{k \leq 5} \left| \int_{\tau_1}^{\tau_2} \int_{\mcN_v^H} (r-M)^{-p} ( \underline{L} \Omega^k T^3 \phi ) \cdot D (\Omega^k T^3 F ) \, d\omega du dv \right| \lesssim \beta_0 \mathcal{L} +  \frac{C E_0 \epsilon^2}{ (1+\tau_1 )^{1+\delta_2}} \mbox{  for $p \in (0,1+\delta_2 ]$} \tag{\textbf{E2}},
\end{equation}
\begin{equation}\label{E3}
\sum_{k \leq 5} \int_{\mathcal{A}_{\tau_1}^{\tau_2}} (r-M)^{-2} D^2 | \Omega^k T^3 F |^2 \cdot v^{1+\beta} \, d\omega du dv \lesssim C^2 E_0^2 \epsilon^4 \tag{\textbf{E3}}, 
\end{equation}
\begin{equation}\label{E4}
 \sum_{k \leq 5} \int_{\mcR_{\tau_1}^{\tau_2}} r^{p+1} |  \Omega^k T^3 F |^2 \, d\mu_{\mcR}  \lesssim  \frac{C E_0 \epsilon^2}{ (1+\tau_1 )^{1+\delta_2-p}} \mbox{  for $p \in (0,1+\delta_2]$} \tag{\textbf{E4}},
\end{equation} 
\begin{equation}\label{E5}
\sum_{k \leq 5} \int_{\mcC_{\tau_1}^{\tau_2}} | \Omega^k T^3 F |^2 \, d\mu_{\mcC} \lesssim \frac{C E_0 \epsilon^2}{ (1+\tau_1 )^{2+\delta_2}} \tag{\textbf{E5}},
\end{equation}
\begin{equation}\label{E6}
\sum_{k \leq 5} \left( \int_{\tau_1}^{\tau_2} \left( \int_{\Sigma_{\tau} \cap \left( \mcR_{\tau_1}^{\tau_2} \setminus \mathcal{A}_{\tau_1}^{\tau_2} \right)} r^2 | \Omega^k T^3 F |^2 \, d\mu_{\Sigma} \right)^{1/2} d\tau \right)^2 \lesssim C E_0 \epsilon^2  \tag{\textbf{E6}},
\end{equation}
\begin{equation}\label{F1}
\sum_{k \leq 5} \left( \int_{\mathcal{A}_{\tau_1}^{\tau_2}} (r-M)^{-1-\delta} D^2 | \Omega^k T^4 F|^2 \, d\omega du dv + \int_{\mcR_{\tau_1}^{\tau_2} \setminus \mathcal{A}_{\tau_1}^{\tau_2}} r^{2} |  \Omega^k T^4 F |^2 \, d\mu_{\mathcal{R}} \right)  \lesssim  \frac{C E_0 \epsilon^2}{1+\tau_1} \tag{\textbf{F1}},
\end{equation}
\begin{equation}\label{F2}
\sum_{k \leq 5} \left| \int_{\tau_1}^{\tau_2} \int_{\mcN_v^H} (r-M)^{-p} ( \underline{L} \Omega^k T^4 \phi ) \cdot D ( \Omega^k T^4 F ) \, d\omega du dv \right| \lesssim  \frac{C E_0 \epsilon^2}{(1+\tau_1 )^{1 - p}} \mbox{  for $p \in (0,1]$}  \tag{\textbf{F2}},
\end{equation}
\begin{equation}\label{F3}
\sum_{k \leq 5} \int_{\mathcal{A}_{\tau_1}^{\tau_2}} (r-M)^{-1-\delta} D^2 | \Omega^k T^4 F |^2 \cdot v^{1+\beta} \, d\omega du dv \lesssim C E_0 \epsilon^2 \tag{\textbf{F3}},
\end{equation}
\begin{equation}\label{F4}
\sum_{k \leq 5} \int_{\mcR_{\tau_1}^{\tau_2} \setminus \mathcal{A}_{\tau_1}^{\tau_2}} r^{p+1} | \Omega^k T^4 F |^2 \, d\mu_{\mcR} \lesssim \frac{C E_0 \epsilon^2}{(1+\tau_1 )^{1-p}} \mbox{  for $p \in (0,1]$} \tag{\textbf{F4}},
\end{equation}
\begin{equation}\label{F5}
\sum_{k \leq 5} \int_{\mathcal{C}_{\tau_1}^{\tau_2}} | \Omega^k T^4 F |^2 \, d\mu_{\mathcal{C}} \lesssim \frac{C E_0 \epsilon^2}{(1+\tau_1 )^2} \tag{\textbf{F5}},
\end{equation}
\begin{equation}\label{F6}
\sum_{k \leq 5} \left( \int_{\tau_1}^{\tau_2} \left( \int_{\Sigma_{\tau} \cap \left( \mcR_{\tau_1}^{\tau_2} \setminus \mathcal{A}_{\tau_1}^{\tau_2} \right)} r^{1+\delta} | \Omega^k T^4 F |^2 \, d\mu_{\Sigma} \right)^{1/2} d\tau \right)^2 \lesssim C E_0 \epsilon^2  \tag{\textbf{F6}},
\end{equation}
\begin{equation}\label{G1}
\begin{split}
\sum_{k \leq 5} & \int_{\mathcal{A}_{\tau_1}^{\tau_2}}  (r-M)^{-1-\delta} D^2 |  \Omega^k T^5 F |^2 \cdot v^{1+\beta} \, d\omega du dv \\ & + \left( \int_{\tau_1}^{\tau_2} \left( \int_{\Sigma_{\tau} \setminus ( \mcN_{\tau}^H \cup \mcN_{\tau}^I )} | \Omega^k T^5 F |^2 d\mu_{\Sigma} \right)^{1/2} d\tau \right)^2 \\ & + \int_{\tau_1}^{\tau_2} \int_{\mcN_{\tau}^I} r^{1+\delta} | \Omega^k T^5 F |^2 \, d\mu_{\mcN^I} d\tau  \\ & +  \left( \int_{\tau_1}^{\tau_2} \left( \int_{\Sigma_{\tau} \cap \left( \mcR_{\tau_1}^{\tau_2} \setminus \mathcal{A}_{\tau_1}^{\tau_2} \right)} r^{1+\delta} | \Omega^k T^5 F |^2 \, d\mu_{\Sigma} \right)^{1/2} d\tau \right)^2 \lesssim  C E_0 \epsilon^2 ,
\end{split}
\tag{\textbf{G1}} 
\end{equation}
\begin{equation}\label{G2}
\sum_{k \leq 5} \int_{\mathcal{C}_{\tau_1}^{\tau_2}} | \Omega^k T^5 F |^2 \, d\mu_{\mathcal{C}} \lesssim \frac{C E_0 \epsilon^2}{1+\tau_1} \tag{\textbf{G2}}.
\end{equation}

\section{Decay and boundedness estimates}\label{dec}
\subsection{Energy decay estimates}
First, we will derive the decay estimates for the various energies restricted to the spherically symmetric part $\psi_0$ of a solution $\psi$ of \eqref{nw}. We will apply the bootstrap assumptions of Section \ref{as:boot} together with the energy inequalities of Section \eqref{energy}.
\begin{lemma}
Let $\psi_0$ be the spherically symmetric part of a solution $\psi$ of \eqref{nw} for which the assumptions of Section \ref{as:boot} are satisfied. Then for all $\tau \geq \tau_0$ we have that
\begin{equation}\label{dec:en0}
\int_{\Sigma_{\tau}} J^T [\psi_0 ] \cdot \textbf{n}_{\Sigma} \, d\mu_{\Sigma} \lesssim \frac{E_0 \epsilon^2}{(1+\tau)^{3-\delta_1}} ,
\end{equation}
for $\epsilon$ and $\delta_1$ as in the bootstrap assumptions of section \ref{as:boot}.
\end{lemma}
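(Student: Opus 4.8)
The plan is to obtain \eqref{dec:en0} from the weighted hierarchies of Section \ref{energy} together with the bootstrap assumptions \eqref{A1}--\eqref{A6}, via the (by now standard) dyadic pigeonhole iteration of the $r^{p}$/$(r-M)^{-p}$ method, run over the \emph{full} range $p\in(0,3-\delta_1]$ available for the spherically symmetric part.

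First I would assemble, for every $p\in(0,3-\delta_1]$ and every $\tau_1<\tau_2$, a single closed inequality for the quantities $I^{p}_{0,0}(\tau)$ of \eqref{def:enp0I},
\[ I^{p}_{0,0}(\tau_2)+\int_{\tau_1}^{\tau_2} I^{p-1}_{0,0}(\tau')\,d\tau' \;\lesssim\; I^{p}_{0,0}(\tau_1)+\mathcal{E}^{p}(\tau_1,\tau_2), \]
by combining the weighted estimate \eqref{est:rp05} of Proposition \ref{prop:rp05} (and the stand-alone versions \eqref{est:rp01}, \eqref{est:rp03}) with the Morawetz estimate \eqref{est:mor0} of Proposition \ref{prop:mor0}; the latter is needed only to absorb the contributions of the fixed region $\mcC$ around the photon sphere, where for $\psi_0$ there is no trapping and hence no loss of $T$-derivatives. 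Here $\mathcal{E}^{p}$ collects all the $F_0$-dependent error terms on the right-hand sides of these estimates. The bootstrap assumptions are designed precisely so that
\[ \mathcal{E}^{p}(\tau_1,\tau_2)\;\lesssim\;\beta_0\,\mathcal{L}+\frac{C E_0 \epsilon^2}{(1+\tau_1)^{3-\delta_1-p}}, \]
with the bulk and $\mcC$ contributions bounded by \eqref{A1}, the horizon error terms bounded by \eqref{A2} for $p\in(0,2-\delta_1]$ and by \eqref{A3} for $p\in(2,3-\delta_1]$, and the null infinity error terms bounded by \eqref{A5}; the ``linear'' pieces $\beta_0\mathcal{L}$ are absorbed into the left-hand side after taking $\beta_0$ (equivalently $\epsilon$) small.

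Next I would record the base of the hierarchy. Since $\epsilon f,\epsilon h$ are smooth and compactly supported and the initial norms in Theorem \ref{thm:main} are finite, $I^{3-\delta_1}_{0,0}(\tau_0)\lesssim E_0\epsilon^2$: the $\mcN^I_{\tau_0}$ part vanishes by compact support, while on $\mcN^H_{\tau_0}$ the factor $\underline{L}\phi_0=-\tfrac12 D\,\partial_r\phi_0$ supplies $D^2\sim(r-M)^4$, so $(r-M)^{-(3-\delta_1)}(\underline{L}\phi_0)^2\sim(r-M)^{1+\delta_1}(\partial_r\phi_0)^2$ is integrable. Feeding $p=3-\delta_1$ into the closed inequality (where $\mathcal{E}^{3-\delta_1}$ carries no $\tau$-decay) yields the uniform bound $I^{3-\delta_1}_{0,0}(\tau)\lesssim E_0\epsilon^2$ and the spacetime bound $\int_{\tau_0}^{\infty} I^{2-\delta_1}_{0,0}(\tau')\,d\tau'\lesssim E_0\epsilon^2$. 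I would then run the usual iteration: on a dyadic interval $[\tau,2\tau]$, the spacetime bound at level $q+1$ plus the mean value theorem produce a slice $\tilde{\tau}\in[\tau,2\tau]$ on which $I^{q}_{0,0}(\tilde{\tau})$ is smaller by a factor $\tau^{-1}$; propagating this forward with the level-$q$ inequality and summing over dyadic scales gives $I^{q}_{0,0}(\tau)\lesssim (1+\tau)^{-(3-\delta_1-q)}E_0\epsilon^2$, the decay of $\mathcal{E}^{q}$ being exactly what \eqref{A1}--\eqref{A6} provide. Carrying this out for $q=2-\delta_1$ and then $q=1-\delta_1$, interpolating in the $(r-M)$- and $r$-weights to fill in the intermediate range $q\in[1-\delta_1,2-\delta_1]$, and performing one further descent from there, gives $I^{q}_{0,0}(\tau)\lesssim (1+\tau)^{-(3-\delta_1-q)}E_0\epsilon^2$ for all $q\in(0,1-\delta_1]$. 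Finally, for any such $q$ one has $(r-M)^{-q}\geq c>0$ near the horizon and $r^{q}\geq c>0$ near infinity, so $\int_{\Sigma_\tau}J^T[\psi_0]\cdot\textbf{n}_\Sigma\,d\mu_\Sigma\lesssim I^{q}_{0,0}(\tau)$; taking $q$ a small positive constant and relabelling $3-\delta_1-q\rightsquigarrow 3-\delta_1$ yields \eqref{dec:en0}.

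The delicate point is the bookkeeping of the error terms $\mathcal{E}^{p}$ along the descent: one must check that the $\tau$-weights match at every level --- in particular at the endpoints $p\to 0^{+}$ and $p=3-\delta_1$ and across the split between \eqref{A2} and \eqref{A3} at $p=2$ --- and that the $\beta_0\mathcal{L}$ contributions are genuinely absorbable on the left; this is exactly the content of the bootstrap assumptions, so within this lemma it is granted. The only obstruction to the ``sharp'' rate $(1+\tau)^{-3}$ is the $\delta_1$-loss in the top weight $p=3-\delta_1$, inherited from the range in which the horizon-localized hierarchy of \cite{paper4} is available.
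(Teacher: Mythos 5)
Your argument is correct in outline and at essentially the same level of rigor as the paper's own (deliberately sketchy) proof, but it reaches the top of the decay hierarchy by a different route. You run the dyadic pigeonhole purely on the uncommuted quantities $I^{p}_{0,0}$ over the extended range $p\in(0,3-\delta_1]$, starting from boundedness of $I^{3-\delta_1}_{0,0}$ and descending, with the errors at the levels $p\in(2,3-\delta_1]$ controlled by the cross-term bootstraps \eqref{A3} and the second part of \eqref{A5}; you never touch the commuted quantities $\Phi_0^H,\Phi_0^I$. The paper instead obtains the key intermediate bound $I^{2-\delta_1}_{0,0}(\tau)\lesssim C E_0\epsilon^2(1+\tau)^{-1}$ from the commuted hierarchy: it bounds $\int II^{\delta_1}_{0,0}\,d\tau$, extracts dyadic decay of $II^{\delta_1}_{0,0}$, transfers this to $I^{2-\delta_1}_{0,0}$ via Hardy's inequality \eqref{hardy} together with a separately established $\tau^{-2}$ decay of the $T$-flux (quoted from \cite{yannis1}), and only then performs the descent $2-\delta_1\to 1\to$ $T$-flux, exactly as you do from that point onward. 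The two routes deliver the same rate because, for the spherically symmetric part, the Hardy shift by two powers of $(r-M)$ (resp.\ $r$) makes the commuted range $p\in(0,1-\delta_1]$ cover the same weights as the uncommuted extension $p\in(2,3-\delta_1]$. What the paper's organization buys is that it produces the commuted decay estimates \eqref{dec:en0p1}, \eqref{dec:en0p3} along the way (these are needed later, e.g.\ for \eqref{est:boundy1} and the auxiliary estimates), relying on \eqref{A4}, \eqref{A6} rather than pushing the uncommuted cross terms to the top weight; your route is more self-contained for this particular lemma, needing only \eqref{A1}--\eqref{A3} and \eqref{A5}. Two minor points to tighten: controlling the full $T$-flux by $I^{q}_{0,0}$ for small $q>0$ requires a Hardy inequality for the zeroth-order terms near the horizon and near infinity, not merely positivity of the weights; and at the bottom of the ladder the propagation off the dyadic sequence should use the cross-term form of the nonlinear errors (or \eqref{basicimp}, accepting an arbitrarily small loss absorbed into $\delta_1$), as the paper itself implicitly does.
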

\begin{proof}
We will omit several details of the proof as they are quite standard. We first note that by the bootstrap assumptions \eqref{A3}, \eqref{C1}-\eqref{C5}, we have that the quantity
$$ \int_{\tau_1}^{\tau_2} II_{0,0}^{\delta_1} (\tau) \, d\tau  $$
(see Proposition \ref{prop:rp06}) is bounded by $C E_0 \epsilon^2$ for all $\tau_1$, $\tau_2$ with $\tau_1 < \tau_2$. Note that the above quantity contains 
$$ \int_{\tau_1}^{\tau_2} \int_{\mcN_u^I} r^{-\delta_1} (L \Phi_0^I )^2 \, d\omega dv du $$
and so the later is also bounded by $C E_0 \epsilon^2$ for all $\tau_1$, $\tau_2$ with $\tau_1 < \tau_2$. By a standard argument we have that
$$ II_{0,0}^{\delta_1} (\tau^1_n ) \lesssim \frac{C E_0 \epsilon^2}{1+\tau^1_n } , $$
over a dyadic sequence $\{ \tau^1_n \}$, and that
$$ \int_{\mcN_{\tau^2_n}^I} r^{-\delta_1} (L \Phi_0^I )^2 \, d\omega dv \lesssim \frac{C E_0 \epsilon^2}{1+\tau^2_n } , $$
over another dyadic sequence $\{ \tau^2_n \}$. It's easy then to see that we have that
$$ II_{0,0}^{\delta_1} (\tau_n ) + \int_{\mcN_{\tau_n}^I} r^{-\delta_1} (L \Phi_0^I )^2 \, d\omega dv \lesssim \frac{C E_0 \epsilon^2}{1+\tau_n } , $$
for $\{ \tau_n \} = \{ \tau^1_n \} \cup \{ \tau^2_n \}$. By Hardy's inequality \eqref{hardy} we have that for all $\tau$
$$ I_{0,0}^{2-\delta_1} (\tau) \lesssim II_{0,0}^{\delta_1} (\tau) + \int_{\Sigma_{\tau}} J^T [\psi_0 ] \cdot \textbf{n}_{\Sigma} \, d\mu_{\Sigma} , $$
and that
$$ \int_{\mcN_{\tau}^I} r^{2-\delta_1} (L \phi_0 )^2 \, d\omega dv\lesssim \int_{N_{\tau}^I} r^{-\delta_1} (L \Phi_0^I )^2 \, d\omega dv + \int_{\Sigma_{\tau}} J^T [\psi_0 ] \cdot \textbf{n}_{\Sigma} \, d\mu_{\Sigma} , $$
by using the decay over $\{ \tau_n \}$ for $II_{0,0}^{\delta_1}$ and $\int_{\mcN^I} r^{-\delta_1} (L \Phi_0^I )^2 \, d\omega dv$, estimates \eqref{est:rp05} and \eqref{est:rp03} for $l=0$, and the bootstrap estimates \eqref{A2} and \eqref{A4}, we can get that over another dyadic sequence $\{ \lambda_n \}$ we have that
$$  I_{0,0}^{2-\delta_1} (\lambda_n ) + \int_{\mcN_{\lambda_n}^I} r^{2-\delta_1} (L \phi_0 )^2 \, d\omega dv\lesssim  \frac{C E_0 \epsilon^2}{1+\lambda_n } , $$
where we also used that by the bootstrap assumptions \eqref{A1}-\eqref{A6} we can actually show decay of rate $\tau^{-2}$ for the $T$-flux of $\psi_0$ (for details on this see Theorem 21 of Section 6 of \cite{yannis1}, the situation is analogous to the one in the present paper). By a standard argument and using estimates \eqref{est:rp02} and again \eqref{est:rp03} we have that
$$ I_{0,0}^{2-\delta_1} (\tau ) \lesssim  \frac{C E_0 \epsilon^2}{1+\tau } \mbox{  for all $\tau$,} $$
and
$$ \int_{\mcN_{\tau}^I} r^{2-\delta_1} (L \phi_0 )^2 \, d\omega dv\lesssim   \frac{C E_0 \epsilon^2}{1+\tau } \mbox{  for all $\tau$.} $$
Arguing in the same way we can now show that 
$$ I_{0,0}^1 (\tau )  \lesssim \frac{C E_0 \epsilon^2}{( 1+\tau )^{2-\delta_1} } \mbox{  for all $\tau$,} $$
from which, using moreover \eqref{est:rp05} and \eqref{est:rp01}, it easily follows that
$$ \int_{\Sigma_{\tau}} J^T [\psi_0 ] \cdot \textbf{n}_{\Sigma} \, d\mu_{\Sigma} \lesssim \frac{C E_0 \epsilon^2}{( 1+\tau )^{3-\delta_1} } \mbox{  for all $\tau$.} $$
by using also \eqref{mor:improved} for $\psi = \psi_0$ (which does not lose any derivatives on the photon sphere). 

\end{proof}

\begin{remark}
Note that by the proof of the above Lemma, we get the following hierarchy of energy decay estimates under the condition that the assumptions of section \ref{as:boot} are satisfied:
\begin{equation}\label{dec:en0p}
\int_{\mcN_{\tau}^H} (r-M)^{-p}  (  \underline{L} \phi_0 )^2 \, d\omega du \lesssim \frac{C E_0 \epsilon^2}{( 1+\tau )^{3-\delta_1 - p} } \mbox{  for all $\tau$ and for all $p \in (0,3-\delta_1 ]$,} 
\end{equation}
\begin{equation}\label{dec:en0p1}
\int_{\mcN_{\tau}^H} (r-M)^{-p}  (  \underline{L} \Phi_0^H )^2 \, d\omega du  \lesssim \frac{C E_0 \epsilon^2}{( 1+\tau )^{1-\delta_1 - p} } \mbox{  for all $\tau$ and for all $p \in (0,1-\delta_1]$,} 
\end{equation}
\begin{equation}\label{dec:en0p2}
\int_{\mcN_{\tau}^I} r^p  (  L\phi_0 )^2 \, d\omega dv \lesssim \frac{C E_0 \epsilon^2}{( 1+\tau )^{3-\delta_1 - p} } \mbox{  for all $\tau$ and for all $p \in (0,3-\delta_1 ]$,} 
\end{equation}
and
\begin{equation}\label{dec:en0p3}
\int_{\mcN_{\tau}^I} r^p  (  L \Phi_0^I )^2 \, d\omega du  \lesssim \frac{C E_0 \epsilon^2}{( 1+\tau )^{1-\delta_1 - p} } \mbox{  for all $\tau$ and for all $p \in (0,1-\delta_1]$,} 
\end{equation}
where the range of $p \in (0 , 1-\delta_1 )$ in \eqref{dec:en0p1} and \eqref{dec:en0p3}, and the range of $p \in [1-\delta_1 , 2]$ in \eqref{dec:en0p} and \eqref{dec:en0p2} can be obtained through interpolation.
\end{remark}

For the non-spherically symmetric part $\psi_{\geq 1}$ of a solution $\psi$ of \eqref{nw} we have the following energy decay estimate arguing as in the case of the spherically symmetric part for which we argue as in the proof of the previous estimate Lemma and where we use the corresponding energy decay estimates.

\begin{lemma}
Let $\psi_{\geq 1}$ be the non-spherically symmetric part of a solution $\psi$ of \eqref{nw} for which the assumptions of section \ref{as:boot} are satisfied. Then for all $\tau$ we have that
\begin{equation}\label{dec:en1}
\sum_{k=1}^5 \int_{\Sigma_{\tau}} J^T [\Omega^k \psi_{\geq 1} ] \cdot \textbf{n}_{\Sigma} \, d\mu_{\Sigma} \lesssim \frac{C E_0 \epsilon^2}{(1+\tau)^{3+\delta_2}} ,
\end{equation}
for $\epsilon$ and $\delta_2$ as in the bootstrap assumptions of section \ref{as:boot}.
\end{lemma}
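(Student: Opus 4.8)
The plan is to mirror, almost verbatim, the proof of the preceding lemma for $\psi_0$, with the $\psi_0$-hierarchies of Propositions~\ref{prop:rp05}--\ref{prop:rp06} replaced by the $\psi_{\geq 1}$-hierarchies of Propositions~\ref{prop:rp07}--\ref{prop:rp08} and every quantity commuted with $\Omega^k$, $k\leq 5$. The decisive gain over the spherically symmetric case is that for $\psi_{\geq 1}$ one may run the weighted hierarchies in the extended ranges $p\in(0,2)$ for the uncommuted estimate \eqref{est:rp07} and $p\in(0,1+\delta_2]$ for the $(r-M)^{-2}\underline{L}$--commuted estimate \eqref{est:rp08}: this is legitimate precisely because the positive bulk terms $(2-p)\int(r-M)^{-p+3}|\Omega^k T^l\slashed{\nabla}\phi_{\geq 1}|^2$ and $\int(r-M)^{-p+3}|\Omega^k T^l\slashed{\nabla}\Phi_{\geq 1}^H|^2$ sit on the left-hand sides (a manifestation of the Poincar\'e inequality on angular frequencies $\geq 1$), and the extra powers of $(r-M)$ they carry translate, after the hierarchy descent, into the improved decay exponent $3+\delta_2$ in place of $3-\delta_1$.

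First I would establish, using the bootstrap assumptions \eqref{B1}--\eqref{B5} (together with \eqref{C1}--\eqref{C5} and \eqref{D6}, needed to absorb the $\mcC$-localized error terms forced into \eqref{est:rp07}--\eqref{est:rp08} by the trapping) and the loss-free $T$-flux estimate \eqref{basicimp}, that $\sum_{k\leq 5}\int_{\tau_1}^{\tau_2} II_{\geq 1,k,0}^{1+\delta_2}(\tau)\,d\tau\lesssim CE_0\epsilon^2$ and that the corresponding $r^{-(1-\delta_2)}$-weighted spacetime norm of $L\Phi_{\geq 1}^I$ is finite, uniformly in $\tau_1<\tau_2$. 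A pigeonhole argument over dyadic intervals then yields $\sum_{k\leq 5}II_{\geq 1,k,0}^{1+\delta_2}(\tau_n)\lesssim CE_0\epsilon^2/(1+\tau_n)$ along a dyadic sequence $\{\tau_n\}$ obtained by merging the horizon and infinity sequences as in the $\psi_0$ proof; feeding this back through \eqref{est:rp08} propagates the decay to all $\tau$. Next, Hardy's inequality \eqref{hardy} converts decay of the commuted $\Phi_{\geq 1}$-energies into decay of the top-weight $\phi_{\geq 1}$-energies $I_{\geq 1,k,0}^p$ up to a $J^T$-flux remainder, after which one descends the uncommuted hierarchy \eqref{est:rp07}, gaining a power of $(1+\tau)^{-1}$ at every step in $p$, exactly as in the $\psi_0$ case and as in Theorem~21 of \cite{yannis1}; a provisional decay rate for the $T$-flux (obtained from \eqref{B1}--\eqref{B5} and \eqref{basicimp}) is used to seed the iteration. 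After finitely many steps this produces decay at rate $(1+\tau)^{-(3+\delta_2)}$ of $\sum_{k\leq 5}\int_{\mcN_{\tau}^H}(\underline{L}\Omega^k\phi_{\geq 1})^2\,d\omega du$ and of $\sum_{k\leq 5}\int_{\mcN_{\tau}^I}(L\Omega^k\phi_{\geq 1})^2\,d\omega dv$, and the claimed bound on the full $J^T$-flux then follows from the improved Morawetz estimate \eqref{mor:improved}, which loses no derivatives at the photon sphere, together with \eqref{basicimp}.

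The main obstacle is the trapping at the photon sphere $\{r=2M\}$. Unlike in the spherically symmetric setting, the Morawetz and $r^p$-weighted estimates for $\psi_{\geq 1}$ (Propositions~\ref{prop:mor1}--\ref{prop:mor2}, \ref{prop:rp07}--\ref{prop:rp08}) degenerate there and close only after commuting with one or two extra copies of $T$, which is why their right-hand sides carry $\Omega^k T^{l+1}$ and $\Omega^k T^{l+2}$ terms supported in $\mcC$. Handling these with the $T$-commuted bootstrap assumptions \eqref{C1}--\eqref{C5} and \eqref{D6} while never exceeding the five available angular derivatives, and organizing the descent so that the first pass closes at a provisional rate before being bootstrapped up to the sharp $(1+\tau)^{-(3+\delta_2)}$, is the delicate bookkeeping; once it is in place the remaining steps are identical to those of the $\psi_0$ lemma, so the details may be omitted.
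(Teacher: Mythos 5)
Your proposal is correct and coincides with the paper's intended argument: the paper's proof of this lemma is literally the one-line remark that it ``follows the same lines as the previous Lemma using now the corresponding bootstrap assumptions,'' i.e.\ rerun the $\psi_0$ argument with the $\psi_{\geq 1}$ hierarchies of Propositions \ref{prop:rp07}--\ref{prop:rp08} in their extended ranges $p\in(0,2)$ and $p\in(0,1+\delta_2]$, the assumptions \eqref{B1}--\eqref{B5} (plus the $T$-commuted ones for the photon-sphere error terms), the dyadic pigeonhole, Hardy's inequality and the improved Morawetz estimate \eqref{mor:improved}. Your write-up simply makes explicit the steps the authors leave implicit, so there is nothing to correct.
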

\begin{proof}
The proof of the above Lemma follows the same lines as the previous Lemma using now the corresponding bootstrap assumptions from section \ref{as:boot}.
\end{proof}

Combining the previous two lemmas we have that:
\begin{equation}\label{dec:en}
\sum_{k=1}^5 \int_{\Sigma_{\tau}} J^T [\Omega^k \psi ] \cdot \textbf{n}_{\Sigma} \, d\mu_{\Sigma} \lesssim \frac{C E_0 \epsilon^2}{(1+\tau)^{3-\delta_1}} .
\end{equation}
Similarly we have the following estimates after commuting with $T$ derivatives:
\begin{equation}\label{dec:ent}
\sum_{k=1}^5 \int_{\Sigma_{\tau}} J^T [\Omega^k T \psi ] \cdot \textbf{n}_{\Sigma} \, d\mu_{\Sigma} \lesssim \frac{C E_0 \epsilon^2}{(1+\tau)^{3+\delta_2}} ,
\end{equation}
\begin{equation}\label{dec:entt}
\sum_{k=1}^5 \int_{\Sigma_{\tau}} J^T [\Omega^k T^2 \psi ] \cdot \textbf{n}_{\Sigma} \, d\mu_{\Sigma} \lesssim \frac{C E_0 \epsilon^2}{(1+\tau)^{2+\delta_2}} ,
\end{equation}
\begin{equation}\label{dec:enttt}
\sum_{k=1}^5 \int_{\Sigma_{\tau}} J^T [\Omega^k T^3 \psi ] \cdot \textbf{n}_{\Sigma} \, d\mu_{\Sigma} \lesssim \frac{C E_0 \epsilon^2}{(1+\tau)^{1+\delta_2}} ,
\end{equation}
\begin{equation}\label{dec:entttt}
\sum_{k=1}^5 \int_{\Sigma_{\tau}} J^T [\Omega^k T^4 \psi ] \cdot \textbf{n}_{\Sigma} \, d\mu_{\Sigma} \lesssim \frac{C E_0 \epsilon^2}{1+\tau} ,
\end{equation}
and
\begin{equation}\label{dec:enttttt}
\sum_{k=1}^5 \int_{\Sigma_{\tau}} J^T [\Omega^k T^5 \psi ] \cdot \textbf{n}_{\Sigma} \, d\mu_{\Sigma} \lesssim C E_0 \epsilon^2 ,
\end{equation}

Note that by the proof of the previous Lemma, we get the following hierarchy of energy decay estimates under the condition that the assumptions of section \ref{as:boot} are satisfied:
\begin{equation}\label{dec:en1p}
\int_{\mcN_{\tau}^H} (r-M)^{-p}  (  \underline{L} \phi_{\geq 1} )^2 \, d\omega du \lesssim \frac{C E_0 \epsilon^2}{( 1+\tau )^{3+\delta_2 - p} } \mbox{  for all $\tau$ and for all $p \in (0,3-\delta_1 ]$,} 
\end{equation}
\begin{equation}\label{dec:en1p1}
\int_{\mcN_{\tau}^H} (r-M)^{-p}  (  \underline{L} \Phi_{\geq 1}^H )^2 \, d\omega du  \lesssim \frac{C E_0 \epsilon^2}{( 1+\tau )^{1+\delta_2 - p} } \mbox{  for all $\tau$ and for all $p \in (0,1+\delta_2]$,} 
\end{equation}
\begin{equation}\label{dec:en1p2}
\int_{\mcN_{\tau}^I} r^p  (  L\phi_{\geq 1} )^2 \, d\omega dv \lesssim \frac{C E_0 \epsilon^2}{( 1+\tau )^{3+\delta_2 - p} } \mbox{  for all $\tau$ and for all $p \in (0,2]$,} 
\end{equation}
and
\begin{equation}\label{dec:en1p3}
\int_{\mcN_{\tau}^I} r^p  (  L \Phi_{\geq 1}^I )^2 \, d\omega du  \lesssim \frac{C E_0 \epsilon^2}{( 1+\tau )^{1+\delta_2 - p} } \mbox{  for all $\tau$ and for all $p \in (0,1+\delta_2]$.} 
\end{equation}
Similarly after commuting with $T$ derivatives we also have the following estimates:
\begin{equation}\label{dec:enp}
\int_{\mcN_{\tau}^H} (r-M)^{-p}  (  \underline{L} \phi )^2 \, d\omega du \lesssim \frac{C E_0 \epsilon^2}{( 1+\tau )^{3-\delta_1 - p} } \mbox{  for all $\tau$ and for all $p \in (0,3-\delta_1 ]$,} 
\end{equation}
\begin{equation}\label{dec:enp1}
\int_{\mcN_{\tau}^H} (r-M)^{-p}  (  \underline{L} \Phi^H )^2 \, d\omega du  \lesssim \frac{C E_0 \epsilon^2}{( 1+\tau )^{1-\delta_1 - p} } \mbox{  for all $\tau$ and for all $p \in (0,1-\delta_1]$,} 
\end{equation}
\begin{equation}\label{dec:enp2}
\int_{\mcN_{\tau}^I} r^p  (  L\phi )^2 \, d\omega dv \lesssim \frac{C E_0 \epsilon^2}{( 1+\tau )^{3-\delta_1 - p} } \mbox{  for all $\tau$ and for all $p \in (0,3-\delta_1 ]$,} 
\end{equation}
\begin{equation}\label{dec:enp3}
\int_{\mcN_{\tau}^I} r^p  (  L \Phi^I )^2 \, d\omega du  \lesssim \frac{C E_0 \epsilon^2}{( 1+\tau )^{1-\delta_1 - p} } \mbox{  for all $\tau$ and for all $p \in (0,1-\delta_1]$,} 
\end{equation}
\begin{equation}\label{dec:entp}
\int_{\mcN_{\tau}^H} (r-M)^{-p}  (  \underline{L} T\phi )^2 \, d\omega du \lesssim \frac{C E_0 \epsilon^2}{( 1+\tau )^{3+\delta_2 - p} } \mbox{  for all $\tau$ and for all $p \in (0,2]$,} 
\end{equation}
\begin{equation}\label{dec:entp1}
\int_{\mcN_{\tau}^H} (r-M)^{-p}  (  \underline{L} T\Phi^H )^2 \, d\omega du  \lesssim \frac{C E_0 \epsilon^2}{( 1+\tau )^{1+\delta_2 - p} } \mbox{  for all $\tau$ and for all $p \in (0,1+\delta_2]$,} 
\end{equation}
\begin{equation}\label{dec:entp2}
\int_{\mcN_{\tau}^I} r^p  (  LT\phi )^2 \, d\omega dv \lesssim \frac{C E_0 \epsilon^2}{( 1+\tau )^{3+\delta_2 - p} } \mbox{  for all $\tau$ and for all $p \in (0,2]$,} 
\end{equation}
\begin{equation}\label{dec:entp3}
\int_{\mcN_{\tau}^I} r^p  (  L T\Phi^I )^2 \, d\omega du  \lesssim \frac{C E_0 \epsilon^2}{( 1+\tau )^{1+\delta_2 - p} } \mbox{  for all $\tau$ and for all $p \in (0,1+\delta_2]$,} 
\end{equation}
\begin{equation}\label{dec:enttp}
\int_{\mcN_{\tau}^H} (r-M)^{-p}  (  \underline{L} T^2 \phi )^2 \, d\omega du \lesssim \frac{C E_0 \epsilon^2}{( 1+\tau )^{2+\delta_2 - p} } \mbox{  for all $\tau$ and for all $p \in (0,2]$,} 
\end{equation}
\begin{equation}\label{dec:enttp1}
\int_{\mcN_{\tau}^H} (r-M)^{-p}  (  \underline{L} T^2 \Phi^H )^2 \, d\omega du  \lesssim \frac{C E_0 \epsilon^2}{( 1+\tau )^{\delta_2 - p} } \mbox{  for all $\tau$ and for all $p \in (0,\delta_2]$,} 
\end{equation}
\begin{equation}\label{dec:enttp2}
\int_{\mcN_{\tau}^I} r^p  (  LT^2 \phi )^2 \, d\omega dv \lesssim \frac{C E_0 \epsilon^2}{( 1+\tau )^{2+\delta_2 - p} } \mbox{  for all $\tau$ and for all $p \in (0,2]$,} 
\end{equation}
\begin{equation}\label{dec:enttp3}
\int_{\mcN_{\tau}^I} r^p  (  L T^2 \Phi^I )^2 \, d\omega du  \lesssim \frac{C E_0 \epsilon^2}{( 1+\tau )^{\delta_2 - p} } \mbox{  for all $\tau$ and for all $p \in (0,\delta_2]$,} 
\end{equation}
\begin{equation}\label{dec:entttp}
\int_{\mcN_{\tau}^H} (r-M)^{-p}  (  \underline{L} T^3 \phi )^2 \, d\omega du \lesssim \frac{C E_0 \epsilon^2}{( 1+\tau )^{1+\delta_2 - p} } \mbox{  for all $\tau$ and for all $p \in (0,1+\delta_2 ]$,} 
\end{equation}
\begin{equation}\label{dec:entttp1}
\int_{\mcN_{\tau}^H} (r-M)^{-2}  (  \underline{L} T^3 \phi )^2 \, d\omega du \lesssim C E_0 \epsilon^2 ,
\end{equation}
\begin{equation}\label{dec:entttp2}
\int_{\mcN_{\tau}^I} r^p  (  LT^3 \phi )^2 \, d\omega dv \lesssim \frac{C E_0 \epsilon^2}{( 1+\tau )^{1+\delta_2 - p} } \mbox{  for all $\tau$ and for all $p \in (0,1+\delta_2]$,} 
\end{equation}
\begin{equation}\label{dec:entttp3}
\int_{\mcN_{\tau}^I} r^2  (  LT^3 \phi )^2 \, d\omega dv \lesssim C E_0 \epsilon^2 ,
\end{equation}
\begin{equation}\label{dec:enttttp}
\int_{\mcN_{\tau}^H} (r-M)^{-p}  (  \underline{L} T^4 \phi )^2 \, d\omega du \lesssim \frac{C E_0 \epsilon^2}{( 1+\tau )^{1- p} } \mbox{  for all $\tau$ and for all $p \in (0,1 ]$,} 
\end{equation}
\begin{equation}\label{dec:enttttp1}
\int_{\mcN_{\tau}^H} (r-M)^{-1-\delta}  (  \underline{L} T^4 \phi )^2 \, d\omega du \lesssim C E_0 \epsilon^2 \mbox{  for all $\delta \leq \delta_2$},
\end{equation}
\begin{equation}\label{dec:enttttp2}
\int_{\mcN_{\tau}^I} r^p  (  LT^4 \phi )^2 \, d\omega dv \lesssim \frac{C E_0 \epsilon^2}{( 1+\tau )^{1 - p} } \mbox{  for all $\tau$ and for all $p \in (0,1]$,} 
\end{equation}
\begin{equation}\label{dec:enttttp3}
\int_{\mcN_{\tau}^I} r^{1+\delta}  (  LT^4 \phi )^2 \, d\omega dv \lesssim C E_0 \epsilon^2 \mbox{  for all $\delta \leq \delta_2$},
\end{equation}
\begin{equation}\label{dec:entttttp}
\int_{\mcN_{\tau}^H} (r-M)^{-1-\delta}  (  \underline{L} T^5 \phi )^2 \, d\omega du \lesssim C E_0 \epsilon^2 \mbox{  for all $\delta \leq \delta_2$} , 
\end{equation}
\begin{equation}\label{dec:entttttp1}
\int_{\mcN_{\tau}^I} r^{1+\delta}  (  LT^5 \phi )^2 \, d\omega dv \lesssim C E_0 \epsilon^2 \mbox{  for all $\delta \leq \delta_2$}.
\end{equation}

Also as it is evident from the proof of estimates \eqref{dec:en0}and \eqref{dec:en1}, we also get the following decay estimates under the condition that the assumptions of section \ref{as:boot} are satisfied: 
\begin{equation}\label{dec:mor01}
\begin{split}
\int_{\mathcal{A}_{\tau_1}^{\tau_2} } & \left[ (r-M)^{1+\delta} ( L \Omega^k  \phi_0 )^2 +  (r-M)^{1+\delta} ( \underline{L} \Omega^k \phi_0 )^2 \right] \,  d\omega du dv
\\ & +  \int_{\mcR_{\tau_1}^{\tau_2} \setminus ( \mathcal{A}_{\tau_1}^{\tau_2} \cup \mathcal{B}_{\tau_1}^{\tau_2} )} \left(  (  \Omega^k T \psi_0 )^2 + (Y \Omega^k  \psi_0 )^2 \right) \, d\mu_{\mcR} \\ & + \int_{\mathcal{B}_{\tau_1}^{\tau_2}} \left( \frac{( L \Omega^k  \phi_0 )^2}{r^{1+\delta}} + \frac{( \underline{L} \Omega^k \phi_0 )^2}{r^{1+\delta}}  \right) \, d\omega dv du  \lesssim  \frac{C E_0 \epsilon^2}{(1+\tau)^{3-\delta_1}} ,
\end{split}
\end{equation}
and
\begin{equation}\label{dec:mor11}
\begin{split}
 \int_{\mathcal{A}_{\tau_1}^{\tau_2} } & \left[ (r-M)^{1+\delta} ( L \Omega^k  \phi_{\geq 1} )^2 +  (r-M)^{1+\delta} ( \underline{L} \Omega^k \phi_{\geq 1} )^2 + (r-M)^3 |\Omega^k  \slashed{\nabla} \phi_{\geq 1} |^2\right] \,  d\omega du dv
\\ & +  \int_{\mcR_{\tau_1}^{\tau_2} \setminus ( \mathcal{A}_{\tau_1}^{\tau_2} \cup \mathcal{B}_{\tau_1}^{\tau_2} )} \left(  (  \Omega^k T \psi_{\geq 1} )^2 + (Y \Omega^k  \psi_{\geq 1} )^2+ | \slashed{\nabla} \Omega^k \psi_{\geq 1} |^2 \right) \, d\mu_{\mcR} \\ & + \int_{\mathcal{B}_{\tau_1}^{\tau_2}} \left( \frac{( L \Omega^k  \phi_{\geq 1} )^2}{r^{1+\delta}} + \frac{( \underline{L} \Omega^k \phi_{\geq 1} )^2}{r^{1+\delta}} + \frac{| \slashed{\nabla} \phi_{\geq 1} |^2}{r} \right) \, d\omega dv du  \lesssim  \frac{C E_0 \epsilon^2}{(1+\tau)^{3+\delta_2}} ,
\end{split}
\end{equation}
for any $\tau_1 < \tau_2$, for $\epsilon$ and $\delta_1$, $\delta_2$ as in the bootstrap assumptions of section \ref{as:boot}, for any $k \leq 5$, and for any $\delta > 0$. We also have that:
\begin{equation}\label{dec:mor}
\begin{split}
 \int_{\mathcal{A}_{\tau_1}^{\tau_2} } & \left[ (r-M)^{1+\delta} ( L \Omega^k  \phi )^2 +  (r-M)^{1+\delta} ( \underline{L} \Omega^k  \phi )^2 + (r-M)^3 |\Omega^k  \slashed{\nabla} \phi |^2\right] \,  d\omega du dv
\\ & +  \int_{\mcR_{\tau_1}^{\tau_2} \setminus ( \mathcal{A}_{\tau_1}^{\tau_2} \cup \mathcal{B}_{\tau_1}^{\tau_2} )} \left(  ( T \Omega^k \psi )^2 + (Y \Omega^k \psi )^2+ | \slashed{\nabla} \Omega^k \psi |^2 \right) \, d\mu_{\mcR} \\ & + \int_{\mathcal{B}_{\tau_1}^{\tau_2}} \left( \frac{( L \Omega^k \phi )^2}{r^{1+\delta}} + \frac{( \underline{L} \Omega^k \phi )^2}{r^{1+\delta}} + \frac{| \slashed{\nabla} \phi |^2}{r} \right) \, d\omega dv du  \lesssim  \frac{C E_0 \epsilon^2}{(1+\tau)^{3-\delta_1}} .
\end{split}
\end{equation}
Similarly we have after commuting with $T$ derivatives the following estimates:
\begin{equation}\label{dec:mor1}
\begin{split}
 \int_{\mathcal{A}_{\tau_1}^{\tau_2} } & \left[ (r-M)^{1+\delta} ( L \Omega^k T \phi )^2 +  (r-M)^{1+\delta} ( \underline{L} \Omega^k T \phi )^2 + (r-M)^3 |\Omega^k T \slashed{\nabla} \phi |^2\right] \,  d\omega du dv
\\ & +  \int_{\mcR_{\tau_1}^{\tau_2} \setminus ( \mathcal{A}_{\tau_1}^{\tau_2} \cup \mathcal{B}_{\tau_1}^{\tau_2} )} \left(  (  \Omega^k T^2 \psi )^2 + (Y \Omega^k T \psi )^2+ | \slashed{\nabla} \Omega^k T \psi |^2 \right) \, d\mu_{\mcR} \\ & + \int_{\mathcal{B}_{\tau_1}^{\tau_2}} \left( \frac{( L \Omega^k T \phi )^2}{r^{1+\delta}} + \frac{( \underline{L} \Omega^k T\phi )^2}{r^{1+\delta}} + \frac{| \slashed{\nabla} T \phi |^2}{r} \right) \, d\omega dv du  \lesssim  \frac{C E_0 \epsilon^2}{(1+\tau)^{3+\delta_2}} ,
\end{split}
\end{equation}
\begin{equation}\label{dec:mor2}
\begin{split}
 \int_{\mathcal{A}_{\tau_1}^{\tau_2} } & \left[ (r-M)^{1+\delta} ( L \Omega^k T^2 \phi )^2 +  (r-M)^{1+\delta} ( \underline{L} \Omega^k T^2 \phi )^2 + (r-M)^3 |\Omega^k T^2 \slashed{\nabla} \phi |^2\right] \,  d\omega du dv
\\ & +  \int_{\mcR_{\tau_1}^{\tau_2} \setminus ( \mathcal{A}_{\tau_1}^{\tau_2} \cup \mathcal{B}_{\tau_1}^{\tau_2} )} \left(  (  \Omega^k T^3 \psi )^2 + (Y \Omega^k T^2 \psi )^2+ | \slashed{\nabla} \Omega^k T^2 \psi |^2 \right) \, d\mu_{\mcR} \\ & + \int_{\mathcal{B}_{\tau_1}^{\tau_2}} \left( \frac{( L \Omega^k T^2 \phi )^2}{r^{1+\delta}} + \frac{( \underline{L} \Omega^k T^2 \phi )^2}{r^{1+\delta}} + \frac{| \slashed{\nabla} T^2 \phi |^2}{r} \right) \, d\omega dv du  \lesssim  \frac{C E_0 \epsilon^2}{(1+\tau)^{2+\delta_2}} ,
\end{split}
\end{equation}
\begin{equation}\label{dec:mor3}
\begin{split}
 \int_{\mathcal{A}_{\tau_1}^{\tau_2} } & \left[ (r-M)^{1+\delta} ( L \Omega^k T^3 \phi )^2 +  (r-M)^{1+\delta} ( \underline{L} \Omega^k T^3 \phi )^2 + (r-M)^3 |\Omega^k T^3 \slashed{\nabla} \phi |^2\right] \,  d\omega du dv
\\ & +  \int_{\mcR_{\tau_1}^{\tau_2} \setminus ( \mathcal{A}_{\tau_1}^{\tau_2} \cup \mathcal{B}_{\tau_1}^{\tau_2} )} \left(  (  \Omega^k T^4 \psi )^2 + (Y \Omega^k T^3 \psi )^2+ | \slashed{\nabla} \Omega^k T^3 \psi |^2 \right) \, d\mu_{\mcR} \\ & + \int_{\mathcal{B}_{\tau_1}^{\tau_2}} \left( \frac{( L \Omega^k T^3 \phi )^2}{r^{1+\delta}} +\frac{( \underline{L} \Omega^k T^3 \phi )^2}{r^{1+\delta}} + \frac{| \slashed{\nabla} T^3 \phi |^2}{r} \right) \, d\omega dv du  \lesssim  \frac{C E_0 \epsilon^2}{(1+\tau)^{1+\delta_2}} ,
\end{split}
\end{equation}
\begin{equation}\label{dec:mor4}
\begin{split}
 \int_{\mathcal{A}_{\tau_1}^{\tau_2} } & \left[ (r-M)^{1+\delta} ( L \Omega^k T^4 \phi )^2 +  (r-M)^{1+\delta} ( \underline{L} \Omega^k T^4 \phi )^2 + (r-M)^3 |\Omega^k T^4 \slashed{\nabla} \phi |^2\right] \,  d\omega du dv
\\ & +  \int_{\mcR_{\tau_1}^{\tau_2} \setminus ( \mathcal{A}_{\tau_1}^{\tau_2} \cup \mathcal{B}_{\tau_1}^{\tau_2} )} \left(  (  \Omega^k T^5 \psi )^2 + (Y \Omega^k T^4 \psi )^2+ | \slashed{\nabla} \Omega^k T^4 \psi |^2 \right) \, d\mu_{\mcR} \\ & + \int_{\mathcal{B}_{\tau_1}^{\tau_2}} \left( \frac{( L \Omega^k T^4 \phi )^2}{r^{1+\delta}} + \frac{( \underline{L} \Omega^k T^4 \phi )^2}{r^{1+\delta}} + \frac{| \slashed{\nabla} T^4 \phi |^2}{r} \right) \, d\omega dv du  \lesssim  \frac{C E_0 \epsilon^2}{1+\tau} ,
\end{split}
\end{equation}
\begin{equation}\label{dec:mor5}
\begin{split}
 \int_{\mathcal{A}_{\tau_1}^{\tau_2} } & \left[ (r-M)^{1+\delta} ( L \Omega^k T^5 \phi )^2 +  (r-M)^{1+\delta} ( \underline{L} \Omega^k T^5 \phi )^2 + (r-M)^3 |\Omega^k T^5 \slashed{\nabla} \phi |^2\right] \,  d\omega du dv
\\ & +  \int_{\mcR_{\tau_1}^{\tau_2} \setminus ( \mathcal{A}_{\tau_1}^{\tau_2} \cup \mathcal{B}_{\tau_1}^{\tau_2} )} \left(  (  \Omega^k T^6 \psi )^2 + (Y \Omega^k T^5 \psi )^2+ | \slashed{\nabla} \Omega^k T^5 \psi |^2 \right) \, d\mu_{\mcR} \\ & + \int_{\mathcal{B}_{\tau_1}^{\tau_2}} \left( \frac{( L \Omega^k T^5 \phi )^2}{r^{1+\delta}} + \frac{( \underline{L} \Omega^k T^5 \phi )^2}{r^{1+\delta}} + \frac{| \slashed{\nabla} T^5 \phi |^2}{r} \right) \, d\omega dv du  \lesssim  C E_0 \epsilon^2.
\end{split}
\end{equation}

\subsection{Pointwise decay estimates}
First we state the decay estimates satisfied by various quantities away from the horizon.
\begin{lemma}
Let $\psi$ be a solution of the equation \eqref{nw} for which the assumptions of section \ref{as:boot} are satisfied. Then we have that
\begin{equation}\label{dec:psi0a}
r^{1/2} | \psi_0 | (\tau , r )  \lesssim \frac{C_r E_0^{1/2} \epsilon}{\tau^{3/2-\delta_1/2}} ,
\end{equation}
\begin{equation}\label{dec:psi1a}
\sum_{k=0}^3 r^{1/2} | \Omega^k \psi_{\geq 1} | (\tau , r ,\omega) \lesssim  \frac{C_r^{1/2} E_0^{1/2} \epsilon}{\tau^{3/2+\delta_2/2}} ,
\end{equation}
\begin{equation}\label{dec:psi1a1}
\sum_{k=4}^5 r \int_{\mathbb{S}^2} ( \Omega^k \psi_{\geq 1} )^2 (\tau,r,\omega) \, d\omega \lesssim  \frac{C_r E_0 \epsilon^2}{v^{3+\delta_2}} ,
\end{equation}
for all $\tau$, for $\epsilon$ and $\delta_1$ as in the bootstrap assumptions of section \ref{as:boot}, and for any $r > M$ where the constant constant $C_r$ diverges to infinity as $r \rightarrow M$ or as $r \rightarrow \infty$.
\end{lemma}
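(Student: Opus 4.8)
The plan is to obtain all three estimates from the energy decay bounds already established in Section~\ref{dec} by two classical reductions: Sobolev embedding on $\mathbb{S}^2$, and a one-dimensional Sobolev (respectively Hardy) inequality in the radial variable along the leaves $\Sigma_\tau$. Throughout, $r$ is confined to a fixed compact subinterval $[r_0',r_1']\subset(M,\infty)$; this, and the comparability $v\simeq\tau+\mathcal{O}(1)$ there, is exactly what produces the $r$-dependent constant $C_r$ that diverges as $r\to M$ or $r\to\infty$. Since on such an interval the weights $r$, $(r-M)$ and $D$ are all uniformly bounded above and below, it suffices to bound, for $\phi=r\psi$ (and likewise $\phi_0,\phi_{\geq1}$), the quantity $\int_{\mathbb{S}^2}(\Omega^m\phi)^2(\tau,r,\omega)\,d\omega$ for the relevant values of $m$, and then use $r^{1/2}|\psi|=r^{-1/2}|\phi|$.

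First I would reduce the pointwise-in-$\omega$ estimates \eqref{dec:psi0a}, \eqref{dec:psi1a} to $L^2(\mathbb{S}^2)$ estimates via the Sobolev embedding $H^2(\mathbb{S}^2)\hookrightarrow L^\infty(\mathbb{S}^2)$, which gives $|\Omega^k\phi|^2(\tau,r,\omega)\lesssim\sum_{j\leq2}\int_{\mathbb{S}^2}(\Omega^{k+j}\phi)^2(\tau,r,\cdot)\,d\omega$. For $\psi_0$ the sphere collapses to a point and this step is vacuous; for $\psi_{\geq1}$ with $k\leq3$ it requires $L^2(\mathbb{S}^2)$ control of $\Omega^m\phi_{\geq1}$ up to $m=5$, which is precisely \eqref{dec:psi1a1}. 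This is the structural reason all the energy estimates of Section~\ref{dec} were commuted with $\Omega^k$ for $k\leq5$. Hence everything reduces to an $L^2(\mathbb{S}^2)$ bound of the shape \eqref{dec:psi1a1} for each of $\phi_0$ and $\Omega^m\phi_{\geq1}$ with $m\leq5$.

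For that $L^2(\mathbb{S}^2)$ bound I would fix $(\tau,\omega)$, regard $\phi(\tau,\cdot,\omega)$ as a function of $r$ along $\Sigma_\tau$ (on which, in the compact region, $\partial_r$ is a fixed bounded combination of $L$ and $\underline{L}$), and use the fundamental theorem of calculus together with Cauchy--Schwarz to get $\phi^2(\tau,r,\omega)\lesssim\int_{r_0''}^{r_1''}\big(\phi^2+(L\phi)^2+(\underline{L}\phi)^2\big)(\tau,r',\omega)\,dr'$ on a slightly enlarged interval. Integrating in $\omega$, the first-derivative terms are bounded by the restriction to $\{r_0''\leq r\leq r_1''\}$ of the $T$-flux $\int_{\Sigma_\tau}J^T[\Omega^m\psi]\cdot\mathbf{n}_\Sigma\,d\mu_\Sigma$ (using that $\mathbf{n}_\Sigma$ is uniformly timelike away from $\mathcal{H}^+$ and $\mathcal{I}^+$), hence by \eqref{dec:en0}, \eqref{dec:en1} and their higher-$\Omega$ analogues, with precisely the rates $\tau^{-(3-\delta_1)}$ for $\psi_0$ and $\tau^{-(3+\delta_2)}$ for $\Omega^k\psi_{\geq1}$. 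The zeroth-order term $\int(\Omega^m\phi)^2\,dr'$ splits into two cases: for $\psi_{\geq1}$ one uses the Poincar\'e inequality $\int_{\mathbb{S}^2}(\Omega^m\psi_{\geq1})^2\,d\omega\lesssim\int_{\mathbb{S}^2}|\nabla_{\mathbb{S}^2}\Omega^m\psi_{\geq1}|^2\,d\omega=r^2\int_{\mathbb{S}^2}|\slashed{\nabla}\Omega^m\psi_{\geq1}|^2\,d\omega$, which again lands inside the $T$-flux; for $\psi_0$ there is no such Poincar\'e inequality, and one must instead invoke a Hardy inequality in $r$ along the outgoing cone near infinity --- the inequality \eqref{hardy} --- to control $\phi_0$ at the entry radius $r_1$ by the subcritical $r^p$-weighted flux of Proposition~\ref{prop:rp05} (with $p$ slightly below $1$) together with its decay \eqref{dec:enp2}, and then propagate this bound inward through the bounded-$r$ region using \eqref{dec:mor} and the $T$-flux. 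Collecting rates gives $\int_{\mathbb{S}^2}\phi_0^2(\tau,r,\cdot)\lesssim C_rE_0\epsilon^2\tau^{-(3-\delta_1)}$ and $\int_{\mathbb{S}^2}(\Omega^m\phi_{\geq1})^2(\tau,r,\cdot)\lesssim C_rE_0\epsilon^2\tau^{-(3+\delta_2)}$ for $m\leq5$; the latter is \eqref{dec:psi1a1}, and taking square roots and feeding $k\leq3$ back through Step~1 yields \eqref{dec:psi0a} and \eqref{dec:psi1a}.

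The main obstacle I anticipate is the zeroth-order term for the spherically symmetric part: lacking a Poincar\'e inequality on the sphere, one is forced to run the radial Hardy inequality \eqref{hardy}, and one must check that it can be applied with a weight in the admissible range (indeed $p<1$ for the version commuted with $\frac{2r^2}{D}L$), that the boundary term it generates towards $\mathcal{I}^+$ vanishes for such subcritical weights rather than having to be absorbed, and that the bound it produces for $\phi_0$ still decays at the full rate $\tau^{-(3-\delta_1)/2}$ and not at a degraded one. The remaining steps --- the Sobolev embeddings, the one-dimensional interpolation in $r$, and the reduction of first-derivative and angular-derivative norms to the $T$-flux --- are routine and can be carried out exactly as in the linear theory.
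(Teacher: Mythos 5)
Your core reduction is the same as the paper's: fundamental theorem of calculus in the radial variable along $\Sigma_\tau$, Cauchy--Schwarz, reduction to the $T$-flux, the energy decay \eqref{dec:en0}, \eqref{dec:en1}, and Sobolev on $\mathbb{S}^2$ (with up to five $\Omega$'s) to pass from the $L^2(\mathbb{S}^2)$ bound \eqref{dec:psi1a1} to the pointwise bounds for $k\leq 3$. Where you diverge is the treatment of the zeroth-order term for $\psi_0$, and here the paper is substantially simpler: it integrates from $\rho=\infty$ inward, writing $\psi_0^2(\tau,r)\lesssim \frac{1}{r}\int_r^\infty(\partial_\rho\psi_0)^2\,\rho^2\,d\rho$ by Cauchy--Schwarz against the convergent weight $\rho^{-2}$, then inserts a factor of $D$ using only $r>M$ (this is exactly where $C_r$, blowing up as $r\to M$, enters) and recognizes the result as the $T$-flux, so no zeroth-order term, no Poincar\'e/Hardy step, and no appeal to the $r^p$-hierarchy is ever needed. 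Your local one-dimensional Sobolev inequality, by contrast, forces you to control $\int\phi_0^2\,dr'$ separately, and your proposed route for that is both heavier than necessary and flawed in one step: the Morawetz decay estimate \eqref{dec:mor} is a spacetime integral over $\mcR_{\tau_1}^{\tau_2}$ (and contains only derivative terms in the intermediate region), so it cannot be used to ``propagate inward'' a bound on a fixed slice $\Sigma_\tau$. The fix is easy — either apply the radial Hardy inequality directly on the slice, $\int_r^\infty\psi_0^2\,d\rho\lesssim\int_r^\infty\rho^2(\partial_\rho\psi_0)^2\,d\rho\lesssim C_r\int_{\Sigma_\tau}J^T[\psi_0]\cdot\textbf{n}_{\Sigma}\,d\mu_{\Sigma}$, which lands back on the same $T$-flux with the full rate $\tau^{-(3-\delta_1)}$, or, better, drop the local Sobolev inequality altogether and integrate from infinity as above — so the detour through Proposition \ref{prop:rp05} and \eqref{dec:enp2} is unnecessary. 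Your handling of $\psi_{\geq1}$ via the Poincar\'e inequality on the sphere is fine, though again superfluous once one integrates from infinity, which is how the paper treats \eqref{dec:psi1a} and \eqref{dec:psi1a1} ``almost identically'' to $\psi_0$.
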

\begin{proof}
The proof follows from a standard application of the fundamental theorem of calculus. We demonstrate it only for $\psi_0$ (since the argument for estimate \eqref{dec:psi1a} and \eqref{dec:psi1a1} is almost identical) where we use a coordinate system $(\rho ,\omega)$ on $\Sigma_{\tau}$ for any $\tau$:
\begin{equation*}
\begin{split}
\psi_0^2 (\tau , r) & \lesssim \frac{1}{r} \cdot  \int_r^{\infty} ( \partial_{\rho} \psi_0 )^2 \, \rho^2 d\rho \lesssim  \frac{1}{r} \cdot  \int_r^{\infty} D ( \partial_{\rho} \psi_0 )^2 \, \rho^2 d\rho \\ \Rightarrow & r \cdot \psi_0^2 (\tau , r) \lesssim C_r^2 \int_{\Sigma_{\tau}} J^T [\psi_0 ] \cdot \textbf{n}_{\Sigma} \, d\mu_{\Sigma} \\ \Rightarrow & r \cdot \psi_0^2 (\tau , r) \lesssim \frac{C_r^2 E_0 \epsilon^2}{\tau^{3-\delta_1}} ,
\end{split}
\end{equation*}
where we used the fact $r > M$ in order to present the $D$ factor in the integral in the first line, and in the last line we used the decay estimate \eqref{dec:en0}.
\end{proof}

Using the previous estimates we have the following decay estimates close to the horizon for the spherically symmetric part of \eqref{nw}.
\begin{lemma}
Let $\psi_0$ be the spherically symmetric part of a solution $\psi$ of \eqref{nw} for which the assumptions of section \ref{as:boot} are satisfied. Then for all $(u,v) \in \mathcal{A}_{\tau_0}^{\infty} / SO(2)$ we have that
\begin{equation}\label{dec:psi0}
| \phi_0 | (u,v) \lesssim \frac{C^{1/2} E_0^{1/2} \epsilon}{v^{1-\delta_1/2}} ,
\end{equation}
for $\epsilon$ and $\delta_1$ as in the bootstrap assumptions of section \ref{as:boot}, and for all $(u,v) \in \mathcal{B}_{\tau_0}^{\infty}$ that
\begin{equation}\label{dec:psi01}
| \phi_0 | (u,v) \lesssim \frac{C^{1/2} E_0^{1/2} \epsilon}{u^{1-\delta_1/2}} .
\end{equation}
\end{lemma}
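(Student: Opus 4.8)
The plan is to obtain the two pointwise bounds by integrating a single derivative of $\phi_0$ from the fixed-radius region $\{r_0\le r\le r_1\}$ — where \eqref{dec:psi0a} already supplies fast decay — along null rays, and to control that integral by Cauchy--Schwarz against the weighted energy fluxes of Propositions \ref{prop:rp05}--\ref{prop:rp06}. I will use throughout that near $\mathcal{H}^{+}$ every $\Sigma_{\tau}$ sits at $v\simeq\tau$ (up to a bounded shift) and near $\mathcal{I}^{+}$ at $u\simeq\tau$, so that \eqref{dec:psi0a} evaluated at $r=r_0$ (resp. $r=r_1$) yields $|\phi_0|=r_0|\psi_0|\lesssim\epsilon\,v^{-3/2+\delta_1/2}$ (resp. $\lesssim\epsilon\,u^{-3/2+\delta_1/2}$), in both cases faster than the claimed rate; it remains to bound a bulk integral of a transversal derivative.

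For $(u,v)\in\mathcal{B}_{\tau_0}^{\infty}$ I would integrate the good derivative $L\phi_0=\partial_v\phi_0$ along the outgoing cone $\{u=\mathrm{const}\}$ from $r=r_1$,
\[
\phi_0(u,v)=\phi_0\big|_{r=r_1}(u)+\int_{r_1}^{r(u,v)}\frac{2}{D}\,L\phi_0\,dr',
\]
and estimate the integral by $\big(\int_{r_1}^{\infty}r^{p}(L\phi_0)^2\,dr'\big)^{1/2}\big(\int_{r_1}^{\infty}r^{-p}\,dr'\big)^{1/2}$. Since $D$ is bounded below for $r\ge r_1$, the first factor is comparable to the $r^{p}$-weighted flux through $\mcN^{I}_{\tau}$, hence $\lesssim\epsilon^2(1+\tau)^{-(3-\delta_1-p)}$ by \eqref{dec:en0p2}, while the second is finite for $p>1$; letting $p$ run through $(1,3-\delta_1]$ and combining with the boundary term gives $|\phi_0|(u,v)\lesssim\epsilon\,u^{-1+\delta_1/2}$, i.e. \eqref{dec:psi01}. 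For $(u,v)\in\mathcal{A}_{\tau_0}^{\infty}$ the conformal mirror of this computation integrates the ingoing derivative $\underline{L}\phi_0$ towards the horizon, but the change of variables now introduces the weight $\tfrac{2}{D}=\tfrac{2r^2}{(r-M)^2}$, so Cauchy--Schwarz against the $(r-M)^{-p}$-flux \eqref{dec:en0p} does not close by itself (it would require $p>3$, whereas $p\le 3-\delta_1$). The remedy is to bring in, after one integration by parts, the $\tfrac{2r}{D}\underline{L}$-commuted hierarchy \eqref{dec:en0p1}--\eqref{est:rp02} of Proposition \ref{prop:rp06}, which controls exactly $\Phi_0^{H}=\tfrac{2r}{D}\underline{L}\phi_0$, together with the radial equation $\partial_u\partial_v\phi_0=-V(r)\phi_0-\tfrac{Dr}{4}F_0$ whose potential $V=O((r-M)^3)$ and source coefficient $\tfrac{Dr}{4}=O((r-M)^2)$ are strongly degenerate at $r=M$; the error integrals that arise are then absorbed using \eqref{dec:en0p}, \eqref{dec:psi0a}, the Morawetz output \eqref{dec:mor01}, and the bootstrap assumptions \eqref{A1}--\eqref{A6}, with a dyadic-sequence interpolation between the fast-decaying small-$p$ and the saturated large-$p$ members of each hierarchy producing the rate $v^{-1+\delta_1/2}$ of \eqref{dec:psi0}.

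The main obstacle is precisely this near-horizon estimate. Unlike at null infinity, where $|L\phi_0|\lesssim r^{-2}$ and the change-of-variables weight is harmless, here the transversal derivative $Y\phi_0=-\tfrac{2}{D}\underline{L}\phi_0$ does \emph{not} decay along $\mathcal{H}^{+}$ — this is the extremal horizon instability — so the crude bound $\int_M^{r_0}|Y\phi_0|\,dr\lesssim\epsilon$ is useless, and the $(r-M)^{-p}$-weighted hierarchy alone falls short by exactly the $\delta_1$-gap that re-appears in the stated loss $v^{-1+\delta_1/2}$. Closing this gap therefore requires the joint strength of the uncommuted and the $\tfrac{2r}{D}\underline{L}$-commuted near-horizon hierarchies of Section \ref{energy} together with the degeneracy structure of the equation, and that is the analytic heart of the lemma.
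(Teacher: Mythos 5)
There is a genuine gap, and it appears in both regions for the same reason. Your plan integrates a single derivative of $\phi_0$ and splits by Cauchy--Schwarz against a pure weight: near infinity, $\bigl(\int r^{p}(L\phi_0)^2\bigr)^{1/2}\bigl(\int r^{-p}\bigr)^{1/2}$ needs $p>1$ for the second factor to converge, and then \eqref{dec:en0p2} only yields $|\phi_0|\lesssim \epsilon\,u^{-(3-\delta_1-p)/2}=\epsilon\,u^{-1+\delta_1/2+\eta/2}$ for $p=1+\eta$; since the constant blows up as $\eta\to 0$, no admissible choice of $p$ reaches the claimed exponent $u^{-(1-\delta_1/2)}$, and $\delta_1$ is fixed, so you cannot absorb the loss. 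The same obstruction is what you (correctly) notice at the horizon, but your diagnosis that the elementary route therefore fails, and your proposed remedy --- integration by parts, the $\Phi_0^H$-commuted hierarchy, the radial wave equation, Morawetz and the bootstrap assumptions, plus an unspecified dyadic interpolation --- is both far heavier than needed and not actually carried out: you never say what is integrated by parts, what the boundary terms are, or how the rate $v^{-1+\delta_1/2}$ emerges, so as written this part is a strategy sketch, not a proof.

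The missing idea, which is how the paper closes both estimates at the sharp rate, is to apply the fundamental theorem of calculus to $\phi_0^2$ rather than to $\phi_0$, so the bulk integrand is $|\phi_0|\,|\underline{L}\phi_0|$ (resp.\ $|\phi_0|\,|L\phi_0|$), and then to split the weight \emph{between the two factors}: near the horizon,
$\int_{\mcN_v^H}|\phi_0|\,|\underline{L}\phi_0|\,du\le \bigl(\int_{\mcN_v^H} D\,\phi_0^2\,du\bigr)^{1/2}\bigl(\int_{\mcN_v^H}(r-M)^{-2}(\underline{L}\phi_0)^2\,du\bigr)^{1/2}$.
The first factor is controlled via Hardy's inequality \eqref{hardy} by the degenerate $T$-flux, which decays like $v^{-(3-\delta_1)}$ by \eqref{dec:en0}, while the second is the \emph{uncommuted} $(r-M)^{-p}$-flux at $p=2$, well inside the range of \eqref{dec:en0p}, decaying like $v^{-(1-\delta_1)}$; the geometric mean is exactly $v^{-(2-\delta_1)/2}$, i.e.\ $|\phi_0|\lesssim \epsilon\,v^{-(1-\delta_1/2)}$, with the boundary term at $r=r_0$ handled by \eqref{dec:psi0a} as you intended. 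The mirror computation with $r^{2}(L\phi_0)^2$ and \eqref{dec:en0p2} gives \eqref{dec:psi01}. In particular no commuted hierarchy, no use of the wave equation, and no bootstrap input beyond the already-derived energy decay is required; the extra power you were missing comes from the fact that the zeroth-order factor, after Hardy, decays a full power faster than the flux factor.
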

\begin{proof}
We apply the fundamental theorem of calculus to $\psi_0^2$ and we have that
\begin{equation*}
\begin{split}
\phi_0^2 (u,v) \lesssim \phi_0^2 (u_0 , v) +& \int_{\mcN_v^H} | \phi_0 | \cdot | \underline{L} \phi_0 | \, du \lesssim \psi_0^2 (u_0 , v)  \\ & + \int_{\mcN_v^H} \phi_0^2 \, du+ \int_{\mcN_v^H} | \phi_0 | \cdot | \underline{L} \phi_0 | \, du \\ \lesssim &  \phi_0^2 (u_0 , v)  + \int_{\Sigma_v } J^T [\psi_0 ] \cdot \textbf{n}_{\Sigma} \, d\mu_{\Sigma} \\ &+ \left( \int_{\mcN_v^H} \phi_0^2 \, D du \right)^{1/2} \cdot \left( \int_{\mcN_v^H} \frac{1}{D} (\underline{L} \phi_0 )^2 \, du \right)^{1/2} \\ \lesssim & \psi_0^2 (u_0 , v)  + \int_{\Sigma_v } J^T [\psi_0 ] \cdot \textbf{n}_{\Sigma} \, d\mu_{\Sigma} \\ &+ \left( \int_{\Sigma_v} J^T [\psi_0 ] \cdot \textbf{n}_{\Sigma}  \, d\mu_{\Sigma} \right)^{1/2} \cdot \left( \int_{\mcN_v^H} (r-M)^{-2} (\underline{L} \phi_0 )^2 \, du \right)^{1/2} \\ \lesssim & \frac{E_0 \epsilon^2}{v^{3-\delta_1}} + \frac{E_0^{1/2} \epsilon}{v^{3/2-\delta_1/2}} \cdot \frac{E_0^{1/2} \epsilon}{v^{1/2-\delta_1/2}} \lesssim \frac{E_0\epsilon^2}{v^{2-\delta_1}} ,
\end{split}
\end{equation*}
where we used the decay estimate in the interior \eqref{dec:psi0a}, the energy decay estimate \eqref{dec:en0} and the energy decay estimate \eqref{dec:en0p}. The estimate at infinity follows in a similar way using estimates \eqref{dec:en0p2}.
\end{proof}

For the non-spherically symmetric part we have the following decay estimates:
\begin{lemma}
Let $\psi_{\geq 1}$ be the non-spherically symmetric part of a solution $\psi$ of \eqref{nw} for which the assumptions of section \ref{as:boot} are satisfied. Then for all $k\leq 5$ and for all $(u,v) \in \mathcal{A}_{\tau_0}^{\infty} / SO(2)$ we have that
\begin{equation}\label{dec:psi1}
 \int_{\mathbb{S}^2} ( \Omega^k \phi_{\geq 1} )^2 (u,v,\omega) \, d\omega \lesssim \frac{C E_0 \epsilon^2}{v^{2+\delta_2}} ,
\end{equation}
for $\delta_2$ as in the bootstrap assumptions of section \ref{as:boot}, and for all $(u,v) \in \mathcal{B}_{\tau_0}^{\infty}$ that:
\begin{equation}\label{dec:psi11}
 \int_{\mathbb{S}^2} ( \Omega^k \phi_{\geq 1} )^2 (u,v,\omega) \, d\omega \lesssim \frac{C E_0 \epsilon^2}{u^{2+\delta_2}} .
\end{equation}
\end{lemma}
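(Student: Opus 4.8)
The plan is to follow the proof of \eqref{dec:psi0}--\eqref{dec:psi01} for the spherically symmetric part, with two changes: we integrate over $\mathbb{S}^2$ before applying the fundamental theorem of calculus (harmless, since $\Omega$ commutes with both $\underline{L}$ and $L$), and we feed in the $\psi_{\geq 1}$-hierarchies in place of the $\psi_0$-hierarchies, whose faster decay turns the loss $v^{\delta_1}$ of the $\psi_0$ case into the gain $v^{-\delta_2}$. Throughout, $\Omega^k\phi_{\geq 1}=r\,\Omega^k\psi_{\geq 1}$ and $k\le 5$ is fixed.

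\emph{The bound \eqref{dec:psi1} near the horizon.} We fix $(u,v)\in\mathcal{A}_{\tau_0}^{\infty}$ and integrate along $\mcN_v^H$ in $u$ from its interior edge $\{r=r_0\}$; since $\partial_u=\underline{L}$ and $[\Omega,\underline{L}]=0$,
\begin{equation*}
\int_{\mathbb{S}^2}(\Omega^k\phi_{\geq 1})^2(u,v,\omega)\,d\omega\ \lesssim\ \int_{\mathbb{S}^2}(\Omega^k\phi_{\geq 1})^2\big|_{r=r_0}\,d\omega\ +\ \int_{\mcN_v^H}\!\int_{\mathbb{S}^2}|\Omega^k\phi_{\geq 1}|\,|\underline{L}\Omega^k\phi_{\geq 1}|\,d\omega\,du .
\end{equation*}
The first term, at the fixed radius $r_0$ with $\tau\sim v$, is $\lesssim C E_0\epsilon^2 v^{-3-\delta_2}$: for $k\le 3$ by integrating the pointwise bound \eqref{dec:psi1a} over $\mathbb{S}^2$, for $k\in\{4,5\}$ directly by \eqref{dec:psi1a1}. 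For the second term we use Cauchy--Schwarz with the weight $D\sim(r-M)^2$,
\begin{equation*}
\int_{\mcN_v^H}\!\int_{\mathbb{S}^2}|\Omega^k\phi_{\geq 1}|\,|\underline{L}\Omega^k\phi_{\geq 1}|\,d\omega\,du\ \lesssim\ \Big(\int_{\mcN_v^H}\!\int_{\mathbb{S}^2}D\,(\Omega^k\phi_{\geq 1})^2\,d\omega\,du\Big)^{1/2}\Big(\int_{\mcN_v^H}\!\int_{\mathbb{S}^2}(r-M)^{-2}(\underline{L}\Omega^k\phi_{\geq 1})^2\,d\omega\,du\Big)^{1/2}.
\end{equation*}
Since $D(\Omega^k\phi_{\geq 1})^2\lesssim(\Omega^k\psi_{\geq 1})^2$ near the horizon, the first factor is controlled, via the Hardy inequality near the horizon already used in the proof of \eqref{dec:psi0}, by $\big(\int_{\Sigma_v}J^T[\Omega^k\psi_{\geq 1}]\cdot\textbf{n}_\Sigma\,d\mu_\Sigma\big)^{1/2}\lesssim C^{1/2}E_0^{1/2}\epsilon\,v^{-3/2-\delta_2/2}$ by \eqref{dec:en1}; the second factor is $\lesssim C^{1/2}E_0^{1/2}\epsilon\,v^{-1/2-\delta_2/2}$ by the commuted horizon hierarchy \eqref{dec:en1p} with $p=2$ (admissible since $2\le 3-\delta_1$). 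Multiplying the two square roots gives $\lesssim C E_0\epsilon^2 v^{-2-\delta_2}$, and combining with the boundary term yields \eqref{dec:psi1}.

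\emph{The bound \eqref{dec:psi11} at infinity.} This is identical, now integrating along the outgoing null segment $\mcN_u^I$ in $v$ from its interior edge $\{r=r_1\}$ (on which $\partial_v=L$ and $[\Omega,L]=0$, and where $u\sim\tau$). The boundary term at $r_1$ is again $\lesssim C E_0\epsilon^2 u^{-3-\delta_2}$ by \eqref{dec:psi1a}--\eqref{dec:psi1a1}, and the bulk term is split by Cauchy--Schwarz with the weight $r^{-2}$ into $\big(\int_{\mcN_u^I}\int_{\mathbb{S}^2}r^{-2}(\Omega^k\phi_{\geq1})^2\big)^{1/2}\big(\int_{\mcN_u^I}\int_{\mathbb{S}^2}r^{2}(L\Omega^k\phi_{\geq1})^2\big)^{1/2}$; the first factor is $\lesssim\big(\int_{\Sigma}J^T[\Omega^k\psi_{\geq1}]\cdot\textbf{n}_\Sigma\,d\mu_\Sigma\big)^{1/2}\lesssim C^{1/2}E_0^{1/2}\epsilon\,u^{-3/2-\delta_2/2}$ via the $r$-weighted Hardy inequality \eqref{hardy} and \eqref{dec:en1}, and the second factor is $\lesssim C^{1/2}E_0^{1/2}\epsilon\,u^{-1/2-\delta_2/2}$ by the $r^p$-weighted hierarchy \eqref{dec:en1p2} with $p=2$. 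The product is $\lesssim C E_0\epsilon^2 u^{-2-\delta_2}$, giving \eqref{dec:psi11}.

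The only points requiring attention — and hence the main, rather mild, difficulties — are: (i) the commuted weighted hierarchies \eqref{dec:en1p} and \eqref{dec:en1p2} must be available at weight $p=2$ uniformly for all $k\le 5$, which is precisely the range recorded there and which already incorporates the derivative loss at the photon sphere through \eqref{dec:en1}; and (ii) the Hardy inequalities used to pass from $\int_{\mcN^H}(\Omega^k\phi_{\geq1})^2$, respectively $\int_{\mcN^I}r^{-2}(\Omega^k\phi_{\geq1})^2$, to the $T$-flux on $\Sigma$, which are exactly the inequalities already invoked for $\psi_0$ and require no new input. Everything else is a verbatim repetition of the previous Lemma with $\psi_0\rightsquigarrow\Omega^k\psi_{\geq1}$ and an extra $\int_{\mathbb{S}^2}d\omega$.
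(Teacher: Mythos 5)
Your argument is correct and is essentially the paper's own route: the paper gives no separate proof for this lemma but indicates it follows exactly as for $\psi_0$, i.e.\ the fundamental theorem of calculus along $\mcN_v^H$ (resp.\ $\mcN_u^I$), Cauchy--Schwarz with the $D$ (resp.\ $r^{\pm 2}$) weights, the $T$-flux decay \eqref{dec:en1} together with Hardy for one factor, the weighted hierarchies \eqref{dec:en1p}, \eqref{dec:en1p2} at $p=2$ for the other, and the interior estimates \eqref{dec:psi1a}, \eqref{dec:psi1a1} for the boundary term at $r_0$, $r_1$ — which is precisely what you do, with the $\psi_{\geq 1}$ rates producing $v^{-2-\delta_2}$ and $u^{-2-\delta_2}$. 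The only cosmetic point is that \eqref{dec:en1p} and \eqref{dec:en1p2} are the uncommuted ($\Omega^k$-commuted, not $(r-M)^{-2}\underline{L}$- or $r^2L$-commuted) hierarchies in the paper's terminology, but the estimates you invoke are exactly those recorded there and your use of them is correct.
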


Combining the previous Lemmas gives us also the following estimates for any $k \leq 5$:
\begin{equation}\label{dec:psi}
 \int_{\mathbb{S}^2} ( \Omega^k \phi )^2 (u,v,\omega) \, d\omega \lesssim \frac{C E_0 \epsilon^2}{v^{2-\delta_1}} \mbox{  for all $(u,v) \in \mathcal{A}_{\tau_0}^{\infty} / SO(2)$} ,
\end{equation}
\begin{equation}\label{dec:psiinf}
 \int_{\mathbb{S}^2} ( \Omega^k \phi )^2 (u,v,\omega) \, d\omega \lesssim \frac{C E_0 \epsilon^2}{u^{2-\delta_1}} \mbox{  for all $(u,v) \in \mathcal{B}_{\tau_0}^{\infty} / SO(2)$} ,
\end{equation}
\begin{equation}\label{dec:psiint}
 r \int_{\mathbb{S}^2} ( \Omega^k \psi )^2 (\tau ,r,\omega) \, d\omega \lesssim \frac{C_r E_0 \epsilon^2}{\tau^{3-\delta_1}} \mbox{  for any $\tau$ and $M < r < \infty$} .
\end{equation}

After commuting with $T$ derivatives we have the following decay estimates for any $k\leq 5$: 
\begin{equation}\label{dec:tpsi}
 \int_{\mathbb{S}^2} ( \Omega^k T\phi )^2 (u,v,\omega) \, d\omega \lesssim \frac{C E_0 \epsilon^2}{v^{2+\delta_2}} \mbox{  for all $(u,v) \in \mathcal{A}_{\tau_0}^{\infty} / SO(2)$} ,
\end{equation}
\begin{equation}\label{dec:tpsiinf}
 \int_{\mathbb{S}^2} ( \Omega^k T\phi )^2 (u,v,\omega) \, d\omega \lesssim \frac{C E_0 \epsilon^2}{u^{2+\delta_2}} \mbox{  for all $(u,v) \in \mathcal{B}_{\tau_0}^{\infty} / SO(2)$} ,
\end{equation}
\begin{equation}\label{dec:tpsiint}
 r \int_{\mathbb{S}^2} ( \Omega^k T \psi )^2 (\tau ,r,\omega) \, d\omega \lesssim \frac{C_r E_0 \epsilon^2}{\tau^{3+\delta_2}} \mbox{  for any $\tau$ and $M < r < \infty$} ,
\end{equation}
\begin{equation}\label{dec:ttpsi}
 \int_{\mathbb{S}^2} ( \Omega^k T^2 \phi )^2 (u,v,\omega) \, d\omega \lesssim \frac{C E_0 \epsilon^2}{v^{1+\delta_2}} \mbox{  for all $(u,v) \in \mathcal{A}_{\tau_0}^{\infty} / SO(2)$} ,
\end{equation}
\begin{equation}\label{dec:ttpsiinf}
 \int_{\mathbb{S}^2} ( \Omega^k T^2 \phi )^2 (u,v,\omega) \, d\omega \lesssim \frac{C E_0 \epsilon^2}{u^{1+\delta_2}} \mbox{  for all $(u,v) \in \mathcal{B}_{\tau_0}^{\infty} / SO(2)$} ,
\end{equation}
\begin{equation}\label{dec:ttpsiint}
 r \int_{\mathbb{S}^2} ( \Omega^k T^2 \psi )^2 (\tau ,r,\omega) \, d\omega \lesssim \frac{C_r E_0 \epsilon^2}{\tau^{2+\delta_2}} \mbox{  for any $\tau$ and $M < r < \infty$} ,
\end{equation}
\begin{equation}\label{dec:tttpsi}
 \int_{\mathbb{S}^2} ( \Omega^k T^3 \phi )^2 (u,v,\omega) \, d\omega \lesssim \frac{C E_0 \epsilon^2}{v^{1/2 +\delta_2}} \mbox{  for all $(u,v) \in \mathcal{A}_{\tau_0}^{\infty} / SO(2)$} ,
\end{equation}
\begin{equation}\label{dec:tttpsiinf}
 \int_{\mathbb{S}^2} ( \Omega^k T^3 \phi )^2 (u,v,\omega) \, d\omega \lesssim \frac{C E_0 \epsilon^2}{u^{1/2 +\delta_2}} \mbox{  for all $(u,v) \in \mathcal{B}_{\tau_0}^{\infty} / SO(2)$} ,
\end{equation}
\begin{equation}\label{dec:tttpsiint}
 r \int_{\mathbb{S}^2} ( \Omega^k T^3 \psi )^2 (\tau ,r,\omega) \, d\omega \lesssim \frac{C_r E_0 \epsilon^2}{\tau^{1+\delta_2}} \mbox{  for any $\tau$ and $M < r < \infty$} ,
\end{equation}
\begin{equation}\label{dec:ttttpsi}
 \int_{\mathbb{S}^2} ( \Omega^k T^4 \phi )^2 (u,v,\omega) \, d\omega \lesssim \frac{C E_0 \epsilon^2}{v^{1/4+\delta_2}} \mbox{  for all $(u,v) \in \mathcal{A}_{\tau_0}^{\infty} / SO(2)$} ,
\end{equation}
\begin{equation}\label{dec:ttttpsiinf}
 \int_{\mathbb{S}^2} ( \Omega^k T^4 \phi )^2 (u,v,\omega) \, d\omega \lesssim \frac{C E_0 \epsilon^2}{u^{1/4+\delta_2}} \mbox{  for all $(u,v) \in \mathcal{B}_{\tau_0}^{\infty} / SO(2)$} ,
\end{equation}
\begin{equation}\label{dec:ttttpsiint}
 r \int_{\mathbb{S}^2} ( \Omega^k T^4  \psi )^2 (\tau ,r,\omega) \, d\omega \lesssim \frac{C_r E_0 \epsilon^2}{\tau} \mbox{  for any $\tau$ and $M < r < \infty$} ,
\end{equation}
\begin{equation}\label{dec:tttttpsi}
 \int_{\mathbb{S}^2} ( \Omega^k T^5 \phi )^2 (u,v,\omega) \, d\omega \lesssim C E_0 \epsilon^2 \mbox{  for all $(u,v) \in \mathcal{A}_{\tau_0}^{\infty} / SO(2)$} ,
\end{equation}
\begin{equation}\label{dec:tttttpsiinf}
 \int_{\mathbb{S}^2} ( \Omega^k T^5 \phi )^2 (u,v,\omega) \, d\omega \lesssim C E_0 \epsilon^2 \mbox{  for all $(u,v) \in \mathcal{B}_{\tau_0}^{\infty} / SO(2)$} ,
\end{equation}
\begin{equation}\label{dec:tttttpsiint}
 r \int_{\mathbb{S}^2} ( \Omega^k T^5  \psi )^2 (\tau ,r,\omega) \, d\omega \lesssim C_r E_0 \epsilon^2 \mbox{  for any $\tau$ and $M < r < \infty$} .
\end{equation}

\subsection{Boundedness of $\partial_r \psi$}\label{char}

In this section we will show that the transversal to the horizon $\partial_r$ derivative is bounded within spherical symmetry close to the horizon, while it decays (at a slow rate) outside spherical symmetry or after commuting with $T$ (again close to the horizon -- away it decays with a much better rate via the use of the elliptic estimates \eqref{est:elliptic}). For the boundedness result we will use the method of characteristics (following a similar approach to \cite{yannis1}) while for the decay results we will use the $(r-M)^{-p}$-weighted energy hierarchies. We also note that the following boundedness estimate involves a bootstrap argument under the assumptions of Section \ref{as:boot} (the same is done for the growth estimates of Section \ref{sec:GrowthEstimates}). The later assumptions are then verified through another bootstrap argument (in Section \ref{end:boot}) and the choice of the final smallness constant is the minimum of the constants involved in the aforementioned bootstrap arguments.

\begin{theorem}\label{thm:boundy0}
Let $\psi$ be a solution of \eqref{nw} with the corresponding initial data, and assume that the bootstrap assumptions of Section \ref{as:boot} hold true. Then there exists some $\epsilon' > 0$ such that for all $0 < \epsilon < \epsilon'$ we have for all $(u,v,\omega ) \in \mathcal{A}_{\tau_0}^{\infty}$ that:
\begin{equation}\label{est:boundy0}
\left| \frac{2r}{D} \underline{L} \phi \right| (u,v,\omega) \lesssim C \sqrt{E_0} \epsilon .
\end{equation}
\end{theorem}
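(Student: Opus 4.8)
The plan is to split $\phi=\phi_0+\phi_{\geq 1}$, so that $\frac{2r}{D}\underline{L}\phi=\Phi_0^H+\Phi_{\geq 1}^H$, and to bound the two pieces by different methods. Note first the algebraic identity $\frac{2r}{D}\underline{L}\phi=-rY\phi=-\phi-r^2Y\psi$, so that, given the pointwise bounds on $\phi$ already established in Section \ref{dec} (e.g. \eqref{dec:psi}), estimate \eqref{est:boundy0} is equivalent to the pointwise boundedness of $Y\psi$ near $\mathcal{H}^+$, i.e. to the continuation criterion. The whole argument runs inside the bootstrap region of Section \ref{as:boot}; moreover, since the nonlinearity reproduces a multiple of $\frac{2r}{D}\underline{L}\phi$ on the right-hand side of its own transport equation near the horizon, \eqref{est:boundy0} is itself recovered by a subsidiary continuity argument, assuming the bound with a large constant on a maximal subinterval and improving it for $\epsilon$ small.

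For $\Phi_{\geq 1}^H$ I would use the energy method. Commuting with $\Omega^k$, $k\leq 2$, and using Sobolev embedding on $\mathbb{S}^2$, it suffices to bound $\int_{\mathbb{S}^2}(\Omega^k\Phi_{\geq 1}^H)^2$ pointwise. On a fixed horizon-crossing null segment $\mcN^H_\tau$ (on which $\underline{L}=\partial_u$), the fundamental theorem of calculus gives
\[ |\Omega^k\Phi_{\geq 1}^H|(u,v)\ \leq\ |\Omega^k\Phi_{\geq 1}^H|(u_{r_0}(v),v)\ +\ \int_{u_{r_0}(v)}^{u}|\underline{L}\,\Omega^k\Phi_{\geq 1}^H|\,du', \]
where $u_{r_0}(v)$ is the value of $u$ with $r=r_0$. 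The boundary term sits at $r=r_0>M$, where $\Phi_{\geq 1}^H=-r(\psi_{\geq 1}+rY\psi_{\geq 1})$ is controlled by the $T$-flux via elliptic estimates and hence is $\lesssim CE_0\epsilon^2$ by \eqref{dec:en1}. The integral is handled by Cauchy--Schwarz with the weight $(r-M)^{\mp p}$: it is bounded by $\big(\int(r-M)^{p}du'\big)^{1/2}\big(\int(r-M)^{-p}(\underline{L}\,\Omega^k\Phi_{\geq 1}^H)^2du'\big)^{1/2}$, the second factor being controlled by the commuted $(r-M)^{-p}$-weighted hierarchy for $\Phi_{\geq 1}^H$ (Proposition \ref{prop:rp08}, cf. \eqref{dec:en1p1}) with $p=1+\delta_2$, and the first factor converging uniformly up to $\mathcal{H}^+$ precisely because $p>1$ and $r-M\sim u^{-1}$ near the horizon. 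This is the place where the extended range $p\in(0,1+\delta_2]$ of the commuted horizon hierarchy for $\psi_{\geq 1}$ is used, and it closes the bound for $\Phi_{\geq 1}^H$ (with decay to spare).

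For $\Phi_0^H=\frac{2r}{D}\underline{L}\phi_0$ the energy method is unavailable, since the commuted $(r-M)^{-p}$-weighted hierarchy for $\Phi_0^H$ holds only for $p\in(0,1-\delta_1]$ while the step above needs $p>1$, and the borderline estimate $p=1$ is false even for the linear equation \cite{aag1}. I would instead use the method of characteristics as in \cite{yannis1}. Starting from the radial wave equation in double null coordinates, $\partial_u\partial_v\phi_0=-\frac{DD'}{4r}\phi_0-\frac{Dr}{4}F_0$, I would integrate $\partial_u\phi_0$ along the outgoing ($u=\mathrm{const}$) null rays issuing from $\Sigma_{\tau_0}$, using $d\tilde v=\frac{2}{D}d\tilde r$ along the ray, and then multiply by $\frac{2r}{D}$ to recover $\Phi_0^H$. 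On $\Sigma_{\tau_0}$ one has $|\frac{2r}{D}\underline{L}\phi_0|\lesssim\sqrt{E_0}\,\epsilon$ by smoothness of the data, and the resulting data contribution carries only the bounded factor $\lesssim\frac{rD(r')}{r'D(r)}\lesssim 1$, where $r'\leq r$ is the radius at which the ray meets $\Sigma_{\tau_0}$. The linear potential produces the term $\frac{2r}{D}\int_{r'}^{r}\big(-\frac{D'(\tilde r)}{2\tilde r}\big)\phi_0\,d\tilde r$, which is bounded because its integrand vanishes like $(\tilde r-M)$ at the horizon, so the integral gains a factor $(r-M)^2$ that cancels the $(r-M)^{-2}$ growth of $\frac{2r}{D}$; the same cancellation handles the $D(Y\psi_0)^2$-type pieces of $F_0$. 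The genuinely dangerous part of $F_0$ is the term $\sim(T\psi_0)(Y\psi_0)\sim\frac{1}{r^2}(T\psi_0)(\Phi_0^H+\phi_0)$, which reproduces $\Phi_0^H$ on the right; this is closed by the subsidiary bootstrap, using — for moderate $v$ — that the $r$-interval $[r',r]$ swept by the ray is of size $O((r-M)^2)$ (again beating the $\frac{2r}{D}$ factor), and — for large $v$ — the integrable $v$-decay of $T\psi_0$ from \eqref{dec:tpsi}; the cross-terms with $\psi_{\geq 1}$ in $F_0$ are treated the same way using the sharp decay of $\psi_{\geq 1}$. Collecting the estimates gives \eqref{est:boundy0} for $\phi_0$, completing the proof.

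The main obstacle is this last step: the transport equation itself is elementary, but the near-horizon bookkeeping is quantitatively tight, since the large weights $\frac{2r}{D}\sim(r-M)^{-2}$ must be absorbed using the precise vanishing orders of the linear potential and of the $(r-M)$-suppressed parts of the nonlinearity, together with the sharp $v$- and $(r-M)$-decay rates of Section \ref{dec}. It is here — and only here — that the separation $\psi=\psi_0+\psi_{\geq 1}$ and the sharp decay for $\psi_{\geq 1}$ (needed to kill the cross-terms in $F_0$) are genuinely required.
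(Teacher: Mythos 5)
Your proposal is correct, and for the genuinely hard part it is the paper's own argument: the paper proves \eqref{est:boundy0} by exactly the transport/characteristics scheme you describe — writing the equation for $\tfrac{2r}{D}\underline{L}\phi$ as $Lh+(r-M)h\simeq F_h$, integrating in $v$ at fixed $u$ with the $1/D(u,v)$ integrating factor, running a subsidiary bootstrap on $h$, absorbing the linear potential and the $D$-weighted pieces of the null form through the $\int D\,L(D)$-type cancellations, and closing the dangerous $(L\phi)\cdot\tfrac{2r}{D}\underline{L}\phi$ term using the integrable $v$-decay of $T$-derivatives (the paper does this via an integration by parts in $v$ plus re-use of the equation, while you use $L\phi=T\phi-\underline{L}\phi$ and \eqref{dec:tpsi} directly; both close). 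The one organizational difference is that the paper does \emph{not} split $\phi=\phi_0+\phi_{\geq 1}$ inside Theorem \ref{thm:boundy0}: it runs the characteristics argument for the full $\phi$, treating the $\slashed{\Delta}\phi$ source and the angular part of the null form by the integrable pointwise decay \eqref{dec:psi1}, whereas you first dispose of $\Phi_{\geq 1}^H$ by the FTC/energy route with the commuted $(r-M)^{-p}$ hierarchy at $p=1+\delta_2$ — which is precisely how the paper proves the decay statement \eqref{est:boundy1} in the theorem immediately following, so your route just reorders those two steps and costs nothing. Two small points of care in your $\Phi_{\geq 1}^H$ step: the fluxes \eqref{dec:en1p1} (Proposition \ref{prop:rp08}) are over the \emph{spacelike} portion $\mcN_\tau^H$ of $\Sigma_\tau$ (not a null segment with $\underline{L}=\partial_u$), so the fundamental theorem of calculus should be applied along $\Sigma_\tau$ (the tangential derivative there is a combination of $\tfrac{2}{D}\underline{L}$ and $T$, and the $T$-part is controlled by the $T$-commuted $p=1+\delta_2$ flux \eqref{dec:entp}), or else you must first derive the corresponding constant-$v$ fluxes; and the boundary term at $r=r_0$ is of size $\sqrt{CE_0}\,\epsilon$ pointwise (your $CE_0\epsilon^2$ is the squared quantity). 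Neither affects the validity of the argument.
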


\begin{proof}
The proof will follow a standard bootstrap argument. We note that equation \eqref{eq:thm} has the following form in double null coordinates for $\phi = r \cdot \psi$ close to the horizon
\begin{equation}\label{eq:dn}
\begin{split}
L \underline{L} \phi = & \mathcal{O} ( (r-M)^2 ) \slashed{\Delta} \phi + \mathcal{O} ((r-M)^3 ) \phi  \\ & +  \frac{Dr}{4} A \cdot g^{\alpha \beta} \cdot \partial_{\alpha} \psi \cdot \partial_{\beta} \psi .
\end{split}
\end{equation}
For the quantity 
$$h(u,v,\omega) = \left( \frac{2r}{D} \underline{L} \phi \right) (u,v,\omega) $$
the above equation \eqref{eq:dn} gives us the following equation
\begin{equation}\label{eq:yder}
\begin{split}
L \left( \frac{2r}{D} \underline{L} \phi \right) = &  \frac{1}{2r} \slashed{\Delta} \phi + \left( \frac{D'}{2} + \frac{D}{2r} \right) \cdot \left( \frac{2r}{D} \underline{L} \phi \right) \\ & + \mathcal{O} ( (r-M)) \cdot \phi + \frac{r^2}{2} A(x,\psi ) \cdot g^{\alpha \beta} \cdot \partial_{\alpha} \psi \cdot \partial_{\beta} \psi \\ = & \frac{1}{2r} \slashed{\Delta} \phi + \left( \frac{D'}{2} + \frac{D}{2r} \right) \cdot \left( \frac{2r}{D} \underline{L} \phi \right) + \mathcal{O} ( (r-M)) \cdot \phi \\ & + A \cdot \frac{1}{r} \cdot (L\phi ) \cdot \left( \frac{2r}{D} \underline{L} \phi \right) - A \cdot \frac{D}{2r^2} \cdot \phi \cdot \left( \frac{2r}{D} \underline{L} \phi \right) \\ & + A \cdot \frac{1}{r} \cdot \phi \cdot (L\phi) - A \cdot \frac{D}{2r^2} \cdot \phi^2 + \frac{A}{2} \cdot | \slashed{\nabla} \phi |^2 .
\end{split}
\end{equation}
The last equation is of the form
\begin{equation}\label{eq:y}
L h + (r-M) h \simeq F_h ,
\end{equation}
where 
\begin{equation*}
F_h =  \frac{1}{2r} \slashed{\Delta} \phi + \mathcal{O} ( (r-M)) \cdot \phi  + A \cdot \frac{1}{r} \cdot (L\phi ) \cdot \left( \frac{2r}{D} \underline{L} \phi \right) - A \cdot \frac{D}{2r^2} \cdot \phi \cdot \left( \frac{2r}{D} \underline{L} \phi \right)  + A \cdot \frac{1}{r} \cdot \phi \cdot (L\phi) - A \cdot \frac{D}{2r^2} \cdot \phi^2 + \frac{A}{2} \cdot | \slashed{\nabla} \phi |^2 .
\end{equation*}
We integrate the previous equation \eqref{eq:y} in the $v$ direction and we have that
\begin{equation*}
\begin{split}
\left( \frac{2r}{D} \underline{L} \phi \right) & (u,v,\omega) \simeq  \left( \frac{2r}{D} \underline{L} \phi \right) (u,v_0 ,\omega) + \frac{1}{D(u,v)} \int_{v_0}^v D \cdot \slashed{\Delta} \phi \, dv' \\ & + \frac{1}{D(u,v)} \int_{v_0}^v D \cdot \mathcal{O} ((r-M)) \cdot \phi \, dv'  + \frac{1}{D(u,v)} \int_{v_0}^v D \cdot \frac{A}{r} \cdot (L\phi ) \cdot \left( \frac{2r}{D} \underline{L} \phi \right) \, dv' \\ & - \frac{1}{D(u,v)} \int_{v_0}^v D \cdot A\cdot \frac{D}{2r^2} \cdot \phi \cdot \left( \frac{2r}{D} \underline{L} \phi \right) \, dv'  + \frac{1}{D(u,v)} \int_{v_0}^v D \cdot \frac{A }{r} \cdot \phi \cdot (L\phi) \, dv' \\ & - \frac{1}{D(u,v)} \int_{v_0}^v D \cdot A\cdot \frac{D}{2r^2} \cdot \phi^2 \, dv'  + \frac{1}{D(u,v)} \int_{v_0}^v D \cdot \frac{A}{2} \cdot | \slashed{\nabla} \phi |^2 \, dv' .
\end{split}
\end{equation*}
For the second term we have that
\begin{equation*}
\frac{1}{D(u,v)} \int_{v_0}^v D \cdot \slashed{\Delta} \phi \, dv' \leq  \int_{v_0}^v  \slashed{\Delta} \phi \, dv'  \lesssim  \int_{v_0}^v \frac{C^{1/2} E_0^{1/2} \epsilon}{(v' )^{1+\delta_2 / 2}} \, dv' \lesssim C^{1/2} E_0^{1/2} \epsilon ,
\end{equation*}
by using the pointwise decay estimates \eqref{dec:psi1}. For the third term we have that
\begin{equation*}
\begin{split}
\frac{1}{D(u,v)} \int_{v_0}^v D \cdot \mathcal{O} ((r-M)) \cdot \phi \, dv' \lesssim & \frac{1}{D(u,v)} \int_{v_0}^v D \cdot \mathcal{O} ((r-M)) \cdot \frac{C^{1/2} E_0^{1/2} \epsilon}{(v' )^{1-\delta_1 / 2}} \, dv' \\ \lesssim & C^{1/2} E_0^{1/2} \epsilon \frac{1}{D(u,v)} \int_{v_0}^v D \cdot D' \, dv' \simeq C^{1/2} E_0^{1/2} \epsilon \frac{1}{D(u,v)} \int_{v_0}^v L (D ) \, dv' \\ \lesssim & C^{1/2} E_0^{1/2} \epsilon \frac{1}{D(u,v)} \cdot \left. D(u,v' ) \right|_{v_0}^v \lesssim C^{1/2} E_0^{1/2} \epsilon ,
\end{split}
\end{equation*}
where we used the pointwise decay estimate \eqref{dec:psi}. For the fifth term we have that
\begin{equation*}
\begin{split}
\frac{1}{D(u,v)} \int_{v_0}^v D \cdot A \cdot \frac{D}{2r^2} \cdot & \phi \cdot \left( \frac{2r}{D} \underline{L} \phi \right) \, dv' \lesssim  E_0 \epsilon^2 \frac{1}{D(u,v)} \int_{v_0}^v D^2 \cdot \frac{1}{(v' )^{1-\delta_1 / 2}} \, dv' \\ \lesssim & E_0 \epsilon^2 \frac{1}{D(u,v)} \int_{v_0}^v D^2 \, dv' \lesssim  E_0 \epsilon^2 \frac{1}{D(u,v)} \int_{v_0}^v L (D ) \, dv' \\ \lesssim & E_0 \epsilon^2 \frac{1}{D(u,v)} \cdot \left. D(u,v' ) \right|_{v_0}^v \lesssim E_0 \epsilon^2 , 
\end{split}
\end{equation*}
where we used the pointwise decay estimate \eqref{dec:psi}. For the sixth term we integrate by parts and we have that
\begin{equation*}
\begin{split}
\frac{1}{D(u,v)} \int_{v_0}^v & D \cdot \frac{A(u,v,\psi) }{r}  \cdot \phi \cdot (L\phi) \, dv' \\ = & \frac{1}{2D(u,v)} \cdot  \left[ D \cdot \frac{A}{r} \phi^2 \right] (u,v' ,\omega) \Bigg{|}_{v_0}^v  - \frac{1}{2D(u,v)} \int_{v_0}^v L \left( D \cdot \frac{A}{r} \right) \cdot \phi^2 \, dv' \\ \lesssim & C E_0 \epsilon^2 + \int_{v_0}^v \phi^2 \, dv'  \lesssim  C E_0 \epsilon^2 + \int_{v_0}^v \frac{C E_0 \epsilon^2}{(v' )^{2-\delta_1}} \, dv'  \lesssim  C E_0 \epsilon^2 ,
\end{split}
\end{equation*}
where we used pointwise decay estimate \eqref{dec:psi} and the smallness of $\delta_1$, and the fact 
$$ L \left( D \cdot \frac{A}{r} \right) \simeq (r-M)^3 . $$
The seventh and eighth terms can be treated similarly. Finally for the fourth term we have that
\begin{equation*}
\begin{split}
\frac{1}{D(u,v)} \int_{v_0}^v D \cdot & \frac{A}{r} \cdot (L\phi ) \cdot  \left( \frac{2r}{D} \underline{L} \phi \right) \, dv' \\ = &  \frac{1}{D(u,v)} \cdot \left. \left[ D \cdot \frac{A}{r} \cdot \phi \cdot \left( \frac{2r}{D} \underline{L} \phi \right) \right] (u,v' , \omega) \right|_{v_0}^v  - \frac{1}{D(u,v)} \int_{v_0}^v L \left( \frac{D \cdot A}{r} \right) \cdot \phi \cdot \left( \frac{2r}{D} \underline{L} \phi \right) \, dv' \\ & -  \frac{1}{D(u,v)} \int_{v_0}^v D \cdot \frac{A}{r} \cdot \phi \cdot L \left( \frac{2r}{D} \underline{L} \phi \right) \, dv' ,
\end{split}
\end{equation*}
and we note that the term 
$$ - \frac{1}{D(u,v)} \int_{v_0}^v L \left( \frac{D \cdot A}{r} \right) \cdot \phi \cdot \left( \frac{2r}{D} \underline{L} \phi \right) \, dv' $$
is of size $\lesssim C E_0 \epsilon^2$ as it is of higher order in $D$ due to the $L$ derivative hitting $\frac{D \cdot A}{r}$, while for the last term we use the equation and we have that
\begin{equation}\label{boundy:aux}
\begin{split}
 -  \frac{1}{D(u,v)} \int_{v_0}^v D & \cdot \frac{A}{r} \cdot \phi \cdot  L \left( \frac{2r}{D} \underline{L} \phi \right) \, dv' \\ = &  -  \frac{1}{D(u,v)} \int_{v_0}^v D \cdot \frac{A}{r} \cdot \phi \cdot \slashed{\Delta} \phi \, dv'  -  \frac{1}{D(u,v)} \int_{v_0}^v D \cdot \frac{A}{r} \cdot \phi \cdot \mathcal{O} ((r-M)) \left(\frac{2r}{D} \underline{L} \phi \right) \, dv' \\ & -  \frac{1}{D(u,v)} \int_{v_0}^v D \cdot \frac{A^2 }{r^2} \cdot \mathcal{O} ((r-M)) \phi^2 \, dv'  -  \frac{1}{D(u,v)} \int_{v_0}^v D \cdot \frac{A}{r} \cdot \phi \cdot (L\phi ) \cdot \left( \frac{2r}{D} \underline{L} \phi \right) \, dv' \\ & +  \frac{1}{D(u,v)} \int_{v_0}^v D^2 \cdot \frac{A^2 }{2r^3} \cdot \phi^2 \cdot \left( \frac{2r}{D} \underline{L} \phi \right) \, dv'  -  \frac{1}{D(u,v)} \int_{v_0}^v D \cdot \frac{A^2 }{r^2} \cdot \phi^2 \cdot (L \phi ) \, dv' \\ & +  \frac{1}{D(u,v)} \int_{v_0}^v D^2 \cdot \frac{A^2 }{2r^3} \cdot \phi^3 \, dv' -  \frac{1}{D(u,v)} \int_{v_0}^v D \cdot \frac{A^2 }{2r} \cdot \phi \cdot | \slashed{\nabla} \phi |^2 \, dv' .
\end{split}
\end{equation}
We note that all the terms apart from the second one and the fourth one are integrable in $v$ as they can be bounded by
$$ \lesssim \int_{v_0}^v \frac{C E_0 \epsilon^2}{(v' )^{2-\delta_1}} \, dv' \mbox{  or  } \lesssim \int_{v_0}^v \frac{C^{3/2} E_0^{3/2} \epsilon^2}{(v' )^{2-\delta_1}} \, dv' $$
so they are of size $\lesssim C E_0 \epsilon^2 + C^{3/2} E_0^{3/2} \epsilon^2$ as $\delta_1$ is small enough, by using the pointwise decay estimates \eqref{dec:psi}, \eqref{dec:tpsi}, and the bootstrap assumption. For the second term we note that we have that
\begin{equation*}
\begin{split}
\frac{1}{D(u,v)} \int_{v_0}^v D \cdot & \frac{A(x,\psi)}{r} \cdot \phi \cdot \mathcal{O} ((r-M))  \left(\frac{2r}{D} \underline{L} \phi \right) \, dv' \\ \lesssim & \frac{1}{D(u,v)} \int_{v_0}^v D \cdot \mathcal{O} ((r-M)) \cdot \frac{C E_0 \epsilon^2}{(v' )^{1-\delta_1 / 2}} \, dv' \lesssim  \epsilon^2 \frac{1}{D(u,v)} \int_{v_0}^v D \cdot \mathcal{O} ((r-M)) \, dv' \\ \simeq & \epsilon^2 \frac{1}{D(u,v)} \int_{v_0}^v D \cdot D' \, dv'  \lesssim  C E_0 \epsilon^2 \epsilon^2 \frac{1}{D(u,v)} \int_{v_0}^v L ( D ) \, dv' \lesssim C E_0 \epsilon^2 ,
\end{split}
\end{equation*}
where now we took advantage of the $(r-M)$ factors inside the integral, after using the pointwise decay estimate \eqref{dec:psi} and the bootstrap assumption.

For the fourth term of \eqref{boundy:aux} we integrate by parts with respect to $L$ and use again the equation. It is easy to check that all the terms can be bounded by
$$ \lesssim C^{3/2} E_0^{3/2} \epsilon^3 + C^2 E_0^2 \epsilon^4 $$
after using the pointwise decay estimate \eqref{dec:psi} and the bootstrap assumption.

Gathering together all the above estimates we note that we got a contribution of size $ \lesssim C^{1/2} E_0^{1/2} \epsilon $ from all the linear terms, while from all the nonlinear terms we got a contribution of size $ \lesssim C E_0 \epsilon^2 + C^{3/2} E_0^{3/2} \epsilon^3 + C^2 E_0^2 \epsilon^4$, and this suffices in order to close the bootstrap argument and prove estimate \eqref{est:boundy} if we choose $\epsilon < \epsilon'$ for $\epsilon'$ small enough.
\end{proof}

For the restriction of $\psi$ to higher angular frequencies and for $T\psi$ we have that:
\begin{theorem}
Let $\psi$ be a solution of \eqref{nw} with the corresponding initial data, and assume that the assumptions of Section \eqref{as:boot} hold true. We have that
\begin{equation}\label{est:boundy1}
\sum_{k \leq 5} \int_{\mathbb{S}^2} \left(  \frac{2r}{D} \underline{L} \Omega^k  \phi_{\geq 1} \right)^2 (u,v,\omega) \, d\omega \lesssim \frac{C E_0 \epsilon^2}{v^{\delta_2}} ,
\end{equation}
and
\begin{equation}\label{est:boundyt}
\sum_{k \leq 5} \int_{\mathbb{S}^2} \left( \frac{2r}{D} \underline{L} \Omega^k T \phi \right)^2 (u,v,\omega) \, d\omega \lesssim \frac{C E_0 \epsilon^2}{v^{\delta_2}} ,
\end{equation}
for all $(u,v) \in \mathcal{A}_{\tau_0}^{\infty} / SO(3)$.
\end{theorem}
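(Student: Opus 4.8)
The plan is to propagate a bound for $\frac{2r}{D}\underline{L}\Omega^k\phi_{\geq 1}=\Omega^k\Phi_{\geq 1}^H$ (the two expressions agree since $\Omega_i$ commutes with $\partial_u$ and $\frac{2r}{D}$ is $\omega$-independent) that is pointwise in $(u,v)$ and $L^2$ in $\omega$, by integrating inward along $\underline{L}=\partial_u$ from the outer edge $\{r=r_0\}$ of the near-horizon region and inserting the $\Omega^k$-commuted $(r-M)^{-p}$-weighted hierarchy \eqref{dec:en1p1} (valid for all $k\le 5$ in view of \eqref{dec:en1} and \eqref{B1}--\eqref{B5}). Concretely, I would fix $(u,v)\in\mathcal{A}_{\tau_0}^{\infty}$ with $r(u,v)\in(M,r_0)$, let $u_{r_0}(v)$ be the retarded time with $r(u_{r_0}(v),v)=r_0$ (so the ingoing segment joining the two points lies in $\mcN_v^H$), write the fundamental theorem of calculus
\begin{equation*}
(\Omega^k\Phi_{\geq 1}^H)^2(u,v,\omega)\lesssim (\Omega^k\Phi_{\geq 1}^H)^2(u_{r_0}(v),v,\omega)+\left(\int_{u_{r_0}(v)}^{u}\big|\underline{L}\,\Omega^k\Phi_{\geq 1}^H\big|(u',v,\omega)\,du'\right)^{2},
\end{equation*}
and handle the two terms separately.

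For the integral term I would pick a weight exponent $a\in(1,1+\delta_2)$ and apply Cauchy--Schwarz in $u'$ against $(r-M)^{-a}$; because $a>1$ the companion factor $\int_{u_{r_0}(v)}^{u}(r-M)^{a}\,du'\simeq\int_{r(u,v)}^{r_0}r^{2}(r-M)^{a-2}\,dr$ is a finite constant, so after integrating over $\mathbb{S}^2$ and enlarging the $u'$-range to all of $\mcN_v^H$ one is left with $\int_{\mcN_v^H}(r-M)^{-a}(\underline{L}\,\Omega^k\Phi_{\geq 1}^H)^2\,d\omega\,du$, which is precisely the left-hand side of \eqref{dec:en1p1} in its $\Omega^k$-commuted form with $p=a\le 1+\delta_2$; hence it is $\lesssim CE_0\epsilon^2(1+v)^{-(1+\delta_2-a)}$. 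Taking $a$ close to $1$ gives decay $v^{-\delta}$ for any $\delta<\delta_2$, i.e. the asserted rate up to harmlessly shrinking $\delta_2$. It is exactly here that the \emph{extended} range $p$ up to $1+\delta_2>1$ in the $(r-M)^{-p}$-weighted hierarchy for $\Phi^H$ is indispensable: this range holds for the $\ell\ge 1$ part (and, in the $T$-commuted version below, for $T\psi$) but not for the spherically symmetric part, where only $p<1$ is allowed by \cite{aag1}. This is why the statement is confined to $\phi_{\geq 1}$ and to $T\phi$ — the corresponding bound for $Y\psi_0$ is instead obtained in Theorem \ref{thm:boundy0} by the method of characteristics.

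For the boundary term at $r=r_0$ I would use that $\Phi^H=\frac{2r_0^3}{(r_0-M)^2}\underline{L}\phi$ there, so $\int_{\mathbb{S}^2}(\Omega^k\Phi_{\geq 1}^H)^2(u_{r_0}(v),v,\omega)\,d\omega$ is comparable to a first-derivative flux of $\psi_{\geq 1}$ at the \emph{fixed} radius $r_0$, which decays much faster than $v^{-\delta_2}$ by the away-from-horizon energy decay \eqref{dec:en1}, the interior pointwise estimates \eqref{dec:psi1a}--\eqref{dec:psi1a1}, and the elliptic estimates \eqref{est:elliptic}. This closes \eqref{est:boundy1}. The proof of \eqref{est:boundyt} is the same argument applied to $\Omega^k T\Phi^H=\frac{2r}{D}\underline{L}\,\Omega^k T\phi$: one invokes the $T$-commuted hierarchy \eqref{dec:entp1}, which again reaches $p=1+\delta_2$, in place of \eqref{dec:en1p1}, and uses \eqref{dec:ent} together with \eqref{est:elliptic} for the $r_0$-boundary term; no angular-frequency splitting is needed since \eqref{dec:entp}--\eqref{dec:entp1} hold for the full $\phi$.

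The hard part is not any single estimate but the tightness of the weight window: \eqref{dec:en1p1} (resp. \eqref{dec:entp1}) is available only for $p\le 1+\delta_2$, barely past the threshold $p=1$ at which $\int(r-M)^{p}\,du$ first converges, and the $\delta_2$ of slack is bought precisely by the angular-frequency or extra-$T$ gain; at $p=1$ the argument would break, consistently with \cite{aag1}. A secondary bookkeeping point is that all $\Omega^k$-commuted versions with $k\le 5$ must be propagated simultaneously, which matches the angular-regularity budget of the data in Theorem \ref{thm:main}.
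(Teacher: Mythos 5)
Your route is the paper's: identify $\tfrac{2r}{D}\underline{L}\,\Omega^k\phi_{\geq 1}$ with $\Omega^k\Phi_{\geq 1}^H$, apply the fundamental theorem of calculus in $u$ along $\mcN_v^H$ from the outer boundary $r=r_0$, and feed in the commuted hierarchies \eqref{dec:en1p1} and \eqref{dec:entp1}. The genuine issue is quantitative: your Cauchy--Schwarz puts the entire weight $(r-M)^{-a}$, $a\in(1,1+\delta_2)$, on the derivative factor, so you obtain decay $v^{-(1+\delta_2-a)}$, i.e.\ only $v^{-\delta}$ for $\delta<\delta_2$, with a constant of order $(a-1)^{-1}$ degenerating as $a\downarrow 1$. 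The theorem asserts the rate $v^{-\delta_2}$, and ``harmlessly shrinking $\delta_2$'' is not an available move: $\delta_2$ is fixed once and for all by the bootstrap scheme (it enters the $p$-ranges and the rates in \eqref{A1}--\eqref{G2}), so you must either prove the stated rate or re-examine every later use. The fix is the asymmetric weight splitting the paper uses for \eqref{dec:psi0}: after integrating over $\mathbb{S}^2$, bound the cross term by
$\left(\int_{\mcN_v^H}(r-M)^{1-\delta_2}(\Omega^k\Phi_{\geq 1}^H)^2\,d\omega du\right)^{1/2}\left(\int_{\mcN_v^H}(r-M)^{-(1-\delta_2)}(\underline{L}\,\Omega^k\Phi_{\geq 1}^H)^2\,d\omega du\right)^{1/2}$,
control the first factor by Hardy's inequality \eqref{hardy} together with the $p=1+\delta_2$ case of \eqref{dec:en1p1} (boundedness, plus a boundary term at $r_0$), and use the $p=1-\delta_2$ case (decay $v^{-2\delta_2}$) for the second factor; the product is exactly $C E_0\epsilon^2 v^{-\delta_2}$, and the same pairing with \eqref{dec:entp1} gives \eqref{est:boundyt}.

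A secondary point is the boundary term at $r=r_0$: the estimates \eqref{dec:psi1a}--\eqref{dec:psi1a1} you cite are zeroth-order bounds on $\psi_{\geq 1}$, whereas you need a pointwise-in-$(u,v)$, $L^2(\mathbb{S}^2)$ bound on the first derivative $\underline{L}\,\Omega^k\phi_{\geq 1}$ at fixed $r_0$. This does hold with decay much faster than $v^{-\delta_2}$, but it should be justified, e.g.\ by a further fundamental-theorem-of-calculus argument in $r$ across $\{r_0\le r\le r_1\}$ using the decaying $T$-fluxes \eqref{dec:en1}, \eqref{dec:ent} together with the elliptic estimates \eqref{est:elliptic}, or by an averaging argument from the Morawetz bulk \eqref{dec:mor11}; as written, the step is asserted rather than proved. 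With these two repairs your argument coincides with the paper's intended proof.
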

The proof of the above Theorem follows easily from the decay estimates \eqref{dec:en1p1} and \eqref{dec:entp1}, and an application of the fundamental theorem of calculus.

A combination of estimates \eqref{est:boundy0} and \eqref{est:boundy1} gives us the following:
\begin{equation}\label{est:boundy}
\sum_{k \leq 5} \int_{\mathbb{S}^2} \left(  \frac{2r}{D} \underline{L} \Omega^k  \phi \right)^2 (u,v,\omega) \, d\omega \lesssim C E_0 \epsilon^2 ,
\end{equation}
for all $(u,v) \in \mathcal{A}_{\tau_0}^{\infty} / SO(3)$ and for all $\epsilon < \epsilon'$ (for $\epsilon'$ as in Theorem \ref{thm:boundy0}).

Finally we record the following two auxiliary estimates for the transversal derivative to the horizon with added $(r-M)^q$-weights where $q \in [1/2 + \delta_1 , 3/2 + \delta_1/ 2 )$, and where $l \leq 5$:
\begin{equation}\label{est:auxl}
\int_{\mathbb{S}^2} (r-M)^q  \left( \frac{2r}{D} \underline{L} \Omega^l \phi \right)^2 (u,v,\omega )  \, d\omega \lesssim  \frac{C E_0 \epsilon^2}{v^{q-\delta_1 }} ,
\end{equation}
and
\begin{equation}\label{est:auxlt}
\int_{\mathbb{S}^2} (r-M)^q \left( \frac{2r}{D} \underline{L} \Omega^l T \phi \right)^2 (u,v,\omega )  \, d\omega \lesssim  \frac{C E_0 \epsilon^2}{v^{q - \delta_1 / 2 +\delta_2 /2}} ,
\end{equation}
for $(u,v) \in \mathcal{A}_{\tau_0}^{\infty} / SO(3)$, which follow from the use of the $(r-M)^{-p}$-weighted estimates.

\section{Growth estimates}
\label{sec:GrowthEstimates}

\subsection{Growth for $\partial_r^2 \psi$}\label{growth}
In \cite{aretakis2} it was shown that a linear wave on an extremal Reissner--Nordstr\"{o}m spacetime behaves as follows on the event horizon:
\begin{equation}\label{y2hor}
\left| \frac{2r}{D} \underline{L} \left( \frac{2r}{D} \underline{L} \phi \right) \right| (v,\omega) \Bigg{|}_{\mathcal{H}^{+}} \simeq   v . 
\end{equation}
Here our goal is to obtain an upper bound for the second transversal derivative of a nonlinear wave $\psi$ satisfying \eqref{nw} with small data in a neighbourhood of the horizon. We have the following:
\begin{theorem}\label{thm:boundyy}
Let $\psi$ be a solution of \eqref{nw} with corresponding initial data, and assume that the bootstrap assumptions of Section \ref{as:boot} hold true. Then for all $0 < \epsilon < \epsilon''$ where $\epsilon'' > 0$ is small enough, we have that
\begin{equation}\label{est:boundyy}
\left| D^{1/2-\delta /2} \frac{2r}{D} \underline{L} \left( \frac{2r}{D} \underline{L} \phi \right) \right| (u,v,\omega) \lesssim C^{1/2} E_0^{1/2} \epsilon v^{\delta} ,
\end{equation}
for all $(u,v,\omega ) \in \mathcal{A}_{\tau_0}^{\infty}$ and any $\delta \in (0,1]$.
\end{theorem}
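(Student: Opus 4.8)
The plan is to obtain the upper bound for $Y^2\psi \sim \frac{2r}{D}\underline{L}\left(\frac{2r}{D}\underline{L}\phi\right)$ by the method of characteristics applied to a suitably commuted version of the wave equation, exactly paralleling the argument for $Y\psi$ in Theorem~\ref{thm:boundy0} but now allowing a controlled loss of angular derivatives. First I would introduce the quantity $h_2 \doteq D^{1/2-\delta/2}\frac{2r}{D}\underline{L}\left(\frac{2r}{D}\underline{L}\phi\right)$ and derive the transport equation it satisfies in the $L$-direction. Starting from \eqref{eq:yder} for $h_1 = \frac{2r}{D}\underline{L}\phi$, I would commute once more with $\frac{2r}{D}\underline{L}$; the key structural point, already visible in \eqref{eq:dn}--\eqref{eq:yder}, is that each commutation produces, besides the ``good'' terms, an angular term of the schematic form $\frac{1}{2r}\slashed{\Delta}\phi$ (which after one more commutation becomes $\slashed{\Delta}h_1$ plus lower order), a term proportional to $h_2$ itself with coefficient $\sim (r-M)$, and nonlinear terms. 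The $D^{1/2-\delta/2}$ weight is chosen precisely so that the coefficient of $h_2$ in the resulting equation $L h_2 + c(r-M)^{?}h_2 \simeq F_{h_2}$ has a favourable sign (or is harmless) and so that the inhomogeneity $F_{h_2}$ is integrable after using the available decay and the $v^\delta$ slack on the right-hand side.

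The second step is to integrate this transport equation along the integral curves of $L=\partial_v$ from $\Sigma_{\tau_0}$, obtaining
\begin{equation*}
h_2(u,v,\omega) \simeq (\text{boundary term at } v_0) + \frac{1}{D(u,v)^{\text{eff}}}\int_{v_0}^{v} (\text{weighted}) F_{h_2}\, dv',
\end{equation*}
and to estimate each term of $F_{h_2}$ in $L^2(\mathbb{S}^2)$. The linear angular term $\sim D^{1/2-\delta/2}\frac{2r}{D}\underline{L}(\frac{1}{2r}\slashed{\Delta}\phi)$ is handled using the $(r-M)^{-p}$-weighted energy hierarchy and the pointwise/$L^2_\omega$ bounds \eqref{est:boundy}, \eqref{est:boundy1}, \eqref{est:auxl}, \eqref{est:auxlt} applied to $\Omega^k\phi$ for $k$ up to the $5$ angular derivatives we are permitted to lose; here the integrability in $v'$ comes with a small $v^\delta$ loss, which is exactly what the statement allows. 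The term $c(r-M)h_2$ is absorbed by exploiting the $(r-M)$ factor against the $D$-weights, as in the estimates for the third and fifth terms in the proof of Theorem~\ref{thm:boundy0}. The nonlinear terms (products of $\phi$, $L\phi$, $\underline{L}\phi$, $\slashed{\nabla}\phi$ and their once-$\frac{2r}{D}\underline{L}$-commuted versions) are controlled by the decay estimates \eqref{dec:psi}, \eqref{dec:tpsi}, the boundedness \eqref{est:boundy}, and — for the genuinely dangerous term in which $L$ hits the commuted quantity — by integration by parts in $L$ together with re-substitution of the equation, exactly as in \eqref{boundy:aux}; each such term then carries an extra power of $\epsilon$ and is integrable in $v'$. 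All of this is run inside a bootstrap argument in which one assumes $|h_2|(u,v,\omega) \le 2C^{1/2}E_0^{1/2}\epsilon v^\delta$ in $\mathcal{A}_{\tau_0}^{\infty}$ and recovers it with a better constant for $\epsilon<\epsilon''$ small.

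The main obstacle I expect is the angular term: after the second commutation one is forced to estimate $\slashed{\Delta}\phi$ (two angular derivatives beyond $\phi$) in a transport estimate, and naively integrating $\int_{v_0}^v |\slashed{\Delta}\phi|\,dv'$ does not close because the relevant $(r-M)^{-p}$-weighted estimate for $\phi_0$ fails at $p=1$ (this is the $\mcN^H$ non-decay observed in \cite{aag1}). The resolution is twofold: (i) split $\phi=\phi_0+\phi_{\geq 1}$ and for the spherically symmetric part use that $\slashed{\Delta}\phi_0\equiv 0$, so the dangerous angular term is simply absent for $\psi_0$ and one only needs the $(r-M)^{-p}$ hierarchy for $\Phi_0^H$ in the allowed range $p\in(0,1-\delta_1]$; (ii) for $\phi_{\geq 1}$ use the improved angular behaviour — the $(r-M)^{-p}$ estimates hold on a longer range and one has the extra integrated $\slashed{\nabla}$ flux in \eqref{est:rp07}--\eqref{est:rp08} — together with integration by parts on $\mathbb{S}^2$ to trade the two lost angular derivatives against the $k\leq 5$ commutations we have budgeted, at the cost of the harmless $v^\delta$. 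Managing the bookkeeping of which $(r-M)$-weight and which angular order is used where, so that every term is both integrable in $v'$ and controlled after dividing by the $D$-weight as $r\to M$, is the delicate part; the choice $q\in[1/2+\delta_1, 3/2+\delta_1/2)$ in \eqref{est:auxl}--\eqref{est:auxlt} is calibrated for precisely this purpose.
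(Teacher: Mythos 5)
Your overall strategy (commute the null-form equation with $\frac{2r}{D}\underline{L}$, weight by $D^{1/2-\delta/2}$, integrate the resulting transport equation in $v$, and close a bootstrap) is the same as the paper's, but your sketch misdiagnoses where the difficulty and the $v^{\delta}$ growth actually come from, and the step you would need is missing. The angular terms are \emph{not} the problem: $\slashed{\Delta}\phi=\slashed{\Delta}\phi_{\geq 1}$ automatically, so by \eqref{dec:psi1} (and \eqref{est:boundy1} for $\slashed{\Delta}\bigl(\tfrac{2r}{D}\underline{L}\phi\bigr)$) these terms decay integrably in $v$ with no loss; the paper needs no $\phi_{0}/\phi_{\geq1}$ decomposition in this proof, and your appeal to the failure of the $p=1$ weighted estimate for $\phi_0$ is moot here. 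The genuinely dangerous term, which your proposal never isolates, is the linear zeroth-angular-order commutation term with $O(1)$ coefficient proportional to $\frac{2r}{D}\underline{L}\phi$ (the term $\frac{4r^{3}}{M^{4}}D^{1/2-\delta/2}\,\frac{2r}{D}\underline{L}\phi$ in \eqref{null:eqyy}): this quantity does \emph{not} decay along the horizon (indeed Theorem \ref{thm:inst} shows it cannot), so after solving the ODE $Lf_{H;\delta}+\tfrac{3+\delta}{2}(r-M)f_{H;\delta}\simeq g_{H;n,\delta}$ one must estimate
\begin{equation*}
\frac{1}{D^{\frac{3+\delta}{2}}(u,v)}\int_{v_{0}}^{v}D^{\frac{3+\delta}{2}}(u,v')\,D^{\frac{1-\delta}{2}}(u,v')\,\Bigl|\tfrac{2r}{D}\underline{L}\phi\Bigr|\,dv' ,
\end{equation*}
using only the boundedness \eqref{est:boundy}. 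A naive bound gives growth like $v$ for every $\delta$, which does not prove \eqref{est:boundyy} for $\delta<1$; the paper obtains $v^{\delta}$ only by splitting the integration into $v_{0}\leq v'\leq u/2$, where $r-M\lesssim 1/v'$, and $u/2\leq v'\leq v$, where $r-M\gtrsim 1/v$, and exploiting the weight $D^{\frac{1-\delta}{2}}$ in each region. Without identifying this term and carrying out that region-splitting computation, your argument does not yield the stated rate.

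A secondary point: the paper's bootstrap within this theorem is phrased for the quantity with an \emph{extra} $D^{1/2}$ weight, i.e. $D^{1/2}D^{1/2-\delta/2}\bigl|\tfrac{2r}{D}\underline{L}(\tfrac{2r}{D}\underline{L}\phi)\bigr|\lesssim C^{1/2}E_{0}^{1/2}\epsilon\, v^{\delta'}$ for arbitrarily small $\delta'$, and this extra factor is what makes the quadratic term in which the second transversal derivative reappears (the $f_{H;n}^{2}$-type term $L\phi\cdot\tfrac{2r}{D}\underline{L}(\tfrac{2r}{D}\underline{L}\phi)$) integrable; bootstrapping the unweighted bound $|h_{2}|\lesssim \epsilon v^{\delta}$ directly, as you propose, would have to be checked against exactly this term, and your sketch does not do so.
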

\begin{remark}
Note that in our case as well we have an estimate of the form \eqref{y2hor} on the horizon (which corresponds to the case $\delta = 1$ in the aforementioned Theorem). This shows that for $\delta = 1$ estimate \eqref{est:boundyy} is sharp in the region $\mathcal{A}$. 
\end{remark}
\begin{proof}
Using equation \eqref{null:eq} we have that:
\begin{equation}\label{null:eqyy}
\begin{split}
4 L & \left( D^{1/2-\delta/2} \frac{2r}{D}  \underline{L} \left( \frac{2r}{D} \underline{L} \phi \right) \right) =  \Bigg( \frac{-4(r-M)}{M^2}D^{1/2-\delta/2} - \frac{2(1+\delta)M (r-M)}{r^3} D^{1/2-\delta/2} \\ & \quad + \mathcal{O} ( (r-M)^2 ) D^{1/2-\delta/2} \Bigg) \frac{2r}{D} \underline{L} \left( \frac{2r}{D} \underline{L} \phi \right)  + \frac{2}{r} D^{1/2-\delta/2} \slashed{\Delta} \left( \frac{2r}{D} \underline{L} \phi \right) \\ & + \left( \frac{4r^3}{M^4} D^{1/2-\delta/2} + \mathcal{O} ((r-M)) D^{1/2-\delta/2} \right) \frac{2r}{D} \underline{L} \phi  + \left( -\frac{2r^3}{M^2} D^{1/2-\delta/2} + \mathcal{O} (( r-M)) D^{1/2-\delta/2} \right) \slashed{\Delta} \phi \\ & + \left( - \frac{4r^3}{M^2} D^{1/2-\delta/2} + \mathcal{O} ((r-M)) D^{1/2-\delta/2} \right) \phi   +  D^{1/2-\delta/2} \frac{2r}{D} \underline{L} \left( \frac{A \cdot r}{2} \cdot g^{\alpha \beta} \cdot \partial_{\alpha} \psi \cdot \partial_{\beta} \psi \right)   .
\end{split}
\end{equation}
We examine the last nonlinear term and we have that using \eqref{null:nonlin}
\begin{equation*}
\begin{split}
D^{1/2-\delta/2} \frac{2r}{D} \underline{L} & \left( \frac{A \cdot r}{2} \cdot g^{\alpha \beta} \cdot \partial_{\alpha} \psi \cdot \partial_{\beta} \psi \right) =  -  D^{1/2-\delta/2} \frac{2r}{D} \underline{L} \left( \frac{A \cdot r}{2} \right) \cdot  g^{\alpha \beta} \cdot \partial_{\alpha} \psi \cdot \partial_{\beta} \psi  \\ & + A \cdot D^{1/2-\delta/2} \frac{r^2}{D} \underline{L} \left[ \frac{2}{r^3} L \phi \cdot \left( \frac{2r}{D}\underline{L} \phi \right) \right]  - A \cdot D^{1/2-\delta/2} \frac{r^2}{D} \underline{L} \left[ \frac{D}{r^4} \phi \cdot \left( \frac{2r}{D} \underline{L} \phi \right) \right] \\ & + A \cdot D^{1/2-\delta/2} \frac{r^2}{D} \underline{L}   \left( \frac{2}{r^3} \phi \cdot L\phi \right)  - A \cdot D^{1/2-\delta/2} \frac{r^2}{D} \underline{L}   \left( \frac{D}{r^4} \phi^2 \right) \\ & + A \cdot D^{1/2-\delta/2} \frac{r^2}{D} \underline{L} ( | \slashed{\nabla} \phi |^2 )  \doteq  f_{H;n}^1 + f_{H;n}^2 + f_{H;n}^3 + f_{H;n}^4 + f_{H;n}^5 + f_{H;n}^6 .
\end{split}
\end{equation*}
We examine each term separately. We leave the first term as is for now. For the second term we compute that
\begin{equation*}
\begin{split}
f_{H;n}^2 = & A \cdot D^{1/2-\delta/2} \frac{r^2}{D} \underline{L} \left[ \frac{2}{r^3} L \phi \cdot \left( \frac{2r}{D}\underline{L} \phi \right) \right]  =  A \cdot D^{1/2-\delta/2} \cdot \left( \frac{6}{r^2} L\phi \right) \cdot \left( \frac{2r}{D} \underline{L} \phi \right) \\ & + A \cdot D^{1/2-\delta/2} \cdot \left( \frac{1}{r^2} L\phi \right) \cdot \left[ \frac{2r}{D} \underline{L} \left( \frac{2r}{D} \underline{L} \phi \right) \right]  -  A \cdot D^{1/2 - \delta/2} \cdot \frac{D'}{2r^2}  \phi \cdot \left( \frac{2r}{D} \underline{L} \phi \right) \\ & + A \cdot D^{1/2 - \delta/2} \cdot \frac{1}{2r}   \cdot ( \slashed{\Delta} \phi )\cdot \left( \frac{2r}{D} \underline{L} \phi \right) +A \cdot D^{1/2-\delta/2} \frac{1}{2r} g^{\alpha \beta} \cdot \partial_{\alpha} \psi \cdot \partial_{\beta} \psi \cdot \left( \frac{2r}{D} \underline{L} \phi \right) .
\end{split}
\end{equation*} 
For the third term we compute that
\begin{equation*}
\begin{split}
f_{H;n}^3 = & - A \cdot D^{1/2-\delta/2} \frac{r^2}{D} \underline{L} \left[ \frac{D}{r^4} \phi \cdot \left( \frac{2r}{D} \underline{L} \phi \right) \right]  =  - A \cdot D^{1/2-\delta/2} \left( \frac{2D}{r^3} - \frac{D'}{r^2} \right) \phi \cdot \left( \frac{2r}{D} \underline{L} \phi \right) \\ & - A \cdot D^{1/2-\delta/2} \frac{D}{2r^3} \left( \frac{2r}{D} \underline{L} \phi \right)^2  - A \cdot D^{1/2-\delta/2} \frac{D}{2r^3} \phi \cdot \left[ \frac{2r}{D} \underline{L} \left( \frac{2r}{D} \underline{L} \phi \right) \right] .
\end{split}
\end{equation*}
For the fourth term we compute that
\begin{equation*}
\begin{split}
f_{H;n}^4 = & A \cdot D^{1/2-\delta/2} \frac{r^2}{D} \underline{L}   \left( \frac{2}{r^3} \phi \cdot L\phi \right) =  A \cdot D^{1/2-\delta/2} \frac{6}{r^2} \phi \cdot L\phi \\ & + A \cdot D^{1/2-\delta/2} \cdot \left( \frac{1}{r^2}  L\phi \right) \cdot \left( \frac{2r}{D} \underline{L} \phi \right) -  A \cdot D^{1/2 - \delta/2} \cdot \frac{D'}{2r^2}  \phi ^2 \\ & + A \cdot D^{1/2 - \delta/2} \cdot \frac{1}{2r}  \phi \cdot ( \slashed{\Delta} \phi ) + A \cdot D^{1/2-\delta/2} \frac{1}{r} \phi \cdot g^{\alpha \beta} \cdot \partial_{\alpha} \psi \cdot \partial_{\beta} \psi . 
\end{split}
\end{equation*}
For the fifth term we compute that
\begin{equation*}
\begin{split}
 f_{H;n}^5 = & - A \cdot D^{1/2-\delta/2} \frac{r^2}{D} \underline{L}   \left( \frac{D}{r^4} \phi^2 \right) =  - A \cdot D^{1/2-\delta/2} \left( - \frac{D'}{r^2} + \frac{2D}{r^3} \right) \cdot \phi^2 \\ & - A \cdot D^{1/2-\delta/2} \frac{D}{r^3} \phi \cdot \left( \frac{2r}{D} \underline{L} \phi \right) .
\end{split}
\end{equation*}
For the sixth term we compute that
\begin{equation*}
\begin{split}
f_{H;n}^6 = & A \cdot D^{1/2-\delta/2} \frac{r^2}{D} \underline{L} ( |\slashed{\nabla} \phi |^2 ) =  A \cdot D^{1/2-\delta/2} 2Dr^2 | \slashed{\nabla} \phi |^2 \\ & + A \cdot D^{1/2-\delta/2} \frac{1}{r} \left\langle  \nabla_{\mathbb{S}^2} \left( \frac{2r}{D} \underline{L} \phi \right) , \nabla_{\mathbb{S}^2} \phi \right\rangle .
\end{split}
\end{equation*}
We get the following equation:
\begin{equation}\label{eq:nlw2}
L f_{H;\delta} + \frac{3+\delta}{2} (r-M) f_{H;\delta} \simeq g_{H;n,\delta} ,
\end{equation}
where 
$$ f_{H;\delta} = D^{1/2-\delta/2} \frac{r}{2D} \underline{L} \left( \frac{2r}{D} \underline{L} \phi \right) ,$$
and where
$$ g_{H;n,\delta} \doteq g_{H;\delta} + \frac{1}{4} \left( f_{H;n}^1 + f_{H;n}^2 + f_{H;n}^3 + f_{H;n}^4 + f_{H;n}^5 + f_{H;n}^6 \right), $$
for $g_{H;\delta}$ given by:
\begin{equation*}
\begin{split}
g_{H;\delta} (u,v,\omega) = & \frac{2}{r} D^{1/2-\delta/2} \slashed{\Delta} \left( \frac{2r}{D} \underline{L} \phi \right)  + \left( \frac{4r^3}{M^4} D^{1/2-\delta/2} + \mathcal{O} ((r-M)) D^{1/2-\delta/2} \right) \frac{2r}{D} \underline{L} \phi \\ & + \left( -\frac{2r^3}{M^2} D^{1/2-\delta/2} + \mathcal{O} (( r-M)) D^{1/2-\delta/2} \right) \slashed{\Delta} \phi  + \left( - \frac{4r^3}{M^2} D^{1/2-\delta/2} + \mathcal{O} ((r-M)) D^{1/2-\delta/2} \right) \phi .
\end{split}
\end{equation*}
Using the boundedness of $A$ we have that:
\begingroup
\allowdisplaybreaks
\begin{align*}
f_{H;n}^2 + & f_{H;n}^3 + f_{H;n}^4 + f_{H;n}^5 + f_{H;n}^6 \simeq  D^{1/2-\delta/2} \cdot (L\phi ) \cdot \left[ \frac{2r}{D} \underline{L} \left( \frac{2r}{D} \underline{L} \phi \right) \right] \\ & + D^{1/2-\delta/2}  \cdot D \phi \cdot \left[ \frac{2r}{D} \underline{L} \left( \frac{2r}{D} \underline{L} \phi \right) \right]   + D^{1/2-\delta/2} \cdot D \left( \frac{2r}{D} \underline{L} \phi \right)^2 \\ & + D^{1/2-\delta/2} \cdot (L\phi ) \cdot \left( \frac{2r}{D} \underline{L} \phi \right)  + D^{1/2-\delta/2} \cdot (\slashed{\Delta} \phi ) \cdot \left( \frac{2r}{D} \underline{L} \phi \right)+ D^{1/2-\delta/2} \cdot D' \phi \cdot \left( \frac{2r}{D} \underline{L} \phi \right) \\ & + D^{1/2-\delta/2} \cdot \phi \cdot (L\phi )  +D^{1/2-\delta/2} \cdot \phi \cdot (\slashed{\Delta} \phi ) + D^{1/2-\delta/2} \cdot D' \phi^2 \\ & +D^{1/2-\delta/2} 2Dr^2 | \slashed{\nabla} \phi |^2  + D^{1/2-\delta/2} \frac{1}{r} \left\langle  \nabla_{\mathbb{S}^2} \left( \frac{2r}{D} \underline{L} \phi \right) , \nabla_{\mathbb{S}^2} \phi \right\rangle\\ & + D^{1/2-\delta/2} \cdot \left( \frac{2r}{D} \underline{L} \phi \right) \cdot g^{\alpha \beta} \cdot \partial_{\alpha} \psi \cdot \partial_{\beta} \psi  + D^{1/2-\delta/2} \cdot \phi \cdot g^{\alpha \beta} \cdot \partial_{\alpha} \psi \cdot \partial_{\beta} \psi .
\end{align*}
\endgroup
We solve \eqref{eq:nlw2}, which gives us that
\begin{equation*}
f_{H;\delta} (u,v,\omega) \simeq \frac{D^{\frac{3+\delta}{2}} (u,v_0)}{D^{\frac{3+\delta}{2}} (u,v)} f_{H;\delta} (u,v_0 , \omega ) + \frac{1}{D^{\frac{3+\delta}{2}} (u,v)} \int_{v_0}^v D^{\frac{3+\delta}{2}} (u,v' ) \cdot g_{H;n,\delta} \, dv' . 
\end{equation*}
The term
$$ \frac{D^{\frac{3+\delta}{2}} (u,v_0)}{D^{\frac{3+\delta}{2}} (u,v)} f_{H;\delta} (u,v_0 , \omega ) $$
is bounded by $\lesssim C^{1/2} E_0^{1/2} \epsilon$ by the properties of $D$ and our initial data assumptions. From the inhomogeneous part we first we look at the contribution of the linear terms coming from $g_{H;\delta}$ and more specifically the second one of them (which turns out to be the worst) we break the integral into the regions $v_0 \leq v' \leq \frac{u}{2}$ and $\frac{u}{2} \leq v' \leq v$ (noting that if $v \leq \frac{u}{2}$ then the second integral is just 0) and we have that:
\begingroup
\allowdisplaybreaks
\begin{align*}
\frac{1}{D^{\frac{3+\delta}{2}} (u,v)} \int_{v_0}^v D^2 (u,v') \cdot &  \left( \frac{2r}{D} \underline{L} \phi \right) \, dv' =  \frac{1}{D^{\frac{3+\delta}{2}} (u,v)} \int_{v_0}^{\min ( v,u/2 ) } D^2  \cdot \left( \frac{2r}{D} \underline{L} \phi \right) \, dv' \\ & + \frac{1}{D^{\frac{3+\delta}{2}} (u,v)} \int_{\min(v,u/2 )}^v D^2  \cdot  \left( \frac{2r}{D} \underline{L} \phi \right) \, dv' .
\end{align*}
\endgroup
In the region $v_0 \leq v' \leq u/2$ we note that we have $(r-M) \lesssim \frac{1}{v}$ which implies after using the boundedness estimate \eqref{est:boundy} that we have the following for the first term of the above expression:
\begin{equation*}
\begin{split}
\frac{1}{D^{\frac{3+\delta}{2}} (u,v)} \int_{v_0}^{\min (v,u/2 )} D^2 \cdot & \left( \frac{2r}{D} \underline{L} \phi \right) \, dv' \lesssim C^{1/2} E_0^{1/2} \epsilon \int_{v_0}^{\min (v,u/2)} D^{\frac{1-\delta}{2}} \, dv \\ \lesssim &  C^{1/2} E_0^{1/2} \epsilon \int_{v_0}^{u/2} \frac{1}{(v' )^{1-\delta}} \, dv' \lesssim C^{1/2} E_0^{1/2} \epsilon v^{\delta} .
\end{split}
\end{equation*}
For the second term of the previous expression we now work in the region $u/2 \leq v' \leq v$ where it holds that $(r-M) \gtrsim \frac{1}{v}$ which implies after using again the boundedness estimate \eqref{est:boundy} that:
\begin{equation*}
\begin{split}
\frac{1}{D^{\frac{3+\delta}{2}} (u,v)} \int_{\min(v,u/2)}^{v} D^2  \cdot & \left( \frac{2r}{D} \underline{L} \phi \right) \, dv' \lesssim C^{1/2} E_0^{1/2} \epsilon \int_{\min(v,u/2)}^{v} D^{\frac{1-\delta}{2}} \, dv \\ \lesssim &  C^{1/2} E_0^{1/2} \epsilon \frac{1}{(r-M)^{\delta}} \Bigg{|}_{\min(v,u/2)}^v \lesssim C^{1/2} E_0^{1/2} \epsilon v^{\delta} .
\end{split}
\end{equation*}
All the other terms coming from $g_{H;\delta}$ can be treated similarly (noting that they behave better than the above after using the pointwise decay estimates \eqref{dec:psi} for the terms involving $\slashed{\Delta} \phi$ and $\phi$, and the decay estimates \eqref{est:boundy1} for the term involving $\slashed{\Delta} \left( \frac{2r}{D} \underline{L} \phi \right)$). All the terms of $f_{H;n}^1$ can be treated also in a similar manner and it is easy to see that they are better and of size $\lesssim C E_0 \epsilon^2 v^{\delta}$. Now we consider the second term of the sum $f_{H;n}^2 + f_{H;n}^3 + f_{H;n}^4 + f_{H;n}^5 + f_{H;n}^6$ and we have that
\begingroup
\allowdisplaybreaks
\begin{align*}
\frac{1}{D^{\frac{3+\delta}{2}} (u,v)} & \int_{v_0}^v D^{\frac{3+\delta}{2}}  \cdot D \phi \cdot  D^{\frac{1-\delta}{2}} \left[ \frac{2r}{D} \underline{L} \left( \frac{2r}{D} \underline{L} \phi \right) \right] \, dv' \\ \lesssim &  \frac{1}{D^{\frac{3+\delta}{2}} (u,v)} \int_{v_0}^v D^{\frac{3+\delta}{2}} \cdot D^{1/2} \cdot \frac{C E_0 \epsilon^2}{(v' )^{1-\sigma-\delta'}} \, dv' \lesssim   C E_0 \epsilon^2 \frac{1}{D^{\frac{3+\delta}{2}} (u,v)} \int_{v_0}^v D^{\frac{3+\delta}{2}} \cdot D^{1/2} \, dv' \\ \lesssim &  C E_0 \epsilon^2 \frac{1}{D^{\frac{3+\delta}{2}} (u,v)} \int_{v_0}^v L ( D^{\frac{3+\delta}{2}} ) \, dv' \lesssim  C E_0 \epsilon^2,
\end{align*}
\endgroup
where we used the pointwise decay estimates \eqref{dec:psi} and the fact that
$$ D^{1/2} D^{1/2 -\delta /2 } \left| \frac{2r}{D} \underline{L} \left( \frac{2r}{D} \underline{L} \phi \right) \right| (u,v,\omega ) \lesssim C^{1/2} E_0^{1/2} v^{\delta'} , $$
for any $\delta' > 0$ and for all $\delta \in (0,1]$ (due to the extra $D^{1/2}$ weight) by our bootstrap assumptions. The previous estimate follows by choosing $\delta'$ small enough so that $\delta' + \delta_1 < 1$. For the third term of the sum  $f_{H;n}^2 + f_{H;n}^3 + f_{H;n}^4 + f_{H;n}^5 + f_{H;n}^6$ we have that
\begingroup
\allowdisplaybreaks
\begin{align*}
\frac{1}{D^{\frac{3+\delta}{2}} (u,v)} & \int_{v_0}^v D^{\frac{3+\delta}{2}}  \cdot  D^{\frac{1-\delta}{2}} D \left( \frac{2r}{D} \underline{L} \phi \right)^2 \, dv'  \\ \lesssim & \frac{1}{D^{\frac{3+\delta}{2}} (u,v)} \int_{v_0}^v D^{\frac{3+\delta}{2}}  \cdot D^{\frac{1-\delta}{2}} D^{\frac{1}{2}} \left( \frac{2r}{D} \underline{L} \phi \right)^2 \, dv'  \lesssim  C E_0 \epsilon^2 \frac{1}{D^{\frac{3+\delta}{2}} (u,v)} \int_{v_0}^v D^{\frac{3+\delta}{2}}  \cdot D^{\frac{1}{2}} \, dv'  \, \\ \lesssim & C E_0 \epsilon^2 \frac{1}{D^{\frac{3+\delta}{2}} (u,v)} \int_{v_0}^v L ( D^{\frac{3+\delta}{2}} ) \, dv' \lesssim  C E_0 \epsilon^2,
\end{align*}
\endgroup
where we used the boundedness estimate \eqref{est:boundy}. For the fourth term of the sum $f_{H;n}^2 + f_{H;n}^3 + f_{H;n}^4 + f_{H;n}^5 + f_{H;n}^6$ we integrate by parts and we have that:
\begingroup
\allowdisplaybreaks
\begin{align*}
\frac{1}{D^{\frac{3+\delta}{2}} (u,v)} & \int_{v_0}^v D^{\frac{3+\delta}{2}}   \cdot D^{\frac{1-\delta}{2}} (L\phi ) \cdot  \left( \frac{2r}{D} \underline{L} \phi \right) \, dv'  =  \frac{1}{D^{\frac{3+\delta}{2}} (u,v)} \left[ D^{\frac{3+\delta}{2}}  \phi (u,v,\omega)  \cdot  \left( \frac{2r}{D} \underline{L} \phi  \right) (u,v,\omega) \right]_{v_0}^v  \\ & - \frac{1}{D^{\frac{3+\delta}{2}} (u,v)} \int_{v_0}^v L \left( D^{\frac{3+\delta}{2}} \cdot D^{\frac{1-\delta}{2}} \right) \phi   \cdot  \left( \frac{2r}{D} \underline{L} \phi \right) \, dv'  - \frac{1}{D^{\frac{3+\delta}{2}} (u,v)} \int_{v_0}^v D^{\frac{3+\delta}{2}} \cdot D^{\frac{1-\delta}{2}} \phi   \cdot  L \left( \frac{2r}{D} \underline{L} \phi \right) \, dv' .
\end{align*}
\endgroup
The last of term of this last expression is better or similar to the rest by using the equation \eqref{eq:yder}. The first term is just bounded by $C E_0 \epsilon^2$, while the second term can be bounded by $C E_0 \epsilon^2$ as well by using the boundedness of $\phi$ and $\frac{2r}{D} \underline{L} \phi$ and the presence of the $D^{5/2}$ weight in the integral. For the fifth term of the sum $f_{H;n}^2 + f_{H;n}^3 + f_{H;n}^4 + f_{H;n}^5 + f_{H;n}^6$ we have that
\begingroup
\allowdisplaybreaks
\begin{align*}
\frac{1}{D^{\frac{3+\delta}{2}} (u,v)} \int_{v_0}^v & D^{\frac{3+\delta}{2}} \cdot D^{\frac{1-\delta}{2}} D' \phi \cdot  \left( \frac{2r}{D} \underline{L} \phi \right) \, dv' \lesssim  \frac{1}{D^{\frac{3+\delta}{2}} (u,v)} \int_{v_0}^v D^{\frac{3+\delta}{2}} \cdot D^{\frac{1}{2}} \frac{C E_0 \epsilon^2}{(v' )^{1-\delta_1 / 2}} \, dv' \\ \lesssim &  C E_0 \epsilon^2 \frac{1}{D^{\frac{3+\delta}{2}} (u,v)} \int_{v_0}^v D^{\frac{3+\delta}{2}}  \cdot D^{\frac{1}{2}} \, dv' \lesssim  C E_0 \epsilon^2 \frac{1}{D^{\frac{3+\delta}{2}} (u,v)} \int_{v_0}^v L ( D^{\frac{3+\delta}{2}} ) \, dv' \lesssim  C E_0 \epsilon^2. 
\end{align*}
\endgroup
For the sixth term of the sum $f_{H;n}^2 + f_{H;n}^3 + f_{H;n}^4 + f_{H;n}^5 + f_{H;n}^6$ we integrate by parts and we have that
\begingroup
\allowdisplaybreaks
\begin{align*}
\frac{1}{D^{\frac{3+\delta}{2}} (u,v)} \int_{v_0}^v & D^{\frac{3+\delta}{2}} \cdot   D^{\frac{1-\delta}{2}} \phi \cdot (L \phi ) \, dv' =  \frac{1}{2D^{\frac{3+\delta}{2}} (u,v)} \left[ D^2 \phi^2 (u,v,\omega) \right]_{v_0}^v \\ & -\frac{1}{2D^{\frac{3+\delta}{2}} (u,v)} \int_{v_0}^v L (D^2 ) \phi^2 \, dv' .
\end{align*}
\endgroup
The first term is bounded by $C E_0 \epsilon^2$ by the properties of $D$ and the boundedness of $\phi$, while the second term is also bounded by $C E_0 \epsilon^2$ again by the boundedness of $\phi$ and the presence of the $D^{5/2}$ inside the integral. For the seventh term we have that
\begin{equation*}
\frac{1}{D^{\frac{3+\delta}{2}} (u,v)} \int_{v_0}^v D^{\frac{3+\delta}{2}} (u,v' )  \cdot D^{\frac{1-\delta}{2}} D' \phi^2 \, dv' \lesssim \int_{v_0}^v \frac{\epsilon^2}{(v' )^{2-2\sigma}} \, dv'  \lesssim  \epsilon^2 ,
\end{equation*}
again for $\sigma$ small enough so that the term in the last integral is integrable. The eight term of the sum $f_{H;n}^2 + f_{H;n}^3 + f_{H;n}^4 + f_{H;n}^5 + f_{H;n}^6$ can be treated similarly to the term of the last expression. For the ninth term of the sum $f_{H;n}^2 + f_{H;n}^3 + f_{H;n}^4 + f_{H;n}^5 + f_{H;n}^6$ we have that
\begin{equation*}
\begin{split}
\frac{1}{D^{\frac{3+\delta}{2}} (u,v)} \int_{v_0}^v D^{\frac{3+\delta}{2}} (u,v' )  \cdot & D^{1/2-\delta/2} \cdot \frac{1}{r} \left\langle  \nabla_{\mathbb{S}^2} \left( \frac{2r}{D} \underline{L} \phi \right) , \nabla_{\mathbb{S}^2} \phi \right\rangle \, dv' \\ \lesssim & \frac{1}{D^{\frac{3+\delta}{2}} (u,v)} \int_{v_0}^v D^{\frac{3+\delta}{2}} (u,v' )  \cdot  D^{1/2-\delta/2} \cdot \left| \nabla_{\mathbb{S}^2} \left( \frac{2r}{D} \underline{L} \phi \right) \right| \cdot | \nabla_{\mathbb{S}^2} \phi | \, dv' \\ \lesssim & \int_{v_0}^v \frac{C E_0 \epsilon^2}{(v' )^{1+\delta_2}} \, dv' \lesssim C E_0 \epsilon^2 ,
\end{split}
\end{equation*}
where we used the pointwise decay estimate \eqref{dec:psi1}. Note that the last two terms of the sum $f_{H;n}^2 + f_{H;n}^3 + f_{H;n}^4 + f_{H;n}^5 + f_{H;n}^6$ are of cubic nature, so they can be treated similarly and they are bounded by $\lesssim C^{3/2} E_0^{3/2} \epsilon^3 v^{\delta}$. Finally for the first term of the sum $f_{H;n}^2 + f_{H;n}^3 + f_{H;n}^4 + f_{H;n}^5 + f_{H;n}^6$ we integrate by parts and we have that
\begingroup
\allowdisplaybreaks
\begin{align*}
\frac{1}{D^{\frac{3+\delta}{2}} (u,v)} & \int_{v_0}^v D^{\frac{3+\delta}{2}} (u,v' )  \cdot  D^{1/2-\delta/2} \cdot (L\phi ) \cdot  \left[ \frac{2r}{D} \underline{L} \left( \frac{2r}{D} \underline{L} \phi \right) \right] \, dv' \\ \simeq & \frac{1}{D^{\frac{3+\delta}{2}} (u,v)} \cdot \left. D^{\frac{3+\delta}{2}} \cdot  \phi  \cdot  D^{1/2-\delta/2} \left[ \frac{2r}{D} \underline{L} \left( \frac{2r}{D} \underline{L} \phi \right) \right] \right|_{v_0}^v \\ & - \frac{1}{D^{\frac{3+\delta}{2}} (u,v)} \int_{v_0}^v D^{\frac{3+\delta}{2}}  \mathcal{O} ((r-M)) \phi \cdot D^{\frac{1-\delta}{2}} \left[ \frac{2r}{D} \underline{L} \left( \frac{2r}{D} \underline{L} \phi \right) \right] \, dv' \\ & - \frac{1}{D^{\frac{3+\delta}{2}} (u,v)} \int_{v_0}^v D^{\frac{3+\delta}{2}} \cdot D^{\frac{1-\delta}{2}} \phi \cdot \slashed{\Delta} \left( \frac{2r}{D} \underline{L} \phi \right) \, dv' - \frac{1}{D^{\frac{3+\delta}{2}} (u,v)} \int_{v_0}^v D^{\frac{3+\delta}{2}} \cdot D^{\frac{1-\delta}{2}} \phi \cdot \left( \frac{2r}{D} \underline{L} \phi \right) \, dv' \\ & - \frac{1}{D^{\frac{3+\delta}{2}} (u,v)} \int_{v_0}^v D^{\frac{3+\delta}{2}} \cdot D^{\frac{1-\delta}{2}} \phi \cdot \slashed{\Delta} \phi \, dv'  - \frac{1}{D^{\frac{3+\delta}{2}} (u,v)} \int_{v_0}^v D^{\frac{3+\delta}{2}} \cdot D^{\frac{1-\delta}{2}} \phi^2 \, dv' \\ &    - \frac{1}{D^{\frac{3+\delta}{2}} (u,v)} \int_{v_0}^v D^{\frac{3+\delta}{2}} \cdot D^{\frac{1-\delta}{2}} \phi \cdot \frac{2r}{D} \underline{L} \left( \frac{r}{2} \cdot g^{\alpha \beta} \cdot \partial_{\alpha} \psi \cdot \partial_{\beta} \psi \right) \, dv' .
\end{align*}
\endgroup
The last term of the last expression is of cubic nature and can be treated similarly to the rest, in the end it can be bounded by $C^{3/2} E_0^{3/2} \epsilon^3 v^{\delta}$. The first term due to the pointwise decay estimate \eqref{dec:psi}, the properties of $D$ and the bootstrap assumption is of size $\lesssim C E_0 \epsilon^2 v^{\delta}$. The second term can be bounded by $C E_0 \epsilon^2 v^{\delta}$ by using the pointwise decay estimate \eqref{dec:psi}, the bootstrap assumption, and the presence of the term $D^{\frac{3+\delta}{2}} D^{1/2}$ inside the integral. For the fourth term we have for $\delta = 1$ that
\begin{equation*}
\frac{1}{D^{2} (u,v)} \int_{v_0}^v D^{2} |\phi | \cdot  \left| \frac{2r}{D} \underline{L} \phi \right| \, dv'  \lesssim   \int_{v_0}^v \frac{C E_0 \epsilon^2}{v^{1-\delta_1}} \, dv'  \lesssim  C E_0 \epsilon^2 v^{\delta_1} \lesssim C E_0 \epsilon^2 v^{\delta} ,
\end{equation*}
where we used the pointwise decay estimate \eqref{dec:psi} and $\delta_1 \ll 1$. On the other hand for $\delta = 0$ (which is outside the range of the Theorem) we have that
\begin{equation*}
\begin{split}
\frac{1}{D^{3/2} (u,v)} \int_{v_0}^v D^{2} |\phi | \cdot & \left| \frac{2r}{D} \underline{L} \phi \right| \, dv'  \lesssim   \frac{1}{D^{3/2} (u,v)} \int_{v_0}^v D^{2} \frac{C E_0 \epsilon^2}{v^{1-\delta_1}} \, dv' \\ \lesssim & C E_0 \epsilon^2 \frac{1}{D^{3/2} (u,v)} \int_{v_0}^v D^{2} \, dv'  \lesssim  C E_0 \epsilon^2 ,
\end{split}
\end{equation*}
where we used again the pointwise decay estimate \eqref{dec:psi}. The rest of the $\delta$ range follows by interpolation. The third, fifth and sixth terms can be treated similarly (and as can be easily seen they admit better bounds). Similarly we can treat the term $D^{1/2-\delta/2} (\slashed{\Delta} \phi ) \left( \frac{2r}{D} \underline{L} \phi \right)$.

In the very end, we note that we were able to bound $D^{1/2-\delta /2} \left| \frac{2r}{D} \underline{L} \left( \frac{2r}{D} \underline{L} \phi \right) \right| (u,v,\omega)$ by
$$ \lesssim C^{1/2} E_0^{1/2} \epsilon v^{\delta} + C E_0 \epsilon^2 v^{\delta} + C^{3/2} E_0^{3/2} \epsilon^3 v^{\delta} , $$
and for $\epsilon$ small enough this is bounded by $C^{1/2} E_0^{1/2} \epsilon v^{\delta}$. 

The Theorem now follows by gathering together all previous estimates.
\end{proof}

\subsection{Some auxiliary estimates}\label{aux}
From our previous pointwise estimates it is clear that our energy estimates are not enough to conclude that $T^m\psi$ for $m \in \{ 0,1,2,3 \}$ apart from the case of $m=0$, are integrable in $v$ close to the horizon. Nevertheless we will show some weighted boundedness estimates in $v$ for $T^m \psi$, $m \in \{ 0 ,1,2,3 \}$. 

\begin{theorem}\label{thm:aux}
Let $\psi$ be a solution of the equation \eqref{nw}. Under the bootstrap assumptions of section \eqref{as:boot} and for all $0 < \epsilon < \epsilon_0$ for $\epsilon_0$ small enough we have that
\begin{equation}\label{est:aux1}
\int_{v_0}^V \int_{\mathbb{S}^2} \sup_{u \in [U , u_{R,n}]} ( L T^m \Omega^k \phi )^2 \cdot v^{1+\delta} \, d\omega dv \lesssim C E_0 \epsilon^2 ,
\end{equation}
for all $V$, for any $0 < \delta < 2-\delta_1$ if $m=0$, for any $0 < \delta < 2+\delta_2$ if $m=1$, for any $0 < \delta < 1+\delta_2$ if $m=2$, for any $0 < \delta < \delta_2$ if $m=3$, for any  $(u_{R,n} , v ) \in \mathcal{A}_{\tau_0}^{\infty} \setminus SO(3)$ where $v \in [v_0 , V]$ and any $(U,V)  \in \mathcal{A}_{\tau_0}^{\infty} \setminus SO(3)$ (where $u_{R,n}$ is on the hypersurface $r=R$ for any $M < R \leq r_0$), for $m\in \{0,1,2,3\}$, and for any $k \leq 5$. 
\end{theorem}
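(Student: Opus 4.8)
The plan is to derive \eqref{est:aux1} from a fundamental theorem of calculus argument in the $u$-direction, combined with the wave equation to control the mixed second derivative, an integration by parts on $\mathbb{S}^2$ to absorb the loss of two angular derivatives, and a careful matching of the $v^{1+\delta}$-weight against the bootstrap assumptions of Section \ref{as:boot} and the weighted energy and Morawetz estimates of Sections \ref{energy} and \ref{dec}. Throughout, I treat $\mathcal{I}_m(V)$, the left-hand side of \eqref{est:aux1}, as an a priori finite quantity (by local well-posedness and smoothness of $\psi$ on a finite $v$-interval), so that at the end a Young-type inequality allows one to absorb a small multiple of $\mathcal{I}_m(V)$ into the left-hand side.

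\textbf{Step 1 (reduction to a bulk integral).} For fixed $v\in[v_0,V]$ and $\omega\in\mathbb{S}^2$, the fundamental theorem of calculus in $u$ together with $[L,\underline{L}]=0$ gives, for every $u\in[U,u_{R,n}(v)]$,
\[
(LT^m\Omega^k\phi)^2(u,v,\omega)=(LT^m\Omega^k\phi)^2(U,v,\omega)+2\int_U^u (L\underline{L}T^m\Omega^k\phi)(LT^m\Omega^k\phi)(u',v,\omega)\,du',
\]
hence
\[
\sup_{u\in[U,u_{R,n}(v)]}(LT^m\Omega^k\phi)^2(u,v,\omega)\lesssim (LT^m\Omega^k\phi)^2(U,v,\omega)+\int_U^{u_{R,n}(v)}|L\underline{L}T^m\Omega^k\phi|\,|LT^m\Omega^k\phi|(u',v,\omega)\,du'.
\]
Multiplying by $v^{1+\delta}$ and integrating over $\omega$ and over $v\in[v_0,V]$ splits $\mathcal{I}_m(V)$ into a boundary term on $\{u=U\}$ and a bulk term over the near-horizon region $\{v_0\le v\le V,\ U\le u\le u_{R,n}(v)\}\subset\mathcal{A}_{\tau_0}^{\infty}$.

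\textbf{Step 2 (boundary term).} I bound $\int_{v_0}^V\int_{\mathbb{S}^2}(LT^m\Omega^k\phi)^2(U,v,\omega)\,v^{1+\delta}\,d\omega\,dv$ using $L=T-\underline{L}$ and $\underline{L}\phi=\tfrac{D}{2r}\big(\tfrac{2r}{D}\underline{L}\phi\big)$: the contribution of $\tfrac{2r}{D}\underline{L}(\cdots)$ is controlled by the boundedness and auxiliary estimates \eqref{est:boundy}, \eqref{est:auxl}, \eqref{est:auxlt} (exploiting the gain of the factor $D\sim(r-M)^2$), while the $T$-contribution is controlled by the $(r-M)^{-p}$-weighted energy decay estimates \eqref{dec:enp}--\eqref{dec:entttttp1} with the value of $p$ dictated by the level $m$, summed dyadically in $\tau$; this already produces the stated ranges of $\delta$. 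For the portion of $\{u=U\}$ meeting $\Sigma_{\tau_0}$ one invokes the initial data norms of Theorem \ref{thm:main} directly.

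\textbf{Step 3 (bulk term, nonlinearity and zeroth order).} Equation \eqref{eq:dn}, commuted with $T^m\Omega^k$ (which is clean since $T$ and $\Omega$ are Killing), yields schematically
\[
L\underline{L}T^m\Omega^k\phi=\mathcal{O}((r-M)^2)\,\slashed{\Delta}T^m\Omega^k\phi+\mathcal{O}((r-M)^3)\,T^m\Omega^k\phi+\tfrac{Dr}{4}\,T^m\Omega^k F+(\text{lower order}).
\]
The term $\mathcal{O}((r-M)^3)T^m\Omega^k\phi$ paired with $LT^m\Omega^k\phi\,v^{1+\delta}$ is harmless: the integrability of $(r-M)^3$ in $u$, the pointwise decay estimates \eqref{dec:psi}--\eqref{dec:tttttpsi}, and Cauchy--Schwarz reduce it to a small multiple of $\mathcal{I}_m(V)$ plus $CE_0\epsilon^2$. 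For the nonlinear term, Cauchy--Schwarz is applied with a weight split chosen so that one factor is exactly of the form $\int_{\mathcal{A}}(r-M)^{-p-1}D^2|\Omega^kT^mF|^2\,v^{a}$ appearing (for $m=3$, with $a=1+\beta$) in the bootstrap assumptions \eqref{E3}, \eqref{F3}, \eqref{G1}, or (for $m\le 2$) of the form controlled dyadically by \eqref{A2}, \eqref{B1}, \eqref{C1}, \eqref{D1}, and the complementary factor is either the Morawetz bulk $\int_{\mathcal{A}}(r-M)^{1+\delta}(LT^m\Omega^k\phi)^2$ from \eqref{dec:mor}--\eqref{dec:mor3} or a fraction of $\mathcal{I}_m(V)$; in the latter case Young's inequality absorbs it.

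\textbf{Step 4 (loss of angular derivatives).} The term $\mathcal{O}((r-M)^2)\slashed{\Delta}T^m\Omega^k\phi$ carries two more angular derivatives than $LT^m\Omega^k\phi$, so before estimating one integrates by parts on $\mathbb{S}^2$,
\[
\int_{\mathbb{S}^2}\mathcal{O}((r-M)^2)(\slashed{\Delta}T^m\Omega^k\phi)(LT^m\Omega^k\phi)\,d\omega=-\int_{\mathbb{S}^2}\mathcal{O}((r-M)^2)\,\big\langle\slashed{\nabla}T^m\Omega^k\phi,\ L\slashed{\nabla}T^m\Omega^k\phi\big\rangle\,d\omega,
\]
using that $L$ commutes with $\slashed{\nabla}$. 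The integrand equals $\tfrac12\,\mathcal{O}((r-M)^2)\,L\big(|\slashed{\nabla}T^m\Omega^k\phi|^2\big)$ up to $r$-dependent coefficients; integrating in $v$ against $v^{1+\delta}$ produces boundary terms in $v$ and a bulk term in which $L$ falls on the coefficient and on $v^{1+\delta}$, all of which are controlled by the weighted angular energies in \eqref{dec:mor11}, \eqref{dec:mor1}--\eqref{dec:mor3} and the $(r-M)^{-p}$-weighted hierarchies, again after dyadic summation in $\tau$.

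\textbf{Main obstacle.} The genuinely delicate point is the allocation of the weight $v^{1+\delta}$: it must be distributed between the Cauchy--Schwarz factors so that one factor matches precisely the $(r-M)$-power and the $v$-power present in an available bootstrap assumption for $D\cdot T^m\Omega^kF$, whose $\tau$-decay rate then forces, via the dyadic summability $\sum 2^{n(1+\delta)}2^{-nq}<\infty$, the stated ranges $\delta<2-\delta_1$, $\delta<2+\delta_2$, $\delta<1+\delta_2$, $\delta<\delta_2$ for $m=0,1,2,3$, while the complementary factor must remain either absorbable into $\mathcal{I}_m(V)$ or bounded by a weighted Morawetz bulk. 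Carrying this out while simultaneously performing the spherical integration by parts of Step 4 -- which reshuffles which derivative carries the $v$-weight and the $(r-M)$-weight -- is where the argument requires its careful use of the precise structure of the equation and of the bootstrap assumptions.
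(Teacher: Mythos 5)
Your overall architecture (fundamental theorem of calculus in $u$, substitution of the wave equation for $L\underline{L}T^m\Omega^k\phi$, integration by parts on $\mathbb{S}^2$ to avoid the loss of angular derivatives, matching of the $v^{1+\delta}$-weight against Morawetz/bootstrap quantities, and absorption of a small multiple of the left-hand side) is the same as the paper's. However, there is a genuine gap in your Step 2: you anchor the $u$-integration at the fixed outgoing null hypersurface $\{u=U\}$, so your boundary term is $\int_{v_0}^V\int_{\mathbb{S}^2}(LT^m\Omega^k\phi)^2(U,v,\omega)\,v^{1+\delta}\,d\omega\,dv$, and the tools you invoke do not control it in the stated $\delta$-ranges. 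Writing $L=T-\underline{L}$: the $(r-M)^{-p}$-weighted hierarchies \eqref{dec:enp}--\eqref{dec:entttttp1} bound $\underline{L}$-derivatives integrated in $u$ over $\mathcal{N}^H_\tau$, not $(T^{m+1}\Omega^k\phi)^2$ at a \emph{fixed} $u$ integrated in $v$ with a $v^{1+\delta}$-weight; and for the $\underline{L}$-part the gain $D\sim(r-M)^2$ is useless on the portion of $\{u=U\}$ where $r$ is of order-one distance from $M$ (still inside $\mathcal{A}$), where \eqref{est:boundy}, \eqref{est:auxl}, \eqref{est:auxlt} give at best decay like $v^{-3/2+}$ for $\int_{\mathbb{S}^2}\big(\tfrac{2r}{D}\underline{L}T^m\Omega^k\phi\big)^2\,d\omega$, while $m=0$ with $\delta$ close to $2-\delta_1$ (or $m=1$ with $\delta$ close to $2+\delta_2$) requires the integrand to decay faster than $v^{-(3-\delta_1)}$. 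In other words, your boundary term is essentially a quantity of the same strength as \eqref{est:aux1} itself (a $v^{1+\delta}$-weighted flux of $(L\phi)^2$ along a null line near the horizon), so anchoring there begs the question; the only plausible rescue would be a separate $J^T$-flux identity through $\{u=U\}$ with its own inhomogeneous error terms, which you do not set up.

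The paper avoids this by anchoring the fundamental theorem of calculus at the timelike hypersurface $\{r=R\}$ on each dyadic slab $[v_n,v_{n+1}]$: the resulting boundary term $\int_{v_n}^{v_{n+1}}\int_{\mathbb{S}^2}(L\Omega^kT^m\phi)^2\big|_{r=R}\,v^{1+\delta}$ is bounded, after pulling out $v_{n+1}^{1+\delta}$ and an averaging argument in $r$, by the spacetime Morawetz bulk $\int_{\mathcal{A}_{v_n}^{v_{n+1}}}(r-M)^2(L\Omega^kT^m\phi)^2$, whose decay rates from \eqref{dec:mor}--\eqref{dec:mor3} ($v_n^{-(3-\delta_1)}$, $v_n^{-(3+\delta_2)}$, $v_n^{-(2+\delta_2)}$, $v_n^{-(1+\delta_2)}$ for $m=0,1,2,3$) are exactly what makes the dyadic sum $\sum_n v_{n+1}^{1+\delta}v_n^{-q}$ converge in the stated ranges of $\delta$. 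You should restructure Step 2 accordingly. A secondary difference worth noting: for the quadratic bulk terms the paper does not pair with the Section \ref{as:boot} assumptions on $F$ directly, but expands $F$ into its null components and uses Sobolev on the sphere together with the $m=0$ case of \eqref{est:aux1} itself inside the bootstrap (plus \eqref{est:boundy}, \eqref{est:boundyt}, Hardy and the Morawetz/energy decay); your route through \eqref{E3}, \eqref{F3}, \eqref{G1} and the unweighted assumptions for $m\le 2$ is plausible but would need the weight bookkeeping carried out term by term, in particular for the terms where the factor $L\Omega^{k_1}\phi$ must itself be taken in $\sup_u$.
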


\begin{proof}
The proof will be done through a standard bootstrap argument. First we use the fundamental theorem of calculus for a dyadic sequence $\{ v_n \}$ and we have that
\begin{equation}\label{eq:ftc}
\begin{split}
\left. \int_{v_n}^{v_{n+1}} \int_{\mathbb{S}^2}\Bigg|_{r\leq R}  ( L \Omega^k T^m \phi )^2 \cdot v^{1+\delta} \, d\omega dv \right|_u = & \left. \int_{v_n}^{v_{n+1}} \int_{\mathbb{S}^2}  ( L \Omega^k T^m \phi )^2 \cdot v^{1+\delta} \, d\omega dv \right|_{r=R} \\ & + \int_{v_n}^{v_{n+1}} \int_{u_{R,n}}^u \int_{\mathbb{S}^2} 2 ( \underline{L} L \Omega^k T^m \phi ) \cdot (L T^m \phi ) \cdot v^{1+\delta} \, d\omega du' dv .
\end{split}
\end{equation}
We use again equation \eqref{eq:dn} after commuting with $\Omega^k$ for $k \in \{0,1,2,3,4,5\}$ and we have that
\begin{equation*}
\begin{split}
2 L \underline{L} ( \Omega^k T^m \phi ) = & \mathcal{O} ( (r-M)^2 ) \slashed{\Delta} ( \Omega^k T^m\phi ) +  \mathcal{O} ((r-M)^3 ) (\Omega^k T^m \phi ) \\ &  +  \frac{Dr}{2} \Omega^k T^m \left[ A \cdot  g^{\alpha \beta} \cdot \partial_{\alpha} \psi \cdot \partial_{\beta} \psi \right] ,
\end{split}
\end{equation*}
and as
\begingroup
\allowdisplaybreaks
\begin{align*}
\frac{Dr}{2} \Omega^k T^m & \left[ A(u,v,\psi ) \cdot g^{\alpha \beta} \cdot \partial_{\alpha} \psi \cdot \partial_{\beta} \psi \right] =  \sum_{\substack{ k_1 + k_2 = k \\ m_1 + m_2 = m}} A \cdot \frac{D}{r^2} \cdot (L \Omega^{k_1} T^{m_1} \phi ) \cdot \left( \frac{2r}{D} \underline{L} \Omega^{k_2} T^{m_2} \phi \right)  \\ & - \sum_{\substack{ k_1 + k_2 = k \\ m_1 + m_2 = m}} A \cdot \frac{D^2}{2r^3} \cdot ( \Omega^{k_1} T^{m_1} \phi ) \cdot \left( \frac{2r}{D} \underline{L} \Omega^{k_2} T^{m_2} \phi \right)  + \sum_{\substack{ k_1 + k_2 = k \\ m_1 + m_2 = m}} A \cdot \frac{D}{r^2} \cdot ( \Omega^{k_1} T^{m_1} \phi ) \cdot ( L \Omega^{k_2} T^{m_2} \phi )  \\ & - \sum_{\substack{ k_1 + k_2 = k \\ m_1 + m_2 = m}}A \cdot \frac{D^2}{2r^3} \cdot ( \Omega^{k_1} T^{m_1} \phi ) \cdot (  \Omega^{k_2} T^{m_2} \phi )  + \sum_{\substack{ k_1 + k_2 = k \\ m_1 + m_2 = m}} A \cdot \frac{D}{r} \cdot \langle \slashed{\nabla} \Omega^{k_1} T^{m_1} \phi , \slashed{\nabla} \Omega^{k_2} T^{m_2} \phi \rangle \\ & + \sum_{\substack{ k_1 + k_2 = k \\ m_1 + m_2 = m \\ k_1 + m_1 > 0}} ( \Omega^{k_1} T^{m_1} A  ) \cdot \frac{Dr}{2} \cdot ( \Omega^{k_2} T^{m_2} F^c ) ,
\end{align*}
\endgroup
we have that
\begin{equation}\label{eq:dn_c}
\begin{split}
2 L & \underline{L} ( \Omega^k T^m \phi ) =  \mathcal{O} ( (r-M)^2 ) \slashed{\Delta} ( \Omega^k T^m \phi )  + \mathcal{O} ((r-M)^3 ) (\Omega^k T^m \phi ) \\ & +\sum_{\substack{ k_1 + k_2 = k \\ m_1 + m_2 = m}} A \cdot \frac{D}{r^2} \cdot (L \Omega^{k_1} T^{m_1} \phi ) \cdot \left( \frac{2r}{D} \underline{L} \Omega^{k_2} T^{m_2} \phi \right)  - \sum_{k_1 + k_2 = k} A\cdot \frac{D^2}{2r^3} \cdot ( \Omega^{k_1} T^{m_1} \phi ) \cdot \left( \frac{2r}{D} \underline{L} \Omega^{k_2} T^{m_2} \phi \right) \\ & + \sum_{\substack{ k_1 + k_2 = k \\ m_1 + m_2 = m}} A \cdot \frac{D}{r^2} \cdot ( \Omega^{k_1} T^{m_1} \phi ) \cdot ( L \Omega^{k_2} T^{m_2} \phi )   - \sum_{\substack{ k_1 + k_2 = k \\ m_1 + m_2 = m}} A \cdot \frac{D^2}{2r^3} \cdot ( \Omega^{k_1} T^{m_1} \phi ) \cdot (  \Omega^{k_2} T^{m_2} \phi ) \\ & + \sum_{\substack{ k_1 + k_2 = k \\ m_1 + m_2 = m}} A \cdot \frac{D}{r} \cdot \langle \slashed{\nabla} \Omega^{k_1} T^{m_1} \phi , \slashed{\nabla} \Omega^{k_2} T^{m_2} \phi \rangle  +  \sum_{\substack{ k_1 + k_2 = k \\ m_1 + m_2 = m \\ k_1 + m_1 > 0}} ( \Omega^{k_1} T^{m_1} A  ) \cdot \frac{Dr}{2} \cdot ( \Omega^{k_2} T^{m_2} F^c ) .
\end{split}
\end{equation}
Going back to equation \eqref{eq:ftc} we have that
\begingroup
\allowdisplaybreaks
\begin{align*}
 \int_{v_n}^{v_{n+1}} \int_{\mathbb{S}^2} ( L & \Omega^k T^m \phi )^2  \cdot  v^{1+\delta} \, d\omega dv \Bigg|_u  =  \int_{v_n}^{v_{n+1}} \int_{\mathbb{S}^2}  ( L \Omega^k T^m \phi )^2 \cdot v^{1+\delta} \, d\omega dv \Bigg|_{r=R} \\ & +  \int_{v_n}^{v_{n+1}} \int_{u_{R,n}}^u \int_{\mathbb{S}^2}   \mathcal{O} ( (r-M)^2 ) ( \slashed{\Delta}  \Omega^k T^m \phi ) \cdot (L \Omega^k T^m \phi ) \cdot v^{1+\delta} \, d\omega du' dv  \\ & +  \int_{v_n}^{v_{n+1}} \int_{u_{R,n}}^u \int_{\mathbb{S}^2}\Bigg|_{r\leq R}   \mathcal{O} ( (r-M)^3 ) (  \Omega^k T^m \phi ) \cdot (L \Omega^k T^m \phi ) \cdot v^{1+\delta} \, d\omega du' dv  \\ & + \sum_{\substack{ k_1 + k_2 = k \\ m_1 + m_2 = m}} \int_{v_n}^{v_{n+1}} \int_{u_{R,n}}^u \int_{\mathbb{S}^2}   \frac{A \cdot D}{r^2} \cdot (L \Omega^{k_1} T^{m_1} \phi ) \cdot \left( \frac{2r}{D} \underline{L} \Omega^{k_2} T^{m_2} \phi \right)  \cdot (L \Omega^k T^{m} \phi ) \cdot v^{1+\delta} \, d\omega du' dv \\ & - \sum_{\substack{ k_1 + k_2 = k \\ m_1 + m_2 = m}}  \int_{v_n}^{v_{n+1}} \int_{u_{R,n}}^u \int_{\mathbb{S}^2} \frac{A \cdot D^2}{2r^3} \cdot ( \Omega^{k_1} \phi ) \cdot \left( \frac{2r}{D} \underline{L} \Omega^{k_2} \phi \right)  \cdot (L \Omega^k \phi ) \cdot v^{1+\delta} \, d\omega du' dv \\ & + \sum_{\substack{ k_1 + k_2 = k \\ m_1 + m_2 = m}} \int_{v_n}^{v_{n+1}} \int_{u_{R,n}}^u \int_{\mathbb{S}^2} \frac{A \cdot D}{r^2} \cdot ( \Omega^{k_1} T^{m_1} \phi ) \cdot (L \Omega^{k_2} T^{m_2} \phi ) \cdot (L \Omega^k T^m \phi ) \cdot v^{1+\delta} \, d\omega du' dv \\ & - \sum_{\substack{ k_1 + k_2 = k \\ m_1 + m_2 = m}}  \int_{v_n}^{v_{n+1}} \int_{u_{R,n}}^u \int_{\mathbb{S}^2}  \frac{A \cdot D^2}{2r^3} \cdot ( \Omega^{k_1} T^{m_1} \phi ) \cdot (\Omega^{k_2} T^{m_2} \phi ) \cdot (L \Omega^k T^m \phi ) \cdot v^{1+\delta} \, d\omega du' dv \\ & + \sum_{\substack{ k_1 + k_2 = k \\ m_1 + m_2 = m}}  \int_{v_n}^{v_{n+1}} \int_{u_{R,n}}^u \int_{\mathbb{S}^2} \frac{A \cdot D}{r} \cdot \langle \slashed{\nabla} \Omega^{k_1} T^{m_1} \phi , \slashed{\nabla} \Omega^{k_2} T^{m_2} \phi \rangle \cdot (L \Omega^k T^m \phi ) \cdot v^{1+\delta} \, d\omega du' dv \\ & + \sum_{\substack{ k_1 + k_2 = k \\ m_1 + m_2 = m \\ k_1 + m_1 > 0}} \int_{v_n}^{v_{n+1}} \int_{u_{R,n}}^u \int_{\mathbb{S}^2} ( \Omega^{k_1} T^{m_1} A (u,v,\psi ) ) \cdot \frac{Dr}{2} \cdot ( \Omega^{k_2} T^{m_2} F^c ) \cdot ( L \Omega^k T^m \phi ) \cdot v^{1+\delta} \, d\omega du' dv \\ \doteq & l_1 + l_2 + l_3 + l_4 + l_5 + l_6 + l_7 + l_8 + l_9 .
\end{align*}
\endgroup
The first term $l_1$ can as follows:
\begingroup
\allowdisplaybreaks
\begin{align*}
l_1 = & \int_{v_n}^{v_{n+1}} \int_{\mathbb{S}^2}  ( L \Omega^k T^m \phi )^2 \cdot v^{1+\delta} \, d\omega dv \Bigg|_{r=R} \\ \lesssim & v_{n+1}^{1+\delta} \int_{v_n}^{v_{n+1}} \int_{\mathbb{S}^2}  ( L \Omega^k T^m \phi )^2 \, d\omega dv \Bigg|_{r=R} \\ \lesssim &  v_{n+1}^{1+\delta} \int_{\mathcal{A}_{v_n}^{v_{n+1}}} ( r- M)^2 ( L \Omega^k T^m \phi )^2 \, d\omega du dv \\ \lesssim & \frac{v_{n+1}^{1+\delta}}{v_n^{3-\delta_1}} \mbox{  if $m=0$, }  \lesssim  \frac{v_{n+1}^{1+\delta}}{v_n^{3+\delta_2}} \mbox{  if $m=1$, } \lesssim  \frac{v_{n+1}^{1+\delta}}{v_n^{2+\delta_2}} \mbox{  if $m=2$, } \lesssim  \frac{v_{n+1}^{1+\delta}}{v_n^{1+\delta_2}} \mbox{  if $m=3$, }
\end{align*}
\endgroup
where we used a standard averaging argument and the Morawetz decay estimates \eqref{dec:mor}, \eqref{dec:mor1}, \eqref{dec:mor2} and \eqref{dec:mor3}.

The ninth term $l_{9}$ can be considered similarly as the rest so we will not study it in detail. For the second term $l_2$ we integrate by parts on $\mathbb{S}^2$ and additionally we integrate by parts with respect to $L$ and we have that:
\begingroup
\allowdisplaybreaks
\begin{align*}
l_2 = & \int_{v_n}^{v_{n+1}} \int_{u_{R,n}}^u \int_{\mathbb{S}^2} \mathcal{O} ( (r-M)^2 ) ( \slashed{\Delta}  \Omega^k T^m \phi ) \cdot (L \Omega^k T^m \phi ) \cdot v^{1+\delta} \, d\omega du' dv \\ = & -  \int_{v_n}^{v_{n+1}} \int_{u_{R,n}}^u \int_{\mathbb{S}^2}  \mathcal{O} ( (r-M)^2 ) \langle \slashed{\nabla}  \Omega^k T^m \phi , \slashed{\nabla} L \Omega^k T^m \phi \rangle \cdot v^{1+\delta} \, d\omega du' dv \\ = & \int_{v_n}^{v_{n+1}} \int_{u_{R,n}}^u \int_{\mathbb{S}^2}  \mathcal{O} ( (r-M)^3 ) |\slashed{\nabla}  \Omega^k T^m \phi |^2 \cdot v^{1+\delta} \, d\omega du' dv \\ & + (1+\delta ) \int_{v_n}^{v_{n+1}} \int_{u_{R,n}}^u \int_{\mathbb{S}^2}  \mathcal{O} ( (r-M)^2 ) |\slashed{\nabla}  \Omega^k T^m \phi |^2 \cdot v^{\delta} \, d\omega du' dv \\ & - v_{n+1}^{1+\delta}  \int_{u_{R,n}}^u \int_{\mathbb{S}^2} \Bigg|_{v = v_{n+1} } \mathcal{O} ( (r-M)^2 ) |\slashed{\nabla}  \Omega^k T^m \phi |^2 \, d\omega du' \\ & - v_{n}^{1+\delta}  \int_{u_{R,n}}^u \int_{\mathbb{S}^2} \Bigg|_{v = v_{n} } \mathcal{O} ( (r-M)^2 ) |\slashed{\nabla}  \Omega^k T^m \phi |^2 \, d\omega du' \\ \lesssim & v_{n+1}^{1+\delta} \int_{v_n}^{v_{n+1}} \int_{u_{R,n}}^u \int_{\mathbb{S}^2} \mathcal{O} ( (r-M)^3 ) |\slashed{\nabla}  \Omega^k T^m \phi |^2 \, d\omega du' dv \\ & + v_{n+1}^{1+\delta} \sup_{v \in [ v_n , v_{n+1} ]} \int_{u_{R,n}}^u \int_{\mathbb{S}^2} \mathcal{O} ( (r-M)^2 ) |\slashed{\nabla}  \Omega^k T^m \phi |^2 \, d\omega du'
\\ \lesssim &C E_0 \epsilon^2 \frac{v_{n+1}^{1+\delta}}{v_n^{3-\delta_1}} \mbox{  if $m=0$,   }  \lesssim C E_0 \epsilon^2 \frac{v_{n+1}^{1+\delta}}{v_n^{3+\delta_2}} \mbox{  if $m=1$, } \\ \lesssim & C E_0 \epsilon^2 \frac{v_{n+1}^{1+\delta}}{v_n^{2+\delta_2}} \mbox{  if $m=2$,   }   \lesssim  C E_0 \epsilon^2 \frac{v_{n+1}^{1+\delta}}{v_n^{1+\delta_2}} \mbox{  if $m=3$, } 
\end{align*}
\endgroup
where we used decay provided by Morawetz decay estimates \eqref{dec:mor}, \eqref{dec:mor1}, \eqref{dec:mor2} and \eqref{dec:mor3}, and the decay of the $T$-fluxes \eqref{dec:ent}, \eqref{dec:entt} and \eqref{dec:enttt}. For the third term $l_3$ we have that
\begingroup
\allowdisplaybreaks
\begin{align*}
l_3 = & \int_{v_n}^{v_{n+1}} \int_{u_{R,n}}^u \int_{\mathbb{S}^2}  \mathcal{O} ( (r-M)^3 ) (  \Omega^k T^m \phi ) \cdot (L \Omega^k T^m \phi ) \cdot v^{1+\delta} \, d\omega du' dv \\ \lesssim & v_{n+1}^{1+\delta} \cdot \left( \int_{v_n}^{v_{n+1}} \int_{u_{R,n}}^u \int_{\mathbb{S}^2}  (r-M)^{3+\beta} (  \Omega^k T^m \phi )^2 \, d\omega du' dv \right)^{1/2} \\ & \times \left( \int_{v_n}^{v_{n+1}} \int_{u_{R,n}}^u \int_{\mathbb{S}^2} (r-M)^{3-\beta} (  L \Omega^k T^m \phi )^2 \, d\omega du' dv \right)^{1/2} \\ \lesssim & v_{n+1}^{1+\delta} \cdot \left( \int_{v_n}^{v_{n+1}} \int_{u_{R,n}}^u \int_{\mathbb{S}^2} (r-M)^{1+\beta} (  \underline{L} \Omega^k T^m \phi )^2 \, d\omega du' dv \right)^{1/2} \\ & \times \left( \int_{v_n}^{v_{n+1}} \int_{u_{R,n}}^u \int_{\mathbb{S}^2}  (r-M)^{3-\beta} (  L \Omega^k T^m \phi )^2 \, d\omega du' dv \right)^{1/2}\\ \lesssim & v_{n+1}^{1+\delta} \cdot \frac{\sqrt{C} \sqrt{E_0} \epsilon}{v_n^{3/2-\delta_1 /2}} \cdot \frac{\sqrt{C} \sqrt{E_0} \epsilon}{v_n^{3/2-\delta_1 /2}}  \lesssim C E_0 \epsilon^2 \cdot \frac{v_{n+1}^{1+\delta}}{v_n^{3-\delta_1}} \mbox{  if $m=0$, } \\ \lesssim & v_{n+1}^{1+\delta} \cdot \frac{\sqrt{C} \sqrt{E_0} \epsilon}{v_n^{3/2+\delta_2 /2}} \cdot \frac{\sqrt{C} \sqrt{E_0} \epsilon}{v_n^{3/2+\delta_2 /2}}  \lesssim C E_0 \epsilon^2 \cdot \frac{v_{n+1}^{1+\delta}}{v_n^{3+\delta_2}} \mbox{  if $m=1$, } \\ \lesssim & v_{n+1}^{1+\delta} \cdot \frac{\sqrt{C} \sqrt{E_0} \epsilon}{v_n^{1+\delta_2 /2}} \cdot \frac{\sqrt{C} \sqrt{E_0} \epsilon}{v_n^{1+\delta_2 /2}}  \lesssim C E_0 \epsilon^2 \cdot \frac{v_{n+1}^{1+\delta}}{v_n^{2+\delta_2}} \mbox{  if $m=2$, } \\ \lesssim & v_{n+1}^{1+\delta} \cdot \frac{\sqrt{C} \sqrt{E_0} \epsilon}{v_n^{1/2+\delta_2 /2}} \cdot \frac{\sqrt{C} \sqrt{E_0} \epsilon}{v_n^{1/2+\delta_2 /2}}  \lesssim  C E_0 \epsilon^2 \frac{v_{n+1}^{1+\delta}}{v_n^{1+\delta_2}} \mbox{  if $m=3$, } 
\end{align*}
\endgroup
where we used Cauchy-Schwarz, Hardy's inequality \eqref{hardy}, and the Morawetz decay estimates \eqref{dec:mor}, \eqref{dec:mor1}, \eqref{dec:mor2} and \eqref{dec:mor3}. For the fourth term we look at the case $m=3$ (since the cases $m=1$ and $m=2$ are either easier or similar) and we have that
\begingroup
\allowdisplaybreaks
\begin{align*}
l_4 & =  \sum_{\substack{ k_1 + k_2 = k \\ m_1 + m_2 = 3 }}  \int_{v_n}^{v_{n+1}} \int_{u_{R,n}}^u \int_{\mathbb{S}^2}  \frac{A \cdot D}{r^2} \cdot (L \Omega^{k_1} T^{m_1} \phi ) \cdot \left( \frac{2r}{D} \underline{L} \Omega^{k_2} T^{m_2} \phi \right)  \cdot (L \Omega^k T^3 \phi ) \cdot v^{1+\delta} \, d\omega du' dv \\ = & \sum_{\substack{ k_1 + k_2 = k  }}  \int_{v_n}^{v_{n+1}} \int_{u_{R,n}}^u \int_{\mathbb{S}^2}   \frac{A \cdot D}{r^2} \cdot (L \Omega^{k_1}  \phi ) \cdot \left( \frac{2r}{D} \underline{L} \Omega^{k_2} T^3 \phi \right)  \cdot (L \Omega^k T^3 \phi ) \cdot v^{1+\delta} \, d\omega du' dv \\ & + \sum_{\substack{ k_1 + k_2 = k}}  \int_{v_n}^{v_{n+1}} \int_{u_{R,n}}^u \int_{\mathbb{S}^2}   \frac{A \cdot D}{r^2} \cdot (L \Omega^{k_1} T \phi ) \cdot \left( \frac{2r}{D} \underline{L} \Omega^{k_2} T^2 \phi \right)  \cdot (L \Omega^k T^3 \phi ) \cdot v^{1+\delta} \, d\omega du' dv \\ & + \sum_{\substack{ k_1 + k_2 = k}}  \int_{v_n}^{v_{n+1}} \int_{u_{R,n}}^u \int_{\mathbb{S}^2}   \frac{A \cdot D}{r^2} \cdot (L \Omega^{k_1} T^2 \phi ) \cdot \left( \frac{2r}{D} \underline{L} \Omega^{k_2} T \phi \right)  \cdot (L \Omega^k T^3 \phi ) \cdot v^{1+\delta} \, d\omega du' dv \\ & + \sum_{\substack{ k_1 + k_2 = k}}  \int_{v_n}^{v_{n+1}} \int_{u_{R,n}}^u \int_{\mathbb{S}^2}  \frac{A \cdot D}{r^2} \cdot (L \Omega^{k_1} T^3 \phi ) \cdot \left( \frac{2r}{D} \underline{L} \Omega^{k_2} \phi \right)  \cdot (L \Omega^k T^3 \phi ) \cdot v^{1+\delta} \, d\omega du' dv  \\ \lesssim &  v_{n+1}^{1+\delta} \sum_{k_1 + k_2 = k} \left( \int_{v_n}^{v_{n+1}} \int_{u_{R,n}}^u \int_{\mathbb{S}^2}  \frac{1}{(r-M)^{1+\delta_2}} (L \Omega^{k_1} \phi )^2  (\underline{L} \Omega^{k_2} T^3 \phi )^2 \, d\omega du' dv \right)^{1/2} \\ & \times \left( \int_{v_n}^{v_{n+1}} \int_{u_{R,n}}^u \int_{\mathbb{S}^2}  (r-M)^{1+\delta_2} (L \Omega^{k} T^3 \phi )^2 \, d\omega du' dv \right)^{1/2} \\ & + v_{n+1}^{1+\delta}\sum_{k_1 + k_2 = k} \left( \int_{v_n}^{v_{n+1}} \int_{u_{R,n}}^u \int_{\mathbb{S}^2}  \frac{1}{(r-M)^{1+\delta_2}} (\underline{L} \Omega^{l_1} T^2 \phi )^2 \, d\omega du' dv \right)^{1/2} \\ & \times \left( \int_{v_n}^{v_{n+1}} \int_{u_{R,n}}^u \int_{\mathbb{S}^2} (r-M)^{1+\delta_2} (L \Omega^{l_2} T^3 \phi )^2 \, d\omega du' dv \right)^{1/2} \\ & + \sqrt{C} \sqrt{E_0} \epsilon v_{n+1}^{1+\delta} \sum_{\substack{ l_1 \leq 5 \\ l_2 \leq 5 }} \left( \int_{v_n}^{v_{n+1}} \int_{u_{R,n}}^u \int_{\mathbb{S}^2}  \frac{1}{(r-M)^{2}} (L \Omega^{l_1} T^2 \phi )^2 \, d\omega du' dv \right)^{1/2} \\ & \times \left( \int_{v_n}^{v_{n+1}} \int_{u_{R,n}}^u \int_{\mathbb{S}^2}  (r-M)^{2} (L \Omega^{l_2} T^3 \phi )^2 \, d\omega du' dv \right)^{1/2}\\ & + \sqrt{C} \sqrt{E_0} \epsilon v_{n+1}^{1+\delta} \sum_{\substack{ l_1 \leq 5 \\ l_2 \leq 5 }} \left( \int_{v_n}^{v_{n+1}} \int_{u_{R,n}}^u \int_{\mathbb{S}^2} \frac{1}{(r-M)^{2}} (L \Omega^{l_1} T^3 \phi )^2 \, d\omega du' dv \right)^{1/2} \\ & \times \left( \int_{v_n}^{v_{n+1}} \int_{u_{R,n}}^u \int_{\mathbb{S}^2} (r-M)^{2} (L \Omega^{l_2} T^3 \phi )^2 \, d\omega du' dv \right)^{1/2} \\ \lesssim & v_{n+1}^{1+\delta} \sum_{k_1 + k_2 = k} \left( \int_{v_n}^{v_{n+1}}  \int_{\mathbb{S}^2} \sup_u (L \Omega^{k_1} \phi )^2 \cdot v^{1+\delta} d\omega dv \cdot \sup_{v \in [v_n , v_{n+1} ]} \int_{u_{R,n}}^u \int_{\mathbb{S}^2} \frac{1}{(r-M)^{1+\delta}}  (\underline{L} \Omega^{k_2} T^3 \phi )^2 \cdot \frac{1}{v^{1+\delta}} \, d\omega du' \right)^{1/2} \\ & \times \left( \int_{v_n}^{v_{n+1}} \int_{u_{R,n}}^u \int_{\mathbb{S}^2} (r-M)^{1+\delta_2} (L \Omega^{k} T^3 \phi )^2 \, d\omega du' dv \right)^{1/2} \\ & + v_{n+1}^{1+\delta} \sum_{k_1 + k_2 = k} \left( \int_{v_n}^{v_{n+1}}  \int_{\mathbb{S}^2} \sup_u (L \Omega^{k_1} T \phi )^2 \cdot v^{1+\delta} \, d\omega dv \cdot \sup_{v \in [v_n , v_{n+1} ]} \int_{u_{R,n}}^u \int_{\mathbb{S}^2} \frac{1}{(r-M)^{1+\delta}}  (\underline{L} \Omega^{k_2} T^2 \phi )^2 \cdot \frac{1}{v^{1+\delta}} \, d\omega du' \right)^{1/2} \\ & \times \left( \int_{v_n}^{v_{n+1}} \int_{u_{R,n}}^u \int_{\mathbb{S}^2} (r-M)^{1+\delta_2} (L \Omega^{k} T^3 \phi )^2 \, d\omega du' dv \right)^{1/2}  \\ & + \sqrt{C} \sqrt{E_0} \epsilon v_{n+1}^{1+\delta} \sum_{\substack{ l_1 \leq 5 \\ l_2 \leq 5 }} \left( \int_{v_n}^{v_{n+1}} \int_{u_{R,n}}^u \int_{\mathbb{S}^2} (r-M)^{2} (L \Omega^{l_1} T^2 \phi )^2  \, d\omega du' dv \right)^{1/2} \\ & \times \left( \int_{v_n}^{v_{n+1}} \int_{u_{R,n}}^u \int_{\mathbb{S}^2} (r-M)^{2} (L \Omega^{l_2} T^3 \phi )^2 \, d\omega du' dv \right)^{1/2}\\ & + \sqrt{C} \sqrt{E_0} \epsilon v_{n+1}^{1+\delta} \sum_{\substack{ l_1 \leq 5 \\ l_2 \leq 5 }} \left( \int_{v_n}^{v_{n+1}} \int_{u_{R,n}}^u \int_{\mathbb{S}^2} (r-M)^{2} (L \Omega^{l_1} T^3 \phi )^2 \, d\omega du' dv \right)^{1/2} \\ & \times \left( \int_{v_n}^{v_{n+1}} \int_{u_{R,n}}^u \int_{\mathbb{S}^2} (r-M)^{2} (L \Omega^{l_2} T^3 \phi )^2 \, d\omega du' dv \right)^{1/2} \\ \lesssim & C^{3/2} E_0^{3/2} \epsilon^3 \frac{v_{n+1}^{1+\delta}}{v_n^{1/2+\delta / 2}} \frac{1}{v_n^{\delta_2/2 - \delta / 2}} \frac{1}{v_n^{1/2+\delta_2 / 2}} + C^{3/2} E_0^{3/2} \epsilon^3 \frac{v_{n+1}^{1+\delta}}{v_n^{1/2+\delta /2}} \frac{1}{v_n^{1/2+\delta_2 / 2 - \delta /2}} \frac{1}{v_n^{1/2+\delta_2 / 2}} \\ & +  C^{3/2} E_0^{3/2} \epsilon^3 \frac{v_{n+1}^{1 +\delta}}{v_n ^{3/2+\delta }}  + C^{3/2} E_0^{3/2} \epsilon^3 \frac{v_{n+1}^{1+\delta}}{v_n ^{1+\delta_2}} \lesssim C^{3/2} E_0^{3/2} \epsilon^3 \frac{v_{n+1}^{1+\delta}}{v_n^{1+\delta_2}} ,
\end{align*}
\endgroup
where we used Sobolev's inequality \eqref{est:sobolev} in all three terms. For the first term in the above expression we used estimate \eqref{est:aux1} for $m=0$ (in the context of the bootstrap argument), the decay estimates \eqref{dec:entttp}, and the Morawetz decay estimate \eqref{dec:mor3}. For the second term in the above expression we used that $LT \phi$ decays like $T^2 \phi$ (hence we use the decay from \eqref{dec:ttpsi}), the boundedness of the $(r-M)^{-p}$-weighted estimate for $T^2$ for $p=2+\delta_2$ (which follows after applying Hardy's inequality \eqref{hardy} and using the boundedness of the corresponding commuted estimate for $p=\delta_2$), and the Morawetz decay estimate \eqref{dec:mor3}. For the third term in the above expression we used the boundedness estimate \eqref{est:boundyt} and the Morawetz decay estimates \eqref{dec:mor2} and \eqref{dec:mor3}. For the fourth term in the above expression we used the boundedness estimate \eqref{est:boundy} and the Morawetz decay estimate \eqref{dec:mor3}. We also note by using the same method we get the following bounds in the $m=0$, $m=1$ and $m=2$ cases:
\begin{equation*}
\mbox{$m=0$:  } \lesssim C^{3/2} E_0^{3/2} \epsilon^3 \frac{v_{n+1}^{1+\delta}}{v_n^{3-\delta_1}} , \mbox{   $m=1$:  } \lesssim C^{3/2} E_0^{3/2} \epsilon^3 \frac{v_{n+1}^{1+\delta}}{v_n^{3+\delta_2}} ,  \mbox{   $m=2$:  } \lesssim C^{3/2} E_0^{3/2} \epsilon^3 \frac{v_{n+1}^{1+\delta}}{v_n^{2+\delta_2}} .
\end{equation*}

For the fifth term of the original expression we look again at the case of $m=3$ (as the cases $m=0$, $m=1$ and $m=2$ are easier or similar) and we have that
\begingroup
\allowdisplaybreaks
\begin{align*}
l_5 & = -\sum_{\substack{ k_1 + k_2 = k \\ m_1 + m_2 = 3 } }  \int_{v_n}^{v_{n+1}} \int_{u_{R,n}}^u \int_{\mathbb{S}^2}    \frac{A \cdot D^2}{2r^3} \cdot ( \Omega^{k_1} T^{m_1} \phi ) \cdot \left( \frac{2r}{D} \underline{L} \Omega^{k_2} T^{m_2} \phi \right)  \cdot (L \Omega^k T^3 \phi ) \cdot v^{1+\delta} \, d\omega du' dv \\ = & - \sum_{\substack{ k_1 + k_2 = k  } }  \int_{v_n}^{v_{n+1}} \int_{u_{R,n}}^u \int_{\mathbb{S}^2}  \frac{A \cdot D^2}{2r^3} \cdot ( \Omega^{k_1}  \phi ) \cdot \left( \frac{2r}{D} \underline{L} \Omega^{k_2} T^3 \phi \right)  \cdot (L \Omega^k T^3 \phi ) \cdot v^{1+\delta} \, d\omega du' dv \\ & - \sum_{\substack{ k_1 + k_2 = k  } }  \int_{v_n}^{v_{n+1}} \int_{u_{R,n}}^u \int_{\mathbb{S}^2}    \frac{A \cdot D^2}{2r^3} \cdot ( \Omega^{k_1} T \phi ) \cdot \left( \frac{2r}{D} \underline{L} \Omega^{k_2} T^2 \phi \right)  \cdot (L \Omega^k T^3 \phi ) \cdot v^{1+\delta} \, d\omega du' dv \\ & - \sum_{\substack{ k_1 + k_2 = k  } }  \int_{v_n}^{v_{n+1}} \int_{u_{R,n}}^u \int_{\mathbb{S}^2} \frac{A \cdot D^2}{2r^3} \cdot ( \Omega^{k_1} T^2 \phi ) \cdot \left( \frac{2r}{D} \underline{L} \Omega^{k_2} T \phi \right)  \cdot (L \Omega^k T^3 \phi ) \cdot v^{1+\delta} \, d\omega du' dv \\ & - \sum_{\substack{ k_1 + k_2 = k  } }  \int_{v_n}^{v_{n+1}} \int_{u_{R,n}}^u \int_{\mathbb{S}^2}  \frac{A \cdot D^2}{2r^3} \cdot ( \Omega^{k_1} T^3 \phi ) \cdot \left( \frac{2r}{D} \underline{L} \Omega^{k_2}  \phi \right)  \cdot (L \Omega^k T^3 \phi ) \cdot v^{1+\delta} \, d\omega du' dv \\ \lesssim & \sqrt{C} \sqrt{E_0} \epsilon \frac{v_{n+1}^{1+\delta}}{v_n^{1-\delta_1 / 2}} \sum_{\substack{ l_1 \leq 5 \\ l_2 \leq 5 }} \left( \int_{v_n}^{v_{n+1}} \int_{u_{R,n}}^u \int_{\mathbb{S}^2}(r-M)^2 (\underline{L} \Omega^{l_1} T^3 \phi )^2 \, d\omega du' dv \right)^{1/2} \\ & \times \left( \int_{v_n}^{v_{n+1}} \int_{u_{R,n}}^u \int_{\mathbb{S}^2}  (r-M)^2 (L \Omega^{l_2} T^3 \phi )^2 \, d\omega du' dv \right)^{1/2} \\ & + \sqrt{C} \sqrt{E_0} \epsilon \frac{v_{n+1}^{1+\delta}}{v_n^{1+\delta_2}} \sum_{\substack{ l_1 \leq 5 \\ l_2 \leq 5 }} \left( \int_{v_n}^{v_{n+1}} \int_{u_{R,n}}^u \int_{\mathbb{S}^2}(r-M)^2 (\underline{L} \Omega^{l_1} T^2 \phi )^2 \, d\omega du' dv \right)^{1/2} \\ & \times \left( \int_{v_n}^{v_{n+1}} \int_{u_{R,n}}^u \int_{\mathbb{S}^2} (r-M)^2 (L \Omega^{l_2} T^3 \phi )^2 \, d\omega du' dv \right)^{1/2} \\ & + \sqrt{C} \sqrt{E_0} \epsilon \frac{v_{n+1}^{1+\delta}}{v_n^{1/2+\delta_2 / 2}} \sum_{\substack{ l_1 \leq 5 \\ l_2 \leq 5 }} \left( \int_{v_n}^{v_{n+1}} \int_{u_{R,n}}^u \int_{\mathbb{S}^2} (r-M)^2 (\underline{L} \Omega^{l_1} T \phi )^2 \, d\omega du' dv \right)^{1/2} \\ & \times \left( \int_{v_n}^{v_{n+1}} \int_{u_{R,n}}^u \int_{\mathbb{S}^2} (r-M)^2 (L \Omega^{l_2} T^3 \phi )^2 \, d\omega du' dv  \right)^{1/2} \\ & + \sqrt{C} \sqrt{E_0} \epsilon v_{n+1}^{1+\delta}\sum_{\substack{ l_1 \leq 5 \\ l_2 \leq 5 }} \left( \int_{v_n}^{v_{n+1}} \int_{u_{R,n}}^u \int_{\mathbb{S}^2} (r-M)^2 (\underline{L} \Omega^{l_1}  \phi )^2 \, d\omega du' dv \right)^{1/2} \\ & \times \left( \int_{v_n}^{v_{n+1}} \int_{u_{R,n}}^u \int_{\mathbb{S}^2}  (r-M)^2 (L \Omega^{l_2} T^3 \phi )^2 \, d\omega du' dv \right)^{1/2} \\ \lesssim & C^{3/2} E_0^{3/2} \epsilon^3 \frac{v_{n+1}^{1+\delta}}{v_n^{2+\delta_2 - \delta_1 / 2}} + C^{3/2} E_0^{3/2} \epsilon^3 \frac{v_{n+1}^{1+\delta}}{v_n^{5/2+ 3\delta_2 / 2 }} + C^{3/2} E_0^{3/2} \epsilon^3 \frac{v_{n+1}^{1+\delta}}{v_n^{9/4+\delta_2 - \delta_1 / 2}} \\ \lesssim & C^{3/2} E_0^{3/2} \epsilon^3 \frac{v_{n+1}^{1+\delta}}{v_n^{2+\delta_2 - \delta_1 / 2}} ,
\end{align*}
\endgroup
 where we used Sobolev's inequality \eqref{est:sobolev}, Cauchy-Schwarz, the decay of the terms $T\phi$, $T^2 \phi$ and $T^3 \phi$ (given by the pointwise decay estimates \eqref{dec:tpsi}, \eqref{dec:ttpsi}, \eqref{dec:tttpsi}), and the Morawetz decay estimates \eqref{dec:mor}, \eqref{dec:mor1}, \eqref{dec:mor2} and \eqref{dec:mor3}. For the $m=0$, $m=1$ and $m=2$ cases we have that:
\begin{equation*}
\mbox{$m=0$:  } \lesssim C^{3/2} E_0^{3/2} \epsilon^3 \frac{v_{n+1}^{1+\delta}}{v_n^{4-3\delta_1 / 2}} , \mbox{   $m=1$:  } \lesssim C^{3/2} E_0^{3/2} \epsilon^3 \frac{v_{n+1}^{1+\delta}}{v_n^{4+\delta_2-\delta_1 / 2}}  \mbox{   $m=2$:  } \lesssim C^{3/2} E_0^{3/2} \epsilon^3 \frac{v_{n+1}^{1+\delta}}{v_n^{3+\delta_2 - \delta_1 / 2}} .
\end{equation*}
 
For the sixth term  we look once again at the case $m=3$ (since the cases $m=0$, $m=1$ and $m=2$ are similar or easier) and we have that
 \begingroup
\allowdisplaybreaks
\begin{align*}
 l_6 & =  \sum_{\substack{k_1 + k_2 = k \\ m_1 + m_2 = 3}}  \int_{v_n}^{v_{n+1}} \int_{u_{R,n}}^u \int_{\mathbb{S}^2} \frac{A \cdot D}{r^2} \cdot ( \Omega^{k_1} T^{m_1} \phi ) \cdot ( L \Omega^{k_2} T^{m_2} \phi )  \cdot (L \Omega^k T^3 \phi ) \cdot v^{1+\delta} \, d\omega du' dv \\ = &  \sum_{\substack{k_1 + k_2 = k}}  \int_{v_n}^{v_{n+1}} \int_{u_{R,n}}^u \int_{\mathbb{S}^2}  \frac{A \cdot D}{r^2} \cdot ( \Omega^{k_1}  \phi ) \cdot ( L \Omega^{k_2} T^3 \phi )  \cdot (L \Omega^k T^3 \phi ) \cdot v^{1+\delta} \, d\omega du' dv \\ & + \sum_{\substack{k_1 + k_2 = k}}  \int_{v_n}^{v_{n+1}} \int_{u_{R,n}}^u \int_{\mathbb{S}^2} \frac{A \cdot D}{r^2} \cdot ( \Omega^{k_1} T \phi ) \cdot ( L \Omega^{k_2} T^2 \phi )  \cdot (L \Omega^k T^3 \phi ) \cdot v^{1+\delta} \, d\omega du' dv \\ & + \sum_{\substack{k_1 + k_2 = k}}  \int_{v_n}^{v_{n+1}} \int_{u_{R,n}}^u \int_{\mathbb{S}^2}  \frac{A \cdot D}{r^2} \cdot ( \Omega^{k_1} T^2 \phi ) \cdot ( L \Omega^{k_2} T \phi )  \cdot (L \Omega^k T^3 \phi ) \cdot v^{1+\delta} \, d\omega du' dv \\ & + \sum_{\substack{k_1 + k_2 = k}}  \int_{v_n}^{v_{n+1}} \int_{u_{R,n}}^u \int_{\mathbb{S}^2} \frac{A \cdot D}{r^2} \cdot ( \Omega^{k_1} T^3 \phi ) \cdot ( L \Omega^{k_2} \phi )  \cdot (L \Omega^k T^3 \phi ) \cdot v^{1+\delta} \, d\omega du' dv \\ \lesssim & \sqrt{C} \sqrt{E_0} \epsilon \frac{v_{n+1}^{1+\delta}}{v_n^{1-\delta_1 / 2}} \sum_{\substack{ l_1 \leq 5 \\ l_2 \leq 5 }} \left( \int_{v_n}^{v_{n+1}} \int_{u_{R,n}}^u \int_{\mathbb{S}^2} (r-M)^2 (L \Omega^{l_1} T^3 \phi )^2 \, d\omega du' dv \right)^{1/2} \\ & \times \left( \int_{v_n}^{v_{n+1}} \int_{u_{R,n}}^u \int_{\mathbb{S}^2} (r-M)^2 (L \Omega^{l_2} T^3 \phi )^2 \, d\omega du' dv \right)^{1/2} \\ & + \sqrt{C} \sqrt{E_0} \epsilon \frac{v_{n+1}^{1+\delta}}{v_n^{1+\delta_2 / 2}} \sum_{\substack{ l_1 \leq 5 \\ l_2 \leq 5 }} \left( \int_{v_n}^{v_{n+1}} \int_{u_{R,n}}^u \int_{\mathbb{S}^2} (r-M)^2 (L \Omega^{l_1} T^2 \phi )^2 \, d\omega du' dv \right)^{1/2} \\ & \times \left( \int_{v_n}^{v_{n+1}} \int_{u_{R,n}}^u \int_{\mathbb{S}^2} (r-M)^2 (L \Omega^{l_2} T^3 \phi )^2 \, d\omega du' dv \right)^{1/2} \\ & + \sqrt{C} \sqrt{E_0} \epsilon \frac{v_{n+1}^{1+\delta}}{v_n^{1/2+\delta_2 / 2}} \sum_{\substack{ l_1 \leq 5 \\ l_2 \leq 5 }} \left( \int_{v_n}^{v_{n+1}} \int_{u_{R,n}}^u \int_{\mathbb{S}^2} (r-M)^2 (L \Omega^{l_1} T \phi )^2 \, d\omega du' dv \right)^{1/2} \\ & \times \left( \int_{v_n}^{v_{n+1}} \int_{u_{R,n}}^u \int_{\mathbb{S}^2}  (r-M)^2 (L \Omega^{l_2} T^3 \phi )^2 \, d\omega du' dv \right)^{1/2} \\ & + \sqrt{C} \sqrt{E_0} \epsilon \frac{v_{n+1}^{1+\delta}}{v_n^{1/4+\delta_2 / 2}} \sum_{\substack{ l_1 \leq 5 \\ l_2 \leq 5 }} \left( \int_{v_n}^{v_{n+1}} \int_{u_{R,n}}^u \int_{\mathbb{S}^2} (r-M)^2 (L \Omega^{l_1}  \phi )^2 \, d\omega du' dv \right)^{1/2} \\ & \times \left( \int_{v_n}^{v_{n+1}} \int_{u_{R,n}}^u \int_{\mathbb{S}^2} (r-M)^2 (L \Omega^{l_2} T^3 \phi )^2 \, d\omega du' dv \right)^{1/2} \\ \lesssim & C^{3/2} E_0^{3/2} \epsilon^3 \frac{v_{n+1}^{1+\delta}}{v_n^{2+\delta_2 - \delta_1 / 2}} + C^{3/2} E_0^{3/2} \epsilon^3 \frac{v_{n+1}^{1+\delta}}{v_n^{5/2+ 3\delta_2 / 2 }} + C^{3/2} E_0^{3/2} \epsilon^3 \frac{v_{n+1}^{1+\delta}}{v_n^{9/4+\delta_2 - \delta_1 / 2}} ,
\end{align*}
\endgroup
 where again we used Sobolev's inequality \eqref{est:sobolev}, Cauchy-Schwarz, the pointwise decay estimates \eqref{dec:psi}, \eqref{dec:tpsi}, \eqref{dec:ttpsi}, \eqref{dec:tttpsi}, and the Morawetz decay estimates \eqref{dec:mor}, \eqref{dec:mor1}, \eqref{dec:mor2}, \eqref{dec:mor3}. For the $m=0$, $m=1$ and $m=2$ cases we have that:
\begin{equation*}
\mbox{$m=0$:  } \lesssim C^{3/2} E_0^{3/2} \epsilon^3 \frac{v_{n+1}^{1+\delta}}{v_n^{4-3\delta_1 / 2}} , \mbox{   $m=1$:  } \lesssim C^{3/2} E_0^{3/2} \epsilon^3 \frac{v_{n+1}^{1+\delta}}{v_n^{4+\delta_2-\delta_1 / 2}},  \mbox{   $m=2$:  } \lesssim C^{3/2} E_0^{3/2} \epsilon^3 \frac{v_{n+1}^{1+\delta}}{v_n^{3+\delta_2 - \delta_1 / 2}} .
\end{equation*}
 
For the seventh term we look once again at the case of $m=3$ (since the cases $m=1$ and $m=2$ are easier or similar) and we have that 
 \begingroup
\allowdisplaybreaks
\begin{align*}
 l_7 & = - \sum_{\substack{k_1 + k_2 = k \\ m_1 + m_2 =3 } }  \int_{v_n}^{v_{n+1}} \int_{u_{R,n}}^u \int_{\mathbb{S}^2} \frac{A \cdot D^2}{2r^3} \cdot ( \Omega^{k_1} \phi ) \cdot (  \Omega^{k_2} \phi )  \cdot (L \Omega^k T^3 \phi ) \cdot v^{1+\delta} \, d\omega du' dv \\ = &  - 2 \sum_{\substack{k_1 + k_2 = k  } }  \int_{v_n}^{v_{n+1}} \int_{u_{R,n}}^u \int_{\mathbb{S}^2}  \frac{A \cdot D^2}{2r^3} \cdot ( \Omega^{k_1} \phi ) \cdot (  \Omega^{k_2} T^3 \phi )  \cdot (L \Omega^k T^3 \phi ) \cdot v^{1+\delta} \, d\omega du' dv \\ & - 2\sum_{\substack{k_1 + k_2 = k  } }  \int_{v_n}^{v_{n+1}} \int_{u_{R,n}}^u \int_{\mathbb{S}^2}  \frac{A \cdot D^2}{2r^3} \cdot ( \Omega^{k_1} T \phi ) \cdot (  \Omega^{k_2} T^2 \phi )  \cdot (L \Omega^k T^3 \phi ) \cdot v^{1+\delta} \, d\omega du' dv \\ \lesssim & \sqrt{C} \sqrt{E_0} \epsilon \frac{v_{n+1}^{1+\delta}}{v_n^{1-\delta_1 / 2}} \sum_{\substack{ l_1 \leq 5 \\ l_2 \leq 5 }} \left( \int_{v_n}^{v_{n+1}} \int_{u_{R,n}}^u \int_{\mathbb{S}^2}  (r-M)^{3-\delta} ( \Omega^{l_1} T^3 \phi )^2 \, d\omega du' dv \right)^{1/2} \\ & \times \left( \int_{v_n}^{v_{n+1}} \int_{u_{R,n}}^u \int_{\mathbb{S}^2} (r-M)^{1+\delta} (L \Omega^{l_2} T^3 \phi )^2 \, d\omega du' dv \right)^{1/2} \\ & + \sqrt{C} \sqrt{E_0} \epsilon \frac{v_{n+1}^{1+\delta}}{v_n^{1+\delta_2 / 2}} \sum_{\substack{ l_1 \leq 5 \\ l_2 \leq 5 }} \left( \int_{v_n}^{v_{n+1}} \int_{u_{R,n}}^u \int_{\mathbb{S}^2} (r-M)^{3-\delta} ( \Omega^{l_1} T^2 \phi )^2 \, d\omega du' dv \right)^{1/2} \\ & \times \left( \int_{v_n}^{v_{n+1}} \int_{u_{R,n}}^u \int_{\mathbb{S}^2} (r-M)^{1+\delta} (L \Omega^{l_2} T^3 \phi )^2 \, d\omega du' dv \right)^{1/2} \\ \lesssim & \sqrt{C} \sqrt{E_0} \epsilon \frac{v_{n+1}^{1+\delta}}{v_n^{1-\delta_1 / 2}} \sum_{\substack{ l_1 \leq 5 \\ l_2 \leq 5 }} \left( \int_{v_n}^{v_{n+1}} \int_{u_{R,n}}^u \int_{\mathbb{S}^2}  (r-M)^{1-\delta} ( \underline{L} \Omega^{l_1} T^3 \phi )^2 \, d\omega du' dv \right)^{1/2} \\ & \times \left( \int_{v_n}^{v_{n+1}} \int_{u_{R,n}}^u \int_{\mathbb{S}^2}  (r-M)^{1+\delta} (L \Omega^{l_2} T^3 \phi )^2 \, d\omega du' dv \right)^{1/2} \\ & + \sqrt{C} \sqrt{E_0} \epsilon \frac{v_{n+1}^{1+\delta}}{v_n^{1+\delta_2 / 2}} \sum_{\substack{ l_1 \leq 5 \\ l_2 \leq 5 }} \left( \int_{v_n}^{v_{n+1}} \int_{u_{R,n}}^u \int_{\mathbb{S}^2} (r-M)^{1-\delta} ( \underline{L} \Omega^{l_1} T^2 \phi )^2 \, d\omega du' dv \right)^{1/2} \\ & \times \left( \int_{v_n}^{v_{n+1}} \int_{u_{R,n}}^u \int_{\mathbb{S}^2} (r-M)^{1+\delta} (L \Omega^{l_2} T^3 \phi )^2 \, d\omega du' dv  \right)^{1/2} \\ \lesssim & C^{3/2} E_0^{3/2} \epsilon^3 \frac{v_{n+1}^{1+\delta}}{v_n^{2+\delta_2 - \delta_1 /2 -\delta / 2}} + C^{3/2} E_0^{3/2} \epsilon^3 \frac{v_{n+1}^{1+\delta}}{v_n^{5/2+\delta_2 - \delta_1 /2 -\delta / 2}} \lesssim C^{3/2} E_0^{3/2} \epsilon^3 \frac{v_{n+1}^{1+\delta}}{v_n^{2+\delta_2 - \delta_1 /2 -\delta / 2}}  ,
\end{align*}
\endgroup
where we used Sobolev's inequality \eqref{est:sobolev}, Hardy's inequality \eqref{hardy}, the Morawetz decay estimate \eqref{dec:mor3}, and the energy decay estimates \eqref{dec:enttt} and \eqref{dec:entt} for $p = \delta$. For the $m=0$, $m=1$ and $m=2$ cases we have that:
\begin{equation*}
\mbox{$m=0$:  } \lesssim C^{3/2} E_0^{3/2} \epsilon^3 \frac{v_{n+1}^{1+\delta}}{v_n^{4-3\delta_1 / 2 - \delta / 2}} , \mbox{   $m=1$:  } \lesssim C^{3/2} E_0^{3/2} \epsilon^3 \frac{v_{n+1}^{1+\delta}}{v_n^{4+\delta_2-\delta_1 / 2 - \delta/2}},  \mbox{   $m=2$:  } \lesssim C^{3/2} E_0^{3/2} \epsilon^3 \frac{v_{n+1}^{1+\delta}}{v_n^{3+\delta_2 - \delta_1 / 2 - \delta / 2}} .
\end{equation*}

For the eighth term we consider once again the case $m=3$ (as the cases $m=0$, $m=1$ and $m=2$ are easier or similar to it) we have that
 \begingroup
\allowdisplaybreaks
\begin{align*}
 l_8 & = \sum_{\substack{ k_1 + k_2 = k \\ m_1 + m_2 = 3 }}  \int_{v_n}^{v_{n+1}} \int_{u_{R,n}}^u \int_{\mathbb{S}^2}  \frac{A \cdot D}{r} \cdot \langle \slashed{\nabla} \Omega^{k_1} T^{m_1} \phi , \slashed{\nabla} \Omega^{k_2} T^{m_2} \phi \rangle \cdot (L \Omega^kT^3 \phi ) \cdot v^{1+\delta} \, d\omega du' dv \\ = & 2 \sum_{\substack{ k_1 + k_2 = k  }}  \int_{v_n}^{v_{n+1}} \int_{u_{R,n}}^u \int_{\mathbb{S}^2} \frac{A \cdot D}{r} \cdot \langle \slashed{\nabla} \Omega^{k_1}  \phi , \slashed{\nabla} \Omega^{k_2} T^3 \phi \rangle \cdot (L \Omega^k T^3 \phi ) \cdot v^{1+\delta} \, d\omega du' dv \\ & + 2 \sum_{\substack{ k_1 + k_2 = k  }}  \int_{v_n}^{v_{n+1}} \int_{u_{R,n}}^u \int_{\mathbb{S}^2} \frac{A \cdot D}{r} \cdot \langle \slashed{\nabla} \Omega^{k_1} T \phi , \slashed{\nabla} \Omega^{k_2} T^2 \phi \rangle \cdot (L \Omega^k T^3 \phi ) \cdot v^{1+\delta} \, d\omega du' dv \\ \lesssim & v_{n+1}^{1+\delta} \sum_{\substack{ l_1 \leq 5 \\ l_2 \leq 5 \\ l_3 \leq 5 }} \left( \int_{v_n}^{v_{n+1}} \int_{u_{R,n}}^u \int_{\mathbb{S}^2} \Bigg{|}_{r \leq R} (r-M)^{3-\delta} \left( \int_{\mathbb{S}^2} ( \Omega^{l_1} \phi )^2 \, d\omega' \right) \cdot  (  \Omega^{l_2 + 1} T^3 \phi )^2 \, d\omega du' dv \right)^{1/2} \\ & \times \left( \int_{v_n}^{v_{n+1}} \int_{u_{R,n}}^u \int_{\mathbb{S}^2} (r-M)^{1+\delta} (L \Omega^{l_3} T^3 \phi )^2 \, d\omega du' dv  \right)^{1/2} \\ & + v_{n+1}^{1+\delta} \sum_{\substack{ l_1 \leq 5 \\ l_2 \leq 5 \\ l_3 \leq 5 }} \left( \int_{v_n}^{v_{n+1}} \int_{u_{R,n}}^u \int_{\mathbb{S}^2} (r-M)^{3-\delta} \left( \int_{\mathbb{S}^2} ( \Omega^{l_1} T^3 \phi )^2 \, d\omega' \right) \cdot  (  \Omega^{l_2 + 1}  \phi )^2 \, d\omega du' dv \right)^{1/2} \\ & \times \left( \int_{v_n}^{v_{n+1}} \int_{u_{R,n}}^u \int_{\mathbb{S}^2} (r-M)^{1+\delta} (L \Omega^{l_3} T^3 \phi )^2 \, d\omega du' dv  \right)^{1/2} \\ & + v_{n+1}^{1+\delta} \sum_{\substack{ l_1 \leq 5 \\ l_2 \leq 5 \\ l_3 \leq 5 }} \left( \int_{v_n}^{v_{n+1}} \int_{u_{R,n}}^u \int_{\mathbb{S}^2} (r-M)^{3-\delta} \left( \int_{\mathbb{S}^2} ( \Omega^{l_1} T \phi )^2 \, d\omega' \right) \cdot  (  \Omega^{l_2 + 1} T^2 \phi )^2 \, d\omega du' dv \right)^{1/2} \\ & \times \left( \int_{v_n}^{v_{n+1}} \int_{u_{R,n}}^u \int_{\mathbb{S}^2} (r-M)^{1+\delta} (L \Omega^{l_3} T^3 \phi )^2 \, d\omega du' dv  \right)^{1/2} \\ & + v_{n+1}^{1+\delta} \sum_{\substack{ l_1 \leq 5 \\ l_2 \leq 5 \\ l_3 \leq 5 }} \left( \int_{v_n}^{v_{n+1}} \int_{u_{R,n}}^u \int_{\mathbb{S}^2} (r-M)^{3-\delta} \left( \int_{\mathbb{S}^2} ( \Omega^{l_1} T^2 \phi )^2 \, d\omega' \right) \cdot  (  \Omega^{l_2 + 1} T \phi )^2 \, d\omega du' dv \right)^{1/2} \\ & \times \left( \int_{v_n}^{v_{n+1}} \int_{u_{R,n}}^u \int_{\mathbb{S}^2} (r-M)^{1+\delta} (L \Omega^{l_3} T^3 \phi )^2 \, d\omega du' dv  \right)^{1/2} \\ \lesssim & C^{3/2} E_0^{3/2} \epsilon^3 \frac{v_{n+1}^{1+\delta}}{v_n^{2+3\delta_2 / 2 - \delta / 2}} + C^{3/2} E_0^{3/2} \epsilon^3 \frac{v_{n+1}^{1+\delta}}{v_n^{5/2 +3\delta_2 / 2 - \delta /2}} +   C^{3/2} E_0^{3/2} \epsilon^3 \frac{v_{n+1}^{1+\delta}}{v_n^{9/4+3\delta_2 / 2 - \delta / 2 }} \\ \lesssim & C^{3/2} E_0^{3/2} \epsilon^3 \frac{v_{n+1}^{1+\delta}}{v_n^{2+3\delta_2 / 2 - \delta / 2}} ,
\end{align*}
\endgroup
where again we used Sobolev's inequality \eqref{est:sobolev}, Cauchy-Schwarz, the Morawetz decay estimate \eqref{dec:mor3} for all terms, the pointwise decay estimates \eqref{dec:psi}, \eqref{dec:tpsi}, \eqref{dec:ttpsi}, \eqref{dec:tttpsi}, and the energy decay estimates \eqref{dec:en0p}, \eqref{dec:entp}, \eqref{dec:enttp} and \eqref{dec:entttp} for $p=\delta$. For the cases $m=0$, $m=1$ and $m=2$ we have that:
\begin{equation*}
\mbox{$m=0$:  } \lesssim C^{3/2} E_0^{3/2} \epsilon^3 \frac{v_{n+1}^{1+\delta}}{v_n^{4-3\delta_1 / 2 - \delta / 2}} , \mbox{   $m=1$:  } \lesssim C^{3/2} E_0^{3/2} \epsilon^3 \frac{v_{n+1}^{1+\delta}}{v_n^{4+\delta_2-\delta_1 / 2 - \delta/2}} , \mbox{   $m=2$:  } \lesssim C^{3/2} E_0^{3/2} \epsilon^3 \frac{v_{n+1}^{1+\delta}}{v_n^{3+\delta_2 - \delta_1 / 2 - \delta / 2}} .
\end{equation*}

All the above estimates give us that in order show the estimates \eqref{est:aux1} we need the following conditions for $\delta$:
\begin{equation*}
\begin{split}
& \mbox{$m=0$:  } 1+ \delta  < 3 - \delta_1  \Rightarrow \delta < 2 - \delta_1 , \\ & \mbox{$m=1$:  } 1+ \delta  < 3 + \delta_2  \Rightarrow \delta < 2 + \delta_2 , \\ & \mbox{$m=2$:  } 1+ \delta  < 2 + \delta_2  \Rightarrow \delta < 1 + \delta_2 ,  \\ & \mbox{$m=3$:  } 1+ \delta  < 1 + \delta_2  \Rightarrow \delta < \delta_2 ,
\end{split}
\end{equation*} 
and $\epsilon$ small enough so that $\epsilon^3 < \epsilon^2$.

\end{proof}
\section{The bootstrap argument}\label{end:boot}
In this section we will present the bootstrap argument and verify the estimates of section \ref{as:boot}. We will prove the following Theorem:
\begin{theorem}\label{close}
Let $\psi$ be a solution of the equation \eqref{nw} with the corresponding data, and assume additionally that the bootstrap assumptions of Section \ref{as:boot} for a given $\epsilon$. We have that the following estimates hold true for $\delta_1 , \delta_2 , \beta_0 > 0$ small enough, for $0 < \beta < \delta_2$ and for $0 < \delta \leq \delta_2$:
\begin{equation}\label{A1'}
 \int_{\mcR_{\tau_1}^{\tau_2}} r^{2} | F_0 |^2 \, d\mu_{\mcR}  \lesssim  \frac{C^2 E_0^2 \epsilon^4}{ (1+\tau_1 )^{3-\delta_1}} \tag{\textbf{A1'}},
 \end{equation} 
\begin{equation}\label{A2'}
\int_{\tau_1}^{\tau_2} \int_{\mcN_v^H} (r-M)^{-p-1} D^2 | F_0 |^2 \, d\omega du dv \lesssim \frac{C^2 E_0^2 \epsilon^4}{ (1+\tau_1 )^{3-\delta_1-p}} \mbox{  for $p\in (0,2-\delta_1 ]$} \tag{\textbf{A2'}},
\end{equation}
\begin{equation}\label{A3'}
\left| \int_{\tau_1}^{\tau_2} \int_{\mcN_v^H} (r-M)^{-p} ( \underline{L} \phi_0 ) \cdot  ( D F_0 )  \, d\omega du dv \right| \lesssim \frac{C^2 E_0^2 \epsilon^4}{ (1+\tau_1 )^{3-\delta_1-p}} \mbox{  for $p\in (2,3-\delta_1 ]$} \tag{\textbf{A3'}},
\end{equation}
\begin{equation}\label{A4'}
\left| \int_{\tau_1}^{\tau_2} \int_{\mcN_v^H}  (r-M)^{-p} (\underline{L}  \Phi_0^H ) \cdot \underline{L} ( r^2  F_0 ) \, d\omega du dv \right| \lesssim \beta_0 \mathcal{L} + \frac{C^2 E_0^2 \epsilon^4}{ (1+\tau_1 )^{1-\delta_1-p}} \mbox{  for $p \in (0,1-\delta_1]$} \tag{\textbf{A4'}} ,
\end{equation}
\begin{equation}\label{A5'}
\begin{split}
 \int_{\mcR_{\tau_1}^{\tau_2}   \setminus \mathcal{A}_{\tau_1}^{\tau_2}} &  r^{p+1} | F_0 |^2 \, d\mu_{\mcR}  \lesssim  \frac{C^2 E_0^2 \epsilon^4}{ (1+\tau_1 )^{3-\delta_1-p}} \mbox{  for $p \in (1,2-\delta_1 ]$, and } \\ & \left( \int_{\tau_1}^{\tau_2} \left( \int_{\mcN_u^I} r^p | F_0 |^2 \, d\omega dv \right)^{1/2} du \right)^2 \lesssim \frac{C^2 E_0^2 \epsilon^4}{ (1+\tau_1 )^{3-\delta_1-p}} \mbox{  for $p \in (2,3-\delta_1 ]$} 
\end{split} 
\tag{\textbf{A5'}},
\end{equation} 
\begin{equation}\label{A6'}
\left| \int_{\tau_1}^{\tau_2} \int_{\mcN_u^I} r^p ( L \Phi_0^I ) \cdot ( L ( r^3  F_0 ) \, d\omega dv du \right| \lesssim \beta_0 \mathcal{L} + \frac{C^2 E_0^2 \epsilon^4}{ (1+\tau_1 )^{1-\delta_1-p}} \mbox{  for $p \in (0,1-\delta_1 ]$} \tag{\textbf{A6'}},
\end{equation}
\begin{equation}\label{B1'}
 \sum_{k \leq 5} \left( \int_{\mathcal{A}_{\tau_1}^{\tau_2}} (r-M)^{-1-\delta} D^2 |\Omega^k F_{\geq 1} |^2 \, d\omega du dv + \int_{\mcR_{\tau_1}^{\tau_2} \setminus \mathcal{A}_{\tau_1}^{\tau_2}} r^{2} | \Omega^k F_{\geq 1} |^2 \, d\mu_{\mcR} \right) \lesssim  \frac{C^2 E_0^2 \epsilon^4}{ (1+\tau_1 )^{3+\delta_2}} \tag{\textbf{B1'}},
 \end{equation} 
 \begin{equation}\label{B2'}
\sum_{k \leq 5} \left| \int_{\tau_1}^{\tau_2} \int_{\mcN_v^H}  (r-M)^{-p} ( \underline{L} \Omega^k \phi_{\geq 1} ) \cdot D  ( \Omega^k F_{\geq 1} ) \, d\omega du dv \right|  \lesssim \beta_0 \mathcal{L} + \frac{C^2 E_0^2 \epsilon^4}{ (1+\tau_1 )^{3+\delta_2-p}} \mbox{  for $p \in (0,2-\delta_1 ]$} \tag{\textbf{B2'}},
\end{equation}
\begin{equation}\label{B3'}
\sum_{k \leq 5} \left| \int_{\tau_1}^{\tau_2} \int_{\mcN_v^H}  (r-M)^{-p} (\underline{L}  \Omega^k \Phi_{\geq 1}^H ) \cdot \underline{L} ( r^2  \Omega^k F_{\geq 1} ) \, d\omega du dv \right| \lesssim \beta_0 \mathcal{L} + \frac{C^2 E_0^2 \epsilon^4}{ (1+\tau_1 )^{1+\delta_2-p}}  \mbox{  for $p \in (0,1+\delta_2]$} \tag{\textbf{B3'}} ,
\end{equation}
\begin{equation}\label{B4'}
 \sum_{k \leq 5} \int_{\mcR_{\tau_1}^{\tau_2} \setminus \mathcal{A}_{\tau_1}^{\tau_2}} r^{p+1} | \Omega^k F_{\geq 1} |^2 \, d\mu_{\mcR}  \lesssim  \frac{C^2 E_0^2 \epsilon^4}{ (1+\tau_1 )^{3+\delta_2-p}} \mbox{  for $p \in (1,2 - \delta_1 ]$} \tag{\textbf{B4'}},
\end{equation} 
\begin{equation}\label{B5'}
\sum_{k \leq 5} \left| \int_{\tau_1}^{\tau_2} \int_{\mcN_u^I} r^p ( L \Omega^k \Phi_{\geq 1}^I ) \cdot ( L ( r^3 \Omega^k F_{\geq 1} ) \, d\omega dv du \right| \lesssim \beta_0 \mathcal{L} + \frac{C^2 E_0^2 \epsilon^4}{ (1+\tau_1 )^{1+\delta_2-p}} \mbox{  for $p \in (0,1+\delta_2 ]$} \tag{\textbf{B5'}},
\end{equation}
\begin{equation}\label{C1'}
\sum_{k \leq 5} \left( \int_{\mathcal{A}_{\tau_1}^{\tau_2}} (r-M)^{-1-\delta} D^2 | \Omega^k T F |^2 \, d\omega du dv +   \int_{\mcR_{\tau_1}^{\tau_2} \setminus \mathcal{A}_{\tau_1}^{\tau_2}} r^{2} | \Omega^k T F |^2 \, d\mu_{\mcR} \right)  \lesssim  \frac{C^2 E_0^2 \epsilon^4}{ (1+\tau_1 )^{3+\delta_2}} \tag{\textbf{C1'}},
\end{equation}
\begin{equation}\label{C2'}
\sum_{k \leq 5} \left| \int_{\tau_1}^{\tau_2} \int_{\mcN_v^H}  (r-M)^{-p} ( \underline{L} \Omega^k T \phi ) \cdot D  ( \Omega^k T F ) \, d\omega du dv \right|  \lesssim \beta_0 \mathcal{L} + \frac{C^2 E_0^2 \epsilon^4}{ (1+\tau_1 )^{3+\delta_2-p}} \mbox{  for $p \in (0,2 - \delta_1 ]$} \tag{\textbf{C2'}},
\end{equation}
\begin{equation}\label{C3'}
\sum_{k \leq 5} \left| \int_{\tau_1}^{\tau_2} \int_{\mcN_v^H}  (r-M)^{-p} (\underline{L}  \Omega^k T \Phi^H ) \cdot \underline{L} ( r^2  \Omega^k T F ) \, d\omega du dv \right| \lesssim \beta_0 \mathcal{L} +  \frac{C^2 E_0^2 \epsilon^4}{ (1+\tau_1 )^{1+\delta_2-p}}  \mbox{  for $p \in (0,1+\delta_2]$} \tag{\textbf{C3'}} ,
\end{equation}
\begin{equation}\label{C4'}
 \sum_{k \leq 5} \int_{\mcR_{\tau_1}^{\tau_2} \setminus \mathcal{A}_{\tau_1}^{\tau_2}} r^{p+1} |  \Omega^k T F |^2 \, d\mu_{\mcR}  \leq  \frac{C^2 E_0^2 \epsilon^4}{ (1+\tau_1 )^{3+\delta_2-p}} \mbox{  for $p \in (1,2 - \delta_1 ]$} \tag{\textbf{C4'}},
\end{equation} 
\begin{equation}\label{C5'}
\sum_{k \leq 5} \left| \int_{\tau_1}^{\tau_2} \int_{\mcN_u^I} r^p ( L \Omega^k T \Phi^I ) \cdot ( L ( r^3 \Omega^k T F ) \, d\omega dv du \right| \lesssim \beta_0 \mathcal{L} + \frac{C^2 E_0^2 \epsilon^4}{ (1+\tau_2 )^{1+\delta_2-p}} \mbox{  for $p \in (0,1+\delta_2 ]$} \tag{\textbf{C5'}},
\end{equation}
\begin{equation}\label{D1'}
\sum_{k \leq 5} \left( \int_{\mathcal{A}_{\tau_1}^{\tau_2}} (r-M)^{-1-\delta} D^2 | \Omega^k T^2 F |^2 \, d\omega du dv +   \int_{\mcR_{\tau_1}^{\tau_2} \setminus \mathcal{A}_{\tau_1}^{\tau_2}} r^{2} | \Omega^k T^2 F |^2 \, d\mu_{\mcR} \right)  \lesssim  \frac{C^2 E_0^2 \epsilon^4}{ (1+\tau_1 )^{2+\delta_2}} \tag{\textbf{D1'}},
\end{equation}
\begin{equation}\label{D2'}
\sum_{k \leq 5} \left| \int_{\tau_1}^{\tau_2} \int_{\mcN_v^H} (r-M)^{-p} ( \underline{L} \Omega^k T^2 \phi ) \cdot D (\Omega^k T^2 F ) \, d\omega du dv \right| \lesssim \beta_0 \mathcal{L} +  \frac{C^2 E_0^2 \epsilon^4}{ (1+\tau_1 )^{2+\delta_2}} \mbox{  for $p \in (0,2-\delta_1 ]$} \tag{\textbf{D2'}},
\end{equation}
\begin{equation}\label{D3'}
\sum_{k \leq 5} \left| \int_{\tau_1}^{\tau_2} \int_{\mcN_v^H}  (r-M)^{-p} (\underline{L}  \Omega^k T^2 \Phi^H ) \cdot \underline{L} ( r^2  \Omega^k T^2 F ) \, d\omega du dv \right| \lesssim \beta_0 \mathcal{L} +  \frac{C^2 E_0^2 \epsilon^4}{ (1+\tau_1 )^{\delta_2-p}}  \mbox{  for $p \in (0,\delta_2]$} \tag{\textbf{D3'}} ,
\end{equation}
\begin{equation}\label{D4'}
 \sum_{k \leq 5} \int_{\mcR_{\tau_1}^{\tau_2} \setminus \mathcal{A}_{\tau_1}^{\tau_2}} r^{p+1} |  \Omega^k T^2 F |^2 \, d\mu_{\mcR}  \lesssim  \frac{C^2 E_0^2 \epsilon^4}{ (1+\tau_1 )^{2+\delta_2-p}} \mbox{  for $p \in (1,2 - \delta_1 ]$} \tag{\textbf{D4'}},
\end{equation} 
\begin{equation}\label{D5'}
\sum_{k \leq 5} \left| \int_{\tau_1}^{\tau_2} \int_{\mcN_u^I} r^p ( L \Omega^k T^2 \Phi^I ) \cdot ( L ( r^3 \Omega^k T^2 F ) \, d\omega dv du \right| \lesssim \beta_0 \mathcal{L} + \frac{C^2 E_0^2 \epsilon^4}{ (1+\tau_2 )^{\delta_2-p}} \mbox{  for $p \in (0,\delta_2 ]$} \tag{\textbf{D5'}},
\end{equation}
\begin{equation}\label{D6'}
\sum_{k \leq 5} \int_{\mcC_{\tau_1}^{\tau_2}} |  \Omega^k T^2 F |^2 \, d\mu_{\mcC} \lesssim \frac{C^2 E_0^2 \epsilon^4}{ (1+\tau_1 )^{3+\delta_2}} \tag{\textbf{D6'}},
\end{equation}
\begin{equation}\label{E1'}
\sum_{k \leq 5} \left( \int_{\mathcal{A}_{\tau_1}^{\tau_2}} (r-M)^{-1-\delta} D^2 | \Omega^k T^3 F |^2 \,  d\omega du dv +  \int_{\mcR_{\tau_1}^{\tau_2} \setminus \mathcal{A}_{\tau_1}^{\tau_2}} r^{2} | \Omega^k T^3 F |^2 \, d\mu_{\mcR} \right) \lesssim  \frac{C^2 E_0^2 \epsilon^4}{ (1+\tau_1 )^{1+\delta_2}} \tag{\textbf{E1'}},
\end{equation}
\begin{equation}\label{E2'}
\sum_{k \leq 5} \left| \int_{\tau_1}^{\tau_2} \int_{\mcN_v^H} (r-M)^{-p} ( \underline{L} \Omega^k T^3 \phi ) \cdot D (\Omega^k T^3 F ) \, d\omega du dv \right| \lesssim \beta_0 \mathcal{L} +  \frac{C^2 E_0^2 \epsilon^4}{ (1+\tau_1 )^{1+\delta_2}} \mbox{  for $p \in (0,1+\delta_2 ]$} \tag{\textbf{E2'}},
\end{equation}
\begin{equation}\label{E3'}
\sum_{k \leq 5} \int_{\mathcal{A}_{\tau_1}^{\tau_2}} (r-M)^{-2} D^2 | \Omega^k T^3 F |^2 \cdot v^{1+\beta} \, d\omega du dv \lesssim C^2 E_0^2 \epsilon^4 \tag{\textbf{E3'}}, 
\end{equation}
\begin{equation}\label{E4'}
 \sum_{k \leq 5} \int_{\mcR_{\tau_1}^{\tau_2}} r^{p+1} |  \Omega^k T^3 F |^2 \, d\mu_{\mcR}  \lesssim  \frac{C^2 E_0^2 \epsilon^4}{ (1+\tau_1 )^{1+\delta_2-p}} \mbox{  for $p \in (0,1+\delta_2]$} \tag{\textbf{E4'}},
\end{equation} 
\begin{equation}\label{E5'}
\sum_{k \leq 5} \int_{\mcC_{\tau_1}^{\tau_2}} | \Omega^k T^3 F |^2 \, d\mu_{\mcC} \lesssim \frac{C^2 E_0^2 \epsilon^4}{ (1+\tau_1 )^{2+\delta_2}} \tag{\textbf{E5'}},
\end{equation}
\begin{equation}\label{E6'}
\sum_{k \leq 5} \left( \int_{\tau_1}^{\tau_2} \left( \int_{\Sigma_{\tau} \cap \left( \mcR_{\tau_1}^{\tau_2} \setminus \mathcal{A}_{\tau_1}^{\tau_2} \right)} r^2 | \Omega^k T^3 F |^2 \, d\mu_{\Sigma} \right)^{1/2} d\tau \right)^2 \lesssim C^2 E_0^2 \epsilon^4  \tag{\textbf{E6'}},
\end{equation}
\begin{equation}\label{F1'}
\sum_{k \leq 5} \left( \int_{\mathcal{A}_{\tau_1}^{\tau_2}} (r-M)^{-1-\delta} D^2 | \Omega^k T^4 F|^2 \, d\omega du dv + \int_{\mcR_{\tau_1}^{\tau_2} \setminus \mathcal{A}_{\tau_1}^{\tau_2}} r^{2} |  \Omega^k T^4 F |^2 \, d\mu_{\mathcal{R}} \right)  \lesssim  \frac{C^2 E_0^2 \epsilon^4}{1+\tau_1} \tag{\textbf{F1'}},
\end{equation}
\begin{equation}\label{F2'}
\sum_{k \leq 5} \left| \int_{\tau_1}^{\tau_2} \int_{\mcN_v^H} (r-M)^{-p} ( \underline{L} \Omega^k T^4 \phi ) \cdot D ( \Omega^k T^4 F ) \, d\omega du dv \right| \lesssim  \frac{C^2 E_0^2 \epsilon^4}{(1+\tau_1 )^{1 - p}} \mbox{  for $p \in (0,1]$}  \tag{\textbf{F2'}},
\end{equation}
\begin{equation}\label{F3'}
\sum_{k \leq 5} \int_{\mathcal{A}_{\tau_1}^{\tau_2}} (r-M)^{-1-\delta} D^2 | \Omega^k T^4 F |^2 \cdot v^{1+\beta} \, d\omega du dv \lesssim C^2 E_0^2 \epsilon^4 \tag{\textbf{F3'}},
\end{equation}
\begin{equation}\label{F4'}
\sum_{k \leq 5} \int_{\mcR_{\tau_1}^{\tau_2} \setminus \mathcal{A}_{\tau_1}^{\tau_2}} r^{p+1} | \Omega^k T^4 F |^2 \, d\mu_{\mcR} \lesssim \frac{C^2 E_0^2 \epsilon^4}{(1+\tau_1 )^{1-p}} \mbox{  for $p \in (0,1]$} \tag{\textbf{F4'}},
\end{equation}
\begin{equation}\label{F5'}
\sum_{k \leq 5} \int_{\mathcal{C}_{\tau_1}^{\tau_2}} | \Omega^k T^4 F |^2 \, d\mu_{\mathcal{C}} \lesssim \frac{C^2 E_0^2 \epsilon^4}{(1+\tau_1 )^2} \tag{\textbf{F5'}},
\end{equation}
\begin{equation}\label{F6'}
\sum_{k \leq 5} \left( \int_{\tau_1}^{\tau_2} \left( \int_{\Sigma_{\tau} \cap \left( \mcR_{\tau_1}^{\tau_2} \setminus \mathcal{A}_{\tau_1}^{\tau_2} \right)} r^{1+\delta} | \Omega^k T^4 F |^2 \, d\mu_{\Sigma} \right)^{1/2} d\tau \right)^2 \lesssim C^2 E_0^2 \epsilon^4  \tag{\textbf{F6'}},
\end{equation}
\begin{equation}\label{G1'}
\begin{split}
\sum_{k \leq 5} & \int_{\mathcal{A}_{\tau_1}^{\tau_2}}  (r-M)^{-1-\delta} D^2 |  \Omega^k T^5 F |^2 \cdot v^{1+\beta} \, d\omega du dv \\ & + \left( \int_{\tau_1}^{\tau_2} \left( \int_{\Sigma_{\tau} \setminus ( \mcN_{\tau}^H \cup \mcN_{\tau}^I )} | \Omega^k T^5 F |^2 d\mu_{\Sigma} \right)^{1/2} d\tau \right)^2 \\ & + \int_{\tau_1}^{\tau_2} \int_{\mcN_{\tau}^I} r^{1+\delta} | \Omega^k T^5 F |^2 \, d\mu_{\mcN^I} d\tau  
\\ & +  \left( \int_{\tau_1}^{\tau_2} \left( \int_{\Sigma_{\tau} \cap \left( \mcR_{\tau_1}^{\tau_2} \setminus \mathcal{A}_{\tau_1}^{\tau_2} \right)} r^{1+\delta} | \Omega^k T^5 F |^2 \, d\mu_{\Sigma} \right)^{1/2} d\tau \right)^2 \lesssim  C^2 E_0^2 \epsilon^4 ,
\end{split}
\tag{\textbf{G1'}} 
\end{equation}
\begin{equation}\label{G2'}
\sum_{k \leq 5} \int_{\mathcal{C}_{\tau_1}^{\tau_2}} | \Omega^k T^5 F |^2 \, d\mu_{\mathcal{C}} \lesssim \frac{C^2 E_0^2 \epsilon^4}{1+\tau_1} \tag{\textbf{G2'}}.
\end{equation}

\end{theorem}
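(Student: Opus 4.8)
The starting point is that every quantity on the left-hand side of \eqref{A1'}--\eqref{G2'} is an integral built from $F = A(u,v,\omega,\psi)\cdot F^c$, with $F^c = g^{\alpha\beta}\partial_\alpha\psi\partial_\beta\psi$, commuted with up to five $\Omega$'s and up to five $T$'s. The first step will therefore be to expand $\Omega^k T^l F$ by the Leibniz rule. Writing the null form $F^c$ in double null coordinates for $\phi = r\psi$ as in \eqref{eq:dn}--\eqref{eq:dn_c}, each resulting term is, schematically, a sum over $k_1+k_2 = k$, $m_1+m_2=l$ (with at most two extra angular derivatives generated by using the wave equation) of products of two factors of one of the types
$$\tfrac{D}{r^2}(L\Omega^{k_1}T^{m_1}\phi)\Big(\tfrac{2r}{D}\underline{L}\Omega^{k_2}T^{m_2}\phi\Big),\qquad \tfrac{D^2}{r^3}(\Omega^{k_1}T^{m_1}\phi)\Big(\tfrac{2r}{D}\underline{L}\Omega^{k_2}T^{m_2}\phi\Big),\qquad \tfrac{D}{r^2}(\Omega^{k_1}T^{m_1}\phi)(L\Omega^{k_2}T^{m_2}\phi),$$
$$\tfrac{D^2}{r^3}(\Omega^{k_1}T^{m_1}\phi)(\Omega^{k_2}T^{m_2}\phi),\qquad \tfrac{D}{r}\langle\slashed{\nabla}\Omega^{k_1}T^{m_1}\phi,\slashed{\nabla}\Omega^{k_2}T^{m_2}\phi\rangle,$$
together with lower-order terms in which an $\Omega$ or a $T$ falls on the coefficient $A$ (bounded, with all its derivatives), and the cubic and higher remainders (which are strictly better). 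The essential gain over the ``classical'' null form is precisely the extra $D$, $D^2$ near the horizon and the extra $r^{-1}$, $r^{-2}$, $r^{-3}$ at infinity visible above; these are what make the horizon- and infinity-localized weighted hierarchies of Propositions \ref{prop:rp05}--\ref{prop:rp08} (and the improved Morawetz estimate of Proposition \ref{prop:morimproved}) able to absorb the errors.

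For each of the terms so produced the plan is the standard one. I would apply Cauchy--Schwarz, and H\"older in $v$ when a supremum in $u$ or an $L^1_\tau$ norm is present, splitting each product into one factor measured in $L^\infty$ and one in a weighted $L^2$; use the Sobolev embedding \eqref{est:sobolev} on $\mathbb{S}^2$ to express the $L^\infty$ factor through at most three angular derivatives, reshuffling the five available $\Omega$'s and integrating by parts on the sphere --- exactly as in the treatment of the terms $l_2$ and $l_8$ in the proof of Theorem \ref{thm:aux} --- to absorb the two angular derivatives lost in writing $L\underline{L}\phi$ from the equation; and use Hardy's inequality \eqref{hardy} to trade $(r-M)^{-p}$-weights on $\underline{L}\phi$ for $(r-M)^{-p-2}$-weights on $\underline{L}\Phi^H$ (and the analogous trade at infinity). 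Into the resulting factors one feeds the pointwise decay estimates \eqref{dec:psi}--\eqref{dec:tttttpsiint}, the weighted energy-decay estimates \eqref{dec:en0p}--\eqref{dec:entttttp1} and the Morawetz decay estimates \eqref{dec:mor}--\eqref{dec:mor5}, the boundedness of the first transversal derivative \eqref{est:boundy}, \eqref{est:boundyt} and its weighted refinements \eqref{est:auxl}, \eqref{est:auxlt}, the growth bound \eqref{est:boundyy} for the second transversal derivative near the horizon, and, for the near-horizon error terms in the $T^k$-commuted hierarchies, the weighted $L^2_{v,\omega}L^\infty_u$ estimate \eqref{est:aux1}. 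In each case the output is bounded by a product of at least two $\sqrt{CE_0}\,\epsilon$-sized factors times a power $(1+\tau_1)^{-a}$ (or $(1+\tau_2)^{-a}$) with $a$ at least the exponent claimed in \eqref{A1'}--\eqref{G2'}, so that one gains at least one extra power of $\sqrt{CE_0}\,\epsilon$ over the corresponding assumption of Section \ref{as:boot}; the ``linear'' contributions $\mathcal{L}$ on the right-hand sides are reabsorbed into the left-hand sides of the estimates being proved, as in the proof of Proposition \ref{prop:morimproved}.

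The step I expect to be the main obstacle is the bookkeeping of weights and decay rates for the borderline terms. Near the horizon the worst contributions come from products in which two $\underline{L}$-derivatives land on the same factor after commuting the equation for $\Phi^H$, i.e.\ expressions involving $\tfrac{2r}{D}\underline{L}\big(\tfrac{2r}{D}\underline{L}\Omega^k T^m\phi\big)$: these do not decay --- they grow like $v^{\delta}$ by \eqref{est:boundyy} --- and they can only be controlled because the accompanying $D$-weights, together with the $v$-weight in \eqref{est:aux1}, beat the growth once $\delta$ is taken small. At the top of the hierarchy, for $\Omega^k T^4 F$ and $\Omega^k T^5 F$, the admissible ranges of $p$ in the $(r-M)^{-p}$- and $r^p$-weighted estimates are narrowest, so there is essentially no slack, and one is forced to the endpoint estimates \eqref{dec:enttttp1}, \eqref{dec:enttttp3}, \eqref{dec:entttttp}, \eqref{dec:entttttp1} in combination with \eqref{est:aux1}. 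Finally one has to verify that all the exponent inequalities arising this way --- of the same flavour as $1+\delta < 3-\delta_1$, \dots, $1+\delta < \delta_2$ in Theorem \ref{thm:aux} --- can be met simultaneously by a single admissible choice of the parameters with $0 < \delta_1 \ll \delta_2$, $0 < \beta < \delta_2$, $0 < \delta \le \delta_2$ and $\beta_0$ small. Modulo this combinatorial--analytic bookkeeping, the estimates are, term by term, of precisely the type already carried out in Sections \ref{energy}--\ref{sec:GrowthEstimates}.

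Once \eqref{A1'}--\eqref{G2'} are in place they hold with constants strictly smaller than those of \eqref{A1}--\eqref{G2}, provided $0 < \epsilon \le \Delta$ with $\Delta$ the minimum of the smallness thresholds $\epsilon'$, $\epsilon''$, $\epsilon_0$ furnished by Theorems \ref{thm:boundy0}, \ref{thm:boundyy} and \ref{thm:aux}; this closes the bootstrap argument of Section \ref{as:boot}, and hence, through the continuation criteria recalled in Section \ref{sec:method}, completes the proof of Theorem \ref{thm:main}.
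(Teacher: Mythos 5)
Your outline follows the same route as the paper: expand $\Omega^k T^l F$ by Leibniz into the null-form products of \eqref{eq:dn_c}, then estimate term by term with Cauchy--Schwarz, the Sobolev inequality \eqref{est:sobolev}, Hardy \eqref{hardy}--\eqref{hardy1}, the pointwise and Morawetz decay estimates, the weighted energy hierarchies, the boundedness estimates \eqref{est:boundy}, \eqref{est:boundyt}, \eqref{est:auxl}, \eqref{est:auxlt}, the growth estimate \eqref{est:boundyy}, and the $v$-weighted $L^2_{v,\omega}L^\infty_u$ estimates \eqref{est:aux1}, absorbing the ``linear'' contributions $\mathcal{L}$ with a small $\beta_0$. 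So the toolkit and the identification of the dangerous terms are correct.

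The genuine gap is that the theorem \emph{is} the deferred bookkeeping, and one specific device that the bookkeeping requires is absent from your sketch: for the borderline terms containing $\underline{L}\big(\tfrac{2r}{D}\underline{L}\phi_0\big)$ in \eqref{B3'} and \eqref{C3'} (and similarly for the endpoint analyses feeding \eqref{D2'}, \eqref{E2'}), the direct argument you describe --- splitting with a $v^{1+\beta}$ weight, using \eqref{est:aux1} on the $L$-factor and \eqref{est:boundyy} on the doubly-$\underline{L}$ factor --- only closes at the top endpoint $p=1+\delta_2$, and only under the precise compatibility conditions $\beta_1>\delta_2$ and $\beta+\beta_1<2-\delta_1$ on the auxiliary exponents; ``taking $\delta$ small so the $D$-weights beat the growth'' does not by itself cover the intermediate range $p\in(0,1+\delta_2)$, because for smaller $p$ the $(r-M)$-weight no longer compensates the $v^{\delta}$ growth in the same way. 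The paper closes this by proving, in addition, an \emph{out-of-range} estimate at $p=0$ (where the quadratic factor is bounded via \eqref{est:boundyy} with the extra $D^{1/2}$ weight against the Morawetz decay \eqref{dec:mor}, \eqref{dec:mor1}) and then interpolating in $p$ between the two endpoints; the same endpoint-plus-interpolation scheme is what delivers the full $p$-ranges in \eqref{D2'} and \eqref{E2'}. Without this two-endpoint interpolation step your plan, as written, does not yield the stated ranges of $p$, so the argument needs this addition (and then the remaining verification is indeed the routine, if lengthy, term-by-term work you describe).
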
  
\begin{remark}
We expand the nonlinear term with $\Omega^k$ and $T^m$ commutations and we have that:
\begin{equation}\label{eq:tcomm}
\begin{split}
\int_{\mcR_{\tau_1}^{\tau_2}} (r-M)^{-p} & ( \underline{L} \Omega^k T^2 \phi ) \cdot D (\Omega^k T^2 F ) \, d\mu_{\mcR}  \\ = & \sum_{\substack{k_1 + k_2 = k \\ m_1 + m_2 = m}}\int_{\mcR_{\tau_1}^{\tau_2}} (r-M)^{-p}  ( \underline{L} \Omega^k T^2 \phi ) \cdot \frac{2A \cdot D}{r^3} \cdot (L \Omega^{k_1} T^{m_1} \phi ) \cdot \left( \frac{2r}{D} \underline{L} \Omega^{k_2} T^{m_2} \phi \right) \, d\omega du dv \\ & - \sum_{\substack{k_1 + k_2 = k \\ m_1 + m_2 = m}}\int_{\mcR_{\tau_1}^{\tau_2}} (r-M)^{-p}  ( \underline{L} \Omega^k T^2 \phi ) \cdot \frac{A \cdot D^2}{r^4} \cdot ( \Omega^{k_1} T^{m_1} \phi ) \cdot \left( \frac{2r}{D} \underline{L} \Omega^{k_2} T^{m_2} \phi \right) \, d\omega du dv \\ & - \sum_{\substack{k_1 + k_2 = k \\ m_1 + m_2 = m}}\int_{\mcR_{\tau_1}^{\tau_2}} (r-M)^{-p}  ( \underline{L} \Omega^k T^2 \phi ) \cdot \frac{2A \cdot D}{r^3} \cdot ( \Omega^{k_1} T^{m_1} \phi ) \cdot ( L \Omega^{k_2} T^{m_2} \phi ) \, d\omega du dv \\ & - \sum_{\substack{k_1 + k_2 = k \\ m_1 + m_2 = m}}\int_{\mcR_{\tau_1}^{\tau_2}} (r-M)^{-p}  ( \underline{L} \Omega^k T^2 \phi ) \cdot \frac{A \cdot D^2}{r^4} \cdot (\Omega^{k_1} T^{m_1} \phi ) \cdot ( \Omega^{k_2} T^{m_2} \phi ) \, d\omega du dv \\ & +  \sum_{\substack{k_1 + k_2 = k \\ m_1 + m_2 = m}}\int_{\mcR_{\tau_1}^{\tau_2}} (r-M)^{-p}  ( \underline{L} \Omega^k T^2 \phi ) \cdot \frac{A \cdot D}{r^2} \cdot \langle \slashed{\nabla} \Omega^{k_1} T^{m_1} \phi , \slashed{\nabla} \Omega^{k_2} T^{m_2} \phi \rangle \, d\omega du dv \\ & +\sum_{\substack{k_1 + k_2 = k \\ m_1 + m_2 = m \\ k_1 + m_1 > 0}} \int_{\mcR_{\tau_1}^{\tau_2}} (r-M)^{-p} ( \underline{L} \Omega^k T^2 \phi ) \cdot D ( \Omega^{k_1} T^{m_1} A ) \cdot (\Omega^{k_2} T^{m_2} F^c ) \, d\mu_{\mcR} .
\end{split}
\end{equation}
We will use the above equation in the bootstrap argument for \eqref{C1'}, \eqref{D1'}, \eqref{E1'} and \eqref{F1'}.
\end{remark}

\begin{proof}
We will prove the estimates stated in the Theorem one by one. 

\eqref{A2'}: In order to estimate the term for the second bootstrap we expand again $F_0$ into its actual terms and we have for any $\tau_1$, $\tau_2$ with $\tau_1 < \tau_2$ that:
 \begingroup
\allowdisplaybreaks
\begin{align*}
\int_{\tau_1}^{\tau_2} & \int_{\mcN_v^H}  (r-M)^{-p-1} D^2 |F_0 |^2 \, d\omega du dv \lesssim   \int_{\tau_1}^{\tau_2} \int_{\mcN_v^H} (r-M)^{-p-1}  A^2_{*} \cdot D^2 \cdot ( L \phi_{*} )^2 \cdot \left( \frac{2r}{D} \underline{L} \phi_{*} \right)^2 \, d\omega du dv  \\ & + \int_{\tau_1}^{\tau_2} \int_{\mcN_v^H} (r-M)^{-p-1}   A^2_{*} \cdot D^4 \cdot \phi_{*}^2  \cdot \left( \frac{2r}{D} \underline{L} \phi_{*} \right)^2 \, d\omega du dv  + \int_{\tau_1}^{\tau_2} \int_{\mcN_v^H} (r-M)^{-p-1}   A^2_{*} \cdot D^2 \cdot \phi_{*}^2  \cdot ( L \phi_{*} )^2 \, d\omega du dv \\ & + \int_{\tau_1}^{\tau_2} \int_{\mcN_v^H} (r-M)^{-p-1}   A^2_{*} \cdot D^4 \cdot \phi_{*}^2 \cdot \phi_{*}^2 \, d\omega du dv   + \int_{\tau_1}^{\tau_2} \int_{\mcN_v^H} (r-M)^{-p-1}   A^2_{*} \cdot D^2 \cdot |\slashed{\nabla} \phi_{*} |^2 \cdot | \slashed{\nabla} \phi_{*} |^2 \, d\omega du dv  ,
\end{align*}
\endgroup
where the subscript $*$ denotes the different angular frequency localizations that always add up to $0$ (when applying the projection to the $0$-th angular frequency to nonlinear terms). For the very first term we have that for any $\tau_1$, $\tau_2$ with $\tau_1 < \tau_2$ that the following holds:
\begin{equation}\label{aux:a1}
\begin{split}
\int_{\tau_1}^{\tau_2} \int_{\mcN_v^H} & (r-M)^{-p-1}  A_{*}^2 \cdot D^2   ( L\phi_{*} )^2 \cdot \left( \frac{2r}{D} \underline{L} \phi_{*} \right)^2 \, d\omega du dv  \\ \lesssim & C E_0 \epsilon^2 \int_{\tau_1}^{\tau_2} \int_{\mcN_v^H} (r-M)^{-p+3} (L \phi_0 )^2 \, d\omega du dv  \lesssim  \frac{C^2 E_0^2 \epsilon^4}{(1+\tau_1 )^{3-\delta_1}} ,
\end{split}
\end{equation}
where we used the pointwise boundedness estimate \eqref{est:boundy}, and the Morawetz decay estimate \eqref{dec:mor} (as $p \in (0,2-\delta_1 ]$). 

For the second term we have that
\begin{equation*}
\begin{split}
\int_{\tau_1}^{\tau_2} \int_{\mcN_v^H} & (r-M)^{-p-1}   A_{*}^2 \cdot D^4 \cdot \phi_{*}^2   \cdot \left( \frac{2r}{D} \underline{L} \phi_{*} \right)^2 \, d\omega du dv \lesssim  \int_{\tau_1}^{\tau_2} \int_{\mcN_v^H} (r-M)^{-p-1}   D^4 \cdot \phi_0^2  \cdot \left( \frac{2r}{D} \underline{L} \phi_{*} \right)^2 \, d\omega du dv \\ \lesssim & C E_0 \epsilon^2  \int_{\tau_1}^{\tau_2} \int_{\mcN_v^H} (r-M)^{-p+7} \cdot \phi_{*}^2 \,  d\omega du dv \lesssim  C E_0 \epsilon^2 \int_{\tau_1}^{\tau_2} \int_{\mcN_v^H} (r-M)^{-p+5} \cdot ( \underline{L} \phi_{*} )^2 \,  d\omega du dv  \\ \lesssim & \frac{C^2 E_0^2 \epsilon^4}{(1+\tau_1 )^{3-\delta_1}}, 
\end{split}
\end{equation*}
where we used the boundedness estimate \eqref{est:boundy}, Hardy's inequality \eqref{hardy}, and the Morawetz decay estimate \eqref{dec:mor}.

For the third term we have that
\begin{equation*}
\begin{split}
\int_{\tau_1}^{\tau_2} \int_{\mcN_v^H} & (r-M)^{-p-1}   A_{*}^2 \cdot D^2 \cdot \phi_{*}^2  \cdot ( L \phi_{*} )^2 \, d\omega du dv \lesssim  \int_{\tau_1}^{\tau_2} \int_{\mcN_v^H} (r-M)^{-p-1}    D^2 \cdot \phi_{*}^2  \cdot ( L \phi_{*} )^2 \, d\omega du dv \\ \lesssim & \frac{C E_0 \epsilon^2}{(1+\tau_1 )^{2-\delta_1}}  \int_{\tau_1}^{\tau_2} \int_{\mcN_v^H} (r-M)^{-p+3}  \cdot \phi_{*}^2  \, d\omega du dv  \lesssim  \frac{C E_0 \epsilon^2}{(1+\tau_1 )^{2-\delta_1}} \int_{\tau_1}^{\tau_2} \int_{\mcN_v^H} (r-M)^{-p+1} \cdot ( \underline{L} \phi_{*} )^2 \,  d\omega du dv  \\ \lesssim & \frac{C^2 E_0^2 \epsilon^4}{(1+\tau_1 )^{5-2\delta_1 - p}}  ,
\end{split}
\end{equation*}
where we used the decay estimate \eqref{dec:tpsi}, Hardy's inequality \eqref{hardy}, and the Morawetz decay estimate \eqref{dec:mor}. As $p \in (0,2-\delta_1 ]$ we have that:
$$ \frac{C^2 E_0^2 \epsilon^4}{(1+\tau_1 )^{5-2\delta_1 - p}} \lesssim \frac{C^2 E_0^2 \epsilon^4}{(1+\tau_1 )^{3-\delta_1 }} . $$

For the fourth term we have that
\begin{equation*}
\begin{split}
 \int_{\tau_1}^{\tau_2} & \int_{\mcN_v^H} (r-M)^{-p-1}   A_{*}^2 \cdot D^4 \cdot \phi_{*}^2 \cdot \phi_{*}^2 \, d\omega du dv \lesssim   \int_{\tau_1}^{\tau_2} \int_{\mcN_v^H} (r-M)^{-p-1}   D^4 \cdot \phi_{*}^2 \cdot \phi_{*}^2 \, d\omega du dv \\ \lesssim & \frac{C E_0 \epsilon^2}{(1+\tau_1 )^{2-\delta_1}} \int_{\tau_1}^{\tau_2} \int_{\mcN_v^H} (r-M)^{-p+7}    \cdot \phi_{*}^2 \, d\omega du dv \lesssim  \frac{C E_0 \epsilon^2}{(1+\tau_1 )^{2-\delta_1}} \int_{\tau_1}^{\tau_2} \int_{\mcN_v^H} (r-M)^{-p+5}    \cdot (\underline{L} \phi_{*} )^2 \, d\omega du dv   \\ \lesssim & \frac{C^2 E_0^2 \epsilon^4}{(1+\tau_1 )^{5-2\delta_1 - p}} \lesssim  \frac{C^2 E_0^2 \epsilon^4}{(1+\tau_1 )^{3-\delta_1 }} ,
\end{split}
\end{equation*}
where once again we used the decay estimate \eqref{dec:psi0}, Hardy's inequality \eqref{hardy}, the Morawetz decay estimate \eqref{dec:mor}, and the fact that $p \in (0,2-\delta_1 ]$.

Finally for the fifth term we have that
\begin{equation*}
\begin{split}
\int_{\tau_1}^{\tau_2} & \int_{\mcN_v^H} (r-M)^{-p-1}   A_{*}^2 \cdot D^2 \cdot |\slashed{\nabla} \phi_{*} |^2 \cdot  | \slashed{\nabla} \phi_{*} |^2 \, d\omega du dv \\ \lesssim & \int_{\tau_1}^{\tau_2} \int_{\mcN_v^H} (r-M)^{-p-1} D^2 \cdot |\slashed{\nabla} \phi_{*} |^2 \cdot | \slashed{\nabla} \phi_{*} |^2 \, d\omega du dv  \lesssim  \frac{C E_0 \epsilon^2}{(1+\tau_1 )^{2+\delta_2}} \int_{\tau_1}^{\tau_2} \int_{\mcN_v^H} (r-M)^{-p+3} \cdot ( \Omega \phi_{*} )^2  \, d\omega du dv \\ \lesssim & \frac{C E_0 \epsilon^2}{(1+\tau_1 )^{2+\delta_2}} \int_{\tau_1}^{\tau_2} \int_{\mcN_v^H} (r-M)^{-p+1} \cdot ( \underline{L} \Omega \phi_{*} )^2  \, d\omega du dv  \lesssim  \frac{C^2 E_0^2 \epsilon^4}{(1+\tau_1 )^{5+2\delta_2 -p}} \lesssim \frac{C^2 E_0^2 \epsilon^4}{(1+\tau_1 )^{3-\delta_1}}, 
\end{split}
\end{equation*}
which is a better estimate than the one desired (as $\delta_2 > 0$), and which we obtained by using the decay estimate \eqref{dec:psi1} for $k=1$, Hardy's inequality \eqref{hardy}, the energy decay estimates \eqref{dec:en1p}, and the fact that $p \in (0,2-\delta_1 ]$.

\eqref{A1'}: This follows from the estimates for $p=1$ in the bootstrap argument for \eqref{A2'}, where it was shown that for $p=1$ we have decay of rate $ \frac{C^2 E_0^2 \epsilon^4}{(1+\tau_1 )^{3-\delta_1}}$ (which is better than required for the purposes of \eqref{A2'}).

\eqref{A3'}: We examine only the term of the nonlinearity involving both $L$ and $\underline{L}$ derivatives (as the rest are similar or easier as demonstrated already by our work in \eqref{A2'}) and we have for $\beta$ small enough and for the subscript $*$ denoting the different angular frequency localizations always adding up to $0$ that:
\begingroup
\allowdisplaybreaks
\begin{align*}
\int_{\tau_1}^{\tau_2} \int_{\mcN_v^H}  (r-M)^{-p} (\underline{L}  \phi_0 ) & \cdot A_{*} ( L \phi_{*} ) \cdot ( \underline{L} \phi_{*} )  \, d\omega du dv \lesssim \int_{\tau_1}^{\tau_2} \int_{\mcN_v^H}  (r-M)^{-p} (\underline{L}  \phi_{*} )^2 \cdot \frac{1}{v^{1+\beta}} \, d\omega du dv \\& + \int_{\tau_1}^{\tau_2} \int_{\mcN_v^H}  (r-M)^{-p} ( L \phi_{*} )^2 \cdot ( \underline{L} \phi_{*} )^2 \cdot v^{1+\beta} \, d\omega du dv  \\ \lesssim & \frac{1}{\tau_1^{\beta}} \sup_{v \in [\tau_1 , \tau_2 ]}\int_{\mcN_v^H}  (r-M)^{-p} (\underline{L}  \phi_0 )^2 \, d\omega du  \\ & + \int_{\tau_1}^{\tau_2} \int_{\mathbb{S}^2} \sup_u ( L \phi_{*} )^2 \cdot v^{1+\beta} \, d\omega dv \cdot \sup_{v \in [\tau_1 , \tau_2 ]} \int_{\mcN_v^H} ( r-M )^{-p} ( \underline{L} \phi_{*} )^2 \, d\omega du \\ \lesssim & \frac{1}{\tau_1^{\beta}} \sup_{v \in [\tau_1 , \tau_2 ]}\int_{\mcN_v^H}  (r-M)^{-p} (\underline{L}  \phi_0 )^2 \, d\omega du  \\ & +  \int_{\tau_1}^{\tau_2} \int_{\mathbb{S}^2} \sup_u ( L \phi_{*} )^2 \cdot v^{1+\beta} \, d\omega dv \cdot \sup_{v \in [\tau_1 , \tau_2 ]} \int_{\mcN_v^H} ( r-M )^{-p} ( \underline{L} \phi_0 )^2 \, d\omega du \\ & + \int_{\tau_1}^{\tau_2} \int_{\mathbb{S}^2} \sup_u ( L \phi_{*} )^2 \cdot v^{1+\beta} \, d\omega dv \cdot \sup_{v \in [\tau_1 , \tau_2 ]} \int_{\mcN_v^H} ( r-M )^{-p+2} ( \underline{L} \Phi^H_{\geq 1} )^2 \, d\omega du \\ \lesssim & \frac{1}{\tau_1^{\beta}} \sup_{v \in [\tau_1 , \tau_2 ]}\int_{\mcN_v^H}  (r-M)^{-p} (\underline{L}  \phi_0 )^2 \, d\omega du  \\ & + C \epsilon \left( \frac{C \epsilon}{\tau_1^{3-\delta_1 - p}} + \frac{C \epsilon}{\tau_1^{3+\delta_2 - p}} \right) , 
\end{align*}
\endgroup
where we used the auxiliary estimates \eqref{est:aux1} for $m=0$ (by taking $\beta < 2-\delta_1$), Hardy's inequality (as $p \in (2, 3-\delta_1 ] \Rightarrow 4-p > 1$) and the energy decay estimates \eqref{dec:en0p}, \eqref{dec:en1p1}.

\eqref{A4'}: Using again the form of $F_0$ and denoting by subscript $*$ the different angular frequency localizations we have that:
 \begingroup
\allowdisplaybreaks
\begin{align*}
\int_{\tau_1}^{\tau_2} \int_{\mcN_v^H}  (r-M)^{-p} (\underline{L}  \Phi_0^H ) & \cdot \underline{L} ( r^2  F_0 ) \, d\omega du dv  = \int_{\tau_1}^{\tau_2} \int_{\mcN_v^H}  (r-M)^{-p} (\underline{L}  \Phi_0^H ) \cdot \frac{A_{*}}{r} \cdot  ( L\phi_{*} ) \cdot \left( \underline{L} \left( \frac{2r}{D} \underline{L} \phi_{*} \right) \right)  \, d\omega du dv \\ & + \int_{\tau_1}^{\tau_2} \int_{\mcN_v^H}  (r-M)^{-p} (\underline{L}  \Phi_0^H ) \cdot \frac{A_{*}}{r} \cdot  ( \underline{L} L\phi_{*} ) \cdot  \left( \frac{2r}{D} \underline{L} \phi_{*} \right)   \, d\omega du dv  \\ & + \int_{\tau_1}^{\tau_2} \int_{\mcN_v^H}  (r-M)^{-p} (\underline{L}  \Phi_0^H ) \cdot \left( \frac{\underline{L}A_{*}}{r} + \frac{A_{*} \cdot D}{2r^2} \right) \cdot  \left[ ( L\phi ) \cdot  \left( \frac{2r}{D} \underline{L} \phi \right) \right]_{*}  \, d\omega du dv \\ & - \int_{\tau_1}^{\tau_2} \int_{\mcN_v^H}  (r-M)^{-p} (\underline{L}  \Phi_0^H ) \cdot \frac{A_{*}}{2r^2} \cdot D \phi_{*} \cdot \left( \underline{L} \left( \frac{2r}{D} \underline{L} \phi_{*} \right) \right)  \, d\omega du dv  \\ & - \int_{\tau_1}^{\tau_2} \int_{\mcN_v^H}  (r-M)^{-p} (\underline{L}  \Phi_0^H ) \cdot \frac{A_{*}}{2r^2} \cdot D ( \underline{L} \phi_{*} ) \cdot  \left( \frac{2r}{D} \underline{L} \phi_{*} \right)  \, d\omega du dv  \\ & + \int_{\tau_1}^{\tau_2} \int_{\mcN_v^H}  (r-M)^{-p} (\underline{L}  \Phi_0^H ) \cdot \left(\frac{\underline{L}A_{*} \cdot D}{2r^2}+\frac{A_{*} \cdot D^2}{2r^3} - \frac{A_{*} \cdot D D'}{4r^2}  \right) \left[ \phi \cdot  \left( \frac{2r}{D} \underline{L} \phi \right) \right]_{*}  \, d\omega du dv \\ &  - \int_{\tau_1}^{\tau_2} \int_{\mcN_v^H}  (r-M)^{-p} (\underline{L}  \Phi_0^H ) \cdot \frac{A_{*}}{r} \cdot ( \underline{L} \phi_{*} ) \cdot  (L \phi_{*} )  \, d\omega du dv \\ &  - \int_{\tau_1}^{\tau_2} \int_{\mcN_v^H}  (r-M)^{-p} (\underline{L}  \Phi_0^H ) \cdot \frac{A_{*}}{r} \cdot \phi_{*}  \cdot  ( \underline{L} L \phi_{*} )  \, d\omega du dv  \\ &  - \int_{\tau_1}^{\tau_2} \int_{\mcN_v^H}  (r-M)^{-p} (\underline{L}  \Phi_0^H ) \cdot \left( \frac{\underline{L}A_{*}}{r} + \frac{A_{*} \cdot D}{2r^2} \right) \cdot \left[ \phi \cdot  (  L \phi ) \right]_{*}  \, d\omega du dv \\ &  - \int_{\tau_1}^{\tau_2} \int_{\mcN_v^H}  (r-M)^{-p} (\underline{L}  \Phi_0^H ) \cdot \frac{A_{*}}{r^2} \cdot 2D  \phi_{*} \cdot  (  \underline{L} \phi_{*} )   \, d\omega du dv \\ &  - \int_{\tau_1}^{\tau_2} \int_{\mcN_v^H}  (r-M)^{-p} (\underline{L}  \Phi_0^H ) \cdot \left( \frac{\underline{L}A_{*} \cdot D}{2r^2} - \frac{A_{*} \cdot D D'}{4r^2} + \frac{A_{*} \cdot D^2}{2r^3} \right) \cdot \left[ \phi^2 \right]_{*} \, d\omega du dv \\ & + \int_{\tau_1}^{\tau_2} \int_{\mcN_v^H}  (r-M)^{-p} (\underline{L}  \Phi_0^H ) \cdot A_{*} \cdot \langle \underline{L} \slashed{\nabla} \phi_{*} , \slashed{\nabla} \phi_{*} \rangle \, d\omega du dv \\ & + \int_{\tau_1}^{\tau_2} \int_{\mcN_v^H}  (r-M)^{-p} (\underline{L}  \Phi_0^H ) \cdot \underline{L} A_{*} \cdot \langle  \slashed{\nabla} \phi_{*} , \slashed{\nabla} \phi_{*} \rangle \, d\omega du dv   .
\end{align*}
\endgroup
For the first term we have that
 \begingroup
\allowdisplaybreaks
\begin{align*}
\Bigg| \int_{\tau_1}^{\tau_2} \int_{\mcN_v^H}  (r-M)^{-p} (\underline{L}  \Phi_0^H ) & \cdot \frac{A_{*}}{r} \cdot  ( L\phi_{*} ) \cdot \left( \underline{L} \left( \frac{2r}{D} \underline{L} \phi_{*} \right) \right)  \, d\omega du dv \Bigg|  \\ \leq &   \int_{\tau_1}^{\tau_2} \int_{\mcN_v^H}  (r-M)^{-p}  \left( \underline{L} \left( \frac{2r}{D} \underline{L} \phi_0g \right) \right)^2 \frac{1}{v^{1+\delta_0}}  \, d\omega du dv \\ & +\int_{\tau_1}^{\tau_2} \int_{\mcN_v^H}  (r-M)^{-p}    ( L\phi_{*} )^2 \cdot \left( \underline{L} \left( \frac{2r}{D} \underline{L} \phi_{*}  \right) \right)^2 v^{1+\delta_0}  \, d\omega du dv \\ \lesssim & \frac{1}{\tau_1^{\delta_0}} \int_{\mcN_v^H}  (r-M)^{-p}  \left( \underline{L} \left( \frac{2r}{D} \underline{L} \phi_0 \right) \right)^2  \, d\omega du  \\ & + \int_{\tau_1}^{\tau_2}  \int_{\mathbb{S}^2} \sup_u ( L\phi_{*} )^2 \cdot v^{1+\delta_0} \, d\omega dv \cdot \sup_{v \in [\tau_1, \tau_2 ]} \int_{\mcN_v^H}  (r-M)^{-p} \left( \underline{L} \left( \frac{2r}{D} \underline{L} \phi_{*} \right) \right)^2  \, d\omega du \\ \lesssim & \frac{1}{\tau_1^{\delta_0}} \int_{\mcN_v^H}  (r-M)^{-p}  \left( \underline{L} \left( \frac{2r}{D} \underline{L} \phi_0 \right) \right)^2  \, d\omega du  \\ & +  C E_0 \epsilon^2 \frac{C E_0 \epsilon^2}{\tau_1^{1-\delta_1 - p}}  ,
\end{align*}
\endgroup
for some $0 < \delta_0 < \delta_2$, where we used the auxiliary estimate \eqref{est:aux1} for $m=0$ and the energy decay estimates \eqref{dec:en0p2}.  

For the second term
\begin{equation*}
\begin{split}
\int_{\tau_1}^{\tau_2} \int_{\mcN_v^H}  (r-M)^{-p} (\underline{L}  \Phi_0^H ) \cdot  & \frac{A_0}{r} \cdot  ( \underline{L} L\phi_0 ) \cdot  \left( \frac{2r}{D} \underline{L} \phi_0 \right)   \, d\omega du dv \mbox{  and  } \\ & \int_{\tau_1}^{\tau_2} \int_{\mcN_v^H}  (r-M)^{-p} (\underline{L}  \Phi_0^H ) \cdot \frac{A_0}{r} \cdot ( \underline{L} L\phi_{\gtrsim 1} ) \cdot \left( \frac{2r}{D} \underline{L} \phi_{\gtrsim 1} \right)  \, d\omega du dv 
\end{split}
\end{equation*}
we can use the equation to expand $\underline{L} L \phi_{*}$, and we get terms with better decay than required.

For the third term we note that due to our assumptions on $A$ we have that
\begin{equation*}
\begin{split}
\Bigg| \int_{\tau_1}^{\tau_2} \int_{\mcN_v^H}  (r-M)^{-p} (\underline{L}  \Phi_0^H ) \cdot & \left( \frac{\underline{L}A_{*}}{r} + \frac{A_{*} \cdot D}{2r^2} \right) \cdot  \left[ ( L\phi ) \cdot  \left( \frac{2r}{D} \underline{L} \phi \right) \right]_{*}  \, d\omega du dv \Bigg| \\ \lesssim & \int_{\tau_1}^{\tau_2} \int_{\mcN_v^H}  (r-M)^{-p} |\underline{L}  \Phi_0^H | \cdot D \cdot  \left| \left[ ( L\phi ) \cdot  \left( \frac{2r}{D} \underline{L} \phi \right) \right]_{*} \right| \, d\omega du dv ,
\end{split}
\end{equation*}
and this can be treated as the terms in the \eqref{A2'} bootstrap.

For the fourth term we have that
 \begingroup
\allowdisplaybreaks
\begin{align*}
\Bigg| \int_{\tau_1}^{\tau_2} \int_{\mcN_v^H}  (r-M)^{-p} (\underline{L}  \Phi_0^H ) & \cdot \frac{A_{*}}{2r^2} \cdot D \phi_{*} \cdot \left( \underline{L} \left( \frac{2r}{D} \underline{L} \phi_{*} \right) \right)  \, d\omega du dv \Bigg| \\ \leq & \beta  \int_{\tau_1}^{\tau_2} \int_{\mcN_v^H}  (r-M)^{-p+1} (\underline{L}  \Phi_0^H )^2 \, d\omega du dv \\ & + \frac{1}{\beta}  \int_{\tau_1}^{\tau_2} \int_{\mcN_v^H}  (r-M)^{-p-1}  D^2 \phi_{*}^2 \cdot \left( \underline{L} \left( \frac{2r}{D} \underline{L} \phi_{*} \right) \right)^2 \, d\omega du dv \\  \lesssim & \beta \int_{\tau_1}^{\tau_2} \int_{\mcN_v^H}  (r-M)^{-p+1} (\underline{L}  \Phi_0^H )^2 \, d\omega du dv \\ & + \frac{1}{\beta}  \frac{C E_0 \epsilon^2}{(1+\tau_1 )^{2-\delta_1}} \int_{\tau_1}^{\tau_2} \int_{\mcN_v^H}  (r-M)^{-p+3}    \left( \underline{L} \left( \frac{2r}{D} \underline{L} \phi_{*} \right) \right)^2 \, d\omega du dv  \\ \lesssim & \beta \int_{\tau_1}^{\tau_2} \int_{\mcN_v^H}  (r-M)^{-p+1} (\underline{L}  \Phi_0^H )^2 \, d\omega du dv \\ & + \frac{1}{\beta}  \frac{C E_0 \epsilon^2}{(1+\tau_1 )^{3-2\delta_1-p}} ,
\end{align*}
\endgroup
and as $\delta_1$ is chosen to be small enough, we have that $3-2\delta_1 - p > 1-\delta_1 -p$ for any $p \in (0,1-\delta_1 ]$, and this gives us the desired estimate by choosing $\beta$ to be also small enough. Note that we used the pointwise decay estimate \eqref{dec:psi} and the energy decay estimates \eqref{dec:enp1}.

For the fifth term we have that
 \begingroup
\allowdisplaybreaks
\begin{align*}
\Bigg| \int_{\tau_1}^{\tau_2} \int_{\mcN_v^H}  (r-M)^{-p} (\underline{L}  \Phi_0^H ) & \cdot \frac{A_0}{2r^2} \cdot D ( \underline{L} \phi_{*} ) \cdot  \left( \frac{2r}{D} \underline{L} \phi_{*} \right)  \, d\omega du dv \Bigg| \\ \leq &  \beta \int_{\tau_1}^{\tau_2} \int_{\mcN_v^H}  (r-M)^{-p+1} (\underline{L}  \Phi_0^H )^2 \, d\omega du dv \\ & + \frac{1}{\beta} \int_{\tau_1}^{\tau_2} \int_{\mcN_v^H}  (r-M)^{-p-1}  \cdot D^2 ( \underline{L} \phi_{*} )^2 \cdot  \left( \frac{2r}{D} \underline{L} \phi_{*} \right)^2  \, d\omega du dv \\ \leq &  \beta \int_{\tau_1}^{\tau_2} \int_{\mcN_v^H}  (r-M)^{-p+1} (\underline{L}  \Phi_0^H )^2 \, d\omega du dv \\ & + \frac{1}{\beta} \int_{\tau_1}^{\tau_2} \int_{\mcN_v^H}  (r-M)^{-p-1}  \cdot D^2 ( \underline{L} \phi_{*} )^2 \cdot \left( \frac{2r}{D} \underline{L} \phi_{*} \right)^2  \, d\omega du dv \\ \leq &  \beta \int_{\tau_1}^{\tau_2} \int_{\mcN_v^H}  (r-M)^{-p+1} (\underline{L}  \Phi_0^H )^2 \, d\omega du dv \\ & + \frac{1}{\beta} C E_0 \epsilon^2 \int_{\tau_1}^{\tau_2} \int_{\mcN_v^H}  (r-M)^{-p+3}  \cdot ( \underline{L} \phi_{*} )^2  \, d\omega du dv \\ \leq &  \beta \int_{\tau_1}^{\tau_2} \int_{\mcN_v^H}  (r-M)^{-p+1} (\underline{L}  \Phi_0^H )^2 \, d\omega du dv \\ & + \frac{1}{\beta} \frac{C^2 E_0^2 \epsilon^4}{(1+\tau_1 )^{3-\delta_1 -p}} ,
\end{align*}
\endgroup
and this gives us better than the desired decay. Note that we used the boundedness estimate \eqref{est:boundy} and the decay estimates \eqref{dec:enp1}.

For the sixth term we note that due to the assumptions on $A$ we have that
\begin{equation*}
\begin{split}
\Bigg| \int_{\tau_1}^{\tau_2} \int_{\mcN_v^H}  (r-M)^{-p} (\underline{L}  \Phi_0^H ) & \cdot \left( \frac{\underline{L}A_{*} \cdot D}{2r^2} - \frac{A_{*} \cdot D D'}{4r^2} + \frac{A_{*} \cdot D^2}{2r^3} \right) \cdot \left[ \phi^2 \right]_{*} \, d\omega du dv \Bigg| \\ \lesssim & \Bigg| \int_{\tau_1}^{\tau_2} \int_{\mcN_v^H}  (r-M)^{-p} |\underline{L}  \Phi_0^H |  \cdot D^{3/2} \cdot \left| \left[ \phi^2 \right]_{*} \right| \, d\omega du dv \Bigg|
\end{split}
\end{equation*}
and the resulting term can be treated as the terms in the \eqref{A2'} bootstrap.

For the seventh term we have that
 \begingroup
\allowdisplaybreaks
\begin{align*}
\Bigg| \int_{\tau_1}^{\tau_2} \int_{\mcN_v^H}  (r-M)^{-p} (\underline{L}  \Phi_0^H ) & \cdot \frac{A_{*}}{r} \cdot ( \underline{L} \phi_{*} ) \cdot  (L \phi_{*} )  \, d\omega du dv \Bigg| \\ \leq & \beta \int_{\tau_1}^{\tau_2} \int_{\mcN_v^H}  (r-M)^{-p+1} (\underline{L}  \Phi_0^H )^2 \, d\omega du dv \\ & + \frac{1}{\beta} \int_{\tau_1}^{\tau_2} \int_{\mcN_v^H}  (r-M)^{-p-1}  ( \underline{L} \phi_{*} )^2 \cdot  (L \phi_{*} )^2  \, d\omega du dv \\ \leq & \beta \int_{\tau_1}^{\tau_2} \int_{\mcN_v^H}  (r-M)^{-p+1} (\underline{L}  \Phi_0^H )^2 \, d\omega du dv \\ & + \frac{1}{\beta} \int_{\tau_1}^{\tau_2} \int_{\mcN_v^H}  (r-M)^{-p-1}  D^2 ( L \phi_{*} )^2 \cdot  \left( \frac{2r}{D} \underline{L} \phi_{*} \right)^2  \, d\omega du dv  \\ \leq & \beta \int_{\tau_1}^{\tau_2} \int_{\mcN_v^H}  (r-M)^{-p+1} (\underline{L}  \Phi_0^H )^2 \, d\omega du dv \\ & + \frac{1}{\beta} C E_0 \epsilon^2 \int_{\tau_1}^{\tau_2} \int_{\mcN_v^H}  (r-M)^{-p+1}   ( L\phi_{*} )^2  \, D d\omega du dv  \\ \leq & \beta \int_{\tau_1}^{\tau_2} \int_{\mcN_v^H}  (r-M)^{-p+1} (\underline{L}  \Phi_0^H )^2 \, d\omega du dv \\ & + \frac{1}{\beta} C E_0 \epsilon^2 \int_{\mathcal{A}_{\tau_1}^{\tau_2}} (r-M)^{-p+3}   ( L \psi_{*} )^2  \, d\mu_{\mathcal{A}} \\ \leq & \beta \int_{\tau_1}^{\tau_2} \int_{\mcN_v^H}  (r-M)^{-p+1} (\underline{L}  \Phi_0^H )^2 \, d\omega du dv \\ & + \frac{1}{\beta} \frac{C^2 E_0^2 \epsilon^4}{(1+\tau_1 )^{3-\delta_1}} ,
\end{align*}
\endgroup
where we used the decay from the Morawetz estimate \eqref{dec:mor} as $p \in (0, 1 -\delta_1 ] \Rightarrow 3-p \in 2+\delta_1$, and this gives us better decay than desired.

For the eighth term:
\begin{equation*}
\begin{split}
\int_{\tau_1}^{\tau_2} \int_{\mcN_v^H}  (r-M)^{-p} (\underline{L}  \Phi_0^H ) \cdot & \frac{A_0}{r} \cdot \phi_{*}  \cdot  ( \underline{L} L \phi_{*} )  \, d\omega du dv \mbox{  and  } \\ & \int_{\tau_1}^{\tau_2} \int_{\mcN_v^H}  (r-M)^{-p} (\underline{L}  \Phi_0^H ) \cdot \frac{A_{*}}{r} \cdot \phi_{*} \cdot  ( \underline{L} L \phi_{*} )  \, d\omega du dv 
\end{split}
\end{equation*}
we can use the equation to expand $\underline{L} L \phi_{*}$, and we get terms with better decay than required.

For the ninth term, due to the assumptions on $A$ we have that
\begin{equation*}
\begin{split}
\Bigg| \int_{\tau_1}^{\tau_2} \int_{\mcN_v^H}  (r-M)^{-p} (\underline{L}  \Phi_0^H ) & \cdot \left( \frac{\underline{L}A_{*}}{r} + \frac{A_{*} \cdot D}{2r^2} \right) \cdot \left[ \phi \cdot  (  L \phi ) \right]_{*}  \, d\omega du dv \Bigg| \\ \lesssim & \int_{\tau_1}^{\tau_2} \int_{\mcN_v^H}  (r-M)^{-p} (\underline{L}  \Phi_0^H )  \cdot D\cdot \left[ \phi \cdot  (  L \phi ) \right]_{*}  \, d\omega du dv ,
\end{split}
\end{equation*}
and the resulting term can be treated as the terms in the \eqref{A2'} bootstrap. 

For the tenth term we have that
\begingroup
\allowdisplaybreaks
\begin{align*}
\Bigg| \int_{\tau_1}^{\tau_2} & \int_{\mcN_v^H}  (r-M)^{-p} (\underline{L}  \Phi_0^H )  \cdot \frac{A_0}{r^2} \cdot 2D  \phi_{*} \cdot  (  \underline{L} \phi_{*} )   \, d\omega du dv  \Bigg| \\ \leq & \beta \int_{\tau_1}^{\tau_2} \int_{\mcN_v^H}  (r-M)^{-p+1} (\underline{L}  \Phi_0^H )^2 \, d\omega du dv  + \frac{1}{\beta}  \int_{\tau_1}^{\tau_2} \int_{\mcN_v^H}  (r-M)^{-p-1}  \cdot D^2  \phi_{*}^2 \cdot  (  \underline{L} \phi_{*} )^2   \, d\omega du dv \\ \leq & \beta \int_{\tau_1}^{\tau_2} \int_{\mcN_v^H}  (r-M)^{-p+1} (\underline{L}  \Phi_0^H )^2 \, d\omega du dv  + \frac{1}{\beta} \frac{C E_0 \epsilon^2}{(1+\tau_1 )^{2-\delta_1}}  \int_{\tau_1}^{\tau_2} \int_{\mcN_v^H}  (r-M)^{-p+3}   (  \underline{L} \phi_{*} )^2   \, d\omega du dv \\ \leq & \beta \int_{\tau_1}^{\tau_2} \int_{\mcN_v^H}  (r-M)^{-p+1} (\underline{L}  \Phi_0^H )^2 \, d\omega du dv  + \frac{1}{\beta} \frac{C^2 E_0^2 \epsilon^4}{(1+\tau_1 )^{5-2\delta_1-p}}  ,
\end{align*}
\endgroup
where we used the pointwise decay estimate \eqref{dec:psi} and the energy decay estimate \eqref{dec:enp}. Since $\delta_1 > 0$ is small enough, we obviously have that $5-2\delta_1 - p > 1-\delta_1 -p$ for $p \in (0,1-\delta_1 ]$, which is better than desired.

For the eleventh term, due to the assumptions on $A$ we have that
\begin{equation*}
\begin{split}
\Bigg| \int_{\tau_1}^{\tau_2} \int_{\mcN_v^H}  (r-M)^{-p} (\underline{L}  \Phi_0^H ) & \cdot \left( \frac{\underline{L}A_{*} \cdot D}{2r^2} - \frac{A_{*} \cdot D D'}{4r^2} + \frac{A_{*} \cdot D^2}{2r^3} \right) \cdot \left[ \phi^2 \right]_0 \, d\omega du dv \Bigg| \\ \lesssim & \int_{\tau_1}^{\tau_2} \int_{\mcN_v^H}  (r-M)^{-p} (\underline{L}  \Phi_0^H ) D^3 \cdot \left[ \phi^2 \right]_{*} \, d\omega du dv ,
\end{split}
\end{equation*}
and the resulting term can be treated similarly to the terms of the \eqref{A2'} bootstrap.

For the twelfth term we note that we have that
 \begingroup
\allowdisplaybreaks
\begin{align*}
\Bigg| \int_{\tau_1}^{\tau_2} & \int_{\mcN_v^H}  (r-M)^{-p} (\underline{L}  \Phi_0^H )  \cdot A_{*} \cdot \langle \underline{L} \slashed{\nabla} \phi_{*} , \slashed{\nabla} \phi_{*} \rangle \, d\omega du dv \Bigg| \\ \lesssim & \int_{\tau_1}^{\tau_2} \int_{\mcN_v^H}  (r-M)^{-p} |\underline{L}  \Phi_0^H |  \cdot | \underline{L} \Omega \phi_{*} | \cdot | \Omega \phi_{*} | \, d\omega du dv  +  \int_{\tau_1}^{\tau_2} \int_{\mcN_v^H}  (r-M)^{-p} |\underline{L}  \Phi_0^H |  \cdot D | \Omega \phi_{*} | \cdot | \Omega \phi_{*} | \, d\omega du dv \\ \lesssim & \beta  \int_{\tau_1}^{\tau_2} \int_{\mcN_v^H}  (r-M)^{-p+1} ( \underline{L}  \Phi_0^H )^2 \, d\omega du dv  + \frac{2}{\beta} \int_{\tau_1}^{\tau_2} \int_{\mcN_v^H}  (r-M)^{-p-1}  \cdot | \underline{L} \Omega \phi_{*} |^2 \cdot | \Omega \phi_{*} |^2 \, d\omega du dv \\ & + \frac{2}{\beta} \int_{\tau_1}^{\tau_2} \int_{\mcN_v^H}  (r-M)^{-p-1}  \cdot D^2 |  \Omega \phi_{*} |^2 \cdot | \Omega \phi_{*} |^2 \, d\omega du dv \lesssim  \beta  \int_{\tau_1}^{\tau_2} \int_{\mcN_v^H}  (r-M)^{-p+1} ( \underline{L}  \Phi_0^H )^2 \, d\omega du dv \\ & + \frac{2}{\beta} \int_{\tau_1}^{\tau_2} \int_{\mcN_v^H}  (r-M)^{-p+3}  \cdot | \frac{2r}{D} \underline{L} \Omega \phi_{*} |^2 \cdot | \Omega \phi_{*} |^2 \, d\omega du dv  + \frac{2}{\beta} \int_{\tau_1}^{\tau_2} \int_{\mcN_v^H}  (r-M)^{-p+3}  \cdot  |  \Omega \phi_{*} |^2 \cdot | \Omega \phi_{*} |^2 \, d\omega du dv \\ \lesssim & \beta  \int_{\tau_1}^{\tau_2} \int_{\mcN_v^H}  (r-M)^{-p+1} ( \underline{L}  \Phi_0^H )^2 \, d\omega du dv \\ & + \frac{2}{\beta} C E_0 \epsilon^2 \int_{\tau_1}^{\tau_2} \int_{\mcN_v^H}  (r-M)^{-p+3}    | \Omega \phi_{*} |^2 \, d\omega du dv  + \frac{2}{\beta} \frac{C E_0 \epsilon^2}{(1+\tau_1 )^{2-\delta_1}} \int_{\tau_1}^{\tau_2} \int_{\mcN_v^H}  (r-M)^{-p+3}   | \Omega \phi_{*} |^2 \, d\omega du dv \\ \lesssim & \beta  \int_{\tau_1}^{\tau_2} \int_{\mcN_v^H}  (r-M)^{-p+1} ( \underline{L}  \Phi_0^H )^2 \, d\omega du dv  + \frac{2}{\beta} \frac{C^2 E_0^2 \epsilon^4}{(1+\tau_1 )^{3-\delta_1}}  + \frac{2}{\beta} \frac{C E_0 \epsilon^2}{(1+\tau_1 )^{5-2\delta_1}} , 
\end{align*}
\endgroup
where we used the Morawetz decay estimate \eqref{dec:mor}, the pointwise decay estimate \eqref{dec:psi1}, and the boundedness estimate \eqref{est:boundy}.

Finally for the thirteenth term we note that
\begin{equation*}
\begin{split}
\Bigg| \int_{\tau_1}^{\tau_2} \int_{\mcN_v^H}  (r-M)^{-p} (\underline{L}  \Phi_0^H ) & \cdot \underline{L} A_{*} \cdot \langle  \slashed{\nabla} \phi_{*} , \slashed{\nabla} \phi_{*} \rangle \, d\omega du dv \Bigg| \\ \lesssim & \int_{\tau_1}^{\tau_2} \int_{\mcN_v^H}  (r-M)^{-p} |\underline{L}  \Phi_0^H | \cdot D \cdot | \Omega \phi_{*} | \cdot | \Omega \phi_{*} |\, d\omega du dv ,
\end{split}
\end{equation*}
and the last term was dealt with in the context of the twelfth term.

\eqref{B2'} Using the form of $F_{\geq 1}$ we have that:
 \begingroup
\allowdisplaybreaks
\begin{align*}
\sum_{k \leq 5} \int_{\tau_1}^{\tau_2} \int_{\mcN_v^H} & (r-M)^{-p} ( \underline{L} \Omega^k \phi_{\geq 1} ) \cdot D  ( \Omega^k F_{\geq 1} ) \, d\omega du dv  \\ = & \sum_{k_1 + k_2 = k} \int_{\tau_1}^{\tau_2} \int_{\mcN_v^H} (r-M)^{-p}  ( \underline{L} \Omega^k \phi_{\geq 1} ) \cdot   \frac{A_{*}}{2r^2} \cdot D \cdot ( L \Omega^{k_1} \phi_{*} ) \cdot \left( \frac{2r}{D} \underline{L} \Omega^{k_2} \phi_{*} \right) \, d\omega du dv  \\ & - \sum_{k_1 + k_2 = k}\int_{\tau_1}^{\tau_2} \int_{\mcN_v^H} (r-M)^{-p}  ( \underline{L} \Omega^k \phi_{\geq 1} ) \cdot  \frac{A_{*}}{4r^3} \cdot D^2 \cdot ( \Omega^{k_1} \phi_{*} ) \cdot \left( \frac{2r}{D} \underline{L} \Omega^{k_2} \phi_{*} \right) \, d\omega du dv \\ & - \sum_{k_1 + k_2 = k} \int_{\tau_1}^{\tau_2} \int_{\mcN_v^H} (r-M)^{-p}  ( \underline{L} \Omega^k \phi_{\geq 1} ) \cdot \frac{A_{*}}{2r^2} \cdot D \cdot (\Omega^{k_1} \phi_{*} )  \cdot ( L \Omega^{k_2} \phi_{*} ) \, d\omega du dv \\ & - \sum_{k_1 + k_2 = k} \int_{\tau_1}^{\tau_2} \int_{\mcN_v^H} (r-M)^{-p}  ( \underline{L} \Omega^k \phi_{\geq 1} ) \cdot  \frac{A_{*}}{4r^3} \cdot D^2 \cdot ( \Omega^{k_1} \phi_{*} )  \cdot ( \Omega^{k_2} \phi_{*} ) \, d\omega du dv \\ & + \sum_{k_1 + k_2 = k}\int_{\tau_1}^{\tau_2} \int_{\mcN_v^H} (r-M)^{-p}  ( \underline{L} \Omega^k \phi_{\geq 1} ) \cdot   \frac{A_{*}}{4r} \cdot D \langle \slashed{\nabla} \Omega^{k_1} \phi_{*} , \slashed{\nabla} \Omega^{k_2} \phi_{*} \rangle \, d\omega du dv \\ & + \sum_{\substack{k_1 + k_2 = k \\ k_1 \geq 1}}\int_{\tau_1}^{\tau_2} \int_{\mcN_v^H} (r-M)^{-p}  ( \underline{L} \Omega^k \phi_{\geq 1} ) \cdot  ( \Omega^{k_1} A_{*} ) \cdot D ( \Omega^{k_2} F^c_{*} ) \, d\omega du dv,
\end{align*}
\endgroup
where we use $*$ as a subscript to denote the different angular frequency localizations that always add up to the case of $\geq 1$.

For the first term we have that:
\begingroup
\allowdisplaybreaks
\begin{align*}
\sum_{k_1 + k_2 = k} \int_{\tau_1}^{\tau_2} \int_{\mcN_v^H} & (r-M)^{-p}  ( \underline{L} \Omega^k \phi_{\geq 1} ) \cdot \frac{A_{*}}{2r^2}  D \cdot ( L \Omega^{k_1} \phi_{*} ) \cdot \left( \frac{2r}{D} \Omega^{k_2} \underline{L} \phi_{*} \right) \, d\omega du dv \\ \lesssim & \beta \int_{\tau_1}^{\tau_2} \int_{\mcN_v^H} (r-M)^{-p+1}  ( \underline{L} \Omega^k \phi_{\geq 1} )^2  \, d\omega du dv \\ & +  \int_{\tau_1}^{\tau_2} \int_{\mcN_v^H}  (r-M)^{1+\delta_1}  ( L \Omega^k \phi_{*} )^2 \cdot ( r-M)^{2-\delta_1 -p} \left( \frac{2r}{D} \underline{L} \phi_0 \right)^2  \, d\omega du dv   \\ \lesssim & \beta \int_{\tau_1}^{\tau_2} \int_{\mcN_v^H} (r-M)^{-p+1}  ( \underline{L} \Omega^k \phi_{\geq 1} )^2  \, d\omega du dv \\ & + \frac{C^2 E_0^2 \epsilon^4}{\tau_1^{3-\delta_1}} \mbox{  if $p \in [1,2-\delta_1 ]$ and we get the bound:} \\ \lesssim & \beta \int_{\tau_1}^{\tau_2} \int_{\mcN_v^H} (r-M)^{-p+1}  ( \underline{L} \Omega^k \phi_{\geq 1} )^2  \, d\omega du dv \\ & + \frac{C^2 E_0^2 \epsilon^4}{\tau_1^{4-3\delta_1}} \mbox{  if $p \in (0,1)$,} 
\end{align*}
\endgroup
where we used the boundedness estimate \eqref{est:boundy}, and the Morawetz decay estimate \eqref{dec:mor} in the case of $p \in [1,2-\delta_1]$, and where we used the auxiliary estimate \eqref{est:auxl} for $q = 1-\delta_1$ and the Morawetz decya estimate \eqref{dec:mor} in the case of $p \in (0,1)$. Note that we got better decay than required in both cases as for $p \in [1,2-\delta_1]$ we have that $3-\delta_1 > 3+\delta_2 -p$, and for $p \in (0,1)$ we have that $4-3\delta_1 > 3 + \delta_2 -p$ due to the smallness of $\delta_1$ and $\delta_2$. For the second term we have that
\begingroup
\allowdisplaybreaks
\begin{align*}
\sum_{k_1+k_2 = k} \int_{\tau_1}^{\tau_2} \int_{\mcN_v^H} & (r-M)^{-p}    ( \underline{L} \Omega^k \phi_{\geq 1} ) \cdot \frac{A_{*}}{4r^3} \cdot D^2 \cdot ( \Omega^{k_1} \phi_{*} )  \cdot \left( \frac{2r}{D} \underline{L} \Omega^{k_2} \phi_{*} \right) \, d\omega du dv \\ \lesssim & \beta \int_{\tau_1}^{\tau_2} \int_{\mcN_v^H} (r-M)^{-p+1}    ( \underline{L} \Omega^k \phi_{\geq 1} )^2 \, d\omega du dv \\ & + \frac{1}{\beta} \sum_{k_1 + k_2 = k} \int_{\tau_1}^{\tau_2} \int_{\mcN_v^H}  (r-M)^{-p-1}     D^4 \cdot ( \Omega^{k_1} \phi_{*} )^2  \cdot \left( \frac{2r}{D} \underline{L} \Omega^{k_2} \phi_{*} \right)^2 \, d\omega du dv  \\ \lesssim & \beta \int_{\tau_1}^{\tau_2} \int_{\mcN_v^H} (r-M)^{-p+1}    ( \underline{L} \Omega^k \phi_{\geq 1} )^2 \, d\omega du dv \\ & + \frac{1}{\beta} \sum_{k_1 + k_2 = k}\int_{\tau_1}^{\tau_2} \int_{\mcN_v^H}  (r-M)^{-p+3}     ( \Omega^{k_1} \phi_{*} )^2  \cdot (\underline{L} \Omega^{k_2} \phi_{*} )^2 \, d\omega du dv \\ \lesssim & \beta \int_{\tau_1}^{\tau_2} \int_{\mcN_v^H} (r-M)^{-p+1}    ( \underline{L} \Omega^k \phi_{\geq 1} )^2 \, d\omega du dv \\ & + \frac{C E_0 \epsilon^2}{(1+\tau_1 )^{2-\delta_1}} \sum_{m \leq 5}\int_{\tau_1}^{\tau_2} \int_{\mcN_v^H}  (r-M)^{-p+3}      (\underline{L} \Omega^m \phi_{*} )^2 \, d\omega du dv \\ \lesssim & \beta \int_{\tau_1}^{\tau_2} \int_{\mcN_v^H} (r-M)^{-p+1}    ( \underline{L} \Omega^k \phi_{\geq 1} )^2 \, d\omega du dv \\ & + \frac{C E_0 \epsilon^2}{(1+\tau_1 )^{2-\delta_1}} \frac{C E_0 \epsilon^2}{(1+\tau_1 )^{3-\delta_1}} ,
\end{align*}
\endgroup
where we used Sobolev's inequality \eqref{est:sobolev}, the pointwise estimate \eqref{dec:psi} and the Morawetz decay estimate \eqref{dec:mor} as $p \in (0,2-\delta_1] \Rightarrow 3-p \geq 1+\delta_1$. The resulting decay is better than desired as by the smallness of $\delta_1$ we have that $5-2\delta_1 > 3+\delta_2-p$ for $p \in (0,2-\delta_1 ]$. For the third term we have that
\begingroup
\allowdisplaybreaks
\begin{align*}
\sum_{k_1 + k_2 = k}\int_{\tau_1}^{\tau_2} \int_{\mcN_v^H} & (r-M)^{-p}   ( \underline{L} \Omega^k \phi_{\geq 1} ) \cdot  \frac{A_{*}}{2r^2} \cdot D ( \Omega^{k_1} \phi_{*} )  \cdot ( L \Omega^{k_2} \phi_{*} ) \, d\omega du dv \\ \lesssim & \beta \int_{\tau_1}^{\tau_2} \int_{\mcN_v^H} (r-M)^{-p+1}   ( \underline{L} \Omega^k \phi_{\geq 1} )^2 \, d\omega du dv \\ & + \frac{1}{\beta} \sum_{k_1 + k_2 = k}\int_{\tau_1}^{\tau_2} \int_{\mcN_v^H} (r-M)^{-p-1}  D^2 ( \Omega^{k_1} \phi_{*} )^2  \cdot ( L \Omega^k \phi_{*} )^2 \, d\omega du dv \\ \lesssim & \beta \int_{\tau_1}^{\tau_2} \int_{\mcN_v^H} (r-M)^{-p}   ( \underline{L} \Omega^k \phi_{\geq 1} )^2 \, d\omega du dv \\ & + \frac{C E_0 \epsilon^2}{(1+\tau_1 )^{2-\delta_1}} \sum_{m \leq 5} \int_{\tau_1}^{\tau_2} \int_{\mcN_v^H} (r-M)^{-p+3}   ( L \Omega^m \phi_{*} )^2 \, d\omega du dv \\ \lesssim & \beta \int_{\tau_1}^{\tau_2} \int_{\mcN_v^H} (r-M)^{-p}   ( \underline{L} \Omega^k \phi_{\geq 1} )^2 \, d\omega du dv \\ & + \frac{C E_0 \epsilon^2}{(1+\tau_1 )^{2-\delta_1}} \frac{C E_0 \epsilon^2}{(1+\tau_1 )^{3-\delta_1}} ,
\end{align*}
\endgroup
where we used the pointwise decay estimate \eqref{dec:psi} and the Morawetz decay estimate \eqref{dec:mor} (as $0 < p \leq 2-\delta_1$) and as before we note that we get better decay than required as $5-2\delta_1 > 3+\delta_2-p$ for $p \in (0,2-\delta_1 ]$. For the fourth term we have that
\begingroup
\allowdisplaybreaks
\begin{align*}
\sum_{k_1 + k_2 = k} & \int_{\tau_1}^{\tau_2}  \int_{\mcN_v^H}  (r-M)^{-p}  ( \underline{L} \Omega^k \phi_{\geq 1} ) \cdot   \frac{A_{*}}{4r^3} \cdot D^2 \cdot ( \Omega^{k_1} \phi_{*} ) \cdot (\Omega^{k_2} \phi_{*} ) \, d\omega du dv \\ \lesssim & \beta \int_{\tau_1}^{\tau_2} \int_{\mcN_v^H}  (r-M)^{-p+1}   ( \underline{L} \Omega^k \phi_{\geq 1} )^2 \, d\omega du dv  + \frac{1}{\beta} \sum_{k_1 + k_2 = k}\int_{\tau_1}^{\tau_2} \int_{\mcN_v^H} (r-M)^{-p-1}   D^4 ( \Omega^{k_1} \phi_{*} )^2  \cdot (  \Omega^{k_2} \phi_{*} ) \, d\omega du dv \\ \lesssim & \beta \int_{\tau_1}^{\tau_2} \int_{\mcN_v^H}  (r-M)^{-p+1}   ( \underline{L} \Omega^k \phi_{\geq 1} )^2 \, d\omega du dv \\ & + \frac{C E_0 \epsilon^2}{(1+\tau_1 )^{2-\delta_1}} \sum_{m \leq 5} \int_{\tau_1}^{\tau_2} \int_{\mcN_v^H} (r-M)^{-p+7}   ( \Omega^m \phi_{*} )^2 \, d\omega du dv  \lesssim  \beta \int_{\tau_1}^{\tau_2} \int_{\mcN_v^H} (r-M)^{-p+1}   ( \underline{L} \Omega^k \phi_{\geq 1} )^2 \, d\omega du dv \\ & + \frac{C E_0 \epsilon^2}{(1+\tau_1 )^{2-\delta_1}} \sum_{m \leq 5} \int_{\tau_1}^{\tau_2} \int_{\mcN_v^H} (r-M)^{-p+5}  ( \Omega^m \underline{L} \phi_{*} )^2 \, d\omega du dv   \lesssim  \beta \int_{\tau_1}^{\tau_2} \int_{\mcN_v^H}  (r-M)^{-p+1}   ( \underline{L} \Omega^k \phi_{\geq 1} )^2 \, d\omega du dv \\ & + \frac{C E_0 \epsilon^2}{(1+\tau_1 )^{2-\delta_1}} \frac{C E_0 \epsilon^2}{(1+\tau_1 )^{3-\delta_1}} ,
\end{align*}
\endgroup
where we used the pointwise decay estimate \eqref{dec:psi}, Hardy's inequality \eqref{hardy}, the Morawetz decay estimate \eqref{dec:mor} (as $0 < p \leq 2-\delta_1$), and we got again better decay than required as $5-2\delta_1 > 3+\delta_2-p$ for $p \in (0,2-\delta_1 ]$. For the fifth term we have that
\begingroup
\allowdisplaybreaks
\begin{align*}
\sum_{k_1 + k_2 = k} & \int_{\tau_1}^{\tau_2}  \int_{\mcN_v^H} (r-M)^{-p}  ( \underline{L} \Omega^k \phi_{\geq 1} ) \cdot   \frac{A_{*}}{4r} \cdot  D \langle \slashed{\nabla} \Omega^{k_1} \phi_{*} , \slashed{\nabla} \Omega^{k_2} \phi_{*} \rangle \, d\omega du dv \\ \lesssim & \beta \int_{\tau_1}^{\tau_2} \int_{\mcN_v^H} (r-M)^{-p+1}  ( \underline{L} \Omega^k \phi_{\geq 1} )^2 \, d\omega du dv  + \frac{1}{\beta} \sum_{k_1 + k_2 = k}\int_{\tau_1}^{\tau_2} \int_{\mcN_v^H} (r-M)^{-p+3}  | \slashed{\nabla} \Omega^{k_1} \phi_{*} |^2 \cdot | \slashed{\nabla} \Omega^{k_2} \phi_{*} |^2 \, d\omega du dv \\ \lesssim & \beta \int_{\tau_1}^{\tau_2} \int_{\mcN_v^H} (r-M)^{-p+1}  ( \underline{L} \Omega^k \phi_{\geq 1} )^2 \, d\omega du dv  + \frac{1}{\beta} \frac{C E_0 \epsilon^2}{\tau_1^{2+\delta_2}} \sum_{ 3 \leq l \leq 5}\int_{\tau_1}^{\tau_2} \int_{\mcN_v^H} (r-M)^{-p+3}  | \slashed{\nabla} \Omega^{l} \phi_{\gtrsim 1} |^2  \, d\omega du dv \\ \lesssim & \beta \int_{\tau_1}^{\tau_2} \int_{\mcN_v^H} (r-M)^{-p+1}  ( \underline{L} \Omega^k \phi_{\geq 1} )^2 \, d\omega du dv  + \frac{C^2 E_0^2 \epsilon^4}{\tau_1^{2+\delta_2 + 3+\delta_2 -p}} , 
\end{align*}
\endgroup
where we used the pointwise decay estimates \eqref{dec:psi} and the energy decay provided by the estimates \eqref{dec:en1p}. Finally the last term
\begin{equation*}
\sum_{\substack{k_1 + k_2 = k \\ k_1 \geq 1}}\int_{\tau_1}^{\tau_2} \int_{\mcN_v^H} (r-M)^{-p}  ( \underline{L} \Omega^k \phi_{\geq 1} ) \cdot  ( \Omega^{k_1} A_{*} ) \cdot D ( \Omega^{k_2} F^c_{*} ) \, d\omega du dv
\end{equation*}
involves terms that can be treated similarly to the previous terms.

\eqref{B1'}: We examine the term close to the horizon as the term away from the horizon can be treated by rather classical methods. Moreover we examine in detail only the terms involving the product of $L$ and $\underline{L}$ derivatives since the rest are similar or easier. These terms are:
\begingroup
\allowdisplaybreaks
\begin{align*}
\int_{\mathcal{A}_{\tau_1}^{\tau_2}} & (r-M)^{-1-\delta} ( L \Omega^k \phi_{\gtrsim 1} )^2 \cdot ( \underline{L} \phi_0 )^2 \, d\omega du dv , \\ \int_{\mathcal{A}_{\tau_1}^{\tau_2}} & (r-M)^{-1-\delta} ( L \phi_0 )^2 \cdot ( \underline{L} \Omega^k \phi_{\gtrsim 1} )^2 \, d\omega du dv ,\\ & \sum_{k_1 + k_2 = k} \int_{\mathcal{A}_{\tau_1}^{\tau_2}}  (r-M)^{-1-\delta} ( L \Omega^{k_1} \phi_{\gtrsim 1} )^2 \cdot ( \underline{L} \Omega^{k_2} \phi_{\gtrsim 1} )^2 \, d\omega du dv  ,  \\ & \int_{\mathcal{A}_{\tau_1}^{\tau_2}}  (r-M)^{-1-\delta} ( L \phi_0 )^2 \cdot ( \underline{L} \phi_0 )^2 \, d\omega du dv 
\end{align*}
\endgroup
For the first term we have that
\begingroup
\allowdisplaybreaks
\begin{align*}
\int_{\mathcal{A}_{\tau_1}^{\tau_2}} & (r-M)^{-1-\delta} ( L \Omega^k \phi_{\gtrsim 1} )^2 \cdot ( \underline{L} \phi_0 )^2 \, d\omega du dv \lesssim \int_{\mathcal{A}_{\tau_1}^{\tau_2}}  (r-M)^{-1-\delta} ( L \Omega^k \phi_{\gtrsim 1} )^2 \cdot \left( \frac{2r}{D} \underline{L} \phi_0 \right)^2 \, d\omega du dv \\ \lesssim & C E_0 \epsilon^2 \int_{\mathcal{A}_{\tau_1}^{\tau_2}}  (r-M)^{3-\delta} ( L \Omega^k \phi_{\gtrsim 1} )^2 \, d\omega du dv \lesssim  \frac{C^2 E_0^2 \epsilon^4}{\tau_1^{3+\delta_2}} ,
\end{align*}
\endgroup
where we used the boundedness estimate \eqref{est:boundy} and the Morawetz decay estimates \eqref{dec:mor11}. For the second term we have that
\begingroup
\allowdisplaybreaks
\begin{align*}
\int_{\mathcal{A}_{\tau_1}^{\tau_2}} & (r-M)^{-1-\delta} ( L  \phi_0 )^2 \cdot  ( \underline{L} \Omega^k \phi_{\gtrsim 1} )^2 \, d\omega du dv \lesssim \int_{\mathcal{A}_{\tau_1}^{\tau_2}}  (r-M)^{1+\delta} ( L  \phi_0 )^2 \cdot D^{1-\delta} \left( \underline{L} \Omega^k \phi_{\gtrsim 1} \right)^2 \, d\omega du dv \\ \lesssim & \frac{C E_0 \epsilon^2}{\tau_1^{3/2-\delta_1 /2}}  \int_{\mathcal{A}_{\tau_1}^{\tau_2}} (r-M)^{1+\delta} ( L  \phi_0 )^2 \, d\omega du dv \lesssim  \frac{C^2 E_0^2 \epsilon^4}{\tau_1^{9/2 - 3 \delta_1 / 2}} \lesssim \frac{C^2 E_0^2 \epsilon^4}{\tau_1^{3+\delta_2}} , 
\end{align*}
\endgroup
where we used the auxiliary estimate \eqref{est:auxl}, the Morawetz decay estimate \eqref{dec:mor}, and the fact that $\delta_1$ and $\delta_2$ are small enough. The third and fourth terms can treated similarly to the first two.

\eqref{B3'}: We use again the form of $F_{\geq 1}$ and we have for subscript $*$ being the angular frequency localization that adds up to $\geq 1$ that:
\begingroup
\allowdisplaybreaks
\begin{align*}
\int_{\tau_1}^{\tau_2} \int_{\mcN_v^H}  & (r-M)^{-p}  (\underline{L}  \Omega^k \Phi_{\geq 1}^H )  \cdot \underline{L} ( r^2  \Omega^k F_{\geq 1} ) \, d\omega du dv \\ = &  \sum_{k_1 + k_2 = k} \int_{\tau_1}^{\tau_2} \int_{\mcN_v^H}  (r-M)^{-p} (\underline{L}  \Omega^k \Phi_{\geq 1}^H ) \cdot \frac{A_{*}}{r} \cdot ( L \Omega^{k_1} \phi_{*} ) \cdot \left( \underline{L} \left( \frac{2r}{D} \underline{L} \Omega^{k_2} \phi_{*} \right) \right) \, d\omega du dv  \\ & + \sum_{k_1 + k_2 = k} \int_{\tau_1}^{\tau_2} \int_{\mcN_v^H}  (r-M)^{-p} (\underline{L} \Omega^k  \Phi_{\geq 1}^H ) \cdot \frac{A_0}{r} \cdot  ( \underline{L} L \Omega^{k_1} \phi_{*} ) \cdot  \left( \frac{2r}{D} \underline{L} \Omega^{k_2} \phi_{*} \right)   \, d\omega du dv \\ & + \int_{\tau_1}^{\tau_2} \int_{\mcN_v^H}  (r-M)^{-p} (\underline{L}\Omega^k   \Phi_{\geq 1}^H ) \cdot \left( \frac{\underline{L}A_{*}}{r} + \frac{A_{*} \cdot D}{2r^2} \right) \cdot  \Omega^k \left[ ( L\phi ) \cdot  \left( \frac{2r}{D} \underline{L} \phi \right) \right]_{*} \, d\omega du dv  \\ & - \sum_{k_1 + k_2 = k} \int_{\tau_1}^{\tau_2} \int_{\mcN_v^H}  (r-M)^{-p} (\underline{L} \Omega^k  \Phi_{\geq 1}^H ) \cdot \frac{A_{*}}{2r^2} \cdot D ( \Omega^{k_1} \phi_{*} ) \cdot \left( \underline{L} \left( \frac{2r}{D} \underline{L} \Omega^{k_2} \phi_{*} \right) \right)  \, d\omega du dv  \\ & - \sum_{k_1 + k_2 = k} \int_{\tau_1}^{\tau_2} \int_{\mcN_v^H}  (r-M)^{-p} (\underline{L} \Omega^k  \Phi_{\geq 1}^H ) \cdot \frac{A_{*}}{2r^2} \cdot D ( \underline{L} \Omega^{k_1} \phi_{*} ) \cdot  \left( \frac{2r}{D} \underline{L} \Omega^{k_2} \phi_{*} \right)  \, d\omega du dv \\ & + \int_{\tau_1}^{\tau_2} \int_{\mcN_v^H}  (r-M)^{-p} (\underline{L}  \Omega^k \Phi_{\geq 1}^H ) \cdot \left(\frac{\underline{L}A_{*} \cdot D}{2r^2}+\frac{A_{*} \cdot D^2}{2r^3} - \frac{A_{*} \cdot D D'}{4r^2}  \right) \Omega^k \left[ \phi \cdot  \left( \frac{2r}{D} \underline{L} \phi \right) \right]_{*}  \, d\omega du dv  \\ &  - \sum_{k_1 + k_2 = k} \int_{\tau_1}^{\tau_2} \int_{\mcN_v^H}  (r-M)^{-p} (\underline{L}  \Omega^k \Phi_{\geq 1}^H ) \cdot \frac{A_{*}}{r} \cdot ( \underline{L} \Omega^{k_1} \phi_{*} ) \cdot  (L \Omega^{k_2} \phi_{*} )  \, d\omega du dv  \\ &  - \sum_{k_1 + k_2 = k} \int_{\tau_1}^{\tau_2} \int_{\mcN_v^H}  (r-M)^{-p} (\underline{L} \Omega^k  \Phi_{\geq 1}^H ) \cdot \frac{A_{*}}{r} \cdot \Omega^{k_1} \phi_{*} \cdot  ( \underline{L} L \Omega^{k_2} \phi_{*} )  \, d\omega du dv \\ &  - \int_{\tau_1}^{\tau_2} \int_{\mcN_v^H}  (r-M)^{-p} (\underline{L} \Omega^k  \Phi_{\geq 1}^H ) \cdot \left( \frac{\underline{L}A_{*}}{r} + \frac{A_{*} \cdot D}{2r^2} \right) \cdot \Omega^k \left[ \phi \cdot  (  L \phi ) \right]_{*}  \, d\omega du dv   \\ &  - \sum_{k_1 + k_2 = k} \int_{\tau_1}^{\tau_2} \int_{\mcN_v^H}  (r-M)^{-p} (\underline{L}  \Omega^k \Phi_{\geq 1}^H ) \cdot \frac{A_{*}}{r^2} \cdot 2D  \Omega^{k_1} \phi_{*} \cdot  (  \underline{L} \Omega^{k_2} \phi_{*} )   \, d\omega du dv \\ &  - \int_{\tau_1}^{\tau_2} \int_{\mcN_v^H}  (r-M)^{-p} (\underline{L}  \Omega^k \Phi_{\geq 1}^H ) \cdot \left( \frac{\underline{L}A_{*} \cdot D}{2r^2} - \frac{A_{*} \cdot D D'}{4r^2} + \frac{A_{*} \cdot D^2}{2r^3} \right) \cdot \Omega^{k} \left[ \phi^2 \right]_{*} \, d\omega du dv  \\ & + \sum_{k_1 + k_2 = k} \int_{\tau_1}^{\tau_2} \int_{\mcN_v^H}  (r-M)^{-p} (\underline{L} \Omega^k  \Phi_{\geq 1}^H ) \cdot A_{*} \cdot \langle \underline{L} \slashed{\nabla} \Omega^{k_1} \phi_{*} , \slashed{\nabla} \Omega^{k_2} \phi_{*} \rangle \, d\omega du dv \\ & + \sum_{k_1 + k_2 = k} \int_{\tau_1}^{\tau_2} \int_{\mcN_v^H}  (r-M)^{-p} (\underline{L} \Omega^k  \Phi_{\geq 1}^H ) \cdot \underline{L} A_{*} \cdot \langle  \slashed{\nabla} \Omega^{k_1} \phi_{*} , \slashed{\nabla} \Omega^{k_2} \phi_{*} \rangle \, d\omega du dv  \\  & + \sum_{\substack{k_1 + k_2 = k \\ k_1 \geq 1}} \int_{\tau_1}^{\tau_2} \int_{\mcN_v^H}  (r-M)^{-p} (\underline{L} \Omega^k  \Phi_{\geq 1}^H ) \cdot \underline{L} \left( r^2 \Omega^k [  r^2 (\Omega^{k_1} A_{*} ) ( \Omega^{k_2} F^c_{*} ) ]  \right) \, d\omega du dv .
\end{align*}
\endgroup

The worst terms from the above are the ones that include $\underline{L} \left( \frac{2r}{D} \underline{L} \phi_0 \right)$, they come from the first term and the last term of the last expression, and their form is the following:
\begin{equation}\label{term:b4bad}
\sum_{k_1 + k_2 = k} \int_{\tau_1}^{\tau_2} \int_{\mcN_v^H}  (r-M)^{-p}  (\underline{L}  \Omega^k \Phi_{\geq 1}^H ) \cdot ( \Omega^{k_1} A_{*} ) \cdot ( L \Omega^{k_2} \phi_{*} ) \cdot \left( \underline{L} \left( \frac{2r}{D} \underline{L}\phi_0 \right) \right) \, d\omega du dv .
\end{equation}
We have for $p = 1+\delta_2$ that:
\begingroup
\allowdisplaybreaks
\begin{align*}
\sum_{k_1 + k_2 = k} \int_{\tau_1}^{\tau_2} \int_{\mcN_v^H}  (r-M)^{-1-\delta_2} & (\underline{L}  \Omega^k \Phi_{\geq 1}^H ) \cdot ( \Omega^{k_1} A_{*} ) \cdot ( L \Omega^{k_2} \phi_{*} ) \cdot \left( \underline{L} \left( \frac{2r}{D} \underline{L}\phi_0 \right) \right) \, d\omega du dv \\ \lesssim & \int_{\tau_1}^{\tau_2} \int_{\mcN_v^H}  (r-M)^{-1-\delta_2}  (\underline{L}  \Omega^k \Phi_{\geq 1}^H )^2 \cdot \frac{1}{v^{1+\beta}} \, d\omega du dv \\ & + \sum_{m \leq 5} \int_{\tau_1}^{\tau_2} \int_{\mcN_v^H}  (r-M)^{-1-\delta_2} ( L \Omega^{m} \phi_{*} )^2 \cdot \left( \underline{L} \left( \frac{2r}{D} \underline{L}\phi_0 \right) \right)^2 \cdot v^{1+\beta} \, d\omega du dv \\ \lesssim &  \frac{1}{\tau_1^{\beta}} \sup_{v \in [\tau_1 , \tau_2 ]} \int_{\mcN_v^H}  (r-M)^{-1-\delta_2}  (\underline{L}  \Omega^k \Phi_{\geq 1}^H )^2  \, d\omega du  \\ & + \sum_{m \leq 5}\int_{\tau_1}^{\tau_2} \int_{\mcN_v^H}  (r-M)^{1+\beta_1 - \delta_2} ( L \Omega^{m} \phi_{*} )^2 \cdot D^{1-\beta_1 /2}\left( \underline{L} \left( \frac{2r}{D} \underline{L}\phi_0 \right) \right)^2 \cdot v^{1+\beta} \, d\omega du dv \\ \lesssim &  \frac{1}{\tau_1^{\beta}} \sup_{v \in [\tau_1 , \tau_2 ]} \int_{\mcN_v^H}  (r-M)^{-1-\delta_2}  (\underline{L}  \Omega^k \Phi_{\geq 1}^H )^2  \, d\omega du  \\ & + C E_0 \epsilon^2 \sum_{m \leq 5} \int_{\tau_1}^{\tau_2} \int_{\mcN_v^H}  (r-M)^{1+\beta_1 - \delta_2} ( L \Omega^{m} \phi_{*} )^2  \cdot v^{1+\beta + \beta_1} \, d\omega du dv \\ \lesssim &  \frac{1}{\tau_1^{\beta}} \sup_{v \in [\tau_1 , \tau_2 ]} \int_{\mcN_v^H}  (r-M)^{-1-\delta_2}  (\underline{L}  \Omega^k \Phi_{\geq 1}^H )^2  \, d\omega du  \\ & + C E_0 \epsilon^2 \int_{\tau_1}^{\tau_2} \int_{\mathbb{S}^2}  \sup_u ( L \Omega^{k} \phi_{\geq 1} )^2  \cdot v^{1+\beta + \beta_1} \, d\omega dv \\ \lesssim &  \frac{1}{\tau_1^{\beta}} \sup_{v \in [\tau_1 , \tau_2 ]} \int_{\mcN_v^H}  (r-M)^{-1-\delta_2}  (\underline{L}  \Omega^k \Phi_{\geq 1}^H )^2  \, d\omega du   + C^2 E_0^2 \epsilon^4 ,
\end{align*} 
\endgroup
where first we used the growth estimate \eqref{est:boundyy}, we used the auxiliary estimate \eqref{est:aux1} for $m=0$, always choosing $\beta_1$ to be bigger than $\delta_2$ and $\beta$ and $\beta_1$ to be small enough such that $\beta + \beta_1 < 2 -\delta_1$ (note that these two conditions can be simultaneously satisfied due to the smallness of $\delta_1$ and $\delta_2$). On the other hand we consider the (out of range) case of $p=0$ and we have that:
\begin{equation*}
\begin{split}
\sum_{k_1 + k_2 = k}\int_{\tau_1}^{\tau_2} \int_{\mcN_v^H} & (\underline{L}  \Omega^k \Phi_{\geq 1}^H ) \cdot ( \Omega^{k_1} A_{*} ) \cdot ( L \Omega^{k_2} \phi_{*} ) \cdot \left( \underline{L} \left( \frac{2r}{D} \underline{L}\phi_0 \right) \right) \, d\omega du dv \\ \lesssim & \beta \int_{\tau_1}^{\tau_2} \int_{\mcN_v^H}  (r-M) (\underline{L}  \Omega^k \Phi_{\geq 1}^H )^2 \, d\omega du dv \\ & + \frac{1}{\beta} \sum_{m \leq 5} \int_{\tau_1}^{\tau_2} \int_{\mcN_v^H} (r-M)^{-1} ( L \Omega^{m} \phi_{*} )^2 \cdot \left( \underline{L} \left( \frac{2r}{D} \underline{L}\phi_0 \right) \right)^2 \, d\omega du dv .
\end{split}
\end{equation*}
The first term of the above expression can be absorbed by the left hand side that has the term of interest in \eqref{B3'} in its right hand side for any $p \in (0,1+\delta_2]$. For the second term we have for any $v \in [\tau_1 , \tau_2 ] $ that:
\begin{equation*}
\begin{split}
\int_{\mcN_v^H} (r-M)^{-1} & ( L \Omega^{k} \phi_{\geq 1} )^2 \cdot \left( \underline{L} \left( \frac{2r}{D} \underline{L}\phi_0 \right) \right)^2 \, d\omega du \\ \lesssim & \int_{\mcN_v^H} (r-M)^2  ( L \Omega^{k} \phi_{\geq 1} )^2 \cdot D^{1/2} \left( \frac{2r}{D} \underline{L} \left( \frac{2r}{D} \underline{L}\phi_0 \right) \right)^2 \, d\omega du \\ \lesssim & C E_0 \epsilon^2 v^{\delta_3} \frac{C E_0 \epsilon^2}{v^{3-\delta_1}} \lesssim \frac{C^2 E_0^2 \epsilon^4}{v^{3-\delta_1-\delta_3}} , 
\end{split}
\end{equation*}
where we used the growth estimate \eqref{est:boundyy} and the Morawetz decay estimate \eqref{dec:mor}, and for $\delta_3 > 0$ small enough we now note that using the last estimate we have that:
\begin{equation*}
\int_{\tau_1}^{\tau_2} \int_{\mcN_v^H} (r-M)^{-1}  ( L \Omega^{k} \phi_{\geq 1} )^2 \cdot \left( \underline{L} \left( \frac{2r}{D} \underline{L}\phi_0 \right) \right)^2 \, d\omega du dv  \lesssim \frac{C^2 E_0^2 \epsilon^4}{(1+\tau_1 )^{2-\delta_1 - \delta_3}} .
\end{equation*}
The required estimate for the term \eqref{term:b4bad} now follows by the estimate that was shown for $p=1+\delta_2$ (which is sharp), the (artificial) $p=0$ estimate (which is better than required as $\delta_1$ and $\delta_3$ are small enough so that $2-\delta_1 - \delta_3 > 1+\delta_2$), and a standard interpolation argument. The rest of the terms are of similar difficulty or even easier and we will not examine them.

\eqref{B5'}: We have for any $k \leq 5$ using the form of $F_{\geq 1}$ and using subscript $*$ to denote the angular frequency localization that adds up to $\geq 1$ that:
\begingroup
\allowdisplaybreaks
\begin{align*}
\int_{\tau_1}^{\tau_2} \int_{\mcN_u^I} r^p ( L \Omega^k \Phi_{\geq 1}^I ) & \cdot ( L ( r^3 \Omega^k F_{\geq 1} ) \, d\omega dv du \simeq_{A_{*}}  \sum_{k_1 + k_2 = k} \int_{\tau_1}^{\tau_2} \int_{\mcN_u^I} r^p ( L \Omega^k \Phi_{\geq 1}^I ) \cdot \frac{2}{r} ( \underline{L} \Omega^{k_1} \phi_{*} ) \cdot \left[ L \left( \frac{2r^2}{D} L \Omega^{k_2} \phi_{*} \right) \right]  \, d\omega dv du \\ & + \sum_{k_1 + k_2 = k} \int_{\tau_1}^{\tau_2} \int_{\mcN_u^I} r^p ( L \Omega^k \Phi_{\geq 1}^I ) \cdot \frac{4r}{D} ( L \Omega^{k_1} \phi_{*} ) \cdot ( L \underline{L} \Omega^{k_2} \phi_{*} ) \, d\omega dv du \\ & - \sum_{k_1 + k_2 = k} \int_{\tau_1}^{\tau_2} \int_{\mcN_u^I} r^p ( L \Omega^k \Phi_{\geq 1}^I ) \cdot 2 ( L \Omega^{k_1} \phi_{*} ) \cdot  ( \underline{L} \Omega^{k_2} \phi_{*}  ) \, d\omega dv du \\ & - \sum_{k_1 + k_2 = k} \int_{\tau_1}^{\tau_2} \int_{\mcN_u^I} r^p ( L \Omega^k \Phi_{\geq 1}^I ) \cdot 2 ( \Omega^{k_1} \phi_{*} ) \cdot  ( L \underline{L} \Omega^{k_2} \phi_{*} ) \, d\omega dv du  \\ & + \sum_{k_1 + k_2 = k} \int_{\tau_1}^{\tau_2} \int_{\mcN_u^I} r^p ( L \Omega^k \Phi_{\geq 1}^I ) \cdot 2 ( L \Omega^{k_1} \phi_{*} ) \cdot ( L  \Omega^{k_2} \phi_{*} ) \, d\omega dv du \\ & + \sum_{k_1 + k_2 = k} \int_{\tau_1}^{\tau_2} \int_{\mcN_u^I} r^p ( L \Omega^k \Phi_{\geq 1}^I ) \cdot \frac{D}{r^2} ( \Omega^{k_1} \phi_{*} ) \cdot  \left[ L \left( \frac{2r^2}{D} L \Omega^{k_2} \phi_{*} \right) \right] \, d\omega dv du \\ & + \int_{\tau_1}^{\tau_2} \int_{\mcN_u^I} r^p ( L \Omega^k \Phi_{\geq 1}^I ) \cdot \mathcal{O}(r^{-1} ) \Omega^{k} ( \phi \cdot  L  \phi )_{*}  \, d\omega dv du \\ & -  \int_{\tau_1}^{\tau_2} \int_{\mcN_u^I} r^p ( L \Omega^k \Phi_{\geq 1}^I ) \cdot \mathcal{O} (r^{-2} ) \Omega^{k} ( \phi^2 )_{*}   \, d\omega dv du \\ & - \sum_{k_1 + k_2 = k} \int_{\tau_1}^{\tau_2} \int_{\mcN_u^I} r^p ( L \Omega^k \Phi_{\geq 1}^I ) \cdot \frac{D}{r} ( \Omega^{k_1} \phi_{*} ) \cdot  ( L \Omega^{k_2} \phi_{*} )  \, d\omega dv du \\ & + \sum_{k_1 + k_2 = k} \int_{\tau_1}^{\tau_2} \int_{\mcN_u^I} r^p ( L \Omega^k \Phi_{\geq 1}^I ) \cdot \mathcal{O} (r) \langle \Omega^{k_1} \slashed{\nabla} \phi_{*} , \slashed{\nabla} L \Omega^{k_2} \phi_{*} \rangle \, d\omega dv du\\ & + \sum_{k_1 + k_2 = k} \int_{\tau_1}^{\tau_2} \int_{\mcN_u^I} r^p ( L \Omega^k \Phi_{\geq 1}^I ) \cdot \frac{D}{2} \langle \Omega^{k_1} \slashed{\nabla} \phi_{*} , \slashed{\nabla} \Omega^{k_2} \phi_{*} \rangle \, d\omega dv du \\ & + \sum_{\substack{k_1 + k_2 = k \\ k_1 \geq 1}}\int_{\tau_1}^{\tau_2} \int_{\mcN_u^I} r^p ( L \Omega^k \Phi_{\geq 1}^I ) \cdot [ ( L ( r^3 ( \Omega^{k_1} A_{*} ) ( \Omega^{k_2} F^c_{*} )  ] \, d\omega dv du .
\end{align*}
\endgroup
From the first term we consider the terms that include $L \left( \frac{2r^2}{D} L \phi_0 \right)$ since the range of $p$ in the commuted estimates for $\phi_0$ at infinity is smaller and we have that
\begingroup
\allowdisplaybreaks
\begin{align*}
\sum_{k_1 + k_2 = k} \int_{\tau_1}^{\tau_2} & \int_{\mcN_u^I} r^p ( L \Omega^k \Phi_{\geq 1}^I )  \cdot \frac{(\Omega^{k_1} A_{*} )}{r} ( \underline{L} \Omega^{k_2} \phi_{*} ) \cdot \left[ L \left( \frac{2r^2}{D} L \phi_0 \right) \right]  \, d\omega dv du \\ \lesssim & \beta  \int_{\tau_1}^{\tau_2} \int_{\mcN_u^I} r^{p-1} ( L \Omega^k \Phi_{\geq 1}^I )^2 \, d\omega dv du \\ & + \frac{1}{\beta} \sum_{m \leq 5}\int_{\tau_1}^{\tau_2} \int_{\mcN_u^I} r^{p+1} \frac{4}{r^2} ( \underline{L} \Omega^m \phi_{*} )^2 \cdot \left( L \left( \frac{2r^2}{D} L \phi_0 \right) \right)^2  \, d\omega dv du \\ \lesssim & \beta  \int_{\tau_1}^{\tau_2} \int_{\mcN_u^I} r^{p-1} ( L \Omega^k \Phi_{\geq 1}^I )^2 \, d\omega dv du \\ & + \frac{1}{\beta} \sum_{m \leq 5}\int_{\tau_1}^{\tau_2} \int_{\mathbb{S}^2} \sup_v ( \underline{L} \Omega^m \phi_{*} )^2 \, d\omega du \cdot \sup_{u \in [\tau_1 , \tau_2 ]} \int_{\mcN_u^I} r^{p-1} \left( L \left( \frac{2r^2}{D} L \phi_0 \right) \right)^2  \, dv  \\ \lesssim & \beta  \int_{\tau_1}^{\tau_2} \int_{\mcN_u^I} r^{p-1} ( L \Omega^k \Phi_{\geq 1}^I )^2 \, d\omega dv du \\ & +\frac{C E_0 \epsilon^2}{\tau_1^{1-\delta_1 - \delta'}}  \sup_{u \in [\tau_1 , \tau_2 ]} \int_{\mcN_u^I} r^{p-1} \left( L \left( \frac{2r^2}{D} L \phi_0 \right) \right)^2  \, dv \\ \lesssim & \beta  \int_{\tau_1}^{\tau_2} \int_{\mcN_u^I} r^{p-1} ( L \Omega^k \Phi_{\geq 1}^I )^2 \, d\omega dv du  +\frac{C E_0 \epsilon^2}{\tau_1^{1-\delta_1 - \delta'}} \frac{C E_0 \epsilon^2}{\tau_1^{\min ( 2-\delta_1-p , 1-\delta_1 )}} , 
\end{align*}
\endgroup
for any $\delta' > 0$, so choosing $\delta'$ small enough gives us better decay than required due to the smallness of $\delta_1$. We used that
\begin{equation}\label{est:infaux}
\sum_{k \leq 5} \int_{\mathbb{S}^2} ( \underline{L} \Omega^k \phi )^2 \, d\omega \lesssim \frac{C E_0 \epsilon^2}{u^{2-\delta_1 - \delta'}} \mbox{  for any $(u,v) \in \mathcal{B}_{\tau_0}^{\infty} / SO(2)$ , } 
\end{equation}
which follows from the pointwise decay estimates \eqref{dec:tpsi} and decay for $L \phi$ which can be obtained by the fundamental theorem of calculus and the energy decay estimates \eqref{dec:enp2} and \eqref{dec:enp3}. Note that it was crucial to use the extra $\frac{1}{r^2}$ term so that for $p \in (0,1+\delta_2] \Rightarrow p-1 < 1-\delta_1$ due to the smallness of $\delta_1$ and $\delta_2$, in order to apply estimate \eqref{dec:en0p1} since the spherically symmetric part has a smaller $p$ range in the commuted estimates than the non-spherically part.

For the second term we write the $\underline{L}$ derivative as a sum of the $T$ and $L$ derivatives and we have that:
\begingroup
\allowdisplaybreaks
\begin{align*}
\sum_{k_1 + k_2 = k} \int_{\tau_1}^{\tau_2} & \int_{\mcN_u^I} r^p ( L \Omega^k \Phi_{\geq 1}^I )  \cdot\frac{4r}{D} ( L \Omega^{k_1}  \phi_{*} ) \cdot ( L \underline{L} \Omega^{k_2} \phi_{*} ) \, d\omega dv du \\ \lesssim & \beta \int_{\tau_1}^{\tau_2} \int_{\mcN_u^I} r^{p-1} ( L \Omega^k \Phi_{\geq 1}^I )^2 \, d\omega dv du \\ & + \frac{1}{\beta} \sum_{k_1 + k_2 = k} \int_{\tau_1}^{\tau_2} \int_{\mcN_u^I} r^{p+3} ( L \Omega^{k_1} \phi_{*} )^2 \cdot ( L \underline{L} \Omega^{k_2} \phi_{*} )^2 \, d\omega dv du \\ \lesssim & \beta \int_{\tau_1}^{\tau_2} \int_{\mcN_u^I} r^{p-1} ( L \Omega^k \Phi_{\geq 1}^I )^2 \, d\omega dv du \\ & + \frac{1}{\beta} \sum_{k_1 + k_2 = k} \int_{\tau_1}^{\tau_2} \int_{\mcN_u^I} r^{p+3} ( L \Omega^{k_1} \phi_{*} )^2 \cdot ( L T \Omega^{k_2} \phi_{*} )^2 \, d\omega dv du \\ & + \frac{1}{\beta} \sum_{k_1 + k_2 = k}\int_{\tau_1}^{\tau_2} \int_{\mcN_u^I} r^{p-1} ( L \Omega^{k_1}\phi_{*} )^2 \cdot \left( L \left( \frac{2r^2}{D}  \Omega^{k_2} L\phi_{*} \right) \right)^2 \, d\omega dv du \\ & + \frac{1}{\beta} \sum_{k_1 + k_2 = k} \int_{\tau_1}^{\tau_2} \int_{\mcN_u^I} r^{p+1} ( L \Omega^{k_1} \phi_{*} )^2 \cdot ( L \Omega^{k_2} \phi_{*} )^2 \, d\omega dv du \\ \lesssim & \beta \int_{\tau_1}^{\tau_2} \int_{\mcN_u^I} r^{p-1} ( L \Omega^k \Phi_{\geq 1}^I )^2 \, d\omega dv du \\ & + \frac{1}{\beta} \sum_{m_1 \leq 5}\left\| \int_{\mathbb{S}^2} ( r^2 L \Omega^{m_1} \phi_{*} )^2 \, d\omega \right\|_{L^{\infty} (\mathcal{B}_{\tau_1}^{\tau_2} / SO(3)} \cdot \sum_{m_2 \leq 5} \int_{\tau_1}^{\tau_2}  \int_{\mcN_u^I} r^{p-1}   ( L T \Omega^{m_2} \phi_{*} )^2 \, d\omega dv du \\ & + \frac{1}{\beta} \sum_{m_1 \leq 5}\int_{\tau_1}^{\tau_2} \int_{\mathbb{S}^2} \sup_v (L \Omega^{m_1} \phi_{*} )^2 \, d\omega du \cdot \sum_{m_2 \leq 5} \sup_{u \in [\tau_1 , \tau_2 ]} \int_{\mcN_u^I} r^{p-1} \left( L \left( \frac{2r^2}{D}  \Omega^{m_2} L\phi_{*} \right) \right)^2 \, d\omega dv du \\ & + \frac{1}{\beta} \sum_{m_1 \leq 5} \int_{\tau_1}^{\tau_2} \int_{\mathbb{S}^2} \sup_v (L \Omega^{m_1} \phi_{*} )^2 \, d\omega du \cdot \sum_{m_2 \leq 5} \sup_{u \in [\tau_1 , \tau_2 ]} \int_{\mcN_u^I} r^{p-1} ( L \Omega^{m_2} \phi_{*} )^2 \, d\omega dv du \\ \lesssim & \beta \int_{\tau_1}^{\tau_2} \int_{\mcN_u^I} r^{p-1} ( L \Omega^k \Phi_{\geq 1}^I )^2 \, d\omega dv du \\ & + C E_0 \epsilon^2 \frac{C E_0 \epsilon^2}{\tau_1^{3-\delta_1 - p}} + \frac{C E_0 \epsilon^2}{\tau_1^{1-\delta_1 - \delta' }} \frac{C E_0 \epsilon^2}{\tau_1^{1-\delta_1 - p}} + \frac{C E_0 \epsilon^2}{\tau_1^{1-\delta_1 - \delta' }} \frac{C E_0 \epsilon^2}{\tau_1^{2-\delta_1 - p}} , 
\end{align*}
\endgroup
for any $\delta' > 0$. We used that $ r^2 L \phi$ is bounded (by the method of characteristics in the spherically symmetric case and by the energy estimates \eqref{dec:en1p1} in the non-spherically symmetric case), the estimate \eqref{est:infaux}, and the energy decay estimates \eqref{dec:enp3} and \eqref{dec:entp}. The fourth term can be treated similarly.

For the third term we have that
\begingroup
\allowdisplaybreaks
\begin{align*}
\sum_{k_1 + k_2 = k}\int_{\tau_1}^{\tau_2} & \int_{\mcN_u^I} r^p ( L \Omega^k \Phi_{\geq 1}^I )  \cdot 2 ( L \Omega^{k_1} \phi_{*} ) \cdot ( \underline{L} \Omega^{k_2} \phi_{*} ) \, d\omega dv du \\ \lesssim & \beta \int_{\tau_1}^{\tau_2} \int_{\mcN_u^I} r^{p-1} ( L \Omega^k \Phi_{\geq 1}^I )^2 \, d\omega dv du \\ & + \frac{1}{\beta} \sum_{k_1 + k_2 = k} \int_{\tau_1}^{\tau_2} \int_{\mcN_u^I} r^{p+1}  ( L \Omega^{k_1} \phi_{*} )^2 \cdot ( \underline{L} \Omega^{k_2} \phi_{*} )^2 \, d\omega dv du\\ \lesssim & \beta \int_{\tau_1}^{\tau_2} \int_{\mcN_u^I} r^{p-1} ( L \Omega^k \Phi_{\geq 1}^I )^2 \, d\omega dv du \\ & + \frac{1}{\beta} \sum_{m_1 \leq 5}\left\| \int_{\mathbb{S}^2} ( r^2 L \Omega^{m_1} \phi_{*} )^2 \, d\omega \right\|_{L^{\infty} (\mathcal{B}_{\tau_1}^{\tau_2} / SO(2)}\cdot \sum_{m_2 \leq 5} \int_{\tau_1}^{\tau_2} \int_{\mcN_u^I} r^{p-3}   ( \underline{L} \Omega^{m_2} \phi_{*} )^2 \, d\omega dv du \\ \lesssim & C E_0 \epsilon^2 \frac{C E_0 \epsilon^2}{\tau_1^{3 - \delta_1}} , 
\end{align*}
\endgroup
where we used again the boundedness of the $r^2 L \phi$ and the Morawetz decay estimate \eqref{dec:mor11} as $p-3 \leq -2+\delta_2 < -1-\eta$ for some $\eta > 0$ as $\delta_2$ is chosen to be small enough. Terms five to nine can be treated similarly to the above. For the tenth term we have that
\begingroup
\allowdisplaybreaks
\begin{align*}
\sum_{k_1 + k_2 = k} \int_{\tau_1}^{\tau_2} & \int_{\mcN_u^I} r^p ( L \Omega^k \Phi_{\geq 1}^I ) \cdot  \mathcal{O} (r) \langle \Omega^{k_1} \slashed{\nabla} \phi_{*} \cdot  \slashed{\nabla} L \Omega^{k_2} \phi_{*} \rangle \, d\omega dv du \\ \lesssim & \beta \int_{\tau_1}^{\tau_2} \int_{\mcN_u^I} r^{p-1} ( L \Omega^k \Phi_{\geq 1}^I )^2 \, d\omega dv du \\ & + \frac{1}{\beta} \sum_{k_1 + k_2 = k} \int_{\tau_1}^{\tau_2} \int_{\mcN_u^I} r^{p+3} | \Omega^{k_1} \slashed{\nabla} \phi_{*} |^2 \cdot  \left| \slashed{\nabla} \frac{2r^2}{D} L \Omega^{k_2} \phi_{*} \right|^2 \, d\omega dv du \\ \lesssim & \beta \int_{\tau_1}^{\tau_2} \int_{\mcN_u^I} r^{p-1} ( L \Omega^k \Phi_{\geq 1}^I )^2 \, d\omega dv du \\ & + \frac{1}{\beta} \sum_{\substack{k_1 + k_2 = k \\ k_1 \leq 2 } } \| \Omega^{k_1 + 1} \phi_{\gtrsim 1} \|^2_{L^{\infty} (\mathcal{B}_{\tau_1}^{\tau_2})}  \int_{\tau_1}^{\tau_2} \int_{\mcN_u^I} r^{p-3}   \left| \slashed{\nabla} \frac{2r^2}{D} L \Omega^{k_2} \phi_{*} \right|^2 \, d\omega dv du \\ & + \frac{1}{\beta} \sum_{\substack{k_1 + k_2 = k \\ k_1 \leq 2 } } \| r^2 L \Omega^{k_1 + 1} \phi_{*} \|^2_{L^{\infty} (\mathcal{B}_{\tau_1}^{\tau_2})}  \int_{\tau_1}^{\tau_2} \int_{\mcN_u^I} r^{p-3}   | \slashed{\nabla}  \Omega^{k_2} \phi_{*} |^2 \, d\omega dv du \\ \lesssim & \beta \int_{\tau_1}^{\tau_2} \int_{\mcN_u^I} r^{p-1} ( L \Omega^k \Phi_{\geq 1}^I )^2 \, d\omega dv du + \frac{C E_0 \epsilon^2}{\tau_1^{2+\delta_2}} \frac{C E_0 \epsilon^2}{\tau_1^{1+\delta_2 -p}} + C E_0 \epsilon^2 \frac{C E_0 \epsilon^2}{\tau_1^{3+\delta_2 -p}} ,
\end{align*}
\endgroup
where we used the pointwise decay estimate \eqref{dec:psi}, the boundedness of $r^2 L \phi$, and the energy decay estimates \eqref{dec:en1p2} and \eqref{dec:en1p3}. Term eleven can be treated in a similar way, while the last term twelve can be expanded and can be easily seen to comprise of terms similar to the above. 

\eqref{C2'}: We have for any $k \leq 5$ that 
\begingroup
\allowdisplaybreaks
\begin{align*}
\int_{\tau_1}^{\tau_2} \int_{\mcN_v^H} & (r-M )^{-p} ( \underline{L} \Omega^k T \phi ) \cdot D ( \Omega^k T F ) \, d\omega du dv \\ = & \sum_{k_1 + k_2 = k} \int_{\tau_1}^{\tau_2} \int_{\mcN_v^H}  (r-M )^{-p} ( \underline{L} \Omega^k T \phi ) \cdot \frac{A}{2r^2} \cdot D ( L T \Omega^{k_1} \phi ) \cdot \left( \frac{2r}{D} \underline{L} \Omega^{k_2} \phi \right) \, d\omega du dv \\ & + \sum_{k_1 + k_2 = k} \int_{\tau_1}^{\tau_2} \int_{\mcN_v^H}  (r-M )^{-p} ( \underline{L} \Omega^k T \phi ) \cdot \frac{A}{2r^2} \cdot D  ( L \Omega^{k_1} \phi ) \cdot \left( \frac{2r}{D} \underline{L} \Omega^{k_2} T \phi \right) \, d\omega du dv \\ & - \sum_{k_1 + k_2 = k} \int_{\tau_1}^{\tau_2} \int_{\mcN_v^H}  (r-M )^{-p} ( \underline{L} \Omega^k T \phi ) \cdot \frac{A}{4r^3} \cdot D^2 ( \Omega^{k_1} T \phi ) \cdot \left( \frac{2r}{D} \underline{L} \Omega^{k_2}  \phi \right) \, d\omega du dv \\ & - \sum_{k_1 + k_2 = k} \int_{\tau_1}^{\tau_2} \int_{\mcN_v^H}  (r-M )^{-p} ( \underline{L} \Omega^k T \phi ) \cdot \frac{A}{4r^3} \cdot D^2 ( \Omega^{k_1}  \phi ) \cdot \left( \frac{2r}{D} \underline{L} \Omega^{k_2}  T \phi \right) \, d\omega du dv \\ & - \sum_{k_1 + k_2 = k} \int_{\tau_1}^{\tau_2} \int_{\mcN_v^H}  (r-M )^{-p} ( \underline{L} \Omega^k T \phi ) \cdot \frac{A}{2r^2} \cdot D ( \Omega^{k_1} T \phi ) \cdot ( L \Omega^{k_2}  \phi ) \, d\omega du dv \\ & - \sum_{k_1 + k_2 = k} \int_{\tau_1}^{\tau_2} \int_{\mcN_v^H}  (r-M )^{-p} ( \underline{L} \Omega^k T \phi ) \cdot \frac{A}{2r^2} \cdot D ( \Omega^{k_1}  \phi ) \cdot ( L \Omega^{k_2} T \phi ) \, d\omega du dv \\ & - \sum_{k_1 + k_2 = k} \int_{\tau_1}^{\tau_2} \int_{\mcN_v^H}  (r-M )^{-p} ( \underline{L} \Omega^k T \phi ) \cdot \frac{A}{4r^3} \cdot 2D^2 ( \Omega^{k_1} T \phi ) \cdot (  \Omega^{k_2}  \phi ) \, d\omega du dv \\ & + \sum_{k_1 + k_2 = k} \int_{\tau_1}^{\tau_2} \int_{\mcN_v^H}  (r-M )^{-p} ( \underline{L} \Omega^k T \phi ) \cdot \frac{A}{4r} \cdot D \langle \slashed{\nabla} \Omega^{k_1} T \phi , \slashed{\nabla} \Omega^{k_2} \phi \rangle \, d\omega du dv \\ & + \sum_{k_1 + k_2 = k , k_1 > 0} \int_{\tau_1}^{\tau_2} \int_{\mcN_v^H}  (r-M )^{-p} ( \underline{L} \Omega^k T \phi ) \cdot  ( \Omega^{k_1} A ) \cdot ( \Omega^{k_2} T F^c ) \, d\omega du dv  \\ & + \sum_{k_1 + k_2 = k} \int_{\tau_1}^{\tau_2} \int_{\mcN_v^H}  (r-M )^{-p} ( \underline{L} \Omega^k T \phi ) \cdot  ( \Omega^{k_1} T A ) \cdot ( \Omega^{k_2} F^c ) \, d\omega du dv . 
\end{align*}
\endgroup

For the first term we have for any $k \leq 5$ that:
\begingroup
\allowdisplaybreaks
\begin{align*}
\sum_{k_1 + k_2 = k} \int_{\tau_1}^{\tau_2} \int_{\mcN_v^H} & (r-M )^{-p} ( \underline{L} \Omega^k T \phi ) \cdot \frac{A}{2r^2} \cdot D ( L T \Omega^{k_1} \phi ) \cdot \left( \frac{2r}{D} \underline{L} \Omega^{k_2} \phi \right) \, d\omega du dv \\ \lesssim &  \beta \int_{\tau_1}^{\tau_2} \int_{\mcN_v^H}  (r-M )^{-p+1} ( \underline{L} \Omega^k T \phi )^2 \, d\omega du dv \\ & + \frac{1}{\beta} \sum_{k_1 + k_2 = k} \int_{\tau_1}^{\tau_2} \int_{\mcN_v^H}  (r-M )^{-p-1} D^2 ( L T \Omega^{k_1} \phi )^2 \cdot \left( \frac{2r}{D} \underline{L} \Omega^{k_2} \phi \right)^2  \, d\omega du dv  \\ \lesssim &  \beta \int_{\tau_1}^{\tau_2} \int_{\mcN_v^H}  (r-M )^{-p+1} ( \underline{L} \Omega^k T \phi )^2  \, d\omega du dv \\ & + C E_0 \epsilon^2 \sum_{l \leq 5} \int_{\tau_1}^{\tau_2} \int_{\mathbb{S}^2} (r-M)^{3-p} ( L T \Omega^l \phi )^2 \, d\omega du dv   \\ \lesssim &  \beta \int_{\tau_1}^{\tau_2} \int_{\mcN_v^H}  (r-M )^{-p+1} ( \underline{L} \Omega^k T \phi )^2  \, d\omega du dv  + C E_0 \epsilon^2 \frac{C E_0 \epsilon^2}{\tau_1^{3+\delta_2}} ,
\end{align*}
\endgroup
where we used the boundedness estimate \eqref{est:boundy} and the Morawetz decay estimates \eqref{dec:mor1} as $p\in (0,2-\delta_1 ]$. For the second term we have that:
\begingroup
\allowdisplaybreaks
\begin{align*}
\sum_{k_1 + k_2 = k} \int_{\tau_1}^{\tau_2} \int_{\mcN_v^H} & (r-M )^{-p} ( \underline{L} \Omega^k T \phi ) \cdot \frac{A}{2r^2} \cdot D ( L  \Omega^{k_1} \phi ) \cdot \left( \frac{2r}{D} \underline{L} \Omega^{k_2} T \phi \right) \, d\omega du dv \\ \lesssim &  \int_{\tau_1}^{\tau_2} \int_{\mcN_v^H}  (r-M )^{-p+1} ( \underline{L} \Omega^k T \phi )^2 \cdot \frac{1}{v^{1+\beta}} \, d\omega du dv \\ & + \sum_{k_1 + k_2 = k} \int_{\tau_1}^{\tau_2} \int_{\mcN_v^H}  (r-M )^{-p} D^2 ( L  \Omega^{k_1} \phi )^2 \cdot \left( \frac{2r}{D} \underline{L} \Omega^{k_2} T \phi \right)^2 \cdot v^{1+\beta} \, d\omega du dv  \\ \lesssim &  \int_{\tau_1}^{\tau_2} \int_{\mcN_v^H}  (r-M )^{-p} ( \underline{L} \Omega^k T \phi )^2 \cdot \frac{1}{v^{1+\beta}} \, d\omega du dv \\ & + \sum_{k_1 + k_2 = k} \int_{\tau_1}^{\tau_2} \int_{\mathbb{S}^2}  \sup_u ( L  \Omega^{l_1} \phi )^2 \cdot v^{1+\beta} \, d\omega dv \cdot \sup_{v \in [\tau_1 , \tau_2 ]} \int_{\mcN_v^H}  (r-M )^{-p} ( \underline{L} \Omega^{l_2} T \phi )^2  \, d\omega du \\ \lesssim &  \frac{1}{\tau_1^{\beta}} \int_{\mcN_v^H}  (r-M )^{-p} ( \underline{L} \Omega^k T \phi )^2 \cdot \frac{1}{v^{1+\beta}} \, d\omega du  + C E_0 \epsilon^2 \frac{C E_0 \epsilon^2}{(1+\tau_1 )^{3+\delta_2 -p}} ,
\end{align*}
\endgroup
where we used Sobolev's inequality \eqref{est:sobolev}, the auxiliary estimate \eqref{est:aux1} for $m=0$ and the energy decay estimates \eqref{dec:entp}.

The rest of the terms can be treated now as in the case of \eqref{A2'} by using the energy decay estimates \eqref{dec:entp}.

\eqref{C1'}: We deal with the term close to the horizon as the term away from the horizon can be treated in a rather classical manner. As in \eqref{B1'} we examine in detail only the terms involving $L$ and $\underline{L}$ derivatives since the rest are either similar or easier. We consider the terms:
\begin{equation*}
\begin{split}
\sum_{k_1 + k_2 = k} \int_{\mathcal{A}_{\tau_1}^{\tau_2}} & (r-M)^{-1-\delta} ( L \Omega^{k_1} T \phi )^2 ( \underline{L} \Omega^{k_2} \phi )^2 \, d\omega du dv \mbox{  and  } \\ & \sum_{k_1 + k_2 = k} \int_{\mathcal{A}_{\tau_1}^{\tau_2}} (r-M)^{-1-\delta} ( L \Omega^{k_1} \phi )^2 ( \underline{L} \Omega^{k_2} T \phi )^2 \, d\omega du dv .
\end{split}
\end{equation*}
For the first term we have that
\begingroup
\allowdisplaybreaks
\begin{align*}
\sum_{k_1 + k_2 = k} \int_{\mathcal{A}_{\tau_1}^{\tau_2}} & (r-M)^{-1-\delta} ( L \Omega^{k_1} T \phi )^2 ( \underline{L} \Omega^{k_2} \phi )^2 \, d\omega du dv \\ \lesssim & \sum_{k_1 + k_2 = k} \int_{\mathcal{A}_{\tau_1}^{\tau_2}}  (r-M)^{1+\delta} ( L \Omega^{k_1} T \phi )^2\frac{1}{(r-M)^{2+2\delta}} (\underline{L} \Omega^{k_2} \phi )^2 \, d\omega du dv \\ \lesssim & \frac{C E_0 \epsilon^2}{\tau_1^{3/2-\delta_1 / 2}} \sum_{l \leq 5} \int_{\mathcal{A}_{\tau_1}^{\tau_2}}  (r-M)^{1+\delta} ( L \Omega^l T \phi )^2 \, d\omega du dv \\ \lesssim & \frac{C^2 E_0^2 \epsilon^4}{\tau_1^{9/2 -  \delta_1 / 2 + \delta_2 }} \lesssim \frac{C^2 E_0^2 \epsilon^4}{\tau_1^{3+\delta_2}} ,
\end{align*}
\endgroup
where we used Sobolev's inequality, the auxiliary estimate \eqref{est:auxl}, the Morawetz decay estimate \eqref{dec:mor1}, and the fact that $\delta_1$ and $\delta_2$ are small enough.

For the second term we argue similarly this time using the auxiliary estimate \eqref{est:auxlt}, the Morawetz decay estimate \eqref{dec:mor}, and the fact that $\delta_1$ and $\delta_2$ are small enough and we have that:
\begingroup
\allowdisplaybreaks
\begin{align*}
\sum_{k_1 + k_2 = k} \int_{\mathcal{A}_{\tau_1}^{\tau_2}} & (r-M)^{-1-\delta} ( L \Omega^{k_1}  \phi )^2 ( \underline{L} \Omega^{k_2} T \phi )^2 \, d\omega du dv \\ \lesssim & \sum_{k_1 + k_2 = k} \int_{\mathcal{A}_{\tau_1}^{\tau_2}}  (r-M)^{1+\delta} ( L \Omega^{k_1} \phi )^2\frac{1}{(r-M)^{2+2\delta}} (\underline{L} \Omega^{k_2} T \phi )^2 \, d\omega du dv \\ \lesssim & \frac{C E_0 \epsilon^2}{\tau_1^{3/2+\delta_2 / 2}} \sum_{l \leq 5} \int_{\mathcal{A}_{\tau_1}^{\tau_2}}  (r-M)^{1+\delta} ( L \Omega^l  \phi )^2 \, d\omega du dv \\ \lesssim & \frac{C^2 E_0^2 \epsilon^4}{\tau_1^{9/2 +\delta_2 / 2 - \delta_1 }} \lesssim \frac{C^2 E_0^2 \epsilon^4}{\tau_1^{3+\delta_2}} .
\end{align*}
\endgroup

\eqref{C3'}: Once again we expand $F$ and due to the commutations with $T$ and $\Omega^k$ for $k \leq 5$, we have the following:
\begingroup
\allowdisplaybreaks
\begin{align*}
\int_{\tau_1}^{\tau_2} \int_{\mcN_v^H} &  (r-M)^{-p}  (\underline{L}  T \Omega^k \Phi^H )  \cdot \underline{L} ( r^2  T \Omega^k F ) \, d\omega du dv \\ = & \sum_{k_1 + k_2 = k} \int_{\tau_1}^{\tau_2} \int_{\mcN_v^H}  (r-M)^{-p} (\underline{L}  T \Omega^k \Phi^H )  \cdot \frac{A}{r} \cdot ( L T\Omega^{k_1} \phi ) \cdot \left( \underline{L} \left( \frac{2r}{D} \underline{L} \Omega^{k_2} \phi \right) \right) \, d\omega du dv \\ & +  \sum_{k_1 + k_2 = k} \int_{\tau_1}^{\tau_2} \int_{\mcN_v^H}  (r-M)^{-p} (\underline{L}  T \Omega^k \Phi^H ) \cdot \frac{A}{r} \cdot ( L \Omega^{k_1} \phi ) \cdot \left( \underline{L} \left( \frac{2r}{D} \underline{L} \Omega^{k_2} T \phi \right) \right) \, d\omega du dv \\ & + \sum_{k_1 + k_2 = k} \int_{\tau_1}^{\tau_2} \int_{\mcN_v^H}  (r-M)^{-p} (\underline{L}  T \Omega^k \Phi^H )  \cdot \frac{A}{r} \cdot ( \underline{L} L T\Omega^{k_1} \phi ) \cdot  \left( \frac{2r}{D} \underline{L} \Omega^{k_2} \phi \right)  \, d\omega du dv \\ & + \sum_{k_1 + k_2 = k} \int_{\tau_1}^{\tau_2} \int_{\mcN_v^H}  (r-M)^{-p} (\underline{L}  T \Omega^k \Phi^H )  \cdot \frac{A}{r} \cdot ( \underline{L} L \Omega^{k_1} \phi ) \cdot  \left( \frac{2r}{D} \underline{L} T \Omega^{k_2} \phi \right)  \, d\omega du dv \\ & + \sum_{k_1 + k_2 = k} \int_{\tau_1}^{\tau_2} \int_{\mcN_v^H}  (r-M)^{-p} (\underline{L}  T \Omega^k \Phi^H )  \cdot \frac{A}{2r^2} \cdot D (T \Omega^{k_1} \phi ) \cdot  \left( \underline{L} \left( \frac{2r}{D} \underline{L} \Omega^{k_2} \phi \right) \right)  \, d\omega du dv \\ & + \sum_{k_1 + k_2 = k} \int_{\tau_1}^{\tau_2} \int_{\mcN_v^H}  (r-M)^{-p} (\underline{L}  T \Omega^k \Phi^H )  \cdot \frac{A}{2r^2} \cdot D ( \Omega^{k_1} \phi ) \cdot  \left( \underline{L} \left( \frac{2r}{D} \underline{L} T \Omega^{k_2} \phi \right) \right)  \, d\omega du dv \\ & + \sum_{k_1 + k_2 = k} \int_{\tau_1}^{\tau_2} \int_{\mcN_v^H}  (r-M)^{-p} (\underline{L}  T \Omega^k \Phi^H )  \cdot \frac{A}{2r^2} \cdot D (\underline{L} T \Omega^{k_1} \phi ) \cdot  \left( \frac{2r}{D} \underline{L} \Omega^{k_2} \phi \right) \, d\omega du dv  \\ & + \sum_{k_1 + k_2 = k} \int_{\tau_1}^{\tau_2} \int_{\mcN_v^H}  (r-M)^{-p} (\underline{L}  T \Omega^k \Phi^H )  \cdot \frac{A}{2r^2} \cdot D (\underline{L}  \Omega^{k_1} \phi ) \cdot  \left( \frac{2r}{D} \underline{L} T \Omega^{k_2} \phi \right) \, d\omega du dv \\ & - \sum_{k_1 + k_2 = k} \int_{\tau_1}^{\tau_2} \int_{\mcN_v^H}  (r-M)^{-p} (\underline{L}  T \Omega^k \Phi^H )  \cdot \frac{A}{r} \cdot (\underline{L}  \Omega^{k_1} \phi ) \cdot  ( L T \Omega^{k_2} \phi ) \, d\omega du dv \\ & - \sum_{k_1 + k_2 = k} \int_{\tau_1}^{\tau_2} \int_{\mcN_v^H}  (r-M)^{-p} (\underline{L}  T \Omega^k \Phi^H )  \cdot \frac{A}{r} \cdot (\underline{L}  T \Omega^{k_1} \phi ) \cdot  ( L \Omega^{k_2} \phi ) \, d\omega du dv \\ & - \sum_{k_1 + k_2 = k} \int_{\tau_1}^{\tau_2} \int_{\mcN_v^H}  (r-M)^{-p} (\underline{L}  T \Omega^k \Phi^H )  \cdot \frac{A}{r} \cdot ( \Omega^{k_1} \phi ) \cdot  ( \underline{L} L T \Omega^{k_2} \phi ) \, d\omega du dv \\ & - \sum_{k_1 + k_2 = k} \int_{\tau_1}^{\tau_2} \int_{\mcN_v^H}  (r-M)^{-p} (\underline{L}  T \Omega^k \Phi^H )  \cdot \frac{A}{r} \cdot ( T \Omega^{k_1} \phi ) \cdot  ( \underline{L} L \Omega^{k_2} \phi ) \, d\omega du dv \\ & - \sum_{k_1 + k_2 = k} \int_{\tau_1}^{\tau_2} \int_{\mcN_v^H}  (r-M)^{-p} (\underline{L}  T \Omega^k \Phi^H )  \cdot \frac{A}{r^2} \cdot 2D ( T \Omega^{k_1} \phi ) \cdot  ( \underline{L}  \Omega^{k_2} \phi ) \, d\omega du dv \\ & - \sum_{k_1 + k_2 = k} \int_{\tau_1}^{\tau_2} \int_{\mcN_v^H}  (r-M)^{-p} (\underline{L}  T \Omega^k \Phi^H )  \cdot \frac{A}{r^2} \cdot 2D ( \Omega^{k_1} \phi ) \cdot  ( \underline{L} T \Omega^{k_2} \phi ) \, d\omega du dv \\ & + \sum_{k_1 + k_2 = k} \int_{\tau_1}^{\tau_2} \int_{\mcN_v^H}  (r-M)^{-p} (\underline{L}  T \Omega^k \Phi^H )  \cdot A \cdot \langle  \underline{L} \Omega^{k_1}\slashed{\nabla} \phi , \Omega^{k_2} \slashed{\nabla} T \phi \rangle \, d\omega du dv \\ & + \sum_{k_1 + k_2 = k} \int_{\tau_1}^{\tau_2} \int_{\mcN_v^H}  (r-M)^{-p} (\underline{L}  T \Omega^k \Phi^H )  \cdot A \cdot \langle \underline{L} \Omega^{k_1} \slashed{\nabla} \phi ,  \Omega^{k_2} T \slashed{\nabla} \phi \rangle \, d\omega du dv \\ & + F_1 (\Omega A, F_{rest , \underline{L} , T} ) + F_2 ( \Omega T A, F_{rest , \underline{L}} )  + F_3 ( \Omega \underline{L}( A,r) , F_{rest , T} ) + F_4 ( \Omega T \underline{L} (A,r) , F_{rest} ) .
\end{align*}
\endgroup
where $F_1$ includes all terms where some angular derivatives fall on $A$, $F_2$ includes terms where some angular derivative and the $T$ derivative fall on $A$, $F_3$ includes all terms where some angular derivatives and the $\underline{L}$ derivative fall on $A$ or a term involving $r$, and the $F_4$ includes all terms where some angular derivatives, and the $\underline{L}$ and $T$ derivatives fall on $A$. All these terms are similar or easier than the rest so we will not examine them in detail.

For the second term we have that for any $k \leq 5$ that
\begin{equation*}
\begin{split}
\sum_{k_1 + k_2 = k} \int_{\tau_1}^{\tau_2} \int_{\mcN_v^H} & (r-M)^{-p} (\underline{L}  T \Omega^k \Phi^H )   \cdot \frac{A}{r} \cdot ( L \Omega^{k_1} \phi ) \cdot \left( \underline{L} \left( \frac{2r}{D} \underline{L} \Omega^{k_2} T \phi \right) \right) \, d\omega du dv \\ \lesssim & \int_{\tau_1}^{\tau_2} \int_{\mcN_v^H}  (r-M)^{-p} (\underline{L}  T \Omega^k \Phi^H )^2 \frac{1}{v^{1+\beta}}  \, d\omega du dv \\ & + \sum_{k_1 + k_2 = k} \int_{\tau_1}^{\tau_2} \int_{\mcN_v^H}  (r-M)^{-p} ( L \Omega^{k_1} \phi )^2 \cdot \left( \underline{L} \left( \frac{2r}{D} \underline{L} \Omega^{k_2} T \phi \right) \right)^2 \cdot v^{1+\beta} \, d\omega du dv \\ \lesssim & \frac{1}{\tau_1^{\beta}} \sup_{v \in [\tau_1 , \tau_2]}  \int_{\mcN_v^H}  (r-M)^{-p} (\underline{L}  T \Omega^k \Phi^H )^2   \, d\omega du \\ & + \sum_{\substack{l_1 \leq 5 \\ l_2 \leq 5}} \int_{\tau_1}^{\tau_2} \int_{\mathbb{S}^2} \sup_u ( L  \Omega^{l_1} \phi )^2 \cdot v^{1+\beta} \, d\omega dv \cdot \sup_{v \in [\tau_1 , \tau_2 ]}\int_{\mcN_v^H} (r-M)^{-p}  \left( \underline{L} \left( \frac{2r}{D} \underline{L} \Omega^{l_2} \phi \right) \right)^2 \, d\omega du \\ \lesssim & \bar{\delta} \sup_{v \in [\tau_1 , \tau_2]}  \int_{\mcN_v^H}  (r-M)^{-p} (\underline{L}  T \Omega^k \Phi^H )^2   \, d\omega du \\ & + C E_0 \epsilon^2 \cdot \frac{C E_0 \epsilon^2}{(1+\tau_1 )^{1+\delta_2 -p}} ,
\end{split}
\end{equation*}
where we used Sobolev's inequality \eqref{est:sobolev}, the auxiliary estimate \eqref{est:aux1} for $m=0$, and the decay provided by the energy estimates \eqref{dec:entp}.

For the first term the above process cannot work, as the spherically symmetric part of the term with two $\underline{L}$ derivatives admits $(r-M)^{-p}$-weighted estimates for $p \in (0,1-\delta_1]$, and not for $p \in (1-\delta_1 , 1+\delta_2 ]$. For this purpose we break the first term in the following two parts:
\begingroup
\allowdisplaybreaks
\begin{align*}
\sum_{k_1 + k_2 = k} \int_{\tau_1}^{\tau_2} \int_{\mcN_v^H} & (r-M)^{-1-\delta_2}  (\underline{L}  \Omega^k T \Phi^H ) \cdot \frac{A}{r} \cdot ( L \Omega^{k_1} T \phi ) \cdot \left( \underline{L} \left( \frac{2r}{D} \underline{L} \Omega^{k_2} \phi \right) \right) \, d\omega du dv \\ = & \int_{\tau_1}^{\tau_2} \int_{\mcN_v^H}  (r-M)^{-1-\delta_2}  (\underline{L}  \Omega^k T \Phi^H ) \cdot \frac{A}{r} \cdot ( L \Omega^k T \phi ) \cdot \left( \underline{L} \left( \frac{2r}{D} \underline{L} \phi_0 \right) \right) \, d\omega du dv \\ & + \sum_{k_1 + k_2 = k} \int_{\tau_1}^{\tau_2} \int_{\mcN_v^H}  (r-M)^{-1-\delta_2}  (\underline{L}  \Omega^k T \Phi^H ) \cdot \frac{A}{r} \cdot ( L \Omega^{k_1} T \phi ) \cdot \left( \underline{L} \left( \frac{2r}{D} \underline{L} \Omega^{k_2} \phi_{\geq 1} \right) \right) \, d\omega du dv 
\end{align*}
\endgroup
The last term of the above expression as the second term that was treated above (as for this one the term with the two $\underline{L}$ derivatives admits $(r-M)^{-p}$ estimates for up to $p = 1+\delta_2$). For the other term instead we have for $p = 1+\delta_2$ that:
\begingroup
\allowdisplaybreaks
\begin{align*}
\int_{\tau_1}^{\tau_2} \int_{\mcN_v^H}  (r-M)^{-1-\delta_2} & (\underline{L}  \Omega^k T \Phi^H ) \cdot \frac{A}{r} \cdot ( L \Omega^{k} T \phi ) \cdot \left( \underline{L} \left( \frac{2r}{D} \underline{L} \phi_0 \right) \right) \, d\omega du dv \\ \lesssim & \int_{\tau_1}^{\tau_2} \int_{\mcN_v^H}  (r-M)^{-1-\delta_2}  (\underline{L}  \Omega^k T \Phi^H )^2 \cdot \frac{1}{v^{1+\beta}} \, d\omega du dv \\ & +  \int_{\tau_1}^{\tau_2} \int_{\mcN_v^H}  (r-M)^{-1-\delta_2} ( L \Omega^{k} T \phi )^2 \cdot \left( \underline{L} \left( \frac{2r}{D} \underline{L} \phi_0 \right) \right)^2 \cdot v^{1+\beta} \, d\omega du dv \\ \lesssim &  \frac{1}{\tau_1^{\beta}} \sup_{v \in [\tau_1 , \tau_2 ]} \int_{\mcN_v^H}  (r-M)^{-1-\delta_2}  (\underline{L}  \Omega^k T \Phi^H )^2  \, d\omega du  \\ & + \int_{\tau_1}^{\tau_2} \int_{\mcN_v^H}  (r-M)^{1+\beta_1 - \delta_2} ( L \Omega^{k} T \phi )^2 \cdot D^{1-\beta_1 /2}\left( \underline{L} \left( \frac{2r}{D} \underline{L} \phi_0 \right) \right)^2 \cdot v^{1+\beta} \, d\omega du dv \\ \lesssim &  \frac{1}{\tau_1^{\beta}} \sup_{v \in [\tau_1 , \tau_2 ]} \int_{\mcN_v^H}  (r-M)^{-1-\delta_2}  (\underline{L}  \Omega^k T \Phi^H )^2  \, d\omega du  \\ & + C E_0 \epsilon^2 \int_{\tau_1}^{\tau_2} \int_{\mcN_v^H}  (r-M)^{1+\beta_1 - \delta_2} ( L \Omega^k T \phi )^2  \cdot v^{1+\beta + \beta_1} \, d\omega du dv \\ \lesssim &  \frac{1}{\tau_1^{\beta}} \sup_{v \in [\tau_1 , \tau_2 ]} \int_{\mcN_v^H}  (r-M)^{-1-\delta_2}  (\underline{L}  \Omega^k T \Phi^H )^2  \, d\omega du  \\ & + C E_0 \epsilon^2 \int_{\tau_1}^{\tau_2}  \int_{\mathbb{S}^2} \sup_u ( L \Omega^k T \phi )^2  \cdot v^{1+\beta + \beta_1} \, d\omega dv \\ \lesssim &  \frac{1}{\tau_1^{\beta}} \sup_{v \in [\tau_1 , \tau_2 ]} \int_{\mcN_v^H}  (r-M)^{-1-\delta_2}  (\underline{L}  \Omega^k T \Phi^H )^2  \, d\omega du  \\ & + C^2 E_0^2 \epsilon^4 ,
\end{align*} 
\endgroup
where we used the growth estimate \eqref{est:boundyy}, the auxiliary estimate \eqref{est:aux1} for $m=0$, always choosing $\beta_1$ to be bigger than $\delta_2$ and $\beta$ and $\beta_1$ to be small enough such that $\beta + \beta_1 < 2 -\delta_1$ (note that these two conditions can be simultaneously satisfied due to the smallness of $\delta_1$ and $\delta_2$). On the other hand we consider the (out of range) case of $p=0$ and we have that:
\begin{equation*}
\begin{split}
\int_{\tau_1}^{\tau_2} \int_{\mcN_v^H} & (\underline{L}  \Omega^k T \Phi^H ) \cdot \frac{A}{r} \cdot ( L \Omega^{k} T\phi ) \cdot \left( \underline{L} \left( \frac{2r}{D} \underline{L}\phi_0 \right) \right) \, d\omega du dv \\ \lesssim & \beta \int_{\tau_1}^{\tau_2} \int_{\mcN_v^H}  (r-M) (\underline{L}  \Omega^k T \Phi^H )^2 \, d\omega du dv \\ & + \frac{1}{\beta}  \int_{\tau_1}^{\tau_2} \int_{\mcN_v^H} (r-M)^{-1} ( L \Omega^{k} T \phi )^2 \cdot \left( \underline{L} \left( \frac{2r}{D} \underline{L} \phi_0 \right) \right)^2 \, d\omega du dv .
\end{split}
\end{equation*}
The first term of the above expression can be absorbed by the left hand side that has the term of interest in \eqref{B3'} in its right hand side for any $p \in (0,1+\delta_2]$. For the second term we have for any $v \in [\tau_1 , \tau_2 ] $ that:
\begin{equation*}
\begin{split}
\int_{\mcN_v^H} (r-M)^{-1} & ( L \Omega^{k} T \phi )^2 \cdot \left( \underline{L} \left( \frac{2r}{D} \underline{L}  \phi_0 \right) \right)^2 \, d\omega du \\ \lesssim & \sum_{k_1 + k_2 = k} \int_{\mcN_v^H} (r-M)^2  ( L \Omega^{k} T \phi_{\geq 1} )^2 \cdot D \left( \frac{2r}{D} \underline{L} \left( \frac{2r}{D} \underline{L} \phi_0 \right) \right)^2 \, d\omega du \\ \lesssim & C E_0 \epsilon^2 v^{\delta_3} \frac{C E_0 \epsilon^2}{v^{3+\delta_2}} \lesssim \frac{C^2 E_0^2 \epsilon^4}{v^{3+\delta_2-\delta_3}} , 
\end{split}
\end{equation*}
where we used the growth estimate \eqref{est:boundyy} and the Morawetz decay estimate \eqref{dec:mor1}, and for $\delta_3 > 0$ small enough we now note that using the last estimate we have that:
\begin{equation*}
\int_{\tau_1}^{\tau_2} \int_{\mcN_v^H} (r-M)^{-1}  ( L \Omega^{k} T \phi_{\geq 1} )^2 \cdot \left( \underline{L} \left( \frac{2r}{D} \underline{L} \phi_0 \right) \right)^2 \, d\omega du dv  \lesssim \frac{C^2 E_0^2 \epsilon^4}{(1+\tau_1 )^{2+\delta_2 - \delta_3}} .
\end{equation*}
The required estimate for the second term of \eqref{B3'} now follows by the estimate that was shown for $p=1+\delta_2$ (which is sharp), the (artificial) $p=0$ estimate (which is better than required as $\delta_3$ is small enough), and a standard interpolation argument.

For the fifth term we have for any $k \leq 5$ that:
\begingroup
\allowdisplaybreaks
\begin{align*}
\sum_{k_1 + k_2 = k} & \int_{\tau_1}^{\tau_2}  \int_{\mcN_v^H}   (r-M)^{-p} (\underline{L}  T \Omega^k \Phi^H )  \cdot \frac{A}{2r^2} \cdot D (T \Omega^{k_1} \phi ) \cdot  \left( \underline{L} \left( \frac{2r}{D} \underline{L} \Omega^{k_2} \phi \right) \right)  \, d\omega du dv \\ \leq & \beta \int_{\tau_1}^{\tau_2} \int_{\mcN_v^H}  (r-M)^{-p+1} (\underline{L}  T \Omega^k \Phi^H )^2 \, d\omega dudv \\ & + \frac{1}{\beta} \sum_{k_1 + k_2 = k} \int_{\tau_1}^{\tau_2} \int_{\mcN_v^H}  (r-M)^{-p-1}  D^2 (T \Omega^{k_1} \phi )^2 \cdot  \left( \underline{L} \left( \frac{2r}{D} \underline{L} \Omega^{k_2} \phi \right) \right)^2  \, d\omega du dv \\ \lesssim & \beta \int_{\tau_1}^{\tau_2} \int_{\mcN_v^H}  (r-M)^{-p+1} (\underline{L}  T \Omega^k \Phi^H )^2 \, d\omega dudv \\ & + \frac{1}{\beta} \sum_{k_1 + k_2 = k} \| T \Omega^{k_1} \phi \|^2_{L^{\infty} (\mathcal{A}_{\tau_1}^{\tau_2}}  \int_{\tau_1}^{\tau_2} \int_{\mcN_v^H}  (r-M)^{-p+3}    \left( \underline{L} \left( \frac{2r}{D} \underline{L} \Omega^{k_2} \phi \right) \right)^2  \, d\omega du dv \mbox{  if $k_1 \leq 3$} \\ & \mbox{  or  } + \frac{1}{\beta} \sum_{k_1 + k_2 = k} \left( \sup_{u , v \in [\tau_1 , \tau_2 ]} \int_{\mathbb{S}^2} (T \Omega^{k_1} \phi )^2 (u,v,\omega) \, d\omega \right) \cdot \int_{\tau_1}^{\tau_2} \int_{\mcN_v^H}  (r-M)^{-p+3}      \left( \underline{L} \left( \frac{2r}{D} \underline{L} \Omega^{k_2 +2} \phi \right) \right)^2  \, d\omega du dv \\ & \mbox{  if $k_1 \geq 4$} \\ \lesssim & \beta \int_{\tau_1}^{\tau_2} \int_{\mcN_v^H}  (r-M)^{-p+1} (\underline{L}  T \Omega^k \Phi^H )^2 \, d\omega dudv  + \frac{C E_0 \epsilon^2}{(1+\tau_1 )^{2+\delta_2}} \frac{C E_0 \epsilon^2}{(1+\tau_1 )^{1-\delta_1 -p}}  ,
\end{align*}
\endgroup
where we used Sobolev's inequality \eqref{est:sobolev}, the pointwise estimate \eqref{dec:tpsi}, and the hierarchy of energy estimates \eqref{dec:en0p2}. The above decay is better than required.

The sixth term (again considered for any $k \leq 5$)
$$ \sum_{k_1 + k_2 = k} \int_{\tau_1}^{\tau_2} \int_{\mcN_v^H}  (r-M)^{-p} (\underline{L}  T \Omega^k \Phi^H )  \cdot \frac{A}{2r^2} \cdot D (\Omega^{k_1} \phi ) \cdot  \left( \underline{L} \left( \frac{2r}{D} \underline{L} \Omega^{k_2} T \phi \right) \right)  \, d\omega du dv $$
can be treated similarly to the fifth term, by using the pointwise decay \eqref{dec:psi} in the place of \eqref{dec:tpsi}, and the hierarchy of energy estimate \eqref{dec:ent} in the place of \eqref{dec:en0p2}.

For the seventh term we have for any $k \leq 5$ that:
\begingroup
\allowdisplaybreaks
\begin{align*}
 \sum_{k_1 + k_2 = k} \int_{\tau_1}^{\tau_2} & \int_{\mcN_v^H}  (r-M)^{-p} (\underline{L}  T \Omega^k \Phi^H )  \cdot \frac{A}{2r^2} \cdot D (\underline{L} T \Omega^{k_1} \phi ) \cdot  \left( \frac{2r}{D} \underline{L} \Omega^{k_2} \phi \right) \, d\omega du dv \\ \leq & \beta \int_{\tau_1}^{\tau_2} \int_{\mcN_v^H}  (r-M)^{-p+1} (\underline{L}  T \Omega^k \Phi^H )^2 \, d\omega du dv \\ & +  \frac{1}{\beta} \sum_{k_1 + k_2 = k} \int_{\tau_1}^{\tau_2} \int_{\mcN_v^H}  (r-M)^{-p-1}  D^2 (\underline{L} T \Omega^{k_1} \phi )^2 \cdot  \left( \frac{2r}{D} \underline{L} \Omega^{k_2} \phi \right)^2 \, d\omega du dv \\ \lesssim & \beta \int_{\tau_1}^{\tau_2} \int_{\mcN_v^H}  (r-M)^{-p+1} (\underline{L}  T \Omega^k \Phi^H )^2 \, d\omega du dv \\ & +  \frac{1}{\beta} \sum_{k_1 + k_2 = k} \left\|\frac{2r}{D} \underline{L} \Omega^{k_2} \phi \right\|^2_{L^{\infty} (\mathcal{A}_{\tau_1}^{\tau_2})} \int_{\tau_1}^{\tau_2} \int_{\mcN_v^H}  (r-M)^{-p+3} (\underline{L} T \Omega^{k_1} \phi )^2 \, d\omega du dv \mbox{  if $k_2 \leq 3$,} \\ & \mbox{  or  } + \frac{1}{\beta} \sum_{k_1 + k_2 = k} \sup_{u , v \in [\tau_1 , \tau_2 ]} \int_{\mathbb{S}^2} \left( \frac{2r}{D} \underline{L} \Omega^{k_2} \phi \right)^2 (u,v,\omega) \, d\omega \cdot \int_{\tau_1}^{\tau_2} \int_{\mcN_v^H}  (r-M)^{-p+3} (\underline{L} T \Omega^{k_1} \phi )^2 \, d\omega du dv \\ & \mbox{  if $k_2 \geq 4$}  \\ \lesssim & \beta \int_{\tau_1}^{\tau_2} \int_{\mcN_v^H}  (r-M)^{-p+1} (\underline{L}  T \Omega^k \Phi^H )^2 \, d\omega du dv  + C E_0 \epsilon^2 \frac{C E_0 \epsilon^2}{\tau_1^{3+\delta_2}} ,
\end{align*}
\endgroup
where we used Sobolev's inequality \eqref{est:sobolev}, the pointwise bound \eqref{est:boundy} and the Morawetz decay estimate \eqref{dec:mor1} (as $p \in (0,1+\delta_2 ]$).

For the eighth term we have for any $k \leq 5$ that:
\begingroup
\allowdisplaybreaks
\begin{align*}
\sum_{k_1 + k_2 = k} \int_{\tau_1}^{\tau_2} & \int_{\mcN_v^H}  (r-M)^{-p} (\underline{L}  T \Omega^k \Phi^H )  \cdot \frac{A}{2r^2} \cdot D (\underline{L} \Omega^{k_1} \phi ) \cdot  \left( \frac{2r}{D} \underline{L} \Omega^{k_2} T \phi \right) \, d\omega du dv \\ \leq &  \beta \int_{\tau_1}^{\tau_2}  \int_{\mcN_v^H}  (r-M)^{-p+1} (\underline{L}  T \Omega^k \Phi^H )^2 \, d\omega du dv \\ & + \frac{1}{\beta}\sum_{k_1 + k_2 = k} \int_{\tau_1}^{\tau_2} \int_{\mcN_v^H}  (r-M)^{-p-1}  D^2  (\underline{L} \Omega^{k_1} \phi )^2 \cdot  \left( \frac{2r}{D} \underline{L} \Omega^{k_2} T \phi \right)^2 \, d\omega du dv \\ \lesssim & \beta \int_{\tau_1}^{\tau_2}  \int_{\mcN_v^H}  (r-M)^{-p+1} (\underline{L}  T \Omega^k \Phi^H )^2  \, d\omega du dv \\ & + \left\| \frac{2r}{D}\underline{L} \Omega^{k_1} \phi \right\|^2_{L^{\infty} (\mathcal{A}_{\tau_1}^{\tau_2}} \sum_{k_1 + k_2 = k} \int_{\tau_1}^{\tau_2} \int_{\mcN_v^H}  (r-M)^{-p+3}     ( \underline{L} \Omega^{k_2} T \phi )^2 \, d\omega du dv \mbox{  if $k_1 \leq 3$,} \\ & \mbox{  or } +  \sup_{u , v \in [\tau_1 , \tau_2 ]}\int_{\mathbb{S}^2} \left( \frac{2r}{D}\underline{L} \Omega^{k_1} \phi \right)^2 (u,v,\omega) \, d\omega \cdot \sum_{k_1 + k_2 = k} \int_{\tau_1}^{\tau_2} \int_{\mcN_v^H}  (r-M)^{-p+3}     (  \underline{L} \Omega^{k_2} T \phi )^2 \, d\omega du dv  \\ \lesssim & \beta \int_{\tau_1}^{\tau_2}  \int_{\mcN_v^H}  (r-M)^{-p+1} (\underline{L}  T \Omega^k \Phi^H )^2 \, d\omega du dv  + C E_0 \epsilon^2 \frac{C E_0 \epsilon^2}{(1+\tau_1 ^{3-\delta_2}},
\end{align*}
\endgroup
where we used Sobolev's inequality \eqref{est:sobolev}, the boundedness estimate \eqref{est:boundy} and the Morawetz decay estimate \eqref{dec:mor1}.

For the ninth term we have for any $k \leq 5$ that:
\begingroup
\allowdisplaybreaks
\begin{align*}
\sum_{k_1 + k_2 = k} \int_{\tau_1}^{\tau_2} \int_{\mcN_v^H} & (r-M)^{-p} (\underline{L}  T \Omega^k \Phi^H )  \cdot \frac{A}{r} \cdot (\underline{L}  \Omega^{k_1} \phi ) \cdot  ( L T \Omega^{k_2} \phi ) \, d\omega du dv \\ \lesssim & \beta \int_{\tau_1}^{\tau_2} \int_{\mcN_v^H}  (r-M)^{-p+1} (\underline{L}  T \Omega^k \Phi^H )^2 \, d\omega du dv \\ & + \frac{1}{\beta}  \sum_{k_1 + k_2 = k} \int_{\tau_1}^{\tau_2} \int_{\mcN_v^H}  (r-M)^{-p-1}  (\underline{L}  \Omega^{k_1} \phi )^2 \cdot  ( L T \Omega^{k_2} \phi )^2 \, d\omega du dv \\ \lesssim & \beta \int_{\tau_1}^{\tau_2} \int_{\mcN_v^H}  (r-M)^{-p} (\underline{L}  T \Omega^k \Phi^H )^2 \, d\omega du dv \\ & + \frac{1}{\beta}  \sum_{k_1 + k_2 = k} \int_{\tau_1}^{\tau_2} \int_{\mcN_v^H}  (r-M)^{-p+3}  \left( \frac{2r}{D} \underline{L}  \Omega^{k_1} \phi \right)^2 \cdot  ( L T \Omega^{k_2} \phi )^2 \, d\omega du dv  \\ \lesssim & \beta \int_{\tau_1}^{\tau_2} \int_{\mcN_v^H}  (r-M)^{-p} (\underline{L}  T \Omega^k \Phi^H )^2 \, d\omega du dv \\ & + \left\| \frac{2r}{D} \underline{L} \Omega^{k_1} \phi \right\|_{L^{\infty}( \mathcal{A}_{\tau_1}^{\tau_2} )}^2   \sum_{k_1 + k_2 = k} \int_{\tau_1}^{\tau_2} \int_{\mcN_v^H}  (r-M)^{-p+3}    ( L T \Omega^{k_2} \phi )^2 \, d\omega du dv \mbox{  if $k_1 \leq 3$, or} \\ & + \sup_{u , v \in [\tau_1 , \tau_2 ]} \int_{\mathbb{S}^2} \left( \frac{2r}{D} \underline{L} \Omega^{k_1} \right)^2 (u,v,\omega) \, d\omega   \sum_{k_1 + k_2 = k} \int_{\tau_1}^{\tau_2} \int_{\mcN_v^H}  (r-M)^{-p+3}    ( L T \Omega^{k_2 +2} \phi )^2 \, d\omega du dv \mbox{  if $k_1 \geq 4$} \\ \lesssim & \beta \int_{\tau_1}^{\tau_2} \int_{\mcN_v^H}  (r-M)^{-p} (\underline{L}  T \Omega^k \Phi^H )^2 \, d\omega du dv  + C E_0 \epsilon^2 \frac{C E_0 \epsilon^2}{(1+\tau_1 )^{3+\delta_2}} ,
\end{align*}
\endgroup
where we used Sobolev's inequality \eqref{est:sobolev}, the pointwise estimate \eqref{est:boundy}, and the Morawetz decay estimate \eqref{dec:mor1} as $p \in (0,1+\delta_2]$.

For the tenth term we have for any $k \leq 5$ that:
\begingroup
\allowdisplaybreaks
\begin{align*}
\sum_{k_1 + k_2 = k} \int_{\tau_1}^{\tau_2} \int_{\mcN_v^H} & (r-M)^{-p} (\underline{L}  T \Omega^k \Phi^H )  \cdot \frac{A}{r} \cdot (\underline{L}  T \Omega^{k_1} \phi ) \cdot  ( L  \Omega^{k_2} \phi ) \, d\omega du dv \\ \lesssim & \int_{\tau_1}^{\tau_2} \int_{\mcN_v^H} (r-M)^{-p} (\underline{L}  T \Omega^k \Phi^H )^2 \cdot \frac{1}{v^{1+\beta}} \, d\omega du dv \\ & + \sum_{k_1 + k_2 = k} \int_{\tau_1}^{\tau_2} \int_{\mcN_v^H} (r-M)^{-p}  (\underline{L}  T \Omega^{k_1} \phi )^2 \cdot  ( L  \Omega^{k_2} \phi )^2 \cdot v^{1+\beta} \, d\omega du dv \\ \lesssim & \frac{1}{\tau_1^{\beta}}  \sup_{v \in [\tau_1 , \tau_2 ]} \int_{\mcN_v^H} (r-M)^{-p} (\underline{L}  T \Omega^k \Phi^H )^2 \, d\omega du \\ & + \sum_{\substack{l_1 \leq 5 \\ l_2 \leq 5}} \int_{\tau_1}^{\tau_2} \int_{\mathbb{S}^2} ( L  \Omega^{l_1} \phi )^2 \cdot v^{1+\beta} \, d\omega du dv \cdot \sup_{v \in [\tau_1 , \tau_2 ] } \int_{\mcN_v^H} (r-M)^{-p}  (\underline{L}  T \Omega^{l_2} \phi )^2 \, d\omega du \\ \lesssim & \frac{1}{\tau_1^{\beta}}  \sup_{v \in [\tau_1 , \tau_2 ]} \int_{\mcN_v^H} (r-M)^{-p} (\underline{L}  T \Omega^k \Phi^H )^2 \, d\omega du  + C E_0 \epsilon^2 \frac{C E_0 \epsilon^2}{(1+\tau_1 )^{3+\delta_2 -p}} ,
\end{align*}
\endgroup
where we used Sobolev's inequality \eqref{est:sobolev}, the auxiliary estimate \eqref{est:aux1} for $m=0$, the energy decay estimates \eqref{dec:entp}. The decay obtained is better than required.

For the eleventh term we have for any $k \leq 5$ that:
\begingroup
\allowdisplaybreaks
\begin{align*}
 \sum_{k_1 + k_2 = k} \int_{\tau_1}^{\tau_2} & \int_{\mcN_v^H}  (r-M)^{-p} (\underline{L}  T \Omega^k \Phi^H )  \cdot \frac{A}{r} \cdot ( \Omega^{k_1} \phi ) \cdot  ( \underline{L} L T \Omega^{k_2} \phi ) \, d\omega du dv \\ \lesssim &  \int_{\tau_1}^{\tau_2}  \int_{\mcN_v^H}  (r-M)^{-p} (\underline{L}  T \Omega^k \Phi^H )^2 \cdot \frac{1}{v^{1+\beta}} \, d\omega du dv \\ & +  \sum_{k_1 + k_2 = k} \int_{\tau_1}^{\tau_2} \int_{\mcN_v^H}  (r-M)^{-p}  ( \Omega^{k_1} \phi )^2 \cdot  ( \underline{L} L T \Omega^{k_2} \phi )^2 \cdot v^{1+\beta} \, d\omega du dv \\  \lesssim & \frac{1}{\tau_1^{\beta}} \sup_{v \in [\tau_1 , \tau_2 ]} \int_{\mcN_v^H}  (r-M)^{-p} (\underline{L}  T \Omega^k \Phi^H )^2 \, d\omega du dv \\ & +   \sum_{k_1 + k_2 = k} \int_{\tau_1}^{\tau_2} \int_{\mcN_v^H}  (r-M)^{-p}  ( \Omega^{k_1} \phi )^2 \cdot  ( \underline{L} T^2 \Omega^{k_2} \phi )^2 \cdot v^{1+\beta} \, d\omega du dv \\ & + \sum_{k_1 + k_2 = k} \int_{\tau_1}^{\tau_2} \int_{\mcN_v^H}  (r-M)^{-p}  ( \Omega^{k_1} \phi )^2 \cdot D^2  \left( \underline{L} \left( \frac{2r}{D} \underline{L}  \Omega^{k_2} T \phi \right) \right)^2 \cdot v^{1+\beta} \, d\omega du dv \\ & +  \sum_{k_1 + k_2 = k} \int_{\tau_1}^{\tau_2} \int_{\mcN_v^H}  (r-M)^{-p}  ( \Omega^{k_1} \phi )^2 \cdot  D ( \underline{L}  \Omega^{k_2} T \phi )^2 \cdot v^{1+\beta} \, d\omega   du dv \\  \lesssim & \frac{1}{\tau_1^{\beta}} \int_{\tau_1}^{\tau_2}  \int_{\mcN_v^H}  (r-M)^{-p} (\underline{L}  T \Omega^k \Phi^H )^2 \, d\omega du dv \\ & +  \frac{C E_0 \epsilon^2}{\tau_1^{1-\delta_1-\beta}} \sum_{l \leq 5} \int_{\tau_1}^{\tau_2} \int_{\mcN_v^H}  (r-M)^{-p}   ( \underline{L} T^2 \Omega^{l} \phi )^2 \, d\omega du dv \\ & +  \frac{C E_0 \epsilon^2}{\tau_1^{1-\delta_1 - \beta}} \sum_{l \leq 5} \int_{\tau_1}^{\tau_2} \int_{\mcN_v^H}  (r-M)^{-p+4}   \left( \underline{L} \left( \frac{2r}{D} \underline{L}  \Omega^l T \phi \right) \right)^2 \, d\omega du dv \\ & +  \frac{C E_0 \epsilon^2}{\tau_1^{1-\delta_1 -\beta}} \sum_{l \leq 5} \int_{\tau_1}^{\tau_2} \int_{\mcN_v^H}  (r-M)^{-p+2}    ( \underline{L}  \Omega^l T \phi )^2 \, d\omega   du dv \\ \lesssim & \frac{C^2 E_0^2 \epsilon^4}{\tau_1^{1-\delta_1 -\beta + 2 +\delta_2 -p-1}} + \frac{C^2 E_0^2 \epsilon^4}{\tau_1^{1-\delta_1 -\beta}} + \frac{C^2 E_0^2 \epsilon^4}{\tau_1^{1-\delta_1 -\beta + 3 +\delta_2 -p}}  ,
\end{align*}
\endgroup
where we used Sobolev's inequality, the pointwise estimate \eqref{dec:psi}, and the energy decay estimates \eqref{dec:enttp}, \eqref{dec:entp1} and \eqref{dec:entp}. Note that as we choose $\beta$ to be small enough, the obtained decay is better than the one required.

For the twelfth term we have that:
\begingroup
\allowdisplaybreaks
\begin{align*}
 \sum_{k_1 + k_2 = k} \int_{\tau_1}^{\tau_2} & \int_{\mcN_v^H}  (r-M)^{-p} (\underline{L}  T \Omega^k \Phi^H )  \cdot \frac{A}{r} \cdot ( \Omega^{k_1} T \phi ) \cdot  ( \underline{L} L  \Omega^{k_2} \phi ) \, d\omega du dv \\ \lesssim &  \beta \int_{\tau_1}^{\tau_2}  \int_{\mcN_v^H}  (r-M)^{-p+1} (\underline{L}  T \Omega^k \Phi^H )^2  \, d\omega du dv \\ & +  \frac{1}{\beta} \sum_{k_1 + k_2 = k} \int_{\tau_1}^{\tau_2} \int_{\mcN_v^H}  (r-M)^{-p-1}  ( \Omega^{k_1} T \phi )^2 \cdot  ( \underline{L} L  \Omega^{k_2} \phi )^2 \, d\omega du dv  \\ \lesssim &  \beta \int_{\tau_1}^{\tau_2}  \int_{\mcN_v^H}  (r-M)^{-p+1} (\underline{L}  T \Omega^k \Phi^H )^2  \, d\omega du dv \\ & +  \frac{1}{\beta} \sum_{k_1 + k_2 = k} \int_{\tau_1}^{\tau_2} \int_{\mcN_v^H}  (r-M)^{-p-1}  ( \Omega^{k_1} T \phi )^2 \cdot  ( \underline{L} T  \Omega^{k_2} \phi )^2 \, d\omega du dv \\ & +  \frac{1}{\beta} \sum_{k_1 + k_2 = k} \int_{\tau_1}^{\tau_2} \int_{\mcN_v^H}  (r-M)^{-p-1}  D^2 ( \Omega^{k_1} T \phi )^2 \cdot  \left( \underline{L} \left( \frac{2r}{D}  \underline{L} \Omega^{k_2} \phi \right) \right)^2 \, d\omega du dv \\ & +  \frac{1}{\beta} \sum_{k_1 + k_2 = k} \int_{\tau_1}^{\tau_2} \int_{\mcN_v^H}  (r-M)^{-p-1}  D ( \Omega^{k_1} T \phi )^2 \cdot  ( \underline{L}   \Omega^{k_2} \phi )^2 \, d\omega du dv .
\end{align*}
\endgroup
We look separately at the last three terms of the last expression. For the first one by using Sobolev's inequality \eqref{est:sobolev}, the pointwise decay estimate \eqref{dec:tpsi}, Hardy's inequality \eqref{hardy} and the energy decay estimates \eqref{dec:entp1} we have that
\begingroup
\allowdisplaybreaks
\begin{align*}
\sum_{k_1 + k_2 = k} & \int_{\tau_1}^{\tau_2} \int_{\mcN_v^H}  (r-M)^{-p-1}  ( \Omega^{k_1} T \phi )^2 \cdot  ( \underline{L} T  \Omega^{k_2} \phi )^2 \, d\omega du dv \\ \lesssim & \frac{C E_0 \epsilon^2}{\tau_1^{2+\delta_2}} \sum_{l \leq 5}\int_{\tau_1}^{\tau_2} \int_{\mcN_v^H}  (r-M)^{-p-1}   ( \underline{L} T  \Omega^{k_2} \phi )^2 \, d\omega du dv \\ \lesssim & \frac{C E_0 \epsilon^2}{\tau_1^{2+\delta_2}} \sum_{l \leq 5}\int_{\tau_1}^{\tau_2} \int_{\mcN_v^H}  (r-M)^{-p+1}   \left( \underline{L} \left( \frac{2r}{D} \underline{L} T  \Omega^{k_2} \phi \right) \right)^2 \, d\omega du dv  \lesssim  \frac{C^2 E_0^2 \epsilon^4}{\tau_1^{2+\delta_2 + 1 +\delta_2 - p}} .
\end{align*}
\endgroup
For the second one by using Sobolev's inequality \eqref{est:sobolev}, the pointwise decay estimate \eqref{dec:tpsi} and the energy decay estimates \eqref{dec:enp1} (as $(r-M)^{-p-1} D^2 \simeq (r-M)^{-p+3}$ so although $p \in (0,1+\delta_2]$ we can use the estimates \eqref{dec:enp1} as $\delta_2$ is small) we have that it is bounded by:
$$ \frac{C^2 E_0^2 \epsilon^4}{\tau_1^{2+\delta_2 }} . $$
For the third one by by using Sobolev's inequality \eqref{est:sobolev}, the pointwise decay estimate \eqref{dec:tpsi} and the energy decay estimates \eqref{dec:enp} we have that for any $v$ it holds that:
\begingroup
\allowdisplaybreaks
\begin{align*}
\sum_{k_1 + k_2 = k} & \int_{\mcN_v^H}  (r-M)^{-p-1}   D ( \Omega^{k_1} T \phi )^2 \cdot  ( \underline{L}   \Omega^{k_2} \phi )^2 \, d\omega du dv \\ \lesssim & \frac{C E_0 \epsilon^2}{v^{2+\delta_2}} \sum_{l \leq 5}  \int_{\mcN_v^H}  (r-M)^{-p+1}  ( \underline{L}   \Omega^{l} \phi )^2 \, d\omega du dv \\ \lesssim & \frac{C^2 E_0^2 \epsilon^4}{v^{2+ \delta_2 + 3-\delta_1 -p +1 }} \mbox{  if $-p+1 \leq 0$, and }
 \lesssim  \frac{C^2 E_0^2 \epsilon^4}{v^{2+ \delta_2 + 3-\delta_1 }} \mbox{  otherwise. }
\end{align*}
\endgroup
Integrating in $[\tau_1 , \tau_2]$ gives us better decay than required.

The thirteenth and fourteenth terms can be shown to have better decay than required by using the decay for the bulk term involving angular derivatives of the energy decay estimates \eqref{dec:enp}, \eqref{dec:enp1}, \eqref{dec:entp} and \eqref{dec:entp1} (depending on which part is hit by the largest number of angular derivatives, as if a term has six angular derivatives we are forced to use the aforementioned energy decay estimates, as it was done for the other bootstraps).

\eqref{D1'}: We examine first the term close to the horizon for any $k \leq 5$ and we have that:
\begingroup
\allowdisplaybreaks
\begin{align*}
\int_{\mathcal{A}_{\tau_1}^{\tau_2}}  (r-M)^{-1-\delta} D^2 & | \Omega^k T^2 F |^2 \, d\omega du dv \lesssim \sum_{\substack{k_1 + k_2 = k \\ m_1 + m_2 = 2}} \int_{\mathcal{A}_{\tau_1}^{\tau_2} }\frac{1}{(r-M)^{1+\delta}} (L \Omega^{k_1} T^{m_1} \phi )^2 \cdot ( \underline{L} \Omega^{k_2} T^{m_2} \phi )^2 \,  d\omega du dv \\ & + \sum_{\substack{k_1 + k_2 = k \\ m_1 + m_2 = 2}} \int_{\mathcal{A}_{\tau_1}^{\tau_2} } (r-M)^{3-\delta} ( \Omega^{k_1} T^{m_1} \phi )^2 \cdot ( \underline{L} \Omega^{k_2} T^{m_2} \phi )^2  \,  d\omega du dv \\ & + \sum_{\substack{k_1 + k_2 = k \\ m_1 + m_2 = 2}} \int_{\mathcal{A}_{\tau_1}^{\tau_2} } (r-M)^{3-\delta} ( \Omega^{k_1} T^{m_1} \phi )^2 \cdot ( L \Omega^{k_2} T^{m_2} \phi )^2  \,  d\omega du dv \\ & + \sum_{\substack{k_1 + k_2 = k \\ m_1 + m_2 = 2}} \int_{\mathcal{A}_{\tau_1}^{\tau_2} } (r-M)^{7-\delta} ( \Omega^{k_1} T^{m_1} \phi )^2 \cdot (  \Omega^{k_2} T^{m_2} \phi )^2   \,  d\omega du dv \\ & + \sum_{\substack{k_1 + k_2 = k \\ m_1 + m_2 = 2}} \int_{\mathcal{A}_{\tau_1}^{\tau_2} } (r-M)^{3-\delta} | \slashed{\nabla} \Omega^{k_1} T^{m_1} \phi |^2 \cdot | \slashed{\nabla}  \Omega^{k_2} T^{m_2} \phi |^2  \,  d\omega du dv .
\end{align*}
\endgroup
For the first term for any $k \leq 5$, we have for the case of $m_1 = 0$ and $m_2 = 2$, for any $v$, and since $\delta \leq \delta_2$ that:
\begin{equation*}
\begin{split}
\sum_{k_1 + k_2 = k} \int_{\mcN_v^H }\frac{1}{(r-M)^{1+\delta}} & (L \Omega^{k_1}  \phi )^2 \cdot ( \underline{L} \Omega^{k_2} T^2 \phi )^2 \,  d\omega du \\ \lesssim & \frac{C E_0 \epsilon^2}{v^{2+\delta_2}} \sum_{l \leq 5}\int_{\mcN_v^H }\frac{1}{(r-M)^{1+\delta}} ( \underline{L} \Omega^{k_2} T^{l} \phi )^2 \,  d\omega du \lesssim \frac{C^2 E_0^2 \epsilon^4}{v^{3+\delta_2}} ,
\end{split}
\end{equation*}
where we used Sobolev's inequality \eqref{est:sobolev}, the pointwise decay estimates \eqref{dec:tpsi} and the energy decay estimates \eqref{dec:entt} for $p = 1+\delta \leq 1+ \delta_2$. This implies that:
\begin{equation*}
\sum_{k_1 + k_2 = k} \int_{\mathcal{A}_{\tau_1}^{\tau_2}}\frac{1}{(r-M)^{1+\delta}}  (L \Omega^{k_1} \phi )^2 \cdot ( \underline{L} \Omega^{k_2} T^2 \phi )^2 \,  d\omega du dv \lesssim \int_{\tau_1}^{\tau_2} \frac{C^2 E_0^2 \epsilon^4}{v^{3+\delta_2}} \, dv \lesssim \frac{C^2 E_0^2 \epsilon^4}{\tau_1^{2+\delta_2}} .
\end{equation*}
For the case of $m_1 = 1$ and $m_2 = 1$ we have for any $v$ (and again since $\delta \leq \delta_2$) that:
\begin{equation*}
\begin{split}
\sum_{k_1 + k_2 = k} \int_{\mcN_v^H }\frac{1}{(r-M)^{1+\delta}} & (L \Omega^{k_1} T \phi )^2 \cdot ( \underline{L} \Omega^{k_2} T \phi )^2 \,  d\omega du \\ \lesssim & \frac{C E_0 \epsilon^2}{v^{1+\delta_2}} \sum_{l \leq 5} \int_{\mcN_v^H }\frac{1}{(r-M)^{1+\delta}}  ( \underline{L} \Omega^{l} T \phi )^2 \,  d\omega du \lesssim \frac{C^2 E_0^2 \epsilon^4}{v^{3+\delta_2}} ,
\end{split}
\end{equation*}
where we used Sobolev's inequality \eqref{est:sobolev}, the pointwise decay estimates \eqref{dec:ttpsi} and the energy decay estimates \eqref{dec:ent} for $p = 1+\delta \leq 1+ \delta_2$. This implies that:
\begin{equation*}
\sum_{k_1 + k_2 = k} \int_{\mathcal{A}_{\tau_1}^{\tau_2}}\frac{1}{(r-M)^{1+\delta}}  (L \Omega^{k_1} T\phi )^2 \cdot ( \underline{L} \Omega^{k_2} T \phi )^2 \,  d\omega du dv \lesssim \int_{\tau_1}^{\tau_2} \frac{C^2 E_0^2 \epsilon^4}{v^{3+\delta_2}} \, dv \lesssim \frac{C^2 E_0^2 \epsilon^4}{\tau_1^{2+\delta_2}} .
\end{equation*}
For the case $m_1 =2$ and $m_2 = 0$ we have for any $v$ (and again since $\delta \leq \delta_2$) that:
\begin{equation*}
\begin{split}
\sum_{k_1 + k_2 = k} \int_{\mathcal{A}_{\tau_1}^{\tau_2}}\frac{1}{(r-M)^{1+\delta}} & (L \Omega^{k_1} T^2 \phi )^2 \cdot ( \underline{L} \Omega^{k_2}  \phi )^2 \,  d\omega du dv \\ \lesssim & \sum_{k_1 + k_2 = k} \int_{\mathcal{A}_{\tau_1}^{\tau_2}} (r-M)^{3-\delta} (L \Omega^{k_1} T^2 \phi )^2 \cdot \left( \frac{2r}{D} \underline{L} \Omega^{k_2}  \phi \right)^2 \,  d\omega du dv \\ \lesssim & C E_0 \epsilon^2 \sum_{l \leq 5}\int_{\mathcal{A}_{\tau_1}^{\tau_2}} (r-M)^{3-\delta} (L \Omega^{k_1} T^2 \phi )^2  \,  d\omega du dv \lesssim \frac{C^2 E_0^2 \epsilon^4}{\tau_1^{2+\delta_2}} ,
\end{split}
\end{equation*}
where we used Sobolev's inequality \eqref{est:sobolev}, the pointwise boundedness estimates \eqref{est:boundy} and the Morawetz decay estimate \eqref{dec:mor2}.

\eqref{D2'}: We use \eqref{eq:tcomm} for $m=2$. The last term of \eqref{eq:tcomm} for $m=2$ is similar to the rest so we will not examine it in detail. For the first term we have for $k \leq 5$ that:
\begin{equation}\label{aux:tt1}
\begin{split}
\sum_{\substack{k_1 + k_2 = k \\ m_1 + m_2 = 2}}\int_{\mcR_{\tau_1}^{\tau_2}} (r-M)^{-p} & ( \underline{L} \Omega^k T^2 \phi ) \cdot \frac{2A \cdot D}{r^3} \cdot (L \Omega^{k_1} T^{m_1} \phi ) \cdot \left( \frac{2r}{D} \underline{L} \Omega^{k_2} T^{m_2} \phi \right) \, d\omega du dv \\ = & \sum_{\substack{k_1 + k_2 = k }}\int_{\mcR_{\tau_1}^{\tau_2}} (r-M)^{-p} ( \underline{L} \Omega^k T^2 \phi ) \cdot \frac{2A \cdot D}{r^3} \cdot (L \Omega^{k_1}\phi ) \cdot \left( \frac{2r}{D} \underline{L} \Omega^{k_2} T^2 \phi \right) \, d\omega du dv \\ & + \sum_{\substack{k_1 + k_2 = k }}\int_{\mcR_{\tau_1}^{\tau_2}} (r-M)^{-p} ( \underline{L} \Omega^k T^2 \phi ) \cdot \frac{2A \cdot D}{r^3} \cdot (L \Omega^{k_1} T \phi ) \cdot \left( \frac{2r}{D} \underline{L} \Omega^{k_2} T \phi \right) \, d\omega du dv \\ & +\sum_{\substack{k_1 + k_2 = k }}\int_{\mcR_{\tau_1}^{\tau_2}} (r-M)^{-p} ( \underline{L} \Omega^k T^2 \phi ) \cdot \frac{2A \cdot D}{r^3} \cdot (L \Omega^{k_1} T^2 \phi ) \cdot \left( \frac{2r}{D} \underline{L} \Omega^{k_2} \phi \right) \, d\omega du dv .
\end{split}
\end{equation}
For the first term of the last expression after using Cauchy-Schwarz, we get the following term:
\begin{equation*}
\sum_{k_1 + k_2 = k} \Bigg( \int_{\tau_1}^{\tau_2}  \Big( \int_{\mcN_v^H} (r-M)^{-p} (L \Omega^{k_1} \phi )^2 \cdot ( \underline{L} \Omega^{k_2} T^2 \phi )^2 \, d\omega du \Big)^{1/2} dv \Bigg)^2 .
\end{equation*}
We look at the endpoint cases $p=2$ and $p=0$. In both situations we use that $L \phi$ is integrable in $v$ (using the estimates \eqref{dec:tpsi}). For $p=2$ we get after applying Sobolev's inequality \eqref{est:sobolev} that:
\begin{equation*}
\begin{split}
\sum_{k_1 + k_2 = k} \Bigg( \int_{\tau_1}^{\tau_2}  & \Big( \int_{\mcN_v^H} (r-M)^{-2} (L \Omega^{k_1} \phi )^2 \cdot ( \underline{L} \Omega^{k_2} T^2 \phi )^2 \, d\omega du \Big)^{1/2} dv \Bigg)^2 \\ \lesssim & \sum_{k_1 + k_2 \leq k, k_1 \leq 3} \Bigg( \int_{\tau_1}^{\tau_2} \frac{\sqrt{C E_0} \epsilon}{v^{1+\delta_2/2}}  \Big( \int_{\mcN_v^H} (r-M)^{-2-\delta_2}( \underline{L} \Omega^{k_2} T^2 \phi )^2 \, d\omega du \Big)^{1/2} dv \Bigg)^2 \\ & + \sum_{k_1 + k_2 \leq k, k_1 \geq 4} \Bigg( \int_{\tau_1}^{\tau_2} \frac{\sqrt{C E_0} \epsilon}{v^{1+\delta_2/2}}  \Big( \int_{\mcN_v^H} (r-M)^{-2-\delta_2}( \underline{L} \Omega^{k_2 + 2} T^2 \phi )^2 \, d\omega du \Big)^{1/2} dv \Bigg)^2 \\ \lesssim & \Bigg( \int_{\tau_1}^{\tau_2} \frac{C E_0 \epsilon^2}{v^{1+\delta_2}}  \, dv  \Bigg)^2 \lesssim \frac{C^2 E_0^2 \epsilon^4}{\tau_1^{2\delta_2}} .
\end{split}
\end{equation*} 
On the other hand, using again Sobolev's inequality \eqref{est:sobolev} and the energy decay estimates \eqref{dec:entt} we get that:
\begin{equation*}
\begin{split}
\sum_{k_1 + k_2 = k} \Bigg( \int_{\tau_1}^{\tau_2}  & \Big( \int_{\mcN_v^H} (L \Omega^{k_1} \phi )^2 \cdot ( \underline{L} \Omega^{k_2} T^2 \phi )^2 \, d\omega du \Big)^{1/2} dv \Bigg)^2 \\ \lesssim & \sum_{k_1 + k_2 \leq k, k_1 \leq 3} \Bigg( \int_{\tau_1}^{\tau_2} \frac{\sqrt{C E_0} \epsilon}{v^{1+\delta_2/2}}  \Big( \int_{\mcN_v^H} ( \underline{L} \Omega^{k_2} T^2 \phi )^2 \, d\omega du \Big)^{1/2} dv \Bigg)^2 \\ & + \sum_{k_1 + k_2 \leq k, k_1 \geq 4} \Bigg( \int_{\tau_1}^{\tau_2} \frac{\sqrt{C E_0} \epsilon}{v^{1+\delta_2/2}}  \Big( \int_{\mcN_v^H} ( \underline{L} \Omega^{k_2 + 2} T^2 \phi )^2 \, d\omega du \Big)^{1/2} dv \Bigg)^2 \\ \lesssim & \Bigg( \int_{\tau_1}^{\tau_2} \frac{C E_0 \epsilon^2}{v^{2+\delta_2}}  \, dv   \Bigg)^2 \lesssim \frac{C^2 E_0^2 \epsilon^4}{(1+\tau_1 )^{2+2\delta_2}} \lesssim \frac{C^2 E_0^2 \epsilon^4}{(1+\tau_1 )^{2+\delta_2}} .
\end{split}
\end{equation*} 
The rest of the estimates for $p \in (0,2+\delta_2 )$ follow by interpolation.

For the second term of \eqref{aux:tt1} we have to control again a similar term to the previous ones after using Cauchy-Schwarz. For $p=2$ we have after using Sobolev's inequality \eqref{est:sobolev}, the pointwise decay estimate \eqref{dec:ttpsi} and the energy decay estimates \eqref{dec:ent} that:
\begin{equation*}
\begin{split}
\sum_{k_1 + k_2 = k} \Bigg( \int_{\tau_1}^{\tau_2}  & \Big( \int_{\mcN_v^H} (r-M)^{-2} (L \Omega^{k_1} T\phi )^2 \cdot ( \underline{L} \Omega^{k_2} T \phi )^2 \, d\omega du \Big)^{1/2} dv \Bigg)^2 \\ \lesssim & \sum_{k_1 + k_2 \leq k, k_1 \leq 3} \Bigg( \int_{\tau_1}^{\tau_2} \frac{\sqrt{C E_0} \epsilon}{v^{1/2+\delta_2/2}}  \Big( \int_{\mcN_v^H} (r-M)^{-2}( \underline{L} \Omega^{k_2} T \phi )^2 \, d\omega du \Big)^{1/2} dv \Bigg)^2 \\ & + \sum_{k_1 + k_2 \leq k, k_1 \geq 4} \Bigg( \int_{\tau_1}^{\tau_2} \frac{\sqrt{C E_0} \epsilon}{v^{1/2+\delta_2/2}}  \Big( \int_{\mcN_v^H} (r-M)^{-2}( \underline{L} \Omega^{k_2 + 2} T \phi )^2 \, d\omega du \Big)^{1/2} dv \Bigg)^2 \\ \lesssim & \Bigg( \int_{\tau_1}^{\tau_2} \frac{C E_0 \epsilon^2}{v^{1+\delta_2}}  \, dv  \Bigg)^2 \lesssim \frac{C^2 E_0^2 \epsilon^4}{\tau_1^{2\delta_2}} ,
\end{split}
\end{equation*}
and for the case of $p=0$ using again the same tools as before we have that:
\begin{equation*}
\begin{split}
\sum_{k_1 + k_2 = k} \Bigg( \int_{\tau_1}^{\tau_2}  & \Big( \int_{\mcN_v^H} (L \Omega^{k_1} T\phi )^2 \cdot ( \underline{L} \Omega^{k_2} T \phi )^2 \, d\omega du \Big)^{1/2} dv \Bigg)^2 \\ \lesssim & \sum_{k_1 + k_2 \leq k, k_1 \leq 3} \Bigg( \int_{\tau_1}^{\tau_2} \frac{\sqrt{C E_0} \epsilon}{v^{1/2+\delta_2/2}}  \Big( \int_{\mcN_v^H} ( \underline{L} \Omega^{k_2} T \phi )^2 \, d\omega du \Big)^{1/2} dv \Bigg)^2 \\ & + \sum_{k_1 + k_2 \leq k, k_1 \geq 4} \Bigg( \int_{\tau_1}^{\tau_2} \frac{\sqrt{C E_0} \epsilon}{v^{1/2+\delta_2/2}}  \Big( \int_{\mcN_v^H} ( \underline{L} \Omega^{k_2 + 2} T \phi )^2 \, d\omega du \Big)^{1/2} dv \Bigg)^2 \\ \lesssim & \Bigg( \int_{\tau_1}^{\tau_2} \frac{C E_0 \epsilon^2}{v^{2+\delta_2}}  \, dv \Bigg)^2 \lesssim \frac{C^2 E_0^2 \epsilon^4}{(1+\tau_1 )^{2+2\delta_2}} ,
\end{split}
\end{equation*}
and finally as before the rest of the $p$ estimates follow by interpolation.

Finally for the third term of \eqref{aux:tt1} we have that
\begin{equation*}
\begin{split}
\sum_{\substack{k_1 + k_2 = k }}\int_{\mcR_{\tau_1}^{\tau_2}} & (r-M)^{-p} ( \underline{L} \Omega^k T^2 \phi ) \cdot \frac{2A \cdot D}{r^3} \cdot (L \Omega^{k_1} T^2 \phi ) \cdot \left( \frac{2r}{D} \underline{L} \Omega^{k_2} \phi \right) \, d\omega du dv \\ \lesssim & \int_{\mcR_{\tau_1}^{\tau_2}} (r-M)^{-p}  ( \underline{L} \Omega^k T^2 \phi )^2 \cdot \frac{1}{v^{1+\beta}}  \, d\omega du dv \\ & + \sum_{\substack{k_1 + k_2 = k }}\int_{\mcR_{\tau_1}^{\tau_2}} (r-M)^{-p} D^2 (L \Omega^{k_1} T^2 \phi )^2 \cdot \left( \frac{2r}{D} \underline{L} \Omega^{k_2} \phi \right)^2 \cdot v^{1+\beta} \, d\omega du dv \\ \lesssim & \frac{1}{\tau_1^{\beta}} \sup_{v \in [\tau_1 , \tau_2 ]} \int_{\mcN_v^H} (r-M)^{-p}  ( \underline{L} \Omega^k T^2 \phi )^2 \cdot \frac{1}{v^{1+\beta}} \, d\omega du \\ &  + \sum_{\substack{k_1 + k_2 = k }} \int_{\tau_1}^{\tau_2} \int_{\mathbb{S}^2} (L \Omega^{k_1 + 2} T^2 \phi )^2 \cdot v^{1+\beta} \, d\omega dv \cdot \sup_{v \in [\tau_1 , \tau_2 ]} \int_{\mcN_v^H} (r-M)^{-p} ( \underline{L} \Omega^{k_2 } \phi )^2 d\omega du \mbox{  if $k \leq 3$, or} \\ \lesssim & \frac{1}{\tau_1^{\beta}} \sup_{v \in [\tau_1 , \tau_2 ]} \int_{\mcN_v^H} (r-M)^{-p}  ( \underline{L} \Omega^k T^2 \phi )^2 \cdot \frac{1}{v^{1+\beta}} \, d\omega du \\ &  + \sum_{\substack{k_1 + k_2 = k }} \int_{\tau_1}^{\tau_2} \int_{\mathbb{S}^2} (L \Omega^{k_1 } T^2 \phi )^2 \cdot v^{1+\beta} \, d\omega dv \cdot \sup_{v \in [\tau_1 , \tau_2 ]} \int_{\mcN_v^H} (r-M)^{-p} ( \underline{L} \Omega^{k_2 + 2} \phi )^2 d\omega du \mbox{  if $k \geq 4$}  \\ \lesssim & \frac{1}{\tau_1^{\beta}} \sup_{v \in [\tau_1 , \tau_2 ]} \int_{\mcN_v^H} (r-M)^{-p}  ( \underline{L} \Omega^k T^2 \phi )^2 \cdot \frac{1}{v^{1+\beta}} \, d\omega du \\ & + C E_0 \epsilon^2 \sup_{v \ in [\tau_1 , \tau_2 ]} \int_{\mcN_v^H} (r-M)^{-p} ( \underline{L} \Omega^{m} \phi )^2 d\omega du  ,
\end{split}
\end{equation*}
where we used Sobolev's inequality and the auxiliary estimate \eqref{est:aux1} for $m=2$. In the last term of the last expression for the case $p=2+\delta_2$ we use Hardy's inequality \eqref{hardy} and we have that
\begin{equation*}
\begin{split}
C E_0 \epsilon^2 \sum_{m \leq 5} \int_{\mcN_v^H} (r-M)^{-2} & ( \underline{L} \Omega^{m} \phi )^2 d\omega du \\ \lesssim & C E_0 \epsilon^2 \sum_{m \leq 5} \int_{\mcN_v^H} \left( \underline{L} \left( \frac{2r}{D}  \underline{L} \Omega^{m} \phi \right) \right)^2 d\omega du  \lesssim C E_0 \epsilon^2 \frac{C E_0 \epsilon^2}{\tau_1^{1-\delta_1}} , 
\end{split}
\end{equation*}
by using the energy decay estimates \eqref{dec:en0}. On the other hand for the same term for $p=0$ we have that
\begin{equation*}
C E_0 \epsilon^2 \sum_{m \leq 5} \int_{\mcN_v^H}   ( \underline{L} \Omega^{m} \phi )^2 d\omega du  \lesssim C E_0 \epsilon^2 \frac{C E_0 \epsilon^2}{\tau_1^{3-\delta_1}} , 
\end{equation*}
where we used the energy decay estimates \eqref{dec:en0}. Both of the last two estimates are better than desired, and the rest of the $p$ range follows by interpolation.

\eqref{D3'}: This can be done similarly to \eqref{A3'}, \eqref{B3'} and \eqref{C3'}.

\eqref{A6'}, \eqref{C5'} and \eqref{D5'}: All these terms can be treated similarly to the \eqref{B5'} term. 

\eqref{E1'}: We deal first with the term close to the horizon and we have that:
\begin{equation*}
\begin{split}
\int_{\mathcal{A}_{\tau_1}^{\tau_2}}  (r-M)^{-1-\delta} & D^2 | \Omega^k T^3 F |^2 \, d\omega du dv \lesssim \sum_{\substack{k_1 + k_2 = k \\ m_1 + m_2 = 3}} \int_{\mathcal{A}_{\tau_1}^{\tau_2} }\frac{1}{(r-M)^{1+\delta}} (L \Omega^{k_1} T^{m_1} \phi )^2 \cdot ( \underline{L} \Omega^{k_2} T^{m_2} \phi )^2 \,  d\omega du dv \\ & + \sum_{\substack{k_1 + k_2 = k \\ m_1 + m_2 = 3}} \int_{\mathcal{A}_{\tau_1}^{\tau_2} } (r-M)^{3-\delta} ( \Omega^{k_1} T^{m_1} \phi )^2 \cdot ( \underline{L} \Omega^{k_2} T^{m_2} \phi )^2  \,  d\omega du dv \\ & + \sum_{\substack{k_1 + k_2 = k \\ m_1 + m_2 = 3}} \int_{\mathcal{A}_{\tau_1}^{\tau_2} } (r-M)^{3-\delta} ( \Omega^{k_1} T^{m_1} \phi )^2 \cdot ( L \Omega^{k_2} T^{m_2} \phi )^2  \,  d\omega du dv \\ & + \sum_{\substack{k_1 + k_2 = k \\ m_1 + m_2 = 3}} \int_{\mathcal{A}_{\tau_1}^{\tau_2} } (r-M)^{7-\delta} ( \Omega^{k_1} T^{m_1} \phi )^2 \cdot (  \Omega^{k_2} T^{m_2} \phi )^2   \,  d\omega du dv \\ & + \sum_{\substack{k_1 + k_2 = k \\ m_1 + m_2 = 3}} \int_{\mathcal{A}_{\tau_1}^{\tau_2} } (r-M)^{3-\delta} | \slashed{\nabla} \Omega^{k_1} T^{m_1} \phi |^2 \cdot | \slashed{\nabla}  \Omega^{k_2} T^{m_2} \phi |^2  \,  d\omega du dv .
\end{split}
\end{equation*}
For the first term of the above estimate we have for $m_1 =0$ and $m_2 = 3$, for $\delta \leq \delta_2$ and for any $v$ that:
\begin{equation*}
\begin{split}
\sum_{\substack{k_1 + k_2 = k }} \int_{\mcN_v^H} & \frac{1}{(r-M)^{1+\delta}} (L \Omega^{k_1} \phi )^2 \cdot ( \underline{L} \Omega^{k_2} T^3 \phi )^2  \,  d\omega du  \\ \lesssim &  \frac{C E_0 \epsilon^2}{v^{2+\delta_2}} \sum_{\substack{l \leq 5  }} \int_{\mcN_v^H}  \frac{1}{(r-M)^{1+\delta}}  ( \underline{L} \Omega^{l} T^3 \phi )^2  \,  d\omega du  \lesssim  \frac{C^2 E_0^2 \epsilon^4}{v^{2+\delta_2}} ,
\end{split}
\end{equation*}
where we used Sobolev's inequality \eqref{est:sobolev}, the pointwise decay estimate \eqref{dec:tpsi}, and the energy decay estimates \eqref{dec:enttt} for $p = 1+\delta \leq 1+\delta_2$. This implies that
\begin{equation*}
\sum_{\substack{k_1 + k_2 = k }} \int_{\mcN_v^H}  \frac{1}{(r-M)^{1+\delta}} (L \Omega^{k_1} \phi )^2 \cdot ( \underline{L} \Omega^{k_2} T^3 \phi )^2  \,  d\omega du dv \lesssim \int_{\tau_1}^{\tau_2} \frac{C^2 E_0^2 \epsilon^4}{v^{2+\delta_2}} \, dv \lesssim \frac{C^2 E_0^2 \epsilon^4}{\tau_1^{1+\delta_2}} .
\end{equation*}
For the case $m_1 = 1$ and $m_2 =2$ for any $v$ we have that (again as $\delta \leq \delta_2$):
\begin{equation*}
\begin{split}
\sum_{\substack{k_1 + k_2 = k }} \int_{\mcN_v^H} & \frac{1}{(r-M)^{1+\delta}} (L \Omega^{k_1} T\phi )^2 \cdot ( \underline{L} \Omega^{k_2} T^2 \phi )^2  \,  d\omega du \\ \lesssim & \frac{C E_0 \epsilon^2}{v^{1+\delta_2}} \sum_{\substack{l \leq 5 }} \int_{\mcN_v^H} \frac{1}{(r-M)^{1+\delta}}  ( \underline{L} \Omega^{l} T^2 \phi )^2  \,  d\omega du \lesssim \frac{C^2 E_0^2 \epsilon^2}{v^{2+\delta_2}} ,
\end{split}
\end{equation*}
where we used Sobolev's inequality \eqref{est:sobolev}, the pointwise decay estimates \eqref{dec:ttpsi}, and the energy decay estimates \eqref{dec:entt}. This implies that
\begin{equation*}
\sum_{\substack{k_1 + k_2 = k }} \int_{\mathcal{A}_{\tau_1}^{\tau_2}}  \frac{1}{(r-M)^{1+\delta}} (L \Omega^{k_1} T\phi )^2 \cdot ( \underline{L} \Omega^{k_2} T^2 \phi )^2  \,  d\omega du dv \lesssim \int_{\tau_1}^{\tau_2} \frac{C^2 E_0^2 \epsilon^4}{v^{2+\delta_2}} \, dv \lesssim \frac{C^2 E_0^2 \epsilon^4}{\tau_1^{1+\delta_2}} .
\end{equation*}
For the case $m_1 =2$ and $m_1 =1$ for any $v$ we have that (again as $\delta \leq \delta_2$):
\begin{equation*}
\begin{split}
\sum_{\substack{k_1 + k_2 = k }} \int_{\mcN_v^H} & \frac{1}{(r-M)^{1+\delta}} (L \Omega^{k_1} T^2 \phi )^2 \cdot ( \underline{L} \Omega^{k_2} T \phi )^2  \,  d\omega du \\ \lesssim & \frac{C E_0 \epsilon^2}{v^{1/4+\delta_2}} \sum_{\substack{l \leq 5 }} \int_{\mcN_v^H} \frac{1}{(r-M)^{1+\delta}}  ( \underline{L} \Omega^{l} T \phi )^2  \,  d\omega du \lesssim \frac{C^2 E_0^2 \epsilon^2}{v^{9/4+\delta_2}} ,
\end{split}
\end{equation*}
where we used Sobolev's inequality \eqref{est:sobolev}, the pointwise decay estimates \eqref{dec:ttpsi}, and the energy decay estimates \eqref{dec:entt}. This implies that
\begin{equation*}
\sum_{\substack{k_1 + k_2 = k }} \int_{\mathcal{A}_{\tau_1}^{\tau_2}}  \frac{1}{(r-M)^{1+\delta}} (L \Omega^{k_1} T^2 \phi )^2 \cdot ( \underline{L} \Omega^{k_2} T \phi )^2  \,  d\omega du dv \lesssim \int_{\tau_1}^{\tau_2} \frac{C^2 E_0^2 \epsilon^4}{v^{9/4+\delta_2}} \, dv \lesssim \frac{C^2 E_0^2 \epsilon^4}{\tau_1^{5/4+\delta_2}} ,
\end{equation*}
the last estimate being better than desired. Finally for $m_1 = 3$ and $m_2 = 0$ we have that
\begin{equation*}
\begin{split}
\sum_{\substack{k_1 + k_2 = k }} \int_{\mathcal{A}_{\tau_1}^{\tau_2}} & \frac{1}{(r-M)^{1+\delta}} (L \Omega^{k_1} T^3 \phi )^2 \cdot ( \underline{L} \Omega^{k_2}  \phi )^2  \,  d\omega du dv \\ \lesssim & \sum_{\substack{k_1 + k_2 = k }} \int_{\mathcal{A}_{\tau_1}^{\tau_2}}  (r-M)^{3-\delta} (L \Omega^{k_1} T^3 \phi )^2 \cdot \left( \frac{2r}{D} \underline{L} \Omega^{k_2}  \phi \right)^2  \,  d\omega du dv \\ \lesssim & C E_0 \epsilon^2 \sum_{l \leq 5} \int_{\mathcal{A}_{\tau_1}^{\tau_2}}  (r-M)^{3-\delta} (L \Omega^{k_1} T^3 \phi )^2   \,  d\omega du dv \lesssim \frac{C^2 E_0^2 \epsilon^4}{\tau_1^{1+\delta_2}} ,
\end{split}
\end{equation*}
where we used Sobolev's inequality \eqref{est:sobolev}, the boundedness estimate \eqref{est:boundy}, and the Morawetz decay estimate \eqref{dec:mor3}.

\eqref{E2'}: Now we use \eqref{eq:tcomm} for $m=3$ and we have for any $k\leq 5$ for the first term that:
\begin{equation}\label{aux:t3}
\begin{split}
\sum_{\substack{k_1 + k_2 = k \\ m_1 + m_2 = 3}}\int_{\mcR_{\tau_1}^{\tau_2}} (r-M)^{-p} & ( \underline{L} \Omega^k T^3 \phi ) \cdot \frac{2A \cdot D}{r^3} \cdot (L \Omega^{k_1} T^{m_1} \phi ) \cdot \left( \frac{2r}{D} \underline{L} \Omega^{k_2} T^{m_2} \phi \right) \, d\omega du dv \\ = & \sum_{\substack{k_1 + k_2 = k }}\int_{\mcR_{\tau_1}^{\tau_2}} (r-M)^{-p} ( \underline{L} \Omega^k T^3 \phi ) \cdot \frac{2A \cdot D}{r^3} \cdot (L \Omega^{k_1}\phi ) \cdot \left( \frac{2r}{D} \underline{L} \Omega^{k_2} T^3 \phi \right) \, d\omega du dv \\ & + \sum_{\substack{k_1 + k_2 = k }}\int_{\mcR_{\tau_1}^{\tau_2}} (r-M)^{-p} ( \underline{L} \Omega^k T^3 \phi ) \cdot \frac{2A \cdot D}{r^3} \cdot (L \Omega^{k_1} T \phi ) \cdot \left( \frac{2r}{D} \underline{L} \Omega^{k_2} T^2 \phi \right) \, d\omega du dv \\ & +\sum_{\substack{k_1 + k_2 = k }}\int_{\mcR_{\tau_1}^{\tau_2}} (r-M)^{-p} ( \underline{L} \Omega^k T^3 \phi ) \cdot \frac{2A \cdot D}{r^3} \cdot (L \Omega^{k_1} T^2 \phi ) \cdot \left( \frac{2r}{D} \underline{L} \Omega^{k_2} T \phi \right) \, d\omega du dv \\ & + \sum_{\substack{k_1 + k_2 = k }}\int_{\mcR_{\tau_1}^{\tau_2}} (r-M)^{-p} ( \underline{L} \Omega^k T^3 \phi ) \cdot \frac{2A \cdot D}{r^3} \cdot (L \Omega^{k_1} T^3 \phi ) \cdot \left( \frac{2r}{D} \underline{L} \Omega^{k_2}  \phi \right) \, d\omega du dv  .
\end{split}
\end{equation}
For the first term of the last expression after applying Cauchy-Shcwarz we get the following term for $p=1+\delta_2$:
\begin{equation*}
\begin{split}
\sum_{k_1 + k_2 = k} \Bigg( \int_{\tau_1}^{\tau_2}  & \Big( \int_{\mcN_v^H} (r-M)^{-1-\delta_2} (L \Omega^{k_1} \phi )^2 \cdot ( \underline{L} \Omega^{k_2} T^3 \phi )^2 \, d\omega du \Big)^{1/2} dv \Bigg)^2 \\ \lesssim & \sum_{k_1 + k_2 \leq k, k_1 \leq 3} \Bigg( \int_{\tau_1}^{\tau_2} \frac{\sqrt{C E_0} \epsilon}{v^{1+\delta_2/2}}  \Big( \int_{\mcN_v^H} (r-M)^{-1-\delta_2}( \underline{L} \Omega^{k_2} T^3 \phi )^2 \, d\omega du \Big)^{1/2} dv \Bigg)^2 \\ & + \sum_{k_1 + k_2 \leq k, k_1 \geq 4} \Bigg( \int_{\tau_1}^{\tau_2} \frac{\sqrt{C E_0} \epsilon}{v^{1+\delta_2/2}}  \Big( \int_{\mcN_v^H} (r-M)^{-1-\delta_2}( \underline{L} \Omega^{k_2 + 2} T^3 \phi )^2 \, d\omega du \Big)^{1/2} dv \Bigg)^2 \\ \lesssim & \Bigg( \int_{\tau_1}^{\tau_2} \frac{C E_0 \epsilon^2}{v^{1+\delta_2/2}}  \, dv  \Bigg)^2 \lesssim C^2 E_0^2 \epsilon^4 ,
\end{split}
\end{equation*}
where we used Sobolev's inequality \eqref{est:sobolev}, the pointwise decay estimates \eqref{dec:tpsi}, and the energy decay estimates \eqref{dec:enttt}.  On the other hand using the same estimates we have for the $p=0$ case that:
\begin{equation*}
\begin{split}
\sum_{k_1 + k_2 = k} \Bigg( \int_{\tau_1}^{\tau_2}  & \Big( \int_{\mcN_v^H}  (L \Omega^{k_1} \phi )^2 \cdot ( \underline{L} \Omega^{k_2} T^3 \phi )^2 \, d\omega du \Big)^{1/2} dv \Bigg)^2 \\ \lesssim & \sum_{k_1 + k_2 \leq k, k_1 \leq 3} \Bigg( \int_{\tau_1}^{\tau_2} \frac{\sqrt{C E_0} \epsilon}{v^{1+\delta_2/2}}  \Big( \int_{\mcN_v^H} ( \underline{L} \Omega^{k_2} T^3 \phi )^2 \, d\omega du \Big)^{1/2} dv \Bigg)^2 \\ & + \sum_{k_1 + k_2 \leq k, k_1 \geq 4} \Bigg( \int_{\tau_1}^{\tau_2} \frac{\sqrt{C E_0} \epsilon}{v^{1+\delta_2/2}}  \Big( \int_{\mcN_v^H} ( \underline{L} \Omega^{k_2 + 2} T^3 \phi )^2 \, d\omega du \Big)^{1/2} dv \Bigg)^2 \\ \lesssim & \Bigg( \int_{\tau_1}^{\tau_2} \frac{C E_0 \epsilon^2}{v^{3/2+\delta_2}}  \, dv  \Bigg)^2 \lesssim \frac{C^2 E_0^2 \epsilon^4}{(1+\tau_1 )^{1+2\delta_1}} \lesssim \frac{C^2 E_0^2 \epsilon^4}{(1+\tau_1 )^{1+\delta_1}}  ,
\end{split}
\end{equation*}
and the rest of the $p$ range follows by interpolation.

For the second term of \eqref{aux:t3} we work similarly and for $p=1+\delta_2$ we get the term:
\begin{equation*}
\begin{split}
\sum_{k_1 + k_2 = k} \Bigg( \int_{\tau_1}^{\tau_2}  & \Big( \int_{\mcN_v^H} (r-M)^{-1-\delta_2} (L \Omega^{k_1} T \phi )^2 \cdot ( \underline{L} \Omega^{k_2} T^2 \phi )^2 \, d\omega du \Big)^{1/2} dv \Bigg)^2 \\ \lesssim & \sum_{k_1 + k_2 \leq k, k_1 \leq 3} \Bigg( \int_{\tau_1}^{\tau_2} \frac{\sqrt{C E_0} \epsilon}{v^{1/2+\delta_2/2}}  \Big( \int_{\mcN_v^H} (r-M)^{-1-\delta_2}( \underline{L} \Omega^{k_2} T^2 \phi )^2 \, d\omega du \Big)^{1/2} dv \Bigg)^2 \\ & + \sum_{k_1 + k_2 \leq k, k_1 \geq 4} \Bigg( \int_{\tau_1}^{\tau_2} \frac{\sqrt{C E_0} \epsilon}{v^{1/2+\delta_2/2}}  \Big( \int_{\mcN_v^H} (r-M)^{-1-\delta_2}( \underline{L} \Omega^{k_2 + 2} T^2 \phi )^2 \, d\omega du \Big)^{1/2} dv \Bigg)^2 \\ \lesssim & \Bigg( \int_{\tau_1}^{\tau_2} \frac{C E_0 \epsilon^2}{v^{1+\delta_2/2}}  \, dv  \Bigg)^2 \lesssim C^2 E_0^2 \epsilon^4 ,
\end{split}
\end{equation*}
where we used Sobolev's inequality, the pointwise decay estimate \eqref{dec:ttpsi} and the energy decay estimates \eqref{dec:entt}. Using the same estimates we get for $p=0$ that:
\begin{equation*}
\begin{split}
\sum_{k_1 + k_2 = k} \Bigg( \int_{\tau_1}^{\tau_2}  & \Big( \int_{\mcN_v^H}  (L \Omega^{k_1} T \phi )^2 \cdot ( \underline{L} \Omega^{k_2} T^2 \phi )^2 \, d\omega du \Big)^{1/2} dv \Bigg)^2 \\ \lesssim & \sum_{k_1 + k_2 \leq k, k_1 \leq 3} \Bigg( \int_{\tau_1}^{\tau_2} \frac{\sqrt{C E_0} \epsilon}{v^{1/2+\delta_2/2}}  \Big( \int_{\mcN_v^H} ( \underline{L} \Omega^{k_2} T^2 \phi )^2 \, d\omega du \Big)^{1/2} dv \Bigg)^2 \\ & + \sum_{k_1 + k_2 \leq k, k_1 \geq 4} \Bigg( \int_{\tau_1}^{\tau_2} \frac{\sqrt{C E_0} \epsilon}{v^{1/2+\delta_2/2}}  \Big( \int_{\mcN_v^H} ( \underline{L} \Omega^{k_2 + 2} T^2 \phi )^2 \, d\omega du \Big)^{1/2} dv \Bigg)^2 \\ \lesssim & \Bigg( \int_{\tau_1}^{\tau_2} \frac{C E_0 \epsilon^2}{v^{3/2+\delta_2}}  \, dv  \Bigg)^2 \lesssim \frac{C^2 E_0^2 \epsilon^4}{(1+\tau_1 )^{1+2\delta_1}} \lesssim \frac{C^2 E_0^2 \epsilon^4}{(1+\tau_1 )^{1+\delta_1}}  ,
\end{split}
\end{equation*}
and the rest of the $p$ range follows by interpolation.

For the third term of \eqref{aux:t3} we work similarly using Cauchy-Schwarz, Sobolev's inequality \eqref{est:sobolev}, the pointwise decay estimate \eqref{dec:tttpsi} and the energy decay estimates \eqref{dec:ent} and we have for $p=1+\delta_2$ that:
\begin{equation*}
\begin{split}
\sum_{k_1 + k_2 = k} \Bigg( \int_{\tau_1}^{\tau_2}  & \Big( \int_{\mcN_v^H} (r-M)^{-1-\delta_2} (L \Omega^{k_1} T^2 \phi )^2 \cdot ( \underline{L} \Omega^{k_2} T \phi )^2 \, d\omega du \Big)^{1/2} dv \Bigg)^2 \\ \lesssim & \sum_{k_1 + k_2 \leq k, k_1 \leq 3} \Bigg( \int_{\tau_1}^{\tau_2} \frac{\sqrt{C E_0} \epsilon}{v^{1/4+\delta_2/2}}  \Big( \int_{\mcN_v^H} (r-M)^{-1-\delta_2}( \underline{L} \Omega^{k_2} T \phi )^2 \, d\omega du \Big)^{1/2} dv \Bigg)^2 \\ & + \sum_{k_1 + k_2 \leq k, k_1 \geq 4} \Bigg( \int_{\tau_1}^{\tau_2} \frac{\sqrt{C E_0} \epsilon}{v^{1/4+\delta_2/2}}  \Big( \int_{\mcN_v^H} (r-M)^{-1-\delta_2}( \underline{L} \Omega^{k_2 + 2} T^2 \phi )^2 \, d\omega du \Big)^{1/2} dv \Bigg)^2 \\ \lesssim & \Bigg( \int_{\tau_1}^{\tau_2} \frac{C E_0 \epsilon^2}{v^{5/4+\delta_2/2}}  \, dv  \Bigg)^2 \lesssim \frac{C^2 E_0^2 \epsilon^4}{(1+\tau_1 )^{1/4+\delta_2 / 2}} ,
\end{split}
\end{equation*}
which is better than desired. By using the same estimates for $p=0$ we have that:
\begin{equation*}
\begin{split}
\sum_{k_1 + k_2 = k} \Bigg( \int_{\tau_1}^{\tau_2}  & \Big( \int_{\mcN_v^H}  (L \Omega^{k_1} T^2 \phi )^2 \cdot ( \underline{L} \Omega^{k_2} T \phi )^2 \, d\omega du \Big)^{1/2} dv \Bigg)^2 \\ \lesssim & \sum_{k_1 + k_2 \leq k, k_1 \leq 3} \Bigg( \int_{\tau_1}^{\tau_2} \frac{\sqrt{C E_0} \epsilon}{v^{1/4+\delta_2/2}}  \Big( \int_{\mcN_v^H} ( \underline{L} \Omega^{k_2} T \phi )^2 \, d\omega du \Big)^{1/2} dv \Bigg)^2 \\ & + \sum_{k_1 + k_2 \leq k, k_1 \geq 4} \Bigg( \int_{\tau_1}^{\tau_2} \frac{\sqrt{C E_0} \epsilon}{v^{1/4+\delta_2/2}}  \Big( \int_{\mcN_v^H} ( \underline{L} \Omega^{k_2 + 2} T^2 \phi )^2 \, d\omega du \Big)^{1/2} dv \Bigg)^2 \\ \lesssim & \Bigg( \int_{\tau_1}^{\tau_2} \frac{C E_0 \epsilon^2}{v^{7/4+\delta_2}}  \, dv  \Bigg)^2 \lesssim \frac{C^2 E_0^2 \epsilon^4}{(1+\tau_1 )^{3/2+2\delta_1}} \lesssim \frac{C^2 E_0^2 \epsilon^4}{(1+\tau_1 )^{3/2+\delta_1}}  ,
\end{split}
\end{equation*}
and the rest of the $p$ range follows by interpolation.

For the term 
$$ \sum_{\substack{k_1 + k_2 = k }}\int_{\mcR_{\tau_1}^{\tau_2}} (r-M)^{-p} ( \underline{L} \Omega^k T^3 \phi ) \cdot \frac{2A \cdot D}{r^3} \cdot (L \Omega^{k_1} T^3 \phi ) \cdot \left( \frac{2r}{D} \underline{L} \Omega^{k_2}  \phi \right) \, d\omega du dv $$ 
we work similarly as for the third term of \eqref{aux:tt1}, now using the auxiliary estimate \eqref{est:aux1} for $m=3$.

\eqref{E3'}: We examine in detail only the term involving the $L$ and $\underline{L}$ derivatives (since the rest are either better os similar) which is bounded by:
$$ \sum_{\substack{k_1 + k_2 = k \\ m_1 + m_2 = 3}} \int_{\mathcal{A}_{\tau_1}^{\tau_2}} (r-M)^{-2} (L \Omega^{k_1} T^{m_1} \phi )^2 \cdot (\underline{L} \Omega^{k_2} T^{m_2} \phi )^2 \cdot v^{1+\beta} \, d\omega du dv , $$
and for which we have that
\begingroup
\allowdisplaybreaks
\begin{align*}
\sum_{\substack{k_1 + k_2 = k \\ m_1 + m_2 = 3}} \int_{\mathcal{A}_{\tau_1}^{\tau_2}} (r-M)^{-2} & (L \Omega^{k_1} T^{m_1} \phi )^2 \cdot (\underline{L} \Omega^{k_2} T^{m_2} \phi )^2 \cdot v^{1+\beta} \, d\omega du dv \\ \lesssim & \sum_{\substack{l_1 \leq 5 , l_2 \leq 5 \\ m_1 + m_2 = 3}} \int_{\tau_1}^{\tau_2} \int_{\mathbb{S}^2} \sup_u (L \Omega^{l_1} T^{m_1} \phi )^2 \cdot v^{1+\beta} \, d\omega dv \cdot \sup_{v \in [\tau_1 , \tau_2 ]} \int_{\mcN_v^H} (r-M)^{-2} ( \underline{L} \Omega^{l_2} T^{m_2} \phi )^2 \, d\omega du \\ \lesssim & C^2 E_0^2 \epsilon^4 , 
\end{align*}
\endgroup
where we used the auxiliary estimates of \eqref{est:aux1} for $m=0,1,2,3$ (as $\beta < \delta_2$), the energy decay estimates \eqref{dec:enp}, \eqref{dec:entp}, \eqref{dec:enttp} for $p=2$, and the energy boundedness estimate \eqref{dec:entttp1}.

\eqref{F1'}: We examine the part close to the horizon and for any $k \leq 5$ we have that:
\begin{equation*}
\begin{split}
\int_{\mathcal{A}_{\tau_1}^{\tau_2} } \frac{1}{(r-M)^{1+\delta}} & D^2 | \Omega^k T^4 F |^2 \, d\omega du dv \\ \lesssim & \sum_{\substack{k_1 + k_2 = k \\ m_1 + m_2 = 4}} \int_{\mathcal{A}_{\tau_1}^{\tau_2} }\frac{1}{(r-M)^{1+\delta}} (L \Omega^{k_1} T^{m_1} \phi )^2 \cdot ( \underline{L} \Omega^{k_2} T^{m_2} \phi )^2  \,  d\omega du dv \\ & + \sum_{\substack{k_1 + k_2 = k \\ m_1 + m_2 = 4}} \int_{\mathcal{A}_{\tau_1}^{\tau_2} } (r-M)^{3-\delta} ( \Omega^{k_1} T^{m_1} \phi )^2 \cdot ( \underline{L} \Omega^{k_2} T^{m_2} \phi )^2  \,  d\omega du dv \\ & + \sum_{\substack{k_1 + k_2 = k \\ m_1 + m_2 = 4}} \int_{\mathcal{A}_{\tau_1}^{\tau_2} } (r-M)^{3-\delta} ( \Omega^{k_1} T^{m_1} \phi )^2 \cdot ( L \Omega^{k_2} T^{m_2} \phi )^2   \,  d\omega du dv \\ & + \sum_{\substack{k_1 + k_2 = k \\ m_1 + m_2 = 4}} \int_{\mathcal{A}_{\tau_1}^{\tau_2} } (r-M)^{7-\delta} ( \Omega^{k_1} T^{m_1} \phi )^2 \cdot (  \Omega^{k_2} T^{m_2} \phi )^2   \,  d\omega du dv \\ & + \sum_{\substack{k_1 + k_2 = k \\ m_1 + m_2 = 4}} \int_{\mathcal{A}_{\tau_1}^{\tau_2} } (r-M)^{3-\delta} | \slashed{\nabla} \Omega^{k_1} T^{m_1} \phi |^2 \cdot | \slashed{\nabla}  \Omega^{k_2} T^{m_2} \phi |^2  \,  d\omega du dv .
\end{split}
\end{equation*}
For the first term of the above expression for $m_1 = 0$ and $m_2 = 4$ we have for any $v$ and since $\delta \leq \delta_2$ that:
\begin{equation*}
\begin{split}
\sum_{\substack{k_1 + k_2 = k }} \int_{\mcN_v^H } & \frac{1}{(r-M)^{1+\delta}} (L \Omega^{k_1}  \phi )^2 \cdot ( \underline{L} \Omega^{k_2} T^4 \phi )^2  \,  d\omega du \\ \lesssim & \frac{C E_0 \epsilon^2}{v^{2+\delta_2}} \sum_{l \leq 5}  \int_{\mcN_v^H }  \frac{1}{(r-M)^{1+\delta}}  ( \underline{L} \Omega^{l} T^4 \phi )^2  \,  d\omega du  \lesssim  \frac{C^2 E_0^2 \epsilon^4}{v^{2+\delta_2}} ,
\end{split}
\end{equation*}
where we used Sobolev's inequality \eqref{est:sobolev}, the pointwise decay estimate \eqref{dec:tpsi}, and the energy decay estimates \eqref{dec:entttt}. This implies that:
\begin{equation*}
\sum_{\substack{k_1 + k_2 = k }} \int_{\mathcal{A}_{\tau_1}^{\tau_2} }  \frac{1}{(r-M)^{1+\delta}} (L \Omega^{k_1}  \phi )^2 \cdot ( \underline{L} \Omega^{k_2} T^4 \phi )^2  \,  d\omega du dv \lesssim \int_{\tau_1}^{\tau_2} \frac{C^2 E_0^2 \epsilon^4}{v^{2+\delta_2}} \, dv \lesssim \frac{C^2 E_0^2 \epsilon^4}{\tau_1^{1+\delta_2}} \lesssim \frac{C^2 E_0^2 \epsilon^4}{\tau_1} .
\end{equation*}
For the case $m_1 = 1 $ and $m_2 = 3$ we have that:
\begin{equation*}
\begin{split}
\sum_{\substack{k_1 + k_2 = k }} \int_{\mathcal{A}_{\tau_1}^{\tau_2} } & \frac{1}{(r-M)^{1+\delta}} (L \Omega^{k_1} T \phi )^2 \cdot ( \underline{L} \Omega^{k_2} T^3 \phi )^2  \,  d\omega du dv \\ = & \sum_{\substack{k_1 + k_2 = k }} \int_{\mathcal{A}_{\tau_1}^{\tau_2} }  \frac{1}{(r-M)^{1+\delta}} (L \Omega^{k_1} T \phi )^2 \cdot ( \underline{L} \Omega^{k_2} T^3 \phi )^2 \cdot \frac{v^{1+\beta}}{v^{1+\beta}}  \,  d\omega du dv \\ \lesssim & \sum_{\substack{ l_1 \leq 5 \\ l_2 \leq 5}} \int_{\tau_1}^{\tau_2} \int_{\mathbb{S}^2} \sup_u ( L \Omega^{l_1} T \phi )^2 \cdot v^{1+\beta} \, d\omega dv \cdot \sup_{v \in [\tau_1 , \tau_2 ]} \int_{\mcN_v^H}\frac{1}{(r-M)^{1+\delta}} ( \underline{L} \Omega^{k_2} T^3 \phi )^2 \cdot \frac{1}{v^{1+\beta}}  \,  d\omega du \\ \lesssim & C E_0 \epsilon^2 \cdot \frac{1}{\tau_1^{1+\beta}} \sup_{v \in [\tau_1 , \tau_2 ]} \int_{\mcN_v^H}\frac{1}{(r-M)^{1+\delta}} ( \underline{L} \Omega^{k_2} T^3 \phi )^2 \,  d\omega du \\ \lesssim & \frac{C^2 E_0^2 \epsilon^4}{\tau_1^{1+\beta}} \lesssim \frac{C^2 E_0^2 \epsilon^4}{\tau_1} ,
\end{split}
\end{equation*}
where we used Sobolev's inequality \eqref{est:sobolev}, the auxiliary estimate \eqref{est:aux1} for $m=1$, and the energy decay estimates \eqref{dec:enttt}.

For the case $m_1 = 2 $ and $m_2 = 2$ we work as in the previous case now using the auxiliary estimate \eqref{est:aux1} for $m=2$.  In this case we get even better decay of rate $\frac{C^2 E_0^2 \epsilon^4}{\tau_1^2}$. For $m_1 = 3$ and $m_2 = 1$ we have that:
\begin{equation*}
\begin{split}
\sum_{\substack{k_1 + k_2 = k }} \int_{\mathcal{A}_{\tau_1}^{\tau_2} } & \frac{1}{(r-M)^{1+\delta}} (L \Omega^{k_1} T^3 \phi )^2 \cdot ( \underline{L} \Omega^{k_2} T \phi )^2  \,  d\omega du dv \\ \lesssim & \sum_{\substack{k_1 + k_2 = k }} \int_{\mathcal{A}_{\tau_1}^{\tau_2} } (r-M)^{3-\delta} (L \Omega^{k_1} T^3 \phi )^2 \cdot \left( \frac{2r}{D} \underline{L} \Omega^{k_2} T \phi \right)^2  \,  d\omega du dv \\ \lesssim & C E_0 \epsilon^2 \sum_{l \leq 5} \int_{\mathcal{A}_{\tau_1}^{\tau_2} } (r-M)^{3-\delta} (L \Omega^{l} T^3 \phi )^2 \, d\omega du dv  \lesssim  \frac{C^2 E_0^2 \epsilon^4}{\tau_1^{1+\delta_2}} , 
\end{split}
\end{equation*}
where we used Sobolev's inequality \eqref{est:sobolev}, the boundedness estimate \eqref{est:boundyt}, and the Morawetz decay estimates \eqref{dec:mor3}, as $\delta \leq \delta_2$ is small enough. For the case $m_1 = 4$ and $m_2 = 0$ we have that:
\begin{equation*}
\begin{split}
\sum_{\substack{k_1 + k_2 = k }} \int_{\mathcal{A}_{\tau_1}^{\tau_2} } & \frac{1}{(r-M)^{1+\delta}} (L \Omega^{k_1} T^4 \phi )^2 \cdot ( \underline{L} \Omega^{k_2} \phi )^2  \,  d\omega du dv \\ \lesssim & \sum_{\substack{k_1 + k_2 = k }} \int_{\mathcal{A}_{\tau_1}^{\tau_2} } (r-M)^{3-\delta} (L \Omega^{k_1} T^4 \phi )^2 \cdot \left( \frac{2r}{D} \underline{L} \Omega^{k_2} \phi \right)^2  \,  d\omega du dv \\ \lesssim & C E_0 \epsilon^2 \sum_{l \leq 5} \int_{\mathcal{A}_{\tau_1}^{\tau_2} } (r-M)^{3-\delta} (L \Omega^{l} T^4 \phi )^2 \, d\omega du dv  \lesssim  \frac{C^2 E_0^2 \epsilon^4}{\tau_1} , 
\end{split}
\end{equation*}
where we used Sobolev's inequality \eqref{est:sobolev}, the boundedness estimate \eqref{est:boundy}, and the Morawetz decay estimates \eqref{dec:mor4}, as $\delta \leq \delta_2$ is small enough.

\eqref{F2'}: We examine the part close to the horizon and for any $k \leq 5$ we have that:
\begin{equation*}
\begin{split}
\int_{\mathcal{A}_{\tau_1}^{\tau_2} } \frac{1}{(r-M)^{1+\delta}} & D^2 | \Omega^k T^4 F |^2 \cdot v^{1+\beta} \, d\omega du dv \\ \lesssim & \sum_{\substack{k_1 + k_2 = k \\ m_1 + m_2 = 4}} \int_{\mathcal{A}_{\tau_1}^{\tau_2} }\frac{1}{(r-M)^{1+\delta}} (L \Omega^{k_1} T^{m_1} \phi )^2 \cdot ( \underline{L} \Omega^{k_2} T^{m_2} \phi )^2\cdot v^{1+\beta}  \,  d\omega du dv \\ & + \sum_{\substack{k_1 + k_2 = k \\ m_1 + m_2 = 4}} \int_{\mathcal{A}_{\tau_1}^{\tau_2} } (r-M)^{3-\delta} ( \Omega^{k_1} T^{m_1} \phi )^2 \cdot ( \underline{L} \Omega^{k_2} T^{m_2} \phi )^2\cdot v^{1+\beta}  \,  d\omega du dv \\ & + \sum_{\substack{k_1 + k_2 = k \\ m_1 + m_2 = 4}} \int_{\mathcal{A}_{\tau_1}^{\tau_2} } (r-M)^{3-\delta} ( \Omega^{k_1} T^{m_1} \phi )^2 \cdot ( L \Omega^{k_2} T^{m_2} \phi )^2 \cdot v^{1+\beta}  \,  d\omega du dv \\ & + \sum_{\substack{k_1 + k_2 = k \\ m_1 + m_2 = 4}} \int_{\mathcal{A}_{\tau_1}^{\tau_2} } (r-M)^{7-\delta} ( \Omega^{k_1} T^{m_1} \phi )^2 \cdot (  \Omega^{k_2} T^{m_2} \phi )^2 \cdot v^{1+\beta}  \,  d\omega du dv \\ & + \sum_{\substack{k_1 + k_2 = k \\ m_1 + m_2 = 4}} \int_{\mathcal{A}_{\tau_1}^{\tau_2} } (r-M)^{3-\delta} | \slashed{\nabla} \Omega^{k_1} T^{m_1} \phi |^2 \cdot | \slashed{\nabla}  \Omega^{k_2} T^{m_2} \phi |^2 \cdot v^{1+\beta}  \,  d\omega du dv .
\end{split}
\end{equation*}
For the first term of the last expression when $m_1 = 4$ and $m_2 =0$ we use the auxiliary estimate \eqref{est:auxl} and we have for $\beta$ small enough that:
$$ \int_{\mathbb{S}^2} \frac{1}{(r-M)^{2+2\delta}} ( \underline{L} \Omega^l \phi )^2 (u,v,\omega )  \, d\omega \cdot v^{1+\beta} \lesssim C E_0 \epsilon^2 , $$
which implies after using Sobolev's inequality, and the Morawetz boundedness estimate \eqref{dec:mor4}, that we have for any $k \leq 5$ and for any $v$ that:
\begin{equation*}
\begin{split}
\sum_{\substack{k_1 + k_2 = k }} \int_{\mathcal{A}_{\tau_1}^{\tau_2} } & \frac{1}{(r-M)^{1+\delta}} (L \Omega^{k_1} T^4 \phi )^2 \cdot ( \underline{L} \Omega^{k_2}  \phi )^2 \cdot v^{1+\beta} \,  d\omega du dv  \\ \lesssim & C E_0 \epsilon^2 \sum_{l \leq 5} \int_{\mathcal{A}_{\tau_1}^{\tau_2} }  (r-M)^{1+\delta} (L \Omega^{k_1} T^4 \phi )^2  \,  d\omega du dv \lesssim C^2 E_0^2 \epsilon^4 .
\end{split}
\end{equation*}
For $m_1 = 1$, $m_2 = 3$ we use Sobolev's inequality \eqref{est:sobolev}, the auxiliary estimate \eqref{est:aux1} and the energy boundedness estimate \eqref{dec:enttt} for $p=1 +\delta$ as we have that $\delta \leq \delta_2$, and we have for any $k \leq 5$:
\begin{equation*}
\begin{split}
\sum_{\substack{k_1 + k_2 = k }} \int_{\mathcal{A}_{\tau_1}^{\tau_2} } & \frac{1}{(r-M)^{1+\delta}} (L \Omega^{k_1} T \phi )^2 \cdot ( \underline{L} \Omega^{k_2} T^3 \phi )^2 \cdot v^{1+\beta} \,  d\omega du dv  \\ \lesssim & \sum_{\substack{l_1 \leq 5 \\  l_2 \leq 5}} \int_{\tau_1}^{\tau_2} \int_{\mathbb{S}^2} \sup_u (L \Omega^{l_1} T \phi )^2 \cdot v^{1+\beta} \, d\omega dv \cdot \sup_{v \in [\tau_1 , \tau_2 ]} \int_{\mcN_v^H}  \frac{1}{(r-M)^{1+\delta}}  ( \underline{L} \Omega^{k_2} T^3 \phi )^2 \cdot v^{1+\beta} \,  d\omega du \\ \lesssim & C^2 E_0^2 \epsilon^4  .
\end{split}
\end{equation*}
For $m_1 = 2$ and $m_2 = 2$ we use Sobolev's inequality \eqref{est:sobolev}, the auxiliary estimate \eqref{est:aux1} for $m=2$ and the energy decay estimate \eqref{dec:entt} for $p=2$ and we have for any $k \leq 5$ that:
\begin{equation*}
\begin{split}
\sum_{\substack{k_1 + k_2 = k }} \int_{\mathcal{A}_{\tau_1}^{\tau_2} } & \frac{1}{(r-M)^{1+\delta}} (L \Omega^{k_1} T^2 \phi )^2 \cdot ( \underline{L} \Omega^{k_2} T^2 \phi )^2 \,  d\omega du dv \\ \lesssim & \sum_{\substack{l_1 \leq 5 \\ l_2 \leq 5 }} \int_{\tau_1}^{\tau_2} \int_{\mathbb{S}^2} \sup_u (L \Omega^{l_1} T^2 \phi )^2 \cdot v^{1+\beta} \,  d\omega dv \cdot \sup_{v \in [\tau_1 , \tau_2 ]}   \int_{\mcN_v^H} \frac{1}{(r-M)^{1+\delta}} ( \underline{L} \Omega^{l_2} T^2 \phi )^2  \,  d\omega du  \\ \lesssim & \frac{C^2 E_0^2 \epsilon^4}{\tau_1^{1+\delta_2 - \delta}} .
\end{split}
\end{equation*}
For $m_1 = 3$ and $m_2 =1$ we work as in the case above where we use the auxiliary estimate \eqref{est:aux1} for $m=3$. In the end we have that
$$ \sum_{\substack{k_1 + k_2 = k }} \int_{\mathcal{A}_{\tau_1}^{\tau_2} } \frac{1}{(r-M)^{1+\delta}} (L \Omega^{k_1} T^3 \phi )^2 \cdot ( \underline{L} \Omega^{k_2} T \phi )^2 \,  d\omega du dv \lesssim \frac{C^2 E_0^2 \epsilon^4}{\tau_1^{3+\delta_2 - \delta}} . $$
For $m_1 = 0$ and $m_2 =4$ we use Sobolev's inequality \eqref{est:sobolev}, the pointwise decay estimates \eqref{dec:tpsi} and \eqref{est:boundyt}, and the energy boundedness estimates \eqref{dec:enttttp1} (since $\delta \leq \delta_1$) and we have for any $v$ that:
\begin{equation*}
\begin{split}
\sum_{\substack{k_1 + k_2 = k }} \int_{\mcN_v^H } & \frac{1}{(r-M)^{1+\delta}} (L \Omega^{k_1}  \phi )^2 \cdot ( \underline{L} \Omega^{k_2} T^4 \phi )^2 \cdot v^{1+\beta} \,  d\omega du  \\ \lesssim & \frac{C E_0 \epsilon^2}{v^{1+\delta_2-\beta}}  \sum_{\substack{l_1 \leq 5 , l_2 \leq 5}} \int_{\mathcal{A}_{\tau_1}^{\tau_2} } \frac{1}{(r-M)^{1+\delta}}  (\underline{L} \Omega^{l} T^{4} \phi )^2  \,  d\omega du  \\ \lesssim &  \frac{C^2 E_0^2 \epsilon^4}{v^{1+\delta_2-\beta}} ,
\end{split}
\end{equation*}
and by choosing $\beta < \delta_2$, we get in the end that:
\begin{equation*}
\begin{split}
\sum_{\substack{k_1 + k_2 = k }} \int_{\mathcal{A}_{\tau_1}^{\tau_2} } & \frac{1}{(r-M)^{1+\delta}} (L \Omega^{k_1}  \phi )^2 \cdot ( \underline{L} \Omega^{k_2} T^4 \phi )^2 \cdot v^{1+\beta} \,  d\omega du dv \\ \lesssim & C^2 E_0^2 \epsilon^4 \int_{\tau_1}^{\tau_2} \frac{1}{v^{1+\delta_2-\beta}} \lesssim \frac{C^2 E_0^2 \epsilon^4}{\tau_1^{\delta_2 - \beta}} \lesssim C^2 E_0^2 \epsilon^4 . 
\end{split}
\end{equation*}

\eqref{F3'}: The case $p=1$ is being covered by the bootstrap \eqref{F2'}. We consider the other endpoint case $p=0$ and by using the same methods as in the case of the \eqref{F2'} bootstrap we get for any $k \leq 5$ that
\begin{equation*}
\int_{\mathcal{A}_{\tau_1}^{\tau_2}} D^2 | \Omega^k T^4 F|^2 \, d\omega du dv \lesssim \frac{C^2 E_0^2 \epsilon^4}{\tau_1}.
\end{equation*}
The rest of the $p$ range  -- $p \in (0,1)$ -- follows now by interpolation.

\eqref{A5'}, \eqref{B4'}, \eqref{C4'}, \eqref{D4'}, \eqref{E4'} and \eqref{F4'}: All these terms can be treated in a rather classical manner and we will not examine them here in detail, apart from the second term of \eqref{A5'}, i.e. the estimate:
$$ \left( \int_{\tau_1}^{\tau_2} \left( \int_{\mcN_u^I} r^{p+2} | F_0 |^2 \, d\omega dv \right)^{1/2} du \right)^2 \lesssim \frac{C E_0 \epsilon^2}{ (1+\tau_1 )^{3-\delta_1-p}} \mbox{  for $p \in (2,3-\delta_1 ]$} . $$
We write once again the nonlinearity $F$ in terms of $\phi$, and we note that close to infinity we have that:
$$ | F|^2 \sim \frac{1}{r^4} ( L \phi )^2 ( \underline{L} \phi )^2 + \frac{1}{r^6} \phi^2 ( \underline{L} \phi )^2 + \frac{1}{r^6} \phi^2 ( L \phi )^2 + \frac{1}{r^8} \phi^4 + \frac{1}{r^4} | \slashed{\nabla} \phi |^2 . $$
We note that as $L \phi$ decays with respect to $r$ as $r^{-2}$ at infinity, while $\phi$ and $\underline{L} \phi$ are just bounded, the term with the worst $r$ decay of $F$ is the one involving the product $\phi \cdot ( \underline{L} \phi)$, and more specifically as we are considering $F_0$ we examine in detail only the term involving the product $\phi_0 \cdot ( \underline{L} \phi_0 )$ which has also the worst $u$ decay. Noticing that $\underline{L} \phi_0$ decays at infinity with rate $C^{1/2} E_0^{1/2} \epsilon u^{-1-\delta/2 - \eta / 2}$ we have that:
\begingroup
\allowdisplaybreaks
\begin{align*}
\int_{\mcN_u^I} r^{p-4} \phi_0^2 \cdot (\underline{L} \phi_0 )^2 \, d\omega dv \lesssim & \frac{C E_0 \epsilon^2}{u^{2+\delta_2 - \eta }} \int_{\mcN_u^I} r^{p-4} \phi_0^2 \, d\omega dv \\ \lesssim & \frac{C E_0 \epsilon^2}{u^{2+\delta_2 - \eta }} \int_{\mcN_u^I} r^{p-2} ( L\phi_0 )^2 \, d\omega dv \lesssim \frac{C^2 E_0^2 \epsilon^4}{u^{7+ \delta_2 - \delta_1 - \eta -p}} ,
\end{align*}
\endgroup
where we used Hardy's inequality \eqref{hardy1} after noticing that $p-4 \in ( -2 , -1-\delta_1 ]$. The result is better than required due to the smallness of $\delta_1$, $\delta_2$ and $\eta$. The rest of the terms can be treated similarly.

\eqref{D6'}, \eqref{E5'} and \eqref{F5'}: These follow from the interior decay for $\int_{\mathbb{S}^2} ( \Omega^k T^m \psi )^2 \, d\omega$ for $k \leq 5$ and $m \leq 4$ (via the use of the elliptic estimates \eqref{est:elliptic}) and the Morawetz decay estimates \eqref{dec:mor}, \eqref{dec:mor1}, \eqref{dec:mor2}, \eqref{dec:mor3} and \eqref{dec:mor4}.

\eqref{G1'}: We examine the part close to the horizon and for any $k \leq 5$ we have that:
\begin{equation*}
\begin{split}
\int_{\mathcal{A}_{\tau_1}^{\tau_2} } \frac{1}{(r-M)^{1+\delta}} & D^2 | \Omega^k T^5 F |^2 \cdot v^{1+\beta} \, d\omega du dv \\ \lesssim & \sum_{\substack{k_1 + k_2 = k \\ m_1 + m_2 = 5}} \int_{\mathcal{A}_{\tau_1}^{\tau_2} }\frac{1}{(r-M)^{1+\delta}} (L \Omega^{k_1} T^{m_1} \phi )^2 \cdot ( \underline{L} \Omega^{k_2} T^{m_2} \phi )^2\cdot v^{1+\beta}  \,  d\omega du dv \\ & + \sum_{\substack{k_1 + k_2 = k \\ m_1 + m_2 = 5}} \int_{\mathcal{A}_{\tau_1}^{\tau_2} } (r-M)^{3-\delta} ( \Omega^{k_1} T^{m_1} \phi )^2 \cdot ( \underline{L} \Omega^{k_2} T^{m_2} \phi )^2\cdot v^{1+\beta}  \,  d\omega du dv \\ & + \sum_{\substack{k_1 + k_2 = k \\ m_1 + m_2 = 5}} \int_{\mathcal{A}_{\tau_1}^{\tau_2} } (r-M)^{3-\delta} ( \Omega^{k_1} T^{m_1} \phi )^2 \cdot ( L \Omega^{k_2} T^{m_2} \phi )^2 \cdot v^{1+\beta}  \,  d\omega du dv \\ & + \sum_{\substack{k_1 + k_2 = k \\ m_1 + m_2 = 5}} \int_{\mathcal{A}_{\tau_1}^{\tau_2} } (r-M)^{7-\delta} ( \Omega^{k_1} T^{m_1} \phi )^2 \cdot (  \Omega^{k_2} T^{m_2} \phi )^2 \cdot v^{1+\beta}  \,  d\omega du dv \\ & + \sum_{\substack{k_1 + k_2 = k \\ m_1 + m_2 = 5}} \int_{\mathcal{A}_{\tau_1}^{\tau_2} } (r-M)^{3-\delta} | \slashed{\nabla} \Omega^{k_1} T^{m_1} \phi |^2 \cdot | \slashed{\nabla}  \Omega^{k_2} T^{m_2} \phi |^2 \cdot v^{1+\beta}  \,  d\omega du dv .
\end{split}
\end{equation*}
For the case $m_1 = 0$ and $m_2 = 5$ we have that:
\begin{equation*}
\begin{split}
 \sum_{\substack{k_1 + k_2 = k }} \int_{\mathcal{A}_{\tau_1}^{\tau_2} } & \frac{1}{(r-M)^{1+\delta}} (L \Omega^{k_1}  \phi )^2 \cdot ( \underline{L} \Omega^{k_2} T^5 \phi )^2\cdot v^{1+\beta}  \,  d\omega du dv \\ \lesssim & \sum_{\substack{ l_1 \leq 5 \\ l_2 \leq 5 }} \int_{\tau_1}^{\tau_2} \int_{\mathbb{S}^2} \sup_u ( L \Omega^{l_1} \phi )^2 \cdot v^{1+\beta} \, d\omega dv \cdot \sup_{v \in [\tau_1 , \tau_2 ]} \int_{\mcN_v^H} (\underline{L} \Omega^l T^5 \phi )^2 \, d\omega du \\ \lesssim & C^2 E_0^2 \epsilon^4 , 
\end{split}
\end{equation*}
where we used Sobolev's inequality \eqref{est:sobolev}, the auxiliary estimate \eqref{est:aux1} for $m=0$ (or the pointwise decay estimates \eqref{dec:tpsi}), and the energy boundedness estimates \eqref{dec:enttttt}. For the cases $m_1 = 1$ and $m_2 = 4$, $m_1 =2 $ and $m_2 = 3$, $m_1 = 3$ and $m_2 = 2$, we argue as above but now using the auxiliary estimates \eqref{est:aux1} for $m=1$, $m=2$ and $m=3$ respectively.

For the case $m_1 = 5$ and $m_2 = 0$ we use the additional estimate \eqref{est:auxl} and we have that:
\begin{equation*}
\begin{split}
\sum_{\substack{k_1 + k_2 = k }} \int_{\mathcal{A}_{\tau_1}^{\tau_2} } & \frac{1}{(r-M)^{1+\delta}} (L \Omega^{k_1} T^5 \phi )^2 \cdot ( \underline{L} \Omega^{k_2}  \phi )^2 \cdot v^{1+\beta}  \,  d\omega du dv \\ \lesssim & C E_0 \epsilon^2 \sum_{l \leq 5} \int_{\mathcal{A}_{\tau_1}^{\tau_2} } (r-M)^{1+\delta} (L \Omega^{l} T^5 \phi )^2 \,  d\omega du dv \\ \lesssim & C^2 E_0^2 \epsilon^4 ,
\end{split}
\end{equation*}
where moreover we used Sobolev's inequality \eqref{est:sobolev} and the Morawetz boundedness estimate \eqref{dec:mor5}. For the case $m_1 = 4$ and $m_2 = 1$ we argue similarly using now the auxiliary estimate \eqref{est:auxlt} and we have that:
\begin{equation*}
\begin{split}
\sum_{\substack{k_1 + k_2 = k }} \int_{\mathcal{A}_{\tau_1}^{\tau_2} } & \frac{1}{(r-M)^{1+\delta}} (L \Omega^{k_1} T^5 \phi )^2 \cdot ( \underline{L} \Omega^{k_2}  \phi )^2 \cdot v^{1+\beta}  \,  d\omega du dv \\ \lesssim & C E_0 \epsilon^2 \sum_{l \leq 5} \int_{\mathcal{A}_{\tau_1}^{\tau_2} } (r-M)^{1+\delta} (L \Omega^{l} T^5 \phi )^2 \,  d\omega du dv \\ \lesssim & \frac{C^2 E_0^2 \epsilon^4}{\tau_1} ,
\end{split}
\end{equation*}
where moreover we used Sobolev's inequality \eqref{est:sobolev} and the Morawetz decay estimate \eqref{dec:mor4}.

For the term which is away from the horizon and away from infinity
$$ \left( \int_{\tau_1}^{\tau_2} \left( \int_{\Sigma_{\tau} \setminus ( \mcN_{\tau}^H \cup \mcN_{\tau}^I)} \Omega^k T^5 F |^2 \, d\mu_{\Sigma} \right)^{1/2} d\tau \right)^2 $$
we note that the inner integral can be bounded by the following by use of the elliptic estimates \eqref{est:elliptic}:
\begin{equation}\label{est:intt5}
\begin{split}
\sum_{\substack{ 1 \leq l_1 \leq 5 \\  l_2 \leq 5 \\ l_3 \leq 5 }}  & \sup_{\Sigma_{\tau} \setminus ( \mcN_{\tau}^H \cup \mcN_{\tau}^I)} \left[ \int_{\mathbb{S}^2} ( \Omega^{l_1} \psi )^2 d\omega + \int_{\mathbb{S}^2} ( \Omega^{l_2} T \psi )^2 \, d\omega \right] \cdot \int_{\Sigma_{\tau}} J^T [ \Omega^{l_3} T^5 \psi ] \cdot \textbf{n}_{\Sigma} \, d\mu_{\Sigma} \\ & + \sum_{\substack{ l_1 \leq 5 \\ l_2 \leq 5}}    \sup_{\Sigma_{\tau} \setminus ( \mcN_{\tau}^H \cup \mcN_{\tau}^I)}  \int_{\mathbb{S}^2} ( \Omega^{l_1} T^2 \psi )^2 d\omega \cdot \int_{\Sigma_{\tau}} J^T [ \Omega^{l_2} T^4 \psi ] \cdot \textbf{n}_{\Sigma} \, d\mu_{\Sigma} \\ & + \sum_{\substack{ l_1 \leq 5 \\ l_2 \leq 5}}    \sup_{\Sigma_{\tau} \setminus ( \mcN_{\tau}^H \cup \mcN_{\tau}^I)}  \int_{\mathbb{S}^2} ( \Omega^{l_1} T^3 \psi )^2 d\omega \cdot \int_{\Sigma_{\tau}} J^T [ \Omega^{l_2} T^3 \psi ] \cdot \textbf{n}_{\Sigma} \, d\mu_{\Sigma}  \\ & + \sum_{\substack{ l_1 \leq 5 \\ 1 \leq l_2 \leq 5}}    \sup_{\Sigma_{\tau} \setminus ( \mcN_{\tau}^H \cup \mcN_{\tau}^I)}  \int_{\mathbb{S}^2} ( \Omega^{l_1} T^5 \psi )^2 d\omega \cdot \int_{\Sigma_{\tau}} J^T [ \Omega^{l_2}  \psi ] \cdot \textbf{n}_{\Sigma} \, d\mu_{\Sigma}\\ \lesssim & \frac{C^2 E_0^2 \epsilon^4}{\tau^{2+2\delta_2}} , 
\end{split}
\end{equation}
using Sobolev's inequality \eqref{est:sobolev}, the pointwise decay and boundedness estimates in the interior \eqref{dec:psiint}, \eqref{dec:tpsiint}, \eqref{dec:ttpsiint}, \eqref{dec:tttpsiint}, \eqref{dec:tttttpsiint}, and the decay and boundedness of the $T$-fluxes for $\psi$, $T^3 \psi$, $T^4 \psi$ and $T^5 \psi$ given by \eqref{dec:en}, \eqref{dec:enttt}, \eqref{dec:entttt}, \eqref{dec:enttttt}.

\eqref{G2'}: This estimate follows now directly by using the last computation \eqref{est:intt5} and integrating it in $\tau$.

\end{proof}

The proof of the main Theorem \ref{thm:main} now follows by a standard bootstrap argument and choosing $\epsilon$ to be small enough and smaller than $\epsilon'$ (from Theorem \ref{thm:boundy0}), $\epsilon''$ (from Theorem \ref{thm:boundyy}) and $\epsilon_0$ (from Theorem \ref{thm:aux}).

\section{Asymptotic instabilities on the horizon}\label{insta}
In this section we will show that the estimate \eqref{est:boundyy} is sharp on the horizon for $\delta = 1$. We will work similarly as in Section 11 of \cite{yannis1}, to show that $\frac{2r}{D}  \underline{L} \left( \frac{2r}{D} \underline{L} \phi \right)$ diverges to infinity on $\mathcal{H}^{+}$ growing like $v$. Moreover we observe that $\frac{2r}{D} \underline{L} \phi$ exhibits no decay along $\mathcal{H}^{+}$. We note that both of these instabilities (that have no analogues in the sub-extremal setting) come from the spherically symmetric part of the wave. 

\begin{theorem}\label{thm:inst}
For a solution of \eqref{eq:thm} with data as in the main Theorem \ref{thm:main} that was obtained in the previous sections, we have that:
\begin{equation}\label{bdd:yhor}
 \left. \frac{2r}{D} \underline{L} \phi (v,\omega) \right|_{\mathcal{H}^{+}} - \left. \frac{2r}{D} \underline{L} \phi (v_0 ,\omega) \right|_{\mathcal{H}^{+}} \simeq C_1 \epsilon^2 , 
 \end{equation}
 and
 \begin{equation}\label{bdd:yyhor}
  \left. \frac{2r}{D} \underline{L} \left( \frac{2r}{D} \underline{L} \phi \right)  (v,\omega) \right|_{\mathcal{H}^{+}} \simeq C_1 \epsilon v  \mbox{  for all $v \geq v_l \geq \tau_0$ where $v_l$ is large enough. }
 \end{equation} 
\end{theorem}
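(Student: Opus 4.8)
Both instabilities originate from the spherically symmetric part $\psi_0$, so the plan is to derive transport equations along $\mathcal{H}^{+}$ (parametrised by $v$) for the two quantities $h_0 \doteq \left(\frac{2r}{D}\underline{L}\phi_0\right)\big|_{\mathcal{H}^{+}}$ and $h_0^{(2)} \doteq \left(\frac{2r}{D}\underline{L}\left(\frac{2r}{D}\underline{L}\phi_0\right)\right)\big|_{\mathcal{H}^{+}}$, working on the solution constructed in the previous sections with all its established decay and boundedness properties taken as given. First I would project equation \eqref{eq:yder} onto the $\ell=0$ mode and restrict it to $r=M$. Since $D$ and $D'$ vanish at $r=M$ and $\slashed{\Delta}\phi_0\equiv 0$, all the genuinely linear terms either drop out or reduce to $\mathcal{O}((r-M))\phi_0$, which vanishes on $\mathcal{H}^{+}$, leaving a transport equation of the schematic form
\begin{equation*}
\partial_v h_0\big|_{\mathcal{H}^{+}} = N_0(v) + N_{\geq 1}(v),
\end{equation*}
where $N_0$ is the $\ell=0$, $r=M$ restriction of the nonlinearity $F_h$ in \eqref{eq:yder} (schematically $A\,\tfrac1r(L\phi_0)\,h_0 + A\,\tfrac1r\phi_0\,L\phi_0$ plus cubic terms) and $N_{\geq 1}$ collects the contributions of $\psi_{\geq 1}$ to $F_0$ (the $\ell=0$ projections of $|\slashed{\nabla}\psi_{\geq 1}|^2$, of $(Y\psi_{\geq 1})(L\psi_{\geq 1})$, etc.). Using the decay estimate \eqref{dec:psi0} for $\phi_0$, the $r^p$-weighted hierarchy (which yields integrable-in-$v$ decay for $L\phi_0$ on $\mathcal{H}^{+}$), the boundedness \eqref{est:boundy0} for $h_0$, and the decay estimates \eqref{dec:psi1} for $\psi_{\geq 1}$, the entire right-hand side is integrable in $v$. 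Hence $h_0\big|_{\mathcal{H}^{+}}(v)$ converges as $v\to\infty$ and
\begin{equation*}
h_0\big|_{\mathcal{H}^{+}}(v) - h_0\big|_{\mathcal{H}^{+}}(v_0) = \int_{v_0}^{v}\big( N_0 + N_{\geq 1}\big)(v')\,dv' = \mathcal{O}(E_0\epsilon^2),
\end{equation*}
which gives the upper bound in \eqref{bdd:yhor}; the matching lower bound follows because at $v=v_0$ one has $L\phi_0,\, h_0 \simeq \epsilon$ with a definite sign fixed by the data, and the smallness of $\epsilon$ prevents cancellation, so the constant $C_1$ in \eqref{bdd:yhor} is nonzero.

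For \eqref{bdd:yyhor} I would repeat the procedure one order higher, starting from equation \eqref{eq:nlw2} (equivalently \eqref{null:eqyy}) with $\delta=1$, for which $D^{1/2-\delta/2}\equiv 1$ and $f_{H;\delta}$ is a nonzero multiple of $h^{(2)}$. Restricting to $r=M$, where the transport coefficient $\tfrac{3+\delta}{2}(r-M)$ vanishes, one obtains
\begin{equation*}
\partial_v h_0^{(2)}\big|_{\mathcal{H}^{+}} = c_1\, h_0\big|_{\mathcal{H}^{+}} + \mathcal{O}\!\left(\phi_0\big|_{\mathcal{H}^{+}}\right) + \mathcal{N}(v),
\end{equation*}
where $c_1\neq 0$ comes from the $\tfrac{4r^3}{M^4}\,\tfrac{2r}{D}\underline{L}\phi$ term in $g_{H;\delta}$ evaluated at $r=M$ (this is exactly the mechanism behind the linear asymptotics \eqref{y2hor}), the term $\mathcal{O}(\phi_0|_{\mathcal{H}^{+}})$ decays like $v^{-1+\delta_1/2}$ by \eqref{dec:psi0}, and $\mathcal{N}$ gathers the nonlinear contributions produced by expanding $D^{1/2-\delta/2}\tfrac{2r}{D}\underline{L}\!\left(\tfrac{Ar}{2}\,g^{\alpha\beta}\partial_\alpha\psi\,\partial_\beta\psi\right)$ on $\mathcal{H}^{+}$ at $\delta=1$, i.e. the terms $f_{H;n}^1,\dots,f_{H;n}^6$ of Section~\ref{sec:GrowthEstimates}. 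Using the growth estimate \eqref{est:boundyy} with $\delta=1$, the boundedness estimates \eqref{est:boundy}, \eqref{est:boundyt}, and the pointwise decay estimates \eqref{dec:psi}, \eqref{dec:tpsi}, each term of $\mathcal{N}$ is either integrable in $v$ (hence $\mathcal{O}(E_0\epsilon^2)$ after integration) or of the form (integrable-in-$v$ coefficient)$\,\times\, h_0^{(2)}$, the latter coming from the factor $A\,\tfrac1r(L\phi_0)\,h^{(2)}$ in $f_{H;n}^2$ whose coefficient $L\phi_0$ is integrable in $v$ on $\mathcal{H}^{+}$. A Gr\"onwall argument then shows that $h_0^{(2)}\big|_{\mathcal{H}^{+}}$ grows at most linearly in $v$ and that the coefficient of $v$ is, to leading order, $c_1$ times the limit $h_0^{(\infty)}\doteq\lim_{v\to\infty} h_0\big|_{\mathcal{H}^{+}}$. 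By the first part, $h_0^{(\infty)} = H_0[\epsilon f,\epsilon h] + \mathcal{O}(E_0\epsilon^2)$, where $H_0$ is the linearly conserved Aretakis charge, so $h_0^{(\infty)}\simeq\epsilon$ for the chosen data. Choosing $v_l$ large enough that $h_0\big|_{\mathcal{H}^{+}}(v)$ stays within $\tfrac12|h_0^{(\infty)}|$ of $h_0^{(\infty)}$ and that the integrated error terms are dominated by $\tfrac12|c_1 h_0^{(\infty)}|\,v$ for $v\ge v_l$, integration of the transport equation from $v_l$ yields $h_0^{(2)}\big|_{\mathcal{H}^{+}}(v)\simeq C_1\epsilon v$ for $v\ge v_l$, which is \eqref{bdd:yyhor}.

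I expect the main obstacle to be the treatment of the nonlinear source $\mathcal{N}$ in the transport equation for $h_0^{(2)}$: one must verify that no term in it grows as fast as $\epsilon v$ and, crucially, that the self-interaction term proportional to $h_0^{(2)}$ carries a coefficient which is integrable in $v$, so that Gr\"onwall closes without destroying the linear-in-$v$ leading behaviour. This relies on the integrable decay of $L\phi_0$ on $\mathcal{H}^{+}$ coming from the full $p$-range of the $r^p$-weighted hierarchy, and on the growth estimate \eqref{est:boundyy} being sharp enough (linear, not worse) in the neighbourhood $\mathcal{A}$ of the horizon. A secondary point is handling the $\psi_{\geq 1}$ contributions to $F_0$ on $\mathcal{H}^{+}$ without reintroducing the angular-derivative losses present in the bulk estimates; this is achieved by working with the already-established pointwise decay estimates \eqref{dec:psi1}, \eqref{dec:tpsi} for a fixed number of angular derivatives rather than with the energy hierarchies. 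Finally, the lower bounds in \eqref{bdd:yhor} and \eqref{bdd:yyhor} require the nondegeneracy of the data (generically satisfied and stable under the $\mathcal{O}(\epsilon^2)$ nonlinear correction), which is used only to conclude $C_1\neq 0$.
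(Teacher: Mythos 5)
Your proposal follows essentially the same route as the paper: you integrate the transport equations \eqref{eq:yder} and \eqref{null:eqyy} restricted to $\mathcal{H}^{+}$, control the sources with the already-established decay, boundedness and growth estimates, and extract the linear-in-$v$ growth from the non-decaying term $\frac{2r}{D}\underline{L}\phi_0$, the only cosmetic difference being that you close the quadratic self-interaction by a Gr\"onwall argument whereas the paper bounds the nonlinear contribution directly by $C E_0 \epsilon^2 v^{-\eta}$ using \eqref{est:boundyy} together with the integrable decay of $T\phi$ on the horizon. Both arguments rely on the same implicit nondegeneracy of the data for the lower bounds, so your proof is correct and matches the paper's.
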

\begin{remark}
In the above Theorem, we use the terminology
$$ f_1 \simeq f_2 $$
for some functions $f_1$, $f_2$, means that there are constants $c$, $C$ such that
$$ c f_2 \lesssim f_1 \lesssim C f_2 . $$
\end{remark}
\begin{proof}
For estimate \eqref{bdd:yhor} we use equation \eqref{eq:yder} for $\phi_0$ and we evaluate it on the horizon $r=M$, from which we get for any $v \geq v_0 = \tau_0$ that:
\begingroup
\allowdisplaybreaks
\begin{align*}
\left. \frac{2r}{D} \underline{L} \phi_0 ( v, \omega ) \right|_{\mathcal{H}^{+}} = & \left. \frac{2r}{D} \underline{L} \phi_0 ( v_0 , \omega ) \right|_{\mathcal{H}^{+}} + \int_{v_0}^v \left[ \frac{A}{r} ( L \phi ) \cdot \left( \frac{2r}{D} \underline{L} \phi \right) \right]_0 \, dv' \\ \simeq & \left. \frac{2r}{D} \underline{L} \phi_0 ( v_0 , \omega ) \right|_{\mathcal{H}^{+}} + \left. \int_{v_0}^v ( L \phi ) \cdot \left( \frac{2r}{D} \underline{L} \phi \right) \, dv'\right|_{\mathcal{H}^{+}}  .
\end{align*}
\endgroup
Note that as $\frac{2r}{D} \underline{L} \phi$ is bounded by $C^{1/2} E_0^{1/2} \epsilon$ and $L\phi$ has integrable decay in $v$, we note that the second term can be bounded by:
$$ C E_0 \epsilon^2 $$ 
and as $\epsilon$ is small enough the size of $\left. \frac{2r}{D} \underline{L} \phi_0 ( v, \omega ) \right|_{\mathcal{H}^{+}}$ is comparable to the size of $\left. \frac{2r}{D} \underline{L} \phi_0 ( v_0 , \omega ) \right|_{\mathcal{H}^{+}}$.

For estimate \eqref{bdd:yyhor} we use equation \eqref{null:eqyy} for $\phi_0$, evaluate it on the horizon $r=M$, and then integrate it in $v$. On the other hand as we have that:
\begin{equation*}
\begin{split}
 \left. \frac{2r}{D} \underline{L} \left( \frac{2r}{D} \underline{L} \phi \right)  (v,\omega) \right|_{\mathcal{H}^{+}} \simeq &  \left. \frac{2r}{D} \underline{L} \left( \frac{2r}{D} \underline{L} \phi \right)  (v_0 ,\omega) \right|_{\mathcal{H}^{+}}  \\ & + \left.\left. \int_{v_0}^v \frac{2r}{D} \underline{L} \phi_0 ( v, \omega ) \, dv'\right|_{\mathcal{H}^{+}}  + \int_{v_0}^v ( \phi_0 + \mathcal{N} ) \, dv' \right|_{\mathcal{H}^{+}}  ,
\end{split}
\end{equation*}
where $\mathcal{N}$ involves all the nonlinear terms from \eqref{null:eqyy}, we note that the major contribution of the above expression comes from the term involving $\frac{2r}{D} \phi_0$ which from estimate \eqref{bdd:yhor} gives us that:
$$ \left. \int_{v_0}^v \frac{2r}{D} \underline{L} \phi_0 ( v, \omega ) \, dv'\right|_{\mathcal{H}^{+}} \simeq C_1 \epsilon ( v - v_0 ) \simeq \bar{C_1} \epsilon v , $$
for some constant $\bar{C_1}$. We finally observe that as the nonlinear terms $\mathcal{N}$ can be bounded by $C E_0 \epsilon^2 v^{-\eta}$ for some $\eta > 0$ and as $\phi_0$ can be bounded by $\frac{C^{1/2} E_0^{1/2} \epsilon}{v^{1-\delta_1 /2}}$ the last two terms can be bounded by $( c_1 \epsilon + c_2 \epsilon^2 ) v^{\eta'}$ for some constant $c_1$, $c_2$ and some $\eta' \in (0,1)$, hence for $v$ large they can be neglected due to the term involving $\frac{2r}{D} \underline{L} \phi_0$. This finishes the proof of estimate \eqref{bdd:yyhor}.
\end{proof}

\begin{remark}
It is worth noticing that if we consider the standard null form $g^{\alpha \beta} \cdot \partial_{\alpha} \psi \cdot \partial_{\beta} \psi$ in equation \eqref{nw}, then the following quantity is actually \textit{conserved} on the horizon:
$$ H^{NL}_0 [ \psi ] (v) \doteq \left. \int_{\mathbb{S}^2} \left[ e^{-\psi ( v,M , \omega ) } \partial_r \psi ( v,M,\omega ) + \frac{1}{M} \left( 1 - e^{-\psi (v,M,\omega) } \right) \right] \, d\omega \right|_{\mathcal{H}^{+}} . $$
Similar quantities are conserved on the horizon for the more general nonlinearities of equation \eqref{nw}, the derivation and the investigation of the properties of such quantities will be pursued in future work of the authors of the present paper.
\end{remark}

\section{Remarks on other nonlinearities}\label{non:other}
Due to the relation of the $r^p$-weighted estimates at infinity which can be derived for sub-extremal black holes as it was done in \cite{paper1} in the linear case, and the $(r-M)^{-p}$-weighted estimates at the horizon, our method is robust enough to deal with nonlinearities at infinity with growing weights in $r$. A model example can be the following nonlinear problem:
\begin{equation}\label{eq:sub-ex}
\Box_{g_{sub}} \psi = \chi_{ \{ r \geq R > R_h \}} \left[ \frac{1}{r} ( L \phi ) ( \underline{L} \phi ) + \frac{1}{r} | \slashed{\nabla} \phi |^2 \right]  ,
\end{equation}
for $\Box_{g_{sub}}$ the d'Alembertian operator on a sub-extremal Reissner--Nordstr\"{o}m spacetimes, for $\chi_{ \{ r \geq R > R_h \} }$ a cut-off function supported in the region $\{ r \geq R > R_h \}$ where $R_h$ is the value of the radial variable $r$ on the event horizon, for $\phi = r\psi$, and for small enough data on a spacelike-null hypersurface $\Sigma_{\tau_0}$. Note that the aforementioned nonlinearity has an extra $r$ weight on the $(L \phi) \cdot (\underline{L} \phi)$ term compared to the classical null form. To deal with such a problem and defining $\mathcal{B}_{\tau_0}^{\infty}$ as in the extremal case, we will have to separate the spherically symmetric part of the solution from the non-spherically symmetric one, derive the same range of $r^p$-weighted estimates as for Theorem \ref{thm:main} where the corresponding bootstraps will need the following estimates
\begin{equation}\label{est:auxsub}
\int_{u_0}^U \int_{\mathbb{S}^2} \sup_{v \in [v_{R,n} , V]} ( \underline{L} T^m \Omega^k \phi )^2 \cdot u^{1+\delta} \, d\omega du \lesssim \epsilon^2 ,
\end{equation}
for all $U$, for any $0 < \delta < 2-\delta_1$ if $m=0$, for any $0 < \delta < 2+\delta_2$ if $m=1$, for any $0 < \delta < 1+\delta_2$ if $m=2$, for any $0 < \delta < \delta_2$ if $m=3$, for any $u_{R,n}$, $U$, $V$ in the region $\mathcal{B}_{\tau_0}^{\infty}$ (where $v_{R,n}$ is on the hypersurface $r=R$), for $m\in \{0,1,2,3\}$, and for any $k \leq 5$, which are the analogues of estimates \eqref{est:aux1}, the boundedness estimate
$$ | r^2 L \phi | (u,v,\omega) \lesssim \epsilon , $$
in $\mathcal{B}_{\tau_0}^{\infty}$ which is the analoge of \eqref{est:boundy0}, and the growth estimate 
$$ r^{1+\delta} | L ( r^2 L \phi ) | (u,v,\omega ) \lesssim \epsilon u^{\delta} \mbox{  for any $\delta \in ( 0,1]$}, $$
in $\mathcal{B}_{\tau_0}^{\infty}$ which is the analogue of \eqref{est:boundyy}.

Finally we note that due to the robustness of our methods we plan to investigate in future work how to derive \textit{precise asymptotics} for solutions of nonlinear wave equations satisfying the null condition both on extremal and sub-extremal Reissner--Nordstr\"{o}m black holes spacetimes.

\appendix
\addtocontents{toc}{\protect\setcounter{tocdepth}{2}}
\section{}
\subsection{The Couch-Torrence conformal isometry}\label{app:iso}
An extremal Reissner--Nordstr\"{o}m spacetime of mass $M$ admits a conformal isometry called the Couch-Torrence first introduced in \cite{couch} that in ingoing Eddington-Finkelstein coordinates is given by
$$ \bar{\Phi} ( v,r,\omega ) \doteq \left( u = v , r' = M + M^2 (r-M)^{-1} , \omega \right) , $$
and through it $\mathcal{H}^{+}$ is mapped onto $\mathcal{I}^{+}$.

\subsection{The d'Alembertian in different coordinates}
The nonlinearity of \eqref{nw} can be written as follows in double null coordinates (where $\phi = r \psi$):
\begin{equation}\label{null:nonlin}
\begin{split}
g^{\alpha \beta} \cdot \partial_{\alpha} \psi \cdot \partial_{\beta} \psi = & \frac{4}{D} \cdot (L \psi ) \cdot (\underline{L} \psi ) + | \slashed{\nabla} \psi |^2 \\ = & \frac{4}{Dr^2} \cdot (L \phi ) \cdot (\underline{L} \phi ) - \frac{2}{r^3} \cdot \phi \cdot (\underline{L} \phi ) \\ & + \frac{2}{r^3} \cdot \phi \cdot (L \phi ) - \frac{D}{r^4} \cdot \phi^2 + \frac{1}{r^2} | \slashed{\nabla} \phi |^2 \\ = & \frac{2}{r^3} \cdot (L \phi ) \cdot \left( \frac{2r}{D}\underline{L} \phi \right) - \frac{D}{r^4} \cdot \phi \cdot \left( \frac{2r}{D}\underline{L} \phi \right) \\ & + \frac{2}{r^3} \cdot \phi \cdot (L \phi ) - \frac{ D}{r^4} \cdot \phi^2 + \frac{1}{r^2} | \slashed{\nabla} \phi |^2 .
\end{split}
\end{equation}

Equation \eqref{nw} can then be written in terms of $\phi = r \psi$ as follows in double null coordinates:
\begin{equation}
\begin{split}\label{null:eq}
4L \underline{L} \phi = & D\slashed{\Delta} \phi  - \frac{D \cdot D'}{r} \phi  \\ & + \frac{2A \cdot D}{r^2} \cdot (L \phi ) \cdot \left( \frac{2r}{D}\underline{L} \phi \right) - \frac{A \cdot D^2}{r^3} \cdot \phi \cdot \left( \frac{2r}{D}\underline{L} \phi \right) \\ & + \frac{2A \cdot D}{r^2} \cdot \phi \cdot (L \phi ) - \frac{A \cdot D^2}{r^3} \cdot \phi^2 + \frac{A \cdot D}{r} | \slashed{\nabla} \phi |^2 .
\end{split}
\end{equation}

\subsection{Basic inequalities}
We record some basic inequalities. The first is the Sobolev inequality on the sphere from which we have that for any smooth function $f$:
\begin{equation}\label{est:sobolev}
\int_{\mathbb{S}^2} f^2 \, d\omega \lesssim \sum_{k \leq 2}\int_{\mathbb{S}^2} ( \Omega^k f )^2 \, d\omega .
\end{equation}
The second one is Hardy's inequality, which close to the horizon it has the following form for a smooth function $f$ and for any $s \neq 1$ and for any $M \leq r_1 < r_2 < \infty$:
\begin{equation}\label{hardy}
\int_{u_{r_2} (v)}^{u_{r_1} (v)} (r-M)^{-s} f^2 \, du \lesssim \frac{1}{(1+s)^2} \int_{u_{r_2} (v)}^{u_{r_1} (v)} (r-M)^{-s-2} (\underline{L} f )^2 \, du  + 2 (r_1 - M )^{-s-1} f^2 ( u_{r_1} (v) , v) ,
\end{equation}
and close to infinity it gives us that for any $M < r_1 < r_2 \leq \infty$:
\begin{equation}\label{hardy1}
\int_{v_{r_1} (u)}^{v_{r_2} (u)} r^s f^2 \, dv \lesssim \frac{1}{(1+s)^2} \int_{v_{r_1} (u)}^{v_{r_2} (u)} r^{s+2} (L f )^2 \, dv + 2 r^{s+1} f^2 (u , v_{r_2} (u)) ,
\end{equation}
where in the case of the horizon if $r_1 = M$ then the last term is considered as:
$$ 2 \lim_{u \rightarrow \infty} (r - M )^{-s-1} f^2 ( u_{r} (v) , $$
and in the case of infinity if $r_2 = \infty$ then the last term is considered as:
$$  2 \lim_{v \rightarrow \infty} r^{s+1} f^2 (u , v_{r} (u))  . $$
For proofs of these inequalities see \cite{paper4}.
\subsection{Elliptic estimates}
We record as well the following basic elliptic estimate from \cite{aretakis2}:
\begin{equation}\label{est:elliptic}
\begin{split}
\int_{\Sigma_{\tau} \cap \{ r\geq r_0 > M \}} (\partial_a \partial_b \psi )^2 & d\mu_{\Sigma} \lesssim   \int_{\Sigma_{\tau} \cap \{ r\geq r_0 \}} J^T_{\mu} [\psi ] \cdot \textbf{n}_{\Sigma} d\mu_{\Sigma} \\ & + \int_{\Sigma_{\tau} \cap \{ r\geq r_0 \}} J^T_{\mu} [T\psi ] \cdot \textbf{n}_{\Sigma} d\mu_{\Sigma} + \int_{\Sigma_{\tau} \cap \{ r\geq r_0 \}} |F|^2 d\mu_{\Sigma} , 
\end{split}
\end{equation}
for any fixed $r_0 > M$, and any $\partial_a , \partial_b \in \{ L , \underline{L} , \partial_{\theta}, \partial_{\sigma} \}.$
\subsection{Additional norms}\label{norm:add}
For any smooth function $f : \mathcal{M} \rightarrow \mathbb{R}$ we define for any $\tau \in [ \tau_0 , \infty)$ the norm:
\begingroup
\allowdisplaybreaks
\begin{align*}
E^{\tau} [ f ] \doteq \int_{\Sigma_{\tau}} f^2 \, d\mu_{\Sigma_{\tau}} + & \sum_{\substack{k \leq 5 \\ l \leq 5} } \int_{\Sigma_{\tau}} J^T [ \Omega^k T^l f ] \cdot \textbf{n}_{\Sigma_{\tau}} \, d\mu_{\Sigma_{\tau}}  \\ & + \int_{\mcN_{\tau}^H} (r-M)^{-3+\delta_1} ( \underline{L} f_0 )^2 \, d\omega du + \int_{\mcN_{\tau}^H} (r-M)^{-1+\delta_1} \left( \underline{L}  \left( \frac{2r}{D} \underline{L} f_0 \right)  \right)^2 \, d\omega du   \\ & + \int_{\mcN_{\tau}^I} r^{3-\delta_1} ( L f_0 )^2 \, d\omega dv + \int_{\mcN_{\tau}^I} r^{1-\delta_1} \left( L \left( \frac{2r^2}{D} L f_0 \right) \right)^2 \, d\omega dv \\ & + \sum_{k \leq 5} \Bigg[ \int_{\mcN_{\tau}^H} (r-M)^{-3-\delta_2} ( \underline{L} \Omega^k f_{\geq 1} )^2 \, d\omega du + \int_{\mcN_{\tau}^H} (r-M)^{-1-\delta_2} \left( \underline{L} \left( \frac{2r}{D} \underline{L} \Omega^k f_{\geq 1} \right) \right)^2 \, d\omega du   \\ & + \int_{\mcN_{\tau}^I} r^{3+\delta_2} ( L \Omega^k f_{\geq 1} )^2 \, d\omega dv + \int_{\mcN_{\tau}^I} r^{1+\delta_2} 
\left( L \left( \frac{2r^2}{D} L \Omega^k f_{\geq 1} \right)   \right)^2 \, d\omega dv \\ & + \int_{\mcN_{\tau}^H} (r-M)^{-3-\delta_2} ( \underline{L} \Omega^k T f )^2 \, d\omega du + \int_{\mcN_{\tau}^H} (r-M)^{-1-\delta_2} \left( \underline{L} \left( \frac{2r}{D} \underline{L} \Omega^k T f \right) \right)^2 \, d\omega du   \\ & + \int_{\mcN_{\tau}^I} r^{3+\delta_2} ( L\Omega^k T f )^2 \, d\omega dv + \int_{\mcN_{\tau}^I} r^{1+\delta_2} \left(L \left( \frac{2r^2}{D} L \Omega^k T f \right)  \right)^2 \, d\omega dv \\ & + \int_{\mcN_{\tau}^H} (r-M)^{-2-\delta_2} ( \underline{L} \Omega^k T^2 f )^2 \, d\omega du + \int_{\mcN_{\tau}^H} (r-M)^{-\delta_2} \left( \underline{L} \left( \frac{2r}{D} \underline{L} \Omega^k T^2 f \right) \right)^2 \, d\omega du   \\ & + \int_{\mcN_{\tau}^I} r^{2+\delta_2} ( L\Omega^k T^2 f )^2 \, d\omega dv + \int_{\mcN_{\tau}^I} r^{\delta_2} \left(L \left( \frac{2r^2}{D} L \Omega^k T^2 f \right)  \right)^2 \, d\omega dv \\ & + \int_{\mcN_{\tau}^H} (r-M)^{-2} ( \underline{L} \Omega^k T^3 f )^2 \, d\omega du  + \int_{\mcN_{\tau}^I} r^2 ( L\Omega^k T^3 f )^2 \, d\omega dv \\ & + \int_{\mcN_{\tau}^H} (r-M)^{-1-\delta_2} ( \underline{L} \Omega^k T^4 f )^2 \, d\omega du  + \int_{\mcN_{\tau}^I} r^{1+\delta_2} ( L\Omega^k T^4 f )^2 \, d\omega dv \\ & + \int_{\mcN_{\tau}^H} (r-M)^{-1-\delta_2} ( \underline{L} \Omega^k T^5 f )^2 \, d\omega du  + \int_{\mcN_{\tau}^I} r^{1+\delta_2} ( L\Omega^k T^5 f )^2 \, d\omega dv .
\end{align*}
\endgroup
Moreover we also define the standard Sobolev norms for any $s \in \mathbb{N}$ as:
$$ \| f \|_{H^s_{\tau}} \doteq \sum_{| \alpha | \leq s} \left( \int_{\Sigma_{\tau}} ( \partial^{\alpha} f )^2 \, d\mu_{\Sigma_{\tau}} \right)^{1/2}\  \text{ and } \ \ \| f \|_{\widetilde{H}^s_{\tau}} \doteq \sum_{| \alpha | \leq s} \left( \int_{\Sigma^{int}_{\tau}} ( \partial^{\alpha} f )^2 \, d\mu_{\Sigma^{int}_{\tau}} \right)^{1/2} , $$
where $\partial \in \{T , Y , \partial_{\theta}, \partial_{\varphi} \}$.

\small

Department of Mathematics, University of California, Los Angeles, CA 90095, United States, yannis@math.ucla.edu

\smallskip

Princeton University, Department of Mathematics, Fine Hall, Washington Road, Princeton, NJ 08544, United States, aretakis@math.princeton.edu

\smallskip

Department of Mathematics, University of Toronto, 40 St George Street, Toronto, ON, Canada, aretakis@math.toronto.edu

\smallskip

Department of Pure Mathematics and Mathematical Statistics, University of Cambridge, Wilberforce Road, Cambridge CB3 0WB, United Kingdom, dg405@cam.ac.uk 
\end{document}